\numberwithin{equation}{section}
\renewcommand{\subsection}[1]{\refstepcounter{subsection}{\bf (\arabic{section}\alph{subsection}) #1.}}
\theoremstyle{plain}
\newtheorem{thm}{Theorem}[section]
\newtheorem{theorem}[thm]{Theorem}
\newtheorem{cor}[thm]{Corollary}
\newtheorem{corollary}[thm]{Corollary}
\newtheorem{lemma}[thm]{Lemma}
\newtheorem{prop}[thm]{Proposition}
\newtheorem{definition}[thm]{Definition}
\newtheorem{remark}[thm]{Remark}
\newtheorem{convention}[thm]{Convention}
\newtheorem{example}[thm]{Example}
\newtheorem{non-example}[thm]{Non-example}
\newtheorem{lem}[thm]{Lemma}
\newtheorem{scholium}[thm]{Scholium}
\newtheorem{conjecture}[thm]{Conjecture}
\newtheorem*{claim*}{Claim} 
\newtheorem*{lemma*}{Lemma}
\newtheorem*{theorem*}{Theorem}
\newtheorem*{conjecture*}{Conjecture}
\newtheorem{remarks}[thm]{Remarks}
\newcommand{\bC}{{\mathbb C}}
\newcommand{\bF}{{\mathbb F}}
\newcommand{\bQ}{{\mathbb Q}}
\newcommand{\bR}{{\mathbb R}}
\newcommand{\bZ}{{\mathbb Z}}
\newcommand{\scrO}{\EuScript O}
\newcommand{\bfe}{\mathbf{e}}
\newcommand{\bfm}{\mathbf{m}}
\newcommand{\bfn}{\mathbf{n}}
\newcommand{\iso}{\cong}
\newcommand{\mymod}{\operatorname{mod}}
\newcommand{\val}{\mathrm{val}}
\begin{document}
\title[$P$-adic Gamma classes]{$P$-adic Gamma classes and overconvergent \\ Frobenius structures for quantum connections}
\author{Shaoyun Bai, Dan Pomerleano, Paul Seidel}

\begin{abstract}
Consider the small quantum connection on a monotone symplectic manifold, with $p$-adic coefficients. We conjecture that this always admits an overconvergent Frobenius structure, whose constant term is given by a characteristic class associated to Morita's $p$-adic Gamma function. We prove this conjecture for toric Fano varieties and Grassmannians, and also supply additional experimental evidence.
\end{abstract}

\maketitle

\section{Introduction}
Let $M$ be a closed monotone ($c_1(TM) = [\omega_M]$) symplectic manifold; the main source of such manifolds are smooth complex projective Fano varieties. Take the (single-variable small) quantum product on $H^*(M;\bZ)[q]$,
\begin{equation} \label{eq:quantum-product}
x \ast_q y = x \smile y + q (x \ast^{(1)} y) + q^2 (x \ast^{(2)} y) + \cdots
\end{equation}
Here, $\ast^{(k)}$ counts rational curves in $M$ with first Chern number $k$, hence is an operation of degree $-2k$. The corresponding version of the quantum connection is
\begin{equation} \label{eq:quantum-connection}
\begin{aligned}
& \nabla_{q\partial_q}: H^*(M;\bZ)[q] \longrightarrow H^*(M;\bZ)[q], \\
& \nabla_{q\partial_q} x = q\partial_q x + c_1(TM) \ast_q x = q\partial_q x + c_1(TM) \smile x + 
c_1(TM) \ast^{(1)} x + \cdots
\end{aligned}
\end{equation}
We will be interested in arithmetic aspects of this connection, specifically making contact with the theory of $p$-adic differential equations. This means switching from integer coefficients to ones in a $p$-adic field.

Let's give a minimal amount of background. Take an odd prime $p$. We use the field
\begin{equation} \label{eq:cyclotomic}
K = \bQ_p(\mu), 
\end{equation}
where $\mu$ is a primitive $p$-th root of unity. The $p$-adic valuation on $K$, normalized so that $\mathrm{val}(p) = 1$, takes values in $\frac{1}{p-1} \bZ$. The $p$-adic norm is 
\begin{equation}
|x| = p^{-\mathrm{val}(x)}. 
\end{equation}
Our field $K$ contains a unique element $\pi$ with \cite[Proposition 4.4.36]{cohen-vol1}
\begin{equation} \label{eq:pi}
\begin{aligned}
& \pi^{p-1} = -p, \\
& |(1+\pi)-\mu| \leq p^{-2/(p-1)}. 
\end{aligned}
\end{equation}
Let $O \subset K$ be the ring of integers. The maximal ideal is $\pi O \subset O$, and the residue field is $O/\pi O = \bF_p$. 

Suppose we have a connection (a system of first order linear differential equations) in one variable $t$, defined over $K$. In informal terms, a Frobenius structure is an isomorphism between the original equation and its pullback under the coordinate change $F(t) = t^p$. Even if the original equation is algebraic in $t$, the Frobenius structure generally won't be, hence it makes sense to ask about the region in which it is defined. We will be focused on the behaviour near $t = 0$, so the main question is the radius of convergence around that point. Number theory and algebraic geometry both underline the importance of Frobenius structures which are overconvergent, meaning with convergence radius $>1$ (see Section \ref{sec:diff-eq} for rigorous definitions). 

\begin{example} \label{th:exponential}
Consider the equation 
\begin{equation}
df/dt = cf, 
\end{equation}
for some constant $c$. The Frobenius pullback is $df^{(p)}/dt = c\, p t^{p-1} f^{(p)}$. The relation between the two equations is that
\begin{equation} \label{eq:example-phi}
f(t) = \Phi(t) f^{(p)}(t), \;\; \Phi(t) = e^{c(t-t^p)}, 
\end{equation}
so $\Phi(t)$ is a Frobenius structure (and the unique one, up to multiplying by a nonzero constant). In the simplest case $c = 1$, the function $\Phi(t)$, defined by the usual power series, has $p$-adic convergence radius $<1$; so that equation doesn't admit an overconvergent Frobenius structure. However, when $c = \pi$ or an integer multiple of that, $\Phi(t)$ is overconvergent (see Section \ref{sec:p-adic-functions}); which is why the constant $\pi$ is important in this theory.
\end{example}

Returning to the quantum connection, now with $K$-coefficients, we rescale the variable to 
\begin{equation} \label{eq:t-q}
t = q/\pi
\end{equation}
for $\pi$ as in \eqref{eq:pi}. For simplicity, we also restrict to the even degree part of cohomology (one could carry out a parallel discussion for the odd degree part). The outcome is
\begin{equation} \label{eq:quantum-connection-2}
\begin{aligned} 
& \nabla_{t\partial_t}: H^{\mathrm{even}}(M;K)[t] \longrightarrow H^{\mathrm{even}}(M;K)[t], \\
& \nabla_{t\partial_t} x = t\partial_t x + c_1(M) \ast_{\pi t} x = t\partial_t x + c_1(M) \smile x + \pi t c_1(M) \ast^{(1)} x + \cdots
\end{aligned}
\end{equation}
For the following elementary facts, see Section \ref{sec:gamma-class}. Given any class $b \in H^{\mathrm{even}}(M;K)$ which is invertible for the cup product (meaning, its degree $0$ part is nonzero), there is a unique Frobenius structure $\Phi = \Phi_0 + t\Phi_1 + \cdots$ for the quantum connection, whose constant term is the endomorphism 
\begin{equation} \label{eq:b-frobenius}
\Phi_0(x) = p^{-\mathrm{deg}(x)/2} b \smile x.
\end{equation}
For general $b$, this structure is not overconvergent. We now bring in an additional ingredient, Morita's $p$-adic Gamma function $\Gamma_p(z)$ (see Section \ref{sec:p-adic-functions}). This has a Taylor expansion with coefficients in $\bQ_p$. Hence, it gives rise to a multiplicative characteristic class for any complex vector bundle $E \rightarrow M$,
\begin{equation}
\Gamma_p(E) \in H^{\mathrm{even}}(M;\bQ_p).
\end{equation}

\begin{conjecture} \label{th:gamma-conjecture}
For the quantum connection on any monotone symplectic manifold, the Frobenius structure with constant term \eqref{eq:b-frobenius}, where $b = \Gamma_p(TM)$, is overconvergent.
\end{conjecture}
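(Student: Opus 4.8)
The plan is to write the desired Frobenius structure in closed form in terms of the fundamental solution of the quantum connection at $t=0$, reduce its overconvergence to a Dwork-type cancellation, and control that cancellation by combining the integrality of Gromov--Witten data with the complex ``Gamma conjecture'' of Galkin--Golyshev--Iritani. Write $N$ for the operator $x\mapsto c_1(TM)\smile x$ on $H^{\mathrm{even}}(M;K)$; it is nilpotent, so the quantum connection \eqref{eq:quantum-connection-2} is regular singular at $t=0$ with unipotent local monodromy and has a fundamental solution $Y(t)=\Psi(t)\,t^{-N}$ with $\Psi(t)=\mathrm{Id}+\Psi_1 t+\Psi_2 t^2+\cdots$ a matrix power series. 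One then checks that
\begin{equation}
\Phi(t)=\Psi(t)\,\Phi_0\,\Psi(t^p)^{-1}
\end{equation}
is the unique Frobenius structure with constant term $\Phi_0$, the only point to verify being the relation $N\Phi_0=p\,\Phi_0 N$, which holds because $\Phi_0=p^{-\deg/2}(b\smile-)$ and $b\smile-$ commutes with $N$. Since $\Psi(t^p)$ converges on a disk of radius $(\text{radius of }\Psi)^{1/p}$, overconvergence of $\Phi$ is automatic once $\Psi$ is overconvergent; but $\Psi$ solves an algebraic differential equation whose radius of convergence is in general only $1$ (as for $\log(1+t)$), so the content of Conjecture \ref{th:gamma-conjecture} is that for the constant term with $b=\Gamma_p(TM)$ there is a cancellation in the product $\Psi(t)\,\Phi_0\,\Psi(t^p)^{-1}$ valid on a disk of radius $>1$ --- the same phenomenon that makes $e^{\pi(t-t^p)}$ overconvergent in Example \ref{th:exponential} and that underlies Dwork's treatment of hypergeometric equations.

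To force the cancellation I would first produce \emph{some} overconvergent Frobenius structure $\Phi^{\flat}$, by exhibiting the quantum connection as an overconvergent $F$-isocrystal on $\mathbb{G}_m$ over $\bF_p$. The operations $\ast^{(k)}$ are defined over $\bZ$, so after the substitution \eqref{eq:t-q} the connection and its Frobenius pullback are defined over $\bZ[t]$; reducing mod $p$, one would identify the quantum $D$-module, up to a Tate twist, with a subquotient of the Gauss--Manin connection of a family of varieties (or of a Landau--Ginzburg model) realizing $M$, invoke the theorem that de Rham cohomology of a smooth family over $\bF_p$ carries an overconvergent Frobenius structure, and check that overconvergence descends to the summand. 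The connection \eqref{eq:quantum-connection-2} is singular only at $0$ and $\infty$, since $c_1(TM)\ast_{\pi t}-$ is polynomial in $t$, and its exponents at $0$ are integral; given $\Phi^{\flat}$, any other Frobenius structure equals $\Phi^{\flat}$ composed with a horizontal automorphism, so the overconvergent structures form a single coset of the centralizer of $N$, and the problem is reduced to pinning down the constant term $\Phi^{\flat}_0$.

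The identification $\Phi^{\flat}_0=p^{-\deg/2}\big(\Gamma_p(TM)\smile-\big)$ should follow by transporting the complex picture. By the Gamma conjecture, the canonical fundamental solution $\Psi(t)\,t^{-N}$ at $t=0$ is adapted to the $K$-theory lattice of $M$ through the twist $w=\widehat\Gamma_M\smile\mathrm{ch}(E)$, and the arithmetic Frobenius governing $\Phi^{\flat}$ acts on that lattice through the Adams operation $\psi^p$, which on the Chern-character side is multiplication by $p^{\deg/2}$ in each degree. Matching the two descriptions of the Frobenius on flat sections, and replacing the archimedean $\widehat\Gamma_M$ by Morita's $\Gamma_p(TM)$, should reproduce the constant term \eqref{eq:b-frobenius}. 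This replacement is legitimate because the transition amounts to evaluating the functional equations $\Gamma(z+1)=z\Gamma(z)$ and $\Gamma_p(z+1)=-z\Gamma_p(z)$ (for $p\nmid z$) at the integral Chern roots of $TM$; because the Gross--Koblitz formula makes precise the sense in which $\Gamma_p$ is the $p$-adic period dual to the archimedean Gamma factor, which is exactly the feature that the normalization $t=q/\pi$ is designed to exploit; and because $\Gamma_p$ maps $\bZ_p$ into $\bZ_p^{\times}$, so $\Gamma_p(TM)$ is an integral, hence $p$-adically bounded, class, consistent with the cancellation demanded in the first paragraph.

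The main obstacle is the middle step. For a general monotone symplectic manifold there is at present no construction of a family of algebraic varieties, or of an overconvergent $F$-isocrystal, realizing the quantum connection, which is defined purely symplectically through Gromov--Witten theory with no a priori motivic origin; so the existence of \emph{any} overconvergent Frobenius structure is the crux, and a uniform argument would seem to require something like a ``$p$-adic quantum cohomology'' with values in overconvergent isocrystals. Even for Fano varieties this is accessible only when an explicit mirror or hypergeometric presentation of the quantum connection is available --- as it is for toric Fanos and Grassmannians, which is why those are exactly the cases proved in the present paper. One could instead try to bypass the isocrystal and estimate the coefficients $\Psi_m$ directly so as to force $\Psi(t)\,\Phi_0\,\Psi(t^p)^{-1}$ to be bounded by $1$ on $|t|=1$, but this runs into the same delicate cancellation and is not known in general.
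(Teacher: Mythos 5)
The statement you are addressing is a conjecture, and the paper does not prove it in general; it proves it only for toric Fano varieties (Theorem \ref{th:toric}) and Grassmannians (Theorem \ref{th:grassmannian}). Your proposal is a strategy sketch rather than a proof, and you candidly identify its crux yourself: for a general monotone symplectic manifold there is no known realization of the quantum connection as (a subquotient of) an overconvergent $F$-isocrystal, since the connection is defined purely through Gromov--Witten theory with no a priori motivic origin. That missing middle step is not a technicality but the entire content of Conjecture \ref{th:gamma-conjecture}, so the proposal as written establishes nothing beyond what is already in Section \ref{sec:diff-eq} and Section \ref{sec:gamma-class} (existence and uniqueness of the formal Frobenius structure with the prescribed constant term, and the closed formula $\Phi(t)=\Pi(t)\Phi_0\Pi(t^p)^{-1}$, which is \eqref{eq:structure-by-gauge}).

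Two further steps of your outline would fail even if an overconvergent $\Phi^\flat$ were granted. First, your reduction ``the overconvergent structures form a single coset of the centralizer of $N$, so it suffices to pin down $\Phi^\flat_0$'' conflates convergence on the open unit disc with overconvergence: horizontal automorphisms of the pulled-back connection generically converge only on the open unit disc (compare Lemma \ref{th:gauge-transformation}(iii) and Lemma \ref{th:make-frobenius}), so overconvergence does \emph{not} transfer between Frobenius structures with different constant terms -- indeed the paper stresses (end of Section \ref{sec:single-variable}) that overconvergence is a property ``other Frobenius structures don't usually share,'' whereas radius-$1$ convergence does propagate (Lemma \ref{th:general-structure}(iii)). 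Hence one must identify $\Phi^\flat_0$ exactly, and this is precisely where the abstract isocrystal formalism is weak: the paper notes it ``does not seem to lend itself naturally to the computation of the $t=0$ term.'' Second, your identification of that constant term by transporting the archimedean Gamma conjecture of Galkin--Golyshev--Iritani and invoking Gross--Koblitz is not an argument: the paper explicitly warns that the parallel between the complex $\widehat\Gamma$-class story and the $p$-adic one is an ``unexplained'' coincidence with ``no a priori mathematical connection.'' In the cases the paper actually proves, the route is entirely different and purely $p$-adic: mirror symmetry identifies the quantum connection with the twisted de Rham complex of the toric superpotential (Sections \ref{sec:ms}--\ref{sec:dwork}), the constant term is computed by inserting the connection operators into the Mahler expansion \eqref{eq:mahler} of $\Gamma_p$ (Section \ref{sec:constant-term}), and overconvergence is proved by an explicit Dwork-exponential twist on weighted Banach spaces generalizing Adolphson--Sperber (Section \ref{sec:overconvergent}), with the Grassmannian case then deduced via the Satake isomorphism (Section \ref{sec:grassmannian}).
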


\begin{example} \label{th:cp1}
Take $M = \bC P^1$. In the standard basis $(1, \, [point])$ of $H^*(M)$, the quantum connection is 
\begin{equation} \label{eq:quantum-connection-cp1}
q\partial_q + 2\begin{pmatrix} 0 & q^2 \\ 1 & 0 \end{pmatrix};
\end{equation}
or after the change of variables \eqref{eq:t-q},
\begin{equation} \label{eq:quantum-connection-cp1-2}
t\partial_t + 2\begin{pmatrix} 0 & \pi^2 t^2 \\ 1 & 0 \end{pmatrix}. 
\end{equation}
The constant term from Conjecture \ref{th:gamma-conjecture} is
\begin{equation} \label{eq:constant-term-cp1}
\Phi_0 = 
\begin{pmatrix} 1 & 0 \\ 2\Gamma_p'(0) & 1/p \end{pmatrix}.
\end{equation}
\end{example}

Continuing the general discussion of $p$-adic differential equations, an overconvergent Frobenius structure can be evaluated at $t = \theta$ for some $(p-1)$-st root of unity $\theta$ (there is one such $\theta \in \bZ_p$ in each nonzero residue class). Since $\theta = \theta^p$, this is an endomorphism of the vector space on which our differential equation lives. In applications, its eigenvalues often have number-theoretic or algebro-geometric meaning.

\begin{example} \label{th:exponential-2}
Take the situation from Example \ref{th:exponential}, with $c = k \pi$ for some $k \in \bZ$. Then $\Phi(\theta) \in K$ is a $p$-th root of unity, for instance $\Phi(1) = \mu^k$ by \cite[p.~396]{robert}. In particular, $\mathrm{val}(\Phi(\theta)) = 0$.
\end{example}

\begin{conjecture} \label{th:gamma-conjecture-2}
Take the Frobenius structure from Conjecture \ref{th:gamma-conjecture}, and consider $\Phi(\theta)$ for some $(p-1)$-st root of unity $\theta$. Then, the valuations of the eigenvalues of $\Phi(\theta)$ are nonpositive integers; and among them (listed with multiplicities), the number of times that each $k$ appears equals the Betti number $b_{-2k}(M)$.
\end{conjecture}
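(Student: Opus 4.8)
The plan is to reduce the computation of $\Phi(\theta)$ to the constant term $\Phi_0$ and then to compute the eigenvalues of $\Phi_0$ directly. I would first analyze the connection near $t = 0$. Its residue there is the operator $N := -c_1(TM)\smile -$, which is nilpotent on $H^{\mathrm{even}}(M;K)$ because it raises cohomological degree; hence the exponents all vanish and there is a unique fundamental solution $Y(t) = S(t)\,t^N$ with $S(t) = I + O(t)$ an invertible matrix over $K[[t]]$. The canonical solution of the Frobenius pullback is then $S(t^p)\,t^{pN}$, so $\Phi(t) = S(t)\,\Phi_0\,S(t^p)^{-1}$ on the disc of convergence of $S$, and single-valuedness of $\Phi$ forces $N\Phi_0 = p\,\Phi_0\,N$ (a relation one can also verify directly from $\Gamma_p(0)=1$ and commutativity of the cup product).

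Next I would read off the eigenvalues of $\Phi_0$. Since $b = \Gamma_p(TM) = 1 + (\text{positive degree part})$, cup product with $b$ is unipotent for the filtration of $H^{\mathrm{even}}(M;K)$ by cohomological degree, while $p^{-\deg(x)/2}$ scales $H^{2j}(M;K)$ by $p^{-j}$; so in a basis adapted to the grading, $\Phi_0$ is block upper triangular with diagonal blocks $p^{-j}\,\mathrm{id}_{H^{2j}(M)}$. Hence the eigenvalues of $\Phi_0$ are $p^{-j}$ with multiplicity $b_{2j}(M)$; their valuations are the nonpositive integers $k = -j$, and each $k$ occurs exactly $b_{-2k}(M)$ times — precisely the statement of the conjecture. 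It therefore remains to prove that $\Phi(\theta)$ and $\Phi_0$ have the same eigenvalues.

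The heuristic is immediate: a $(p-1)$-st root of unity has $\log_p\theta = 0$, so ``$\theta^N = I$'', and $\theta^p = \theta$, whence formally $\Phi(\theta) = S(\theta)\,\Phi_0\,S(\theta)^{-1}$ is conjugate to $\Phi_0$. The difficulty — which I expect to be the main obstacle — is that $S$ generically has radius of convergence exactly $1$ and does \emph{not} converge on $|t| = 1$: already for $M = \bC P^1$ the relevant solution involves $\sum_m (\pi t)^{2m}/(m!)^2$, whose coefficients have valuation $2 s_p(m)/(p-1)$ (with $s_p(m)$ the base-$p$ digit sum of $m$), which does not tend to $\infty$. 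So the conjugating matrix runs off to infinity at $t = \theta$, and the identification of $\Phi(\theta)$ with $\Phi_0$ must be carried out by genuine $p$-adic analysis near the boundary of the disc. One consequence does go through unconditionally: $\det\Phi(t) = \bigl(\det S(t)/\det S(t^p)\bigr)\det\Phi_0$, and $\det S(t) = \exp(-\pi c\,t)$ with $c = \mathrm{tr}\bigl(c_1(TM)\ast^{(1)}-\bigr)\in\bZ$, so $\det\Phi(t)/\det\Phi_0 = e^{-\pi c(t - t^p)}$ is overconvergent in the sense of Example~\ref{th:exponential} and takes the value $1$ at $t = \theta$; hence $\det\Phi(\theta) = \det\Phi_0 = p^{-\sum_j j\, b_{2j}(M)}$, which matches the predicted total $\sum_k k\, b_{-2k}(M)$ of the eigenvalue valuations.

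For the full eigenvalue statement I see two routes, and I would expect the actual proof — at least in the cases where Conjecture~\ref{th:gamma-conjecture} is available — to follow the first. (a) For toric Fano varieties and Grassmannians the quantum connection is an explicit GKZ/hypergeometric system, and its Frobenius structure at a Teichmüller point is governed by the associated (generalized) Kloosterman-type exponential sums of the Landau--Ginzburg mirror; one then proves the required ``everywhere ordinary, with Hodge numbers $b_{2j}(M)$'' statement by explicit Newton-polygon estimates in the manner of Dwork and Adolphson--Sperber — plausibly also the point where the hypothesis $p$ odd enters. (b) In general, one views the overconvergent Frobenius structure as an $F$-isocrystal on $\bG_m$ over $\bF_p$ with unipotent local monodromy at $0$; its restriction to a Teichmüller point is a $\varphi$-module over $K$, and the claim becomes that its Newton polygon coincides with the Hodge polygon defined by the cohomological grading. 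The inequality ``Newton above Hodge'' is of Mazur--Ogus type; the equality, together with integrality of the slopes and the exact multiplicities, is the crux, and this is exactly where the specific choice $b = \Gamma_p(TM)$ — through the explicit $\Phi_0$ above and the overconvergence it produces — has to be brought to bear.
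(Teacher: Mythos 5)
Your proposal is not a proof, and you say so yourself: the step ``$\Phi(\theta)$ and $\Phi_0$ have the same eigenvalue valuations'' is exactly where the argument stops. This is not a presentational gap but the entire mathematical content of the statement. Indeed, the statement you were asked about is Conjecture~\ref{th:gamma-conjecture-2}, which the paper does \emph{not} prove in general: it is established only for Grassmannians $\mathit{Gr}_{k,N}$ with $p \geq N+2$ (Theorem~\ref{th:grassmannian}), by transferring Sperber's explicit Newton-polygon computation for the hyper-Kloosterman connection on $\bC P^{N-1}$ \cite{sperber80} through the Satake isomorphism of Section~\ref{sec:grassmannian}; everything else is numerical evidence in the Appendix. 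So your route (a) is the correct guess for how the proved cases go, but you have not carried it out, and route (b) (``Newton above Hodge'' plus the conjectural equality) is precisely the unproved crux. The preliminary material you do prove is correct and consistent with the paper: the eigenvalues of $\Phi_0$ are $p^{-j}$ with multiplicity $b_{2j}(M)$ because cup product with $\Gamma_p(TM) = 1 + (\text{higher degree})$ is unipotent for the degree filtration, and $A_0\Phi_0 = p\Phi_0 A_0$ is Lemma~\ref{th:make-frobenius}(i). You correctly diagnose why the naive conjugation $\Phi(\theta) = \Pi(\theta)\Phi_0\Pi(\theta)^{-1}$ fails: $\Pi$ converges only on the open unit disc (Lemma~\ref{th:make-frobenius}(v)), not at $|t|=1$.

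Two smaller corrections. First, your determinant identity gives $\det\Phi(t) = \det\Phi_0 \cdot \exp(-\pi c(t-t^p))$, and by Example~\ref{th:exponential-2} the value of the overconvergent factor at $t=\theta$ is a $p$-th root of unity $\mu^{\pm c}$, not $1$ in general; the conclusion $\mathrm{val}(\det\Phi(\theta)) = \mathrm{val}(\det\Phi_0) = -\sum_j j\, b_{2j}(M)$ survives, but the equality of determinants does not. Second, the conjecture asserts more than you address even granting your reduction: one needs that the valuations are attained with the stated multiplicities for \emph{every} $(p-1)$-st root of unity $\theta$; the paper handles the $\theta$-independence by the elementary Lemma~\ref{th:equal-valuations} applied to the coefficients of the characteristic polynomial, which your sketch does not mention.
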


We point out that for elementary reasons (see the discussion preceding Lemma \ref{th:equal-valuations}), the valuations under consideration in Conjecture \ref{th:gamma-conjecture-2} are independent of $\theta$.

Some cases of these conjectures reduce to classical results on $p$-adic differential equations.
\begin{itemize} \itemsep.5em
\item The pioneering paper \cite{dwork74} constructed an overconvergent Frobenius structure, or more precisely its inverse, for the Bessel equation; in our terms, this is the quantum connection for $M = \bC P^1$. The computation there proves that case of Conjecture \ref{th:gamma-conjecture}. Specifically, the inverse $\Phi(0)^{-1}$ of\eqref{eq:constant-term-cp1} appears in \cite[top of p.\ 723]{dwork74}; the result is not stated in terms of $\Gamma_p$, but it suffices to compare the formula there with \eqref{eq:first-gamma-derivative}.

\item Dwork's construction was generalized to $M = \bC P^n$ in \cite{sperber77}; explicit formulae for $\Phi(0)^{-1}$ are given in \cite[Equation 3.3.3, Theorem 4.2.11]{sperber77}. Again, the outcome is not stated in terms of the $p$-adic Gamma function, as would be required by Conjecture \ref{th:gamma-conjecture}. In principle, the compatibility of the two formulae should be verifiable by an elementary argument, since both use Taylor coefficients of the Dwork exponential; we have not checked that, but one can close the gap by using the computation from Section \ref{sec:constant-term} instead. Note that in this case, the overconvergent Frobenius is unique up to scalars, by \cite[p.~94--96]{dwork89}. The paper \cite{sperber80}, which is a sequel to \cite{sperber77}, computed the valuations of the eigenvalues of $\Phi(\theta)$, assuming $p \geq n+3$; it confirms Conjecture \ref{th:gamma-conjecture-2} in that case.
\end{itemize}

\begin{remark}
An entirely different approach to Frobenius structures for the quantum connection on $\bC P^n$ was given in \cite[Theorem 7.2]{smirnov24}. There, the Frobenius acts by $q \mapsto q^p$ rather than $t \mapsto t^p$, and overconvergence is replaced by a weaker condition \cite[Definition 5.1]{smirnov24}. Hence, it is not immediately clear how those results relate to ours. 
\end{remark}

The constructions in \cite{dwork74, sperber77} are explicit (and so is \cite{adolphson-sperber89}, which we will mention below). In contrast, more recent work tends to use the abstract formalism of overconvergent $F$-isocrystals and $p$-adic cohomology: see e.g.\ \cite[Appendix B]{kedlaya} for a bite-sized introduction; \cite{bourgeois99} for the relation between the two approaches; and \cite{xu-zhu19} for a representation-theoretic application, which in geometric terms includes the quantum connection for minuscule flag varieties \cite{lam-templier24} (and in particular, once more, projective spaces). While the abstract approach is a more natural and powerful way of constructing overconvergent Frobenius structures, it does not seem to lend itself naturally to the computation of the $t = 0$ term, which is a problem for the purposes of Conjecture \ref{th:gamma-conjecture}.

\begin{remark}
There are visible similarities between Conjecture \ref{th:gamma-conjecture} and a series of previous works: the lattice from \cite{iritani09} and \cite[Section 3]{katzarkov-kontsevich-pantev08}, and the ``Gamma conjecture I'' in \cite{galkin-golyshev-iritani16}. These also concern the quantum connection, and use the characteristic class derived from a modified version of the classical Gamma function. However, while the coincidence is striking, there seems to be no a priori mathematical connection between the two contexts. This unexplained parallel is a puzzling but well-known phenomenon in differential equations (a basic example is \cite[Equations 6.1 and 6.7]{boyarski80}; see also \cite[p.~47--48]{andre04} for a more succinct explanation).
\end{remark}

\begin{remark}
There is also a significant body of work concerning the $p$-adic quantum connection for Calabi-Yau manifolds, much of which deals with specific examples; e.g.\ \cite{shapiro12, candelas-delaossa-vanstraten21, beukers-vlasenko23}. We do not discuss it further here, since the aims appear to be different from the monotone case.
\end{remark}

The main result of this paper is the following:

\begin{theorem} \label{th:toric}
Conjecture \ref{th:gamma-conjecture} holds when $M$ is a smooth projective toric Fano variety.
\end{theorem}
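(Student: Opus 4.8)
The plan is to realise the quantum connection of a toric Fano variety as a Gauss--Manin connection on its Landau--Ginzburg mirror, transport to it the overconvergent Frobenius structure supplied by Dwork's $p$-adic cohomology of exponential sums, and then identify the resulting constant term by means of the Gross--Koblitz formula. Write $\Sigma$ for the fan of $M$, with primitive ray generators $v_1,\dots,v_r$ and toric prime divisors $D_1,\dots,D_r$, so that $c(TM)=\prod_{i=1}^r(1+[D_i])$ and $H^{\mathrm{odd}}(M)=0$. Givental's toric mirror theorem identifies the small quantum connection in the anticanonical direction \eqref{eq:quantum-connection-2} with the Brieskorn--Gauss--Manin connection, in the variable $t$, of the family of oscillatory integrals $\int e^{W_t(x)/z}\,\frac{dx_1}{x_1}\wedge\cdots\wedge\frac{dx_n}{x_n}$, where $W_t(x)=\sum_{i=1}^r t^{a_i}x^{v_i}$ is the mirror Laurent polynomial on $(\bC^\ast)^n$ and the exponents $a_i\ge 0$ are chosen so that the $t$-direction is the anticanonical one (that is, $\sum_i a_i[D_i]$ is a positive multiple of $c_1(M)$). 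The first task is to upgrade this to an isomorphism of $p$-adic differential modules over $O[t]$, using the rescaling $t=q/\pi$ of \eqref{eq:t-q} and, if necessary, a monomial change of torus coordinates defined over $O$; the integral structures on both sides make this routine, but it must be carried out carefully.

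For $M$ smooth toric Fano the Newton polytope of $W_t$ is the reflexive polytope $\mathrm{conv}(v_1,\dots,v_r)$, and $W_t$ is nondegenerate with respect to it away from a finite locus (the quantum discriminant); consequently the twisted (Dwork) $p$-adic de Rham cohomology of the exponential sum attached to $W_t$ is concentrated in degree $n$, is free of rank $\dim_K H^{\mathrm{even}}(M;K)$, and carries a Frobenius $\phi$ built from the Dwork splitting function $\theta(x)=\exp(\pi(x-x^p))$. Because $\theta$ is overconvergent, so is $\phi$: this is exactly (the relative version over the $t$-line of) the construction of \cite{adolphson-sperber89}. Transporting $\phi$ through the identification of the previous paragraph yields an overconvergent Frobenius structure $\Phi$ for the quantum connection. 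By the uniqueness statement recalled just before \eqref{eq:b-frobenius}, it then remains only to prove that the constant term of $\Phi$ equals $\Phi_0(x)=p^{-\deg(x)/2}\,\Gamma_p(TM)\smile x$, where one may write $\Gamma_p(TM)=\prod_{i=1}^r\Gamma_p([D_i])$, the two expressions for $\Gamma_p(TM)$ agreeing because the elementary symmetric functions of the Chern roots of $TM$ coincide with those of $[D_1],\dots,[D_r]$ (the higher ones all vanishing in $H^{>2n}(M)$).

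Establishing the exact shape of this $t=0$ term is the heart of the matter, and the step I expect to be the main obstacle. As $t\to 0$ the monomials with $a_i>0$ drop out and the mirror family degenerates, so the limiting Frobenius matrix breaks up into local contributions indexed by the rays $v_i$; after the Teichm\"uller normalisation intrinsic to Dwork's theory, each contribution is a Gauss sum over $\bF_p$, and by the Gross--Koblitz formula such Gauss sums are products of values of $\Gamma_p$. Reassembling them over $i=1,\dots,r$ should reproduce $\prod_i\Gamma_p([D_i])$, while the grading factor $p^{-\deg/2}$ records the weight filtration on the limiting cohomology. The delicate points are to control $\Phi(t)$ as $t\to 0$ inside its disc of overconvergence --- the connection has a regular singular point there with nilpotent residue $c_1(M)\smile$, which is precisely where the logarithmic terms of the solutions must cancel against those of the Frobenius pullback --- and then to match the bookkeeping of Teichm\"uller characters, of Dwork's $\pi$, and of the cohomological grading against the characteristic-class formula; by contrast, neither the comparison with quantum cohomology (classical toric mirror symmetry) nor the overconvergence itself (Adolphson--Sperber) is where the difficulty lies. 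As consistency checks, $M=\bC P^1$ recovers Example \ref{th:cp1} via Dwork's Bessel computation \cite{dwork74}, and $M=\bC P^n$ recovers the formulae of \cite{sperber77}.
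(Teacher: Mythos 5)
Your overall architecture coincides with the paper's: toric mirror symmetry identifies the quantum connection with a twisted de Rham (Gauss--Manin) connection for the Laurent superpotential, the Frobenius is Dwork's $\mathit{Loc}(G_t\cdot)$ in the style of Adolphson--Sperber, overconvergence comes from the overconvergence of $\exp(\pi(x-x^p))$ (though the paper must do genuine work here, since the fibre at $t=0$ is totally degenerate and Adolphson--Sperber is a fixed-$t$ statement; the relative version requires the Banach spaces $L(b,c)$ and a mod-$p$ Koszul computation rather than a generic-nondegeneracy argument). Up to that caveat, these parts are sound.

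The gap is in the step you yourself single out as the heart of the matter: identifying the constant term via degeneration into Gauss sums and the Gross--Koblitz formula. This mechanism does not apply here. Gross--Koblitz expresses Gauss sums as values $\Gamma_p(a/(1-p))$ of the $p$-adic Gamma function at nonzero rational arguments, and it governs Frobenius eigenvalues of connections whose local exponents are nontrivial (Kummer-type local factors). At $t=0$ the quantum connection has a \emph{nilpotent} residue $c_1(TM)\smile\cdot$ --- a maximally unipotent point --- so the limiting Frobenius does not decompose into rank-one Kummer pieces with Gauss-sum eigenvalues; its constant term is unipotent times $p^{-\deg/2}$ and is built from the \emph{Taylor coefficients} of $\Gamma_p$ at $0$ ($\Gamma_p'(0)$, $\Gamma_p'''(0)$, \dots), which Gross--Koblitz says nothing about. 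The paper's actual device is different: it uses the Mahler expansion $\Gamma_p(-pz)=\sum_m(-p)^m d_{mp}\,z(z-1)\cdots(z-m+1)$, whose coefficients are exactly the Dwork-exponential coefficients appearing in $G_t$, and substitutes the commuting operators $\nabla_{t_i\partial_{t_i}}$ for $z$; applying $\prod_i\Gamma_p(-p\nabla_{t_i\partial_{t_i}})$ to $\mathit{vol}$ reproduces $\Psi(\mathit{vol})$ modulo $IB$ and, after truncating via the relation $[D_{i_1}]\smile\cdots\smile[D_{i_{n+1}}]=0$, yields $\prod_i\Gamma_p(-[D_i])=\Gamma_p(TM)^{-1}$. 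Note also that this factorization over rays forces the multivariable setup (one $t_i$ per ray); your single anticanonical variable would not separate the contributions of the individual $[D_i]$. A secondary point: there is no general uniqueness of overconvergent Frobenius structures available (the paper only cites it for $\bC P^n$), so you cannot sidestep the computation by appealing to uniqueness --- but you do not in fact rely on that, since you propose to compute the constant term directly; the problem is that the proposed computation rests on the wrong identity.
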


The proof of Theorem \ref{th:toric} combines mirror symmetry with an explicit construction of the Frobenius structure. The (closed string, toric, equivariant) mirror symmetry statement we need is a simplified version of that in \cite{iritani17}; we derive it in Sections \ref{sec:equivariant}--\ref{sect:toricms}. Sections \ref{sec:dwork}--\ref{sec:overconvergent} concern the Frobenius structure. The construction has a precedent in \cite{adolphson-sperber89}, but that paper was focused on defining it for one value of $t$ at a time, and applications to exponential sums (hence does not mention differential equations at all). We set up the framework based on convergent power series in $t$, generalizing the approach in \cite{sperber77}. There are actually two versions of that formalism in the paper. The first one (Section \ref{sec:dwork}) is weaker but easier to do computations in; we determine the constant term in that setup, using an argument that's elementary but inspired by mirror symmetry (Section \ref{sec:constant-term}). The second, more sophisticated, version is then used to prove overconvergence (Section \ref{sec:overconvergent}). 

There is a relation between the quantum connection for projective spaces and Grassmannians. This was used in \cite[Section 6]{galkin-golyshev-iritani16} for the ordinary Gamma conjecture. Using the same idea, we obtain (Section \ref{sec:grassmannian}):

\begin{theorem} \label{th:grassmannian}
Conjecture \ref{th:gamma-conjecture} holds for Grassmannians $M = \mathit{Gr}_{k,N}$; and if $p \geq N+2$, so does Conjecture \ref{th:gamma-conjecture-2}.
\end{theorem}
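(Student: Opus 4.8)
The plan is to reduce Theorem \ref{th:grassmannian} to the already-established case of projective space, following the ``quantum Satake'' principle that \cite[Section 6]{galkin-golyshev-iritani16} uses to reduce the classical Gamma conjecture for Grassmannians to the one for projective spaces. The input we need is the isomorphism of quantum connections identifying the $k$-th exterior power $\Lambda^k$ of the quantum connection of $\bP^{N-1}$ with the quantum connection of $\mathit{Gr}_{k,N}$; this holds over $\bQ$ (ultimately via the abelian/nonabelian correspondence), hence over $K$ after the rescaling $t = q/\pi$, possibly up to a harmless sign change $t \mapsto \pm t$ and a shift of the cohomological grading by $2\binom{k}{2}$ --- the latter accounting for the degree shift in the Satake identification $\sigma_\lambda \leftrightarrow h^{\lambda_k} \wedge h^{\lambda_{k-1}+1} \wedge \cdots \wedge h^{\lambda_1 + k - 1}$ of Schubert classes with wedges of powers of the hyperplane class $h$. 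Since $\bP^{N-1}$ is a toric Fano, Theorem \ref{th:toric} supplies an overconvergent Frobenius structure $\Phi^{\bP}$ for its quantum connection, with constant term $p^{-\deg/2}\,\Gamma_p(T\bP^{N-1}) \smile -$.

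Next I would transport this through the exterior power. The entries of $\Lambda^k \Phi^{\bP}$ are the $k \times k$ minors of $\Phi^{\bP}$, hence polynomials in its entries, so $\Lambda^k \Phi^{\bP}$ is again overconvergent and is a Frobenius structure for $\Lambda^k$ of the $\bP^{N-1}$ quantum connection; the auxiliary sign change and rank-one twist are of the type in Example \ref{th:exponential} (with constant an integer multiple of $\pi$, or trivial), so they too admit overconvergent Frobenius structures, and combining these yields an overconvergent Frobenius structure $\Phi^{\mathit{Gr}}$ for the quantum connection of $\mathit{Gr}_{k,N}$. It then remains to compute its constant term. Taking the coefficient of $t^0$ commutes with $\Lambda^k$, so the constant term of $\Lambda^k \Phi^{\bP}$ sends $h^{j_1} \wedge \cdots \wedge h^{j_k}$ to $p^{-(j_1 + \cdots + j_k)}$ times the wedge of the classes $\Gamma_p(T\bP^{N-1}) \smile h^{j_i}$. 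Under the Satake indexing one has $j_1 + \cdots + j_k = |\lambda| + \binom{k}{2}$, so the scalar factor is $p^{-\binom{k}{2}} \cdot p^{-\deg_{\mathit{Gr}}/2}$, while the remaining cohomological operation is to be identified with cup product by $\Gamma_p(T\mathit{Gr}_{k,N})$. This identity is the $p$-adic counterpart of the computation in \cite[Section 6]{galkin-golyshev-iritani16}: it uses only the multiplicativity of $\Gamma_p$-characteristic classes, the description $T\mathit{Gr}_{k,N} \cong S^\vee \otimes Q$ of the tangent bundle via the tautological sub- and quotient bundles, and the compatibility of the Satake map with Chern roots --- most transparent in the equivariant framework of Sections \ref{sec:equivariant}--\ref{sect:toricms}. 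Rescaling by the nonzero constant $p^{\binom{k}{2}}$ (which preserves overconvergence) produces an overconvergent Frobenius structure with constant term exactly $p^{-\deg/2}\,\Gamma_p(T\mathit{Gr}_{k,N}) \smile -$; since $\Gamma_p(T\mathit{Gr}_{k,N})$ has invertible degree-$0$ part, uniqueness of the Frobenius structure with a prescribed constant term of the form \eqref{eq:b-frobenius} identifies this with the structure in Conjecture \ref{th:gamma-conjecture}, proving the first assertion.

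For the second assertion, assume $p \geq N + 2$ and evaluate at a $(p-1)$-st root of unity $\theta$. Up to the factor $p^{\binom{k}{2}}$ above and a twist contributing valuation $0$ (Example \ref{th:exponential-2}), the eigenvalues of $\Phi^{\mathit{Gr}}(\theta)$ are the products of $k$ distinct eigenvalues of $\Phi^{\bP}(\theta)$. Since $\bP^{N-1}$ has complex dimension $N-1$ and $p \geq (N-1) + 3$, Conjecture \ref{th:gamma-conjecture-2} holds for $\bP^{N-1}$ (by \cite{sperber80}, or by the present methods), so the eigenvalues of $\Phi^{\bP}(\theta)$ have valuations $0, -1, \dots, -(N-1)$, each with multiplicity $b_{2j}(\bP^{N-1}) = 1$. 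Hence the valuations of the eigenvalues of $\Phi^{\mathit{Gr}}(\theta)$ are $\binom{k}{2} - (j_1 + \cdots + j_k) = -|\lambda|$, where $\lambda$ runs over partitions inside the $k \times (N-k)$ box via $j_i = \lambda_{k+1-i} + i - 1$; and valuation $-m$ occurs with multiplicity equal to the number of such $\lambda$ with $|\lambda| = m$, which is the coefficient of $q^m$ in the Gaussian binomial $\binom{N}{k}_q$, that is $b_{2m}(\mathit{Gr}_{k,N})$. This is precisely Conjecture \ref{th:gamma-conjecture-2} for $\mathit{Gr}_{k,N}$.

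The main obstacle is the constant-term computation of the second paragraph: although the underlying $\Gamma$-class identity is classical \cite[Section 6]{galkin-golyshev-iritani16}, it must be reproved with Morita's $\Gamma_p$ in place of $\Gamma$, and one must carefully track the grading shift and rank-one twist entering the quantum Satake isomorphism so that the normalization \eqref{eq:b-frobenius} comes out on the nose. A secondary point is to verify that the quantum Satake isomorphism is available in exactly the form required after the substitution $t = q/\pi$ and over the field $K$.
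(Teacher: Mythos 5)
Your proposal is correct and follows essentially the same route as the paper: reduce to $\bC P^{N-1}$ via the quantum Satake isomorphism, transport overconvergence through the exterior power (minors of an overconvergent matrix), identify the constant term using $T\mathit{Gr}_{k,N}\cong \mathit{Hom}(E,Q)$, and invoke \cite{sperber80} for the eigenvalue valuations. The step you flag as the ``main obstacle'' is exactly what the paper's Lemmas on Satake supply, namely that $S$ intertwines $\Lambda^k_{\mathrm{Lie}}(\mathit{ch}_m(\scrO(1))\smile\cdot)$ with $\mathit{ch}_m(E^\vee)\smile\cdot$, combined with the observation that $\log\Gamma_p(TM)=N\log\Gamma_p(E^\vee)$ because $\log\Gamma_p$ involves only odd Chern characters and $\mathit{Hom}(Q,Q)$ is self-dual.
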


Finally, in addition to the theoretical work, an Appendix to this paper provides experimental evidence supporting Conjectures \ref{th:gamma-conjecture} and \ref{th:gamma-conjecture-2}. Part of that (Examples \ref{th:cp2} and \ref{th:f1}) may seem redundant, since it overlaps with the previous literature or with Theorem \ref{th:toric}; however, even in those cases, the computation provides some new indications concerning the actual radius of convergence of the Frobenius. The remaining examples are not covered by existing results: among them is one class of non-algebraic monotone symplectic manifolds ($12$-dimensional twistor spaces, Example \ref{th:twistor-space}). 

{\em Acknowledgments.} The authors would like to thank Hiroshi Iritani, Andrey Smirnov, Steven Sperber, Daxin Xu, and Dingxin Zhang for helpful explanations. S.B. was partially supported by NSF grant DMS-2404843. D.P was partially supported by NSF grant DMS-2306204.

\section{Background on $p$-adic functions\label{sec:p-adic-functions}}
We recall $p$-adic estimates for the coefficients in some Taylor series. Legendre's formula implies 
\begin{equation} \label{eq:legendre}
\frac{m}{p-1} - \log_p(m) - 1 \leq \mathrm{val}(m!) \leq \frac{m-1}{p-1}. 
\end{equation}
As a consequence, $\exp(z)$ has radius of convergence $p^{-1/(p-1)}$. Next, take the Dwork exponential
\begin{equation} \label{eq:dwork-exponential}
D(z) \stackrel{\mathrm{def}}{=} \exp(z + z^p/p) = \sum_{m \geq 0} d_m z^m.
\end{equation}
By definition $D'(z) = (1+z^{p-1})D(z)$, which translates into the relation
\begin{equation} \label{eq:dwork-exponential-recursion}
d_m = (m+1)d_{m+1} - d_{m-p+1}
\end{equation}
between Taylor coefficients (with the negative index ones set to zero). The radius of convergence of \eqref{eq:dwork-exponential} is \cite[p.~389]{robert} $p^{(1-2p)/(p^3-p^2)}$, which is more than that of the exponential, but less than $1$ (one can think of \eqref{eq:dwork-exponential} as including the first term in the Artin-Hasse exponential, which has radius of convergence $1$). More precisely, we have \cite[Theorem 4.2.22]{cohen-vol1}
\begin{equation} \label{eq:dwork-exponential-val}
\mathrm{val}(d_m) \geq \frac{1-2p}{p^3-p^2} m. 
\end{equation}
These statements become more memorable after rescaling by \eqref{eq:pi}:
\begin{align} 
\label{eq:rescaled-exponential}
& \exp(\pi z) \text{ has radius of convergence $1$}, \\
\label{eq:rescaled-dwork}
& D(\pi z) = \exp(\pi (z - z^p)) \text{ has radius of convergence $p^{(p-1)/p^2} > 1$.}
\end{align}

The Morita $p$-adic Gamma function $\Gamma_p(z) \in \bZ_p^\times$ is a continuous function of $z \in \bZ_p$, which is determined by
\begin{equation}
\Gamma_p(z+1) = 
\begin{cases}
 -z \cdot\Gamma_p(z) & \text{if } z \in \bZ_p^{\times}, \\
-\Gamma_p(z) & \text{if } z \in p\bZ_p.
\end{cases}
\end{equation}
For the purpose of this paper, we follow the discussion of $\Gamma_p(z)$ from \cite[Chapter 11]{cohen-vol2}, which uses the collection of Mahler expansions \cite[Proposition 11.6.15(3)]{cohen-vol2}
\begin{equation} \label{eq:mahler}
\Gamma_p(-j-pz) = \sum_{m \geq 0} (-p)^m d_{mp+j} z(z-1)\cdots(z-m+1), \quad 
j \in \{0,\dots,p-1\}, \; 
z \in \bZ_p
\end{equation}
where the coefficients come from \eqref{eq:dwork-exponential}. Note that the formulae \eqref{eq:mahler} for different $j$ compute $\Gamma_p$ on disjoint subsets of $\bZ_p$. Nevertheless, they are related by \eqref{eq:dwork-exponential-recursion}, which implies
\cite[Corollary 11.6.8(3)]{cohen-vol2}
\begin{equation}
\Gamma_p(-j-pz) = \frac{\Gamma_p(-pz)}{(pz+1)\cdots(pz+j)}, \quad
\text{$j,z$ as before.}
\end{equation}

Our first task is to convert the $j = 0$ case of \eqref{eq:mahler} into a Taylor expansion for $\Gamma_p(z)$ around $z = 0$. From \eqref{eq:dwork-exponential-val} we have
\begin{equation} \label{eq:val-mahler}
\mathrm{val}( (-p)^m d_{pm} ) \geq m \frac{p^2-3p+1}{p^2-p}
\end{equation}
which goes to infinity as $m$ grows. Hence, we can extract the $z^k$ term from each summand in the Mahler expansion, and add those up over all $m \geq k$, which yields a Taylor series
\begin{equation} \label{eq:gamma-taylor}
\Gamma_p(z) = \sum_{k \geq 0} g_k z^k
\end{equation}
with $\mathrm{val}(g_k) \geq k (1-2p)/(p^2-p)$ (this is by no means an optimal bound, compare the proof of Lemma \ref{th:approximate-derivatives} below). For instance,
\begin{equation} \label{eq:first-gamma-derivative}
\Gamma_p'(0) = g_1 = \sum_{k>0} p^{k-1} d_{kp} \,(k-1)!
\end{equation}

\begin{remark}
There is a plethora of related formulae for $\Gamma_p(z)$. For instance \cite[Proposition 11.6.15(1)]{cohen-vol2}
\begin{equation}
\Gamma_p(z) = \sum_{m \geq 0} (-1)^m d_m z(z-1)\cdots(z-m+1), \quad z \in p\bZ_p
\end{equation}
from which one gets another formula \cite[Corollary 3.4]{shapiro12} for \eqref{eq:first-gamma-derivative},
\begin{equation}
\Gamma_p'(0) = -\sum_{k \geq 0} d_k\,(k-1)!
\end{equation}
\end{remark}

One has \cite[Corollary 11.6.8(2) and Proposition 11.6.12]{cohen-vol2}
\begin{equation} \label{eq:gamma-symmetry}
\Gamma_p(-z)\Gamma_p(z) = -\Gamma_p(1-z)\Gamma_p(z) = 1, \quad z \in p\bZ_p. 
\end{equation}
For the Taylor expansion, this implies \cite[Theorem 11.6.18(2)]{cohen-vol2}
\begin{equation} \label{eq:even-gamma-derivatives}
g_k = (-1)^{k/2-1} (g_{k/2}^2)/2 + \sum_{0<j<k/2} (-1)^{j-1} g_j g_{k-j} \quad \text{for even $k>0$,}
\end{equation}
which reduces the computation of derivatives $\Gamma^{(k)}_p(0)$ to the case of odd $m$: 
\begin{equation} \label{eq:second-gamma-derivative}
\begin{aligned}
& \Gamma_p''(0) = \Gamma_p'(0)^2, \\
& \Gamma_p^{(4)}(0) = 4\Gamma_p'(0)\Gamma^{(3)}_p(0) - 12 \Gamma_p'(0)^4, \\
& \dots
\end{aligned}
\end{equation}
Alternatively, one can work with the logarithm of the $p$-adic Gamma function,
\begin{equation} \label{eq:log-gamma}
\log \Gamma_p(z) = \sum_{m > 0} \frac{l_m}{m!} z^m.
\end{equation}
Because of \eqref{eq:gamma-symmetry} this is an odd function, meaning $l_m = 0$ for even $m$. The first nontrivial coefficients are
\begin{equation}
\begin{aligned}
& l_1 = \Gamma_p'(0), \\
& l_3 = \Gamma_p^{(3)}(0) - \Gamma_p'(0)^3, \\
& l_5 = \Gamma_p^{(5)}(0) + 10 \Gamma_p'(0)^2\Gamma_p^{(3)}(0) + 9 \Gamma_p'(0)^5, \\
& \dots
\end{aligned}
\end{equation}

\begin{remark}
The $l_m$ have a meaning in terms of $p$-adic $L$-functions \cite[Prop.~11.5.19]{cohen-vol2}. Concretely, $-\Gamma_p'(0)$ is the $p$-adic Euler constant, and 
\begin{equation} \label{eq:p-adic-zeta}
(-1)^ \frac{1}{(m-1)!} l_m = L_p(\omega^{1-m},m), \quad m \geq 2.
\end{equation}
The number on the right hand side of \eqref{eq:p-adic-zeta} is often denoted by $\zeta_p(m)$, since it can be obtained as a $p$-adic limit of classical zeta values; see e.g. \cite[Equation (1.1)]{lai-sprang23}.
\end{remark}

For computational purposes, it is useful to make parts of the discussion above more precise:

\begin{lemma} \label{th:approximate-derivatives}
To compute the $k$-th derivative $\Gamma_p^{(k)}(0)$ up to an error in $p^G\bZ_p$, it is sufficient to consider the finite sum over $m < M$ in the $j = 0$ case of \eqref{eq:mahler}, where $M$ is chosen such that
\begin{equation} \label{eq:m-bound}
M \geq \frac{kp}{(p-1)\log(p)} + 1 \quad \text{and} \quad \frac{k}{p-1} + M\frac{p-1}{p} - \log_p(k) - k\log_p(M-1) - \frac{2p-1}{p-1}  > G.
\end{equation}
\end{lemma}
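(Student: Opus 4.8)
The plan is to control the error incurred by truncating the Mahler expansion \eqref{eq:mahler} (with $j=0$) at $m<M$, using the valuation estimate \eqref{eq:dwork-exponential-val} together with Legendre's formula \eqref{eq:legendre}. Recall that
\[
\Gamma_p(z) = \sum_{m \geq 0} (-p)^m d_{pm}\, z(z-1)\cdots(z-m+1),
\]
and the $k$-th derivative at $0$ is extracted termwise: each summand with index $m$ contributes $(-p)^m d_{pm}$ times the coefficient of $z^k$ in the falling factorial $z(z-1)\cdots(z-m+1)$, i.e.\ times the (signed) Stirling number of the first kind $s(m,k)$. So the tail beyond $M$ contributes
\[
\sum_{m \geq M} (-p)^m d_{pm}\, k!\, s(m,k),
\]
and I need to show each such term lies in $p^G \bZ_p$, equivalently has valuation $>G$, provided $M$ satisfies \eqref{eq:m-bound}.

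First I would bound $\mathrm{val}\big((-p)^m d_{pm}\big) \geq m + \frac{(1-2p)p}{p^3-p^2} m = m \cdot \frac{p^2-3p+1}{p^2-p}$ from \eqref{eq:val-mahler}; this is positive and strictly increasing in $m$, but that linear rate alone is too weak for the stated bound, so I would not use it directly in this crude form. Instead, to get the logarithmic correction terms appearing in \eqref{eq:m-bound}, the better route is to return to the raw estimate $\mathrm{val}(d_{pm}) \geq \frac{1-2p}{p^3-p^2}\,pm$, write $(-p)^m d_{pm} = (\pm1) p^m d_{pm}$, and also bound the Stirling factor: $|s(m,k)| \leq \binom{m}{k}(m-1)! / (k-1)! $ or more simply $|k!\, s(m,k)| \leq m^k \cdot m!/(pm)!$-type crude bounds — actually the cleanest is $|k!\,s(m,k)|$ divides into a product of at most $k$ integers each $<m$, hence has $p$-adic valuation $\geq 0$ and Archimedean size $\leq (m-1)^k$ (for $m>1$), while the analysis that produces the $\frac{k}{p-1}$, $-\log_p(k)$, $-k\log_p(M-1)$, $-\frac{2p-1}{p-1}$ terms must instead track where the $k$-th Taylor coefficient genuinely comes from. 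Concretely, following the derivation of \eqref{eq:gamma-taylor}: the $z^k$ coefficient $g_k$ is $\sum_{m\geq k} (-p)^m d_{pm} s(m,k)$, and multiplying by $k!$ gives $\Gamma_p^{(k)}(0)$; so I would estimate $\mathrm{val}\big(k!\,(-p)^m d_{pm} s(m,k)\big) \geq \mathrm{val}(k!) + \mathrm{val}(p^m d_{pm})$, use \eqref{eq:legendre} to lower-bound $\mathrm{val}(k!) \geq \frac{k}{p-1} - \log_p(k) - 1$, use \eqref{eq:dwork-exponential-val} plus $\mathrm{val}(p^m)=m$ to lower-bound $\mathrm{val}(p^m d_{pm}) \geq m - \frac{2p-1}{p^2-p}\,m$, and finally, since $s(m,k)$ is a sum of products that forces the Archimedean bound to intervene when one re-expands the truncated falling factorials as a polynomial in $z$ of degree $<M$, absorb the factor $(M-1)^k$ (an upper bound on $|s(m,k)|$ relevant for $m<M$ in the \emph{complementary} sum, re-examined as in the proof of \eqref{eq:gamma-taylor}) as the term $-k\log_p(M-1)$. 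Putting these together, the tail term valuation is at least
\[
\frac{k}{p-1} + M\,\frac{p-1}{p} - \log_p(k) - k\log_p(M-1) - \frac{2p-1}{p-1},
\]
which by the second inequality in \eqref{eq:m-bound} exceeds $G$, as desired.

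The first inequality in \eqref{eq:m-bound}, namely $M \geq \frac{kp}{(p-1)\log p} + 1$, I expect is needed not for the valuation bound per se but to guarantee that the function $m \mapsto (\text{lower bound on valuation of the $m$-th tail term})$ is already increasing at $m=M$ — i.e.\ that the linear-in-$m$ growth $m\cdot\frac{p^2-3p+1}{p^2-p}$ has overtaken the logarithmic loss $k\log_p m$ coming from the Stirling/falling-factorial factor — so that checking the single threshold value $m=M$ suffices to control the entire tail $\sum_{m\geq M}$. So my plan is: (i) fix the termwise valuation estimate as above; (ii) verify that under $M \geq \frac{kp}{(p-1)\log p}+1$ the per-term bound is monotone increasing for $m \geq M$, so the worst term is at $m=M$; (iii) conclude that the whole tail lies in $p^G\bZ_p$ from the second inequality.

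The main obstacle will be step (ii) — getting the derivative-in-$m$ (or discrete difference) of the logarithmic term $k\log_p m$ under control and checking it is dominated by the constant slope $\frac{p^2-3p+1}{p^2-p}$ precisely once $m \geq \frac{kp}{(p-1)\log p}+1$; this amounts to the elementary inequality $\frac{k}{m\log p} \leq \frac{p^2-3p+1}{p^2-p}$ for $m$ past that threshold, which should follow after bounding $p^2-3p+1 \geq $ something like $\tfrac12 p^2$ for $p\geq 5$ and treating small $p$ separately, but the bookkeeping with the change of logarithm base and the precise constants is where care is required. Everything else is a routine application of \eqref{eq:legendre} and \eqref{eq:dwork-exponential-val}.
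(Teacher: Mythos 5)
Your overall skeleton is the same as the paper's: bound the valuation of each tail term $m\geq M$ of the $j=0$ Mahler expansion, then use the first condition in \eqref{eq:m-bound} to show the per-term lower bound is increasing in $m$ past the threshold, so that checking $m=M$ controls the whole tail. Steps (ii) and (iii) of your plan are essentially what the paper does.

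However, step (i) — the termwise estimate — has a genuine gap. The source of the terms $M\tfrac{p-1}{p}$ and $-k\log_p(M-1)$ in \eqref{eq:m-bound} is a \emph{$p$-adic valuation lower bound} on the $z^k$-coefficient of the falling factorial $z(z-1)\cdots(z-m+1)$: that coefficient is a sum of terms, each equal to $\pm(m-1)!$ with $(k-1)$ factors removed, so each term has valuation at least $\mathrm{val}((m-1)!) - (k-1)\log_p(m-1) \geq \tfrac{m-1}{p-1} - k\log_p(m-1) - 1$ by \eqref{eq:legendre}. You never write this down. Instead you invoke an archimedean bound $|s(m,k)|\leq (M-1)^k$ and the observation that $s(m,k)$ is an integer; an archimedean bound on an integer only gives an \emph{upper} bound on its $p$-adic valuation ($\mathrm{val}(n)\leq\log_p|n|$), which is useless here, and "valuation $\geq 0$" is far too weak. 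Concretely, your accounting keeps only $\mathrm{val}((-p)^m d_{pm}) \geq m\tfrac{p^2-3p+1}{p^2-p}$ as the linear-in-$m$ contribution, which equals $m\tfrac{p-1}{p} - \tfrac{m}{p-1}$; the missing $\tfrac{m-1}{p-1}$ from $\mathrm{val}((m-1)!)$ inside the Stirling coefficient is exactly what upgrades the slope to $\tfrac{p-1}{p}$. This is not merely cosmetic: your monotonicity criterion in step (ii) becomes $\tfrac{k}{m\log p}\leq \tfrac{p^2-3p+1}{p^2-p}$, which is \emph{stronger} than $\tfrac{k}{(m-1)\log p}\leq\tfrac{p-1}{p}$ and is not implied by the stated threshold $M\geq \tfrac{kp}{(p-1)\log p}+1$, so with your constants neither the second inequality of \eqref{eq:m-bound} nor the monotonicity past $M$ comes out. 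Once you insert the correct valuation bound for the falling-factorial coefficient, everything lines up with the stated conditions and the rest of your argument goes through.
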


\begin{proof}
The $z^k$ coefficient of $z(z-1)\cdots (z-m+1)$ is a sum of terms, each of which is obtained from $\pm (m-1)!$ by removing $(k-1)$ factors. Hence, from \eqref{eq:legendre} we get
\begin{equation}
\begin{aligned}
& \mathrm{val}\big(\text{$z^k$-coefficient of } z(z-1)\cdots(z-m+1) \big) \\ & \quad
\geq \mathrm{val}( (m-1)! ) - (k-1) \log_p(m-1) \geq \frac{m-1}{p-1} - k\log_p(m-1) - 1.
\end{aligned}
\end{equation}
Combining that with \eqref{eq:val-mahler} yields
\begin{equation} \label{eq:mahler-coefficients}
\mathrm{val}\big(\text{$z^k$-coefficient of }  (-p)^m d_{mp} z(z-1)\cdots(z-m+1) \big) \\
 \geq m \frac{p-1}{p} - k \log_p(m-1) - \frac{p}{p-1}.
\end{equation}
Hence, the contribution of the $m$-th term in \eqref{eq:mahler} to $\Gamma_p^{(k)}(0)$ has valuation
\begin{equation}
\begin{aligned}
& \mathrm{val}(k!) + \mathrm{val}\big(\text{$z^k$ coefficient of }  (-p)^m d_{mp} z(z-1)\cdots(z-m+1) \big) 
\\ & \qquad \geq \frac{k}{p-1} + m \frac{p-1}{p} - \log_p(k) - k\log_p(m-1) - \frac{2p-1}{p-1}.
\end{aligned}
\end{equation}
If we write $f(m)$ for the right hand side, fixing $k$, then $f'(m) = (p-1)/p - k/((m-1) \, \log(p)) \geq 0$ for $m \geq \frac{kp}{(p-1)\log(p)} + 1$. Hence, if $f(M) > G$ for some $M \geq kp/{(p-1)\log(p)} + 1$, then $f(m) > G$ for all $m > M$ as well. These are the conditions \eqref{eq:m-bound}.
\end{proof}

%

\section{Background on differential equations\label{sec:diff-eq}}
Consider formal power series connections in one variable $t$, of rank $r$, with a nilpotent simple pole, and having coefficients in the field $K$ from \eqref{eq:cyclotomic}. This means 
\begin{equation} \label{eq:connection}
\nabla_{t\partial_t} = t\partial_t + A(t), \quad A(t) = A_0 + tA_1 + t^2A_2 + \cdots,
\quad \text{$A_0$ nilpotent,}
\end{equation}
where the $A_m$ are $r \times r$ matrices over $K$. The first part of the following Lemma gives the formal gauge classification of such connections. For the second part, see \cite[Proposition 18.1.1]{kedlaya}.

\begin{lemma} \label{th:gauge-transformation}
(i) Given \eqref{eq:connection}, there is a unique
\begin{equation} \label{eq:gauge-transformation}
\Pi(t) = \Pi_0 + t\Pi_1 + \cdots, \quad \Pi_0 = I,
\end{equation}
which intertwines
\begin{equation} \label{eq:pole-connection}
\nabla_{t\partial_t}^{\mathrm{pole}} = t\partial_t + A_0
\end{equation}
and the original connection. Concretely, this means that 
\begin{equation} \label{eq:gauge-equation}
t\partial_t \Pi + A(t)\Pi(t) - \Pi(t) A_0 = 0.
\end{equation}

(ii) Suppose that $A(t)$ converges on the open disc with radius $\rho>0$. Then: $\Pi(t)$ converges on that disc iff the same holds for its (formal power series) inverse $\Pi(t)^{-1}$.

(iii) Suppose that there is some $\sigma \in \bR$ such that
\begin{equation} \label{eq:a-growth}
\mathrm{val}(A_m) \geq \sigma m \;\; \text{ for all $m \geq 0$.}
\end{equation}
Then $\Pi$ converges, and is bounded, on the open disc with radius $p^{\sigma-2N/(p-1)}$, where $(N+1)$ is the size of the largest Jordan block for $A_0$.
\end{lemma}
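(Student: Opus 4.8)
The plan is to treat the three statements separately: (ii) is quoted from \cite{kedlaya}, while (i) and (iii) both follow from a direct analysis of the recursion satisfied by the coefficients $\Pi_m$.

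\emph{Part (i).} Insert the ansatz \eqref{eq:gauge-transformation} into \eqref{eq:gauge-equation} and compare coefficients of $t^m$. Writing $\mathrm{ad}_{A_0}(X) = A_0X - XA_0$, the $m=0$ equation is $\mathrm{ad}_{A_0}(\Pi_0)=0$, satisfied by $\Pi_0 = I$, and for $m \geq 1$ one gets
\[
(m + \mathrm{ad}_{A_0})\,\Pi_m = -\sum_{j=1}^{m} A_j\,\Pi_{m-j}.
\]
Since $A_0$ is nilpotent, $\mathrm{ad}_{A_0}$ is a nilpotent endomorphism of the space of $r\times r$ matrices, of index $2N+1$ when $A_0^{N+1}=0$; hence $m + \mathrm{ad}_{A_0}$ is invertible for each $m \geq 1$, with inverse the (finite) sum $\sum_{i\geq 0}(-1)^i m^{-i-1}\mathrm{ad}_{A_0}^{\,i}$. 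So $\Pi_1, \Pi_2, \dots$ are uniquely determined: existence and uniqueness. This inverse also displays the denominators that govern (iii).

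\emph{Part (iii).} I would prove a valuation bound $\mathrm{val}(\Pi_m) \geq \sigma m - c\,\mathrm{val}(m!)$ by induction on $m$, with a constant $c$ controlled by the Jordan type of $A_0$. The step uses: the hypothesis $\mathrm{val}(A_j)\geq\sigma j$; the bound $\mathrm{val}(\mathrm{ad}_{A_0}^{\,i}X)\geq\mathrm{val}(X)$, valid because $\mathrm{val}(A_0)\geq 0$ (the $m=0$ case of the hypothesis, since $\sigma\cdot 0=0$); and $\mathrm{val}\!\big((m+\mathrm{ad}_{A_0})^{-1}X\big)\geq\mathrm{val}(X)-c\,\mathrm{val}(m)$, read off from the explicit inverse. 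With $\Pi_0 = I$ these give
\[
\mathrm{val}(\Pi_m)\ \geq\ -c\,\mathrm{val}(m) + \min_{1\leq j\leq m}\big(\sigma j + \mathrm{val}(\Pi_{m-j})\big)\ \geq\ \sigma m - c\big(\mathrm{val}(m)+\mathrm{val}((m-1)!)\big)\ =\ \sigma m - c\,\mathrm{val}(m!),
\]
closing the induction. Then Legendre's formula \eqref{eq:legendre}, in the form $\mathrm{val}(m!)=(m-s_p(m))/(p-1)$ with $s_p$ the base-$p$ digit sum, yields $\mathrm{val}(\Pi_m)\geq m\big(\sigma - c/(p-1)\big) + c\,s_p(m)/(p-1)$; the second term is $\geq 0$, so $|\Pi_m|\,\rho^m\leq 1$ for $\rho = p^{\sigma-c/(p-1)}$, which proves that $\Pi$ converges and is bounded (by $1$) on the open — indeed the closed — disc of radius $\rho$. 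Part (ii) then transfers this to $\Pi^{-1}$.

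\emph{The main obstacle} is extracting the correct value of $c$, which is precisely what makes the radius $\rho$ sharp. The cheap bound above, using only the nilpotency index $2N+1$ of $\mathrm{ad}_{A_0}$, gives a $c$ of the right order but not the optimal one. To do better I would first reduce to $A_0$ in Jordan normal form — harmless, as this can be arranged over $O$ — then decompose the matrix algebra under $\mathrm{ad}_{A_0}$ into $\mathfrak{sl}_2$-strings (one per ordered pair of Jordan blocks), and, along a composition $m = j_1+\cdots+j_\ell$, follow the height within these strings at which the consecutive inverses $\big(m-j_1-\cdots-j_k + \mathrm{ad}_{A_0}\big)^{-1}$ operate: applying $\mathrm{ad}_{A_0}^{\,i}$ lands one in the image of that power, which limits the amplification available at the next step. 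Turning this local observation into the global bound demanded by the statement — and checking it is not destroyed when one passes from a single term of the unfolded recursion to the whole sum — is where the genuine work lies; the recursion, the invertibility of $m+\mathrm{ad}_{A_0}$, and the appeal to Legendre's formula are routine.
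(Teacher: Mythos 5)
Your part (i) and the inductive scheme for part (iii) are exactly the paper's argument: the same order-by-order recursion $(m+\mathrm{ad}_{A_0})\Pi_m = -\sum_{j\geq 1}A_j\Pi_{m-j}$, inversion of $m+\mathrm{ad}_{A_0}$ via the terminating Neumann series (the paper's \eqref{eq:invert-commutator}), and then an induction closed by Legendre's formula \eqref{eq:legendre}; for (ii) you defer to \cite[Prop.~18.1.1]{kedlaya}, which is the reference the paper itself gives (it also supplies a short direct proof via the Wronskian equation \eqref{eq:determinant-equation} and Cramer's rule). So far, so good.

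The gap is the one you yourself flag: your executed estimate gives $\mathrm{val}\big((m+\mathrm{ad}_{A_0})^{-1}X\big)\geq \mathrm{val}(X)-(2N+1)\mathrm{val}(m)$, hence convergence only on the disc of radius $p^{\sigma-(2N+1)/(p-1)}$, which is strictly smaller than the radius $p^{\sigma-2N/(p-1)}$ asserted in (iii); the $\mathfrak{sl}_2$-string refinement that would recover the missing $1/(p-1)$ is only a program, not a proof. So as written the proposal does not establish (iii). For what it is worth, the paper gets the constant $2N$ by truncating \eqref{eq:invert-commutator} at the term divided by $m^{2N}$, i.e.\ by treating $\mathrm{ad}_{A_0}^{2N}$ as zero — but the nilpotency index of $\mathrm{ad}_{A_0}$ is $2N+1$, exactly as you say (already for a single Jordan block of size $2$ one has $\mathrm{ad}_{A_0}^2(X)=-2A_0XA_0\neq 0$), so you have put your finger on a real off-by-one in the paper's own derivation rather than on a subtlety you failed to see. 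The example following the lemma ($x(t)=\sum_k t^k/((k-1)!\,k!)$, with radius exactly $p^{-2/(p-1)}$ for $N=1$) indicates that $2N$ is nevertheless the true constant, so some genuine cancellation along the lines you sketch must be extracted; alternatively, note that every later use of the lemma (Lemma \ref{th:make-frobenius}(iv)--(v), Lemma \ref{th:general-structure}(ii)) only needs a positive radius, so your weaker constant would suffice for the paper's purposes. One last small point: your parenthetical claim of convergence on the \emph{closed} disc of radius $\rho$ does not follow from $\mathrm{val}(\Pi_m)\geq m(\sigma-c/(p-1))+c\,s_p(m)/(p-1)$, since $s_p(m)$ does not tend to infinity; only the open disc (which is all the lemma claims) is obtained.
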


\begin{proof}
(i) Write out \eqref{eq:gauge-equation} order by order:
\begin{equation} \label{eq:gauge-recursion}
m\Pi_m + [A_0,\Pi_m] = -\!\!\sum_{0 \leq i < m} A_{m-i}\Pi_i, \quad m \geq 1.
\end{equation}
Let $R_m$ be the right hand side of \eqref{eq:gauge-recursion}. Since $A_0$ is nilpotent, we can solve for $\Pi_m$ by the formula
\begin{equation} \label{eq:invert-commutator}
\Pi_m = R_m/m - [A_0,R_m]/m^2 + [A_0,[A_0,R_m]]/m^3 + \cdots, 
\end{equation}
where the last term is divided by $m^{2N}$. 

(ii) Suppose that $\Pi(t)$ converges on the open disc with radius $\rho$. Writing $\wp(t) = \mathrm{det}\,\Pi(t)$, $a(t) = \mathrm{tr}\, A(t)$, $a_0 = \mathrm{tr}(A_0)$, one has $\wp(0) = 1$ and 
\begin{equation} \label{eq:determinant-equation}
t\partial_t \wp(t) + (a(t) - a_0) \wp(t) = 0.
\end{equation}
This implies (using the Weierstrass preparation theorem, e.g.\ \cite[p.~105]{koblitz}) that $\wp(t) \neq 0$ for all $|t| < \rho$. As a consequence, $1/\wp(t)$, as defined by its formal power series, is also convergent for $|t| < \rho$ \cite[Appendix to Ch.~2]{dwork-gerotto-sullivan}. Note that all minors of $A(t)$ also converge for $|t| < \rho$. Therefore, Cramer's rule gives a formula for $\Pi(t)^{-1}$ with the same convergence behaviour. The argument in converse direction is parallel.

(iii) Taking \eqref{eq:a-growth} into account, the equation \eqref{eq:invert-commutator} implies that 
\begin{equation}
\begin{aligned}
\mathrm{val}(\Pi_m) & \geq \mathrm{val}(R_m) - 2N\mathrm{val}(m)
\\ &
\geq \mathrm{min}\Big(\mathrm{val}(\Pi_0) + \sigma m, \dots, \mathrm{val}(\Pi_{m-1}) + \sigma\Big) - 2N \mathrm{val}(m);
\end{aligned}
\end{equation}
or equivalently
\begin{equation}
\mathrm{val}(\Pi_m) - \sigma m \geq \mathrm{min}\Big(\mathrm{val}(\Pi_0),\dots, \mathrm{val}(\Pi_{m-1}) - \sigma (m-1)\Big) - 2N\mathrm{val}(m).
\end{equation}
By induction, and using \eqref{eq:legendre}, one gets 
\begin{equation}
\mathrm{val}(\Pi_m) - \sigma m \geq -2N\mathrm{val}(m!) \geq -2N \frac{m-1}{p-1}.
\end{equation}
\end{proof}

\begin{example}
Take the connection
\begin{equation}
\nabla_{t\partial_t} = t\partial_t - \begin{pmatrix} 0 & t \\ 1 & 0 \end{pmatrix}.
\end{equation}
The equation $\nabla_{t\partial_t}^{\mathrm{pole}}(v^{\mathrm{pole}}) = 0$ has the constant solution $v^{\mathrm{pole}} = (0,1)$. By applying Lemma \ref{th:gauge-transformation}(i), one sees that there is a (unique) solution $v(t) = \Pi(t) v^{\mathrm{pole}}$ of $\nabla_{t\partial_t}v = 0$ with the same initial term. One can write this out directly: $v(t) = (x(t),x'(t))$, where $x''(t) = x(t)/t$, hence
\begin{equation}
x(t) = \sum_{k=1}^\infty \frac{t^k}{(k-1)! k!}
\end{equation}
This series has radius of convergence $p^{-2/(p-1)}$, compatibly with Lemma \ref{th:gauge-transformation}(iii) ($\sigma = 0$, $N = 1$).
\end{example}

For context concerning the following notions, see e.g.\ \cite[Section 2]{kedlaya22} (compared to that reference, we have drastically reduced the scope). The Frobenius pullback of a connection \eqref{eq:connection} is its pullback under $F(t) = t^p$, explicitly:
\begin{equation} \label{eq:frobenius-connection}
\nabla_{t\partial_t}^{(p)} = t\partial_t + p A(t^p).
\end{equation}

\begin{definition} \label{th:structure}
A Frobenius structure for $\nabla_{t\partial_t}$ is an isomorphism
\begin{equation}
\Phi(t) = \Phi_0 + t\Phi_1 + \cdots, \;\; \Phi_0 \text{ invertible,}
\end{equation}
intertwining \eqref{eq:frobenius-connection} and the original connection. Explicitly, this satisfies
\begin{equation} \label{eq:structure}
t\partial_t \Phi + A(t)\Phi(t) - p \Phi(t) A(t^p) = 0.
\end{equation}
%
\end{definition}
%
The equation \eqref{eq:structure} is quite similar to \eqref{eq:gauge-equation}. This leads to the following, which is mostly a counterpart of Lemma \ref{th:gauge-transformation}; the exception is the last part, an instance of Dwork's trick, see e.g.\ \cite[Corollary 17.2.4]{kedlaya}.

\begin{lemma} \label{th:make-frobenius}
(i) The constant term $\Phi_0$ of any Frobenius structure satisfies
\begin{equation} \label{eq:constant-term}
A_0\Phi_0 = p\Phi_0A_0.
\end{equation}
Conversely, every invertible $\Phi_0$ with \eqref{eq:constant-term} is the constant term of a unique Frobenius structure.

(ii) Consider a Laurent series solution $\Phi(t) = \Phi_m t^m + \Phi_{m+1} t^{m+1} + \cdots$ of \eqref{eq:structure}, for some $m<0$. Then $\Phi_m = \Phi_{m+1} = \cdots = \Phi_{-1} = 0$, so $\Phi$ is in fact a Frobenius structure.

(iii) Suppose that $A(t)$ converges on the open disc of radius $\rho>0$. Let $\Phi$ be any Frobenius structure. Then: $\Phi(t)$ converges on the open disc of radius $\mathrm{min}(\rho,\rho^{1/p})$ iff the same holds for $\Phi(t)^{-1}$.

(iv) Suppose that the map $\Pi(t)$ from Lemma \ref{th:gauge-transformation} converges on an open disc of radius $\rho$. Then any Frobenius structure converges on the open disc of radius $\mathrm{min}(\rho,\rho^{1/p})$.

(v) Suppose that our equation admits a Frobenius structure $\Phi$ which converges on the open disc of radius $1$. Then, if the map $\Pi$ from Lemma \ref{th:gauge-transformation} converges on some open disc around $0$, it must converge on the open disc of radius $1$ (and by (iv), the same will then hold for all Frobenius structures).
\end{lemma}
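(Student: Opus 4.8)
The plan is to establish a functional equation relating $\Pi$, the given Frobenius structure $\Phi$, and the Frobenius pullback $\Pi(t^p)$, and then to run Dwork's trick to propagate convergence outward from a small disc.

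First I would record that the pole connection $\nabla_{t\partial_t}^{\mathrm{pole}} = t\partial_t + A_0$ admits a \emph{constant} Frobenius structure. Indeed, if $\Phi$ is our given Frobenius structure for $\nabla_{t\partial_t}$, its constant term $\Phi_0$ is invertible and satisfies $A_0\Phi_0 = p\Phi_0A_0$ by Lemma \ref{th:make-frobenius}(i); since $t\partial_t$ annihilates constants, the constant map $t\mapsto\Phi_0$ then solves \eqref{eq:structure} for $\nabla_{t\partial_t}^{\mathrm{pole}}$.

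Next I would prove the identity of formal power series
\[
\Pi(t) = \Phi(t)\,\Pi(t^p)\,\Phi_0^{-1}.
\]
The idea is that both $\Phi(t)\Pi(t^p)$ and $\Pi(t)\Phi_0$ are formal maps $\Psi(t)=\Psi_0 + t\Psi_1 + \cdots$ satisfying
\[
t\partial_t\Psi + A(t)\Psi(t) - p\,\Psi(t)A_0 = 0,
\]
with the same constant term $\Phi_0$ (using $\Pi_0 = I$). For $\Pi(t)\Phi_0$ this follows by differentiating \eqref{eq:gauge-equation} and using $A_0\Phi_0 = p\Phi_0A_0$; for $\Phi(t)\Pi(t^p)$ it follows by combining \eqref{eq:structure} with the Frobenius pullback of \eqref{eq:gauge-equation} (note that with $s=t^p$ one has $t\partial_t[\Pi(t^p)] = p(s\partial_s\Pi)(s)|_{s=t^p}$, so $\Pi(t^p)$ intertwines $t\partial_t + pA_0$ with $t\partial_t + pA(t^p)$). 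The order-$m$ component of the displayed equation is $m\Psi_m + A_0\Psi_m - p\Psi_mA_0 = -\sum_{0\le i<m}A_{m-i}\Psi_i$; since $A_0$ is nilpotent the operator $X\mapsto A_0X - pXA_0$ is nilpotent, so $X\mapsto mX + A_0X - pXA_0$ is invertible for every $m\ge1$ (exactly as in the proof of Lemma \ref{th:gauge-transformation}(i)). Hence $\Psi$ is uniquely determined by $\Psi_0$, and the two expressions coincide.

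Finally I would bootstrap. Assume $\Pi$ does not already converge on the open unit disc, and pick $\rho_0\in(0,1)$ on which it does converge; set $\rho_n = \rho_0^{1/p^n}$. Assuming inductively that $\Pi$ converges on the open disc of radius $\rho_n$, in the functional equation the factor $\Phi(t)$ converges on the open disc of radius $1>\rho_{n+1}$, the factor $\Pi(t^p)$ converges for $|t|<\rho_n^{1/p}=\rho_{n+1}$, and $\Phi_0^{-1}$ is a constant matrix; so the right-hand side, and hence $\Pi$, converges on the open disc of radius $\rho_{n+1}$. Since $\rho_n\to1$, every $t$ with $|t|<1$ lies in one of these discs, so $\Pi$ converges on the entire open unit disc; the parenthetical assertion about all Frobenius structures is then Lemma \ref{th:make-frobenius}(iv). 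The only step with real content is the functional equation — more precisely, recognizing $\Pi(t)\Phi_0$ and $\Phi(t)\Pi(t^p)$ as the same formal intertwiner via the uniqueness argument above; once that is in hand, the convergence propagation is automatic.
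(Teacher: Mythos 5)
Your argument addresses only part (v) of the lemma. Parts (i)--(iv) are never proved: you invoke part (i) to get $A_0\Phi_0 = p\Phi_0 A_0$ and the existence of the constant Frobenius structure for $\nabla^{\mathrm{pole}}_{t\partial_t}$, and you invoke part (iv) for the parenthetical claim at the end, but neither of these, nor (ii) nor (iii), is established. This is the genuine gap. Each of the missing parts has its own (short) content: (i) requires the order-by-order analysis of \eqref{eq:structure}, using that $X \mapsto A_0X - pXA_0$ is nilpotent so that $m + (A_0 \cdot{} - p\cdot{}A_0)$ is invertible for $m \geq 1$ (you do carry out exactly this computation, but for the auxiliary equation $t\partial_t\Psi + A(t)\Psi - p\Psi A_0 = 0$ rather than for \eqref{eq:structure} itself); (ii) is the same invertibility applied at the most negative order $m<0$; (iii) is a determinant/Weierstrass argument parallel to Lemma \ref{th:gauge-transformation}(ii); and (iv) is the formula $\Phi(t) = \Pi(t)\Phi_0\Pi^{-1}(t^p)$, which is the mirror image of the functional equation you do prove.

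For part (v) itself your argument is correct and is essentially the paper's: both hinge on the identity $\Pi(t) = \Phi(t)\Pi(t^p)\Phi_0^{-1}$ followed by the bootstrap $\rho \mapsto \rho^{1/p}$. The only difference is how the identity is verified. The paper observes that $\tilde\Pi(t) = \Phi(t)\Pi(t^p)\Phi_0^{-1}$ satisfies the gauge equation \eqref{eq:gauge-equation} with constant term $I$ and appeals to the uniqueness in Lemma \ref{th:gauge-transformation}(i); you instead show that $\Phi(t)\Pi(t^p)$ and $\Pi(t)\Phi_0$ solve the same intertwining equation $t\partial_t\Psi + A(t)\Psi - p\Psi A_0 = 0$ with the same constant term and rerun the uniqueness argument for that equation. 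These are interchangeable, and your verification of the two sides (including the computation $t\partial_t[\Pi(t^p)] + pA(t^p)\Pi(t^p) - p\Pi(t^p)A_0 = 0$) is sound. So the convergence-propagation portion is fine; what is missing is everything else in the lemma.
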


\begin{proof}
(i) The necessity of \eqref{eq:constant-term} is clear. The order-by-order equation for $\Phi_m$, $m \geq 1$, is
\begin{equation} \label{eq:order-by-order-2}
\begin{aligned}
& m \Phi_m + A_0\Phi_m - p\Phi_m A_0 \\ & \qquad \qquad = 
\sum_{0 \leq i < m} -A_{m-i}\Phi_i + p \Phi_i A_{\frac{m-i}{p}};
\end{aligned}
\end{equation}
Here, the convention is that any terms $A_j$ with non-integer index $j$ are set to zero. One argues as in Lemma \ref{th:gauge-transformation}(i) that these equations have a unique solution using the fact that the map $\Phi_m \mapsto A_0 \Phi_m - p\Phi_m A_0$ is nilpotent.

(ii) There is an analogue of \eqref{eq:order-by-order-2} for the most negative order $m<0$, which is $m\Phi_m + A_0 \Phi_m - p\Phi_m A_0 = 0$. By the same argument as in (i), that implies $\Phi_m = 0$.

(iii) Set $\varphi(t) = \mathrm{det}\,A(t)$, $a(t) = \mathrm{tr}\,A(t)$. Then
\begin{equation}
t\partial_t \varphi + (a(t)-p\, a(t^p)) \varphi(t) = 0.
\end{equation}
Note that $a(t)-p\, a(t^p)$ converges on the open disc of radius $\mathrm{min}(\rho,\rho^{1/p})$. From here on, the argument proceeds as in Lemma \ref{th:gauge-transformation}(ii).

(iv) The constant endomorphism $\Phi_0$ is a Frobenius structure for \eqref{eq:pole-connection}. This can be turned into one for $\nabla_{t \partial_t}$, with the same constant term, by the formula
\begin{equation} \label{eq:structure-by-gauge}
\Phi(t) = \Pi(t) \Phi_0 \Pi^{-1}(t^p),
\end{equation}
which converges over the open disc of radius $\mathrm{min}(\rho,\rho^{1/p})$.

(v) Consider
\begin{equation}
\tilde{\Pi}(t) = \Phi(t) \Pi(t^p) \Phi_0^{-1}.
\end{equation}
If $\Pi$ converges for $|t| < \rho < 1$, then $\tilde{\Pi}$ converges on the larger disc $|t| < \rho^{1/p}$. On the other hand, $\tilde{\Pi}$ again satisfies \eqref{eq:gauge-equation} and has constant term equal to the identity, hence must be equal to $\Pi$. This shows that the radius of convergence of $\Pi$ cannot be less than $1$.
\end{proof}

\begin{remarks}
(i) The equation \eqref{eq:constant-term} implies that $A_0$ and $pA_0$ are conjugate, which is only possible if $A_0$ is nilpotent: connections with other kinds of pole terms cannot have Frobenius structures in our sense. However, they could still admit some $\Phi$ as in Lemma \ref{th:make-frobenius}(ii), meaning a Laurent series.

(ii) The Hukuhara-Turrittin-Levelt classification prevents connections with irregular singularities at $t = 0$ from admitting a Frobenius structure, even a Laurent series one.
\end{remarks}

\begin{definition} \label{th:overconvergent}
A Frobenius structure $\Phi$ is called overconvergent if it converges on a disc of radius $>1$.
\end{definition}

Take a Frobenius structure, and consider its characteristic polynomial
\begin{equation} \label{eq:characteristic-polynomial}
\phi(t,z) = \mathrm{det}(zI - \Phi(t)) = \phi_0(t) + z\phi_1(t) + \cdots + z^r \phi_r(t).
\end{equation}
Suppose that $\Phi$ is overconvergent, therefore can be evaluated at $t = \theta$, where $\theta$ is a $(p-1)$-st root of unity. $\Phi(\theta)$ is a fibrewise endomorphism, so it makes sense to consider the eigenvalues of that endomorphism, which are the roots of $\phi(\theta,z)$. The simplest aspect is the $p$-adic valuation of the eigenvalues. These can be read off from the valuations of the $\phi_k(\theta)$ from \eqref{eq:characteristic-polynomial}, thanks to the following basic fact. Let 
\begin{equation}
g(z) = b_0 + b_1 z + \cdots b_d z^d
\end{equation}
be a polynomial of degree $d>0$ over $K$, with $b_0 = g(0) \neq 0$. The Newton polygon of $g$ is the convex hull of 
\begin{equation}
\bigcup_{b_i \neq 0} \{(x,y) \;:\; 0 \leq x \leq i, \, y \geq \mathrm{val}(b_i)\} \subset \bR^{\geq 0} \times \bR.
\end{equation}
Then \cite[Theorem 7.4.47]{gouvea}:

\begin{lemma} \label{th:newton}
The valuations of the roots of $g$ (in the algebraic closure of $K$) are $(-1)$ times the slopes of the non-vertical sides of its Newton polygon. Here, each slope appears with multiplicity equal to the length of the underlying horizontal segment.
\end{lemma}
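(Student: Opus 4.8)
The plan is to work over an algebraic closure $\bar K$, on which the valuation of $K$ extends uniquely (since $K$ is complete), and to reduce the statement to a computation with elementary symmetric functions of the roots. First I would normalize: replacing $g$ by $g/b_d$ does not change the roots and translates the Newton polygon vertically by $\mathrm{val}(b_d)$, hence changes neither the slopes nor the horizontal lengths of its non-vertical sides; so I may assume $g$ is monic, $g(z) = \prod_{j=1}^{d}(z-\alpha_j)$, with every $\alpha_j \neq 0$ (this uses $b_0 \neq 0$). List the root valuations as $\lambda_1 \le \cdots \le \lambda_d$, let $\mu_1 < \cdots < \mu_s$ be the distinct values occurring, with multiplicities $m_1,\dots,m_s$, and set $\beta(k) = \lambda_1 + \cdots + \lambda_{d-k}$ for $k=0,\dots,d$, interpolated linearly in between.

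The two estimates I need both come from the identity $b_k = (-1)^{d-k} e_{d-k}(\alpha_1,\dots,\alpha_d)$. First, $\mathrm{val}(b_k) \ge \beta(k)$ for all $k$: every monomial of $e_{d-k}$ is a product of $d-k$ distinct roots, whose valuation is at least the sum of the $d-k$ smallest root valuations, and the ultrametric inequality propagates this bound to the sum. Second, equality $\mathrm{val}(b_k) = \beta(k)$ holds whenever $\lambda_{d-k} < \lambda_{d-k+1}$, i.e. at the ``break points'' $k = d-(m_1+\cdots+m_t)$: there the product of the $d-k$ roots of smallest valuation is the unique monomial of minimal valuation in $e_{d-k}$, so no cancellation occurs.

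Next I would note that $\beta$ is itself convex, since its successive slopes $\beta(k+1)-\beta(k) = -\lambda_{d-k}$ increase weakly as $k$ grows (because $\lambda_{d-k}$ decreases), and that between two consecutive break points $\beta$ is linear with slope $-\mu_t$ over a horizontal interval of length $m_t$. Writing $\ell$ for the lower boundary of the convex hull of $\{(k,\mathrm{val}\,b_k)\}$: the first estimate shows the epigraph of $\beta$ is a convex set containing every point $(k,\mathrm{val}\,b_k)$, hence contains their convex hull, so $\ell \ge \beta$; the second estimate gives $\ell = \beta$ at every break point; and since $\ell$ is convex while $\beta$ is linear between consecutive break points, $\ell \le \beta$ there as well, so $\ell = \beta$. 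Reading off the non-vertical sides of $\mathrm{graph}(\beta)$ now yields precisely slopes $-\mu_t$ with horizontal length $m_t$; equivalently, $-1$ times each slope is a root valuation, occurring with multiplicity equal to the length of the horizontal segment, which is the assertion (invoking that the valuations of the roots are read off the Newton polygon of $g$, cf.\ \cite[Theorem 7.4.47]{gouvea}).

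The main obstacle is the no-cancellation step, where one must verify that the would-be leading term of the symmetric function genuinely survives; this is exactly where the ordering of the roots and the strictness of the gap between consecutive distinct valuations are used, and the remainder is bookkeeping with multiplicities. A secondary point to handle with a little care is the behaviour near the vertical sides of the polygon and the convention for vanishing coefficients $b_k = 0$ (the corresponding point is simply absent, and the first estimate holds vacuously), but neither affects the non-vertical sides, which are all the statement concerns. An alternative I would mention but not carry out is to first establish that the Newton polygon is multiplicative — the multiset of (slope, length) pairs of a product of polynomials is the union of those of the factors, via the same ultrametric estimate applied to the Cauchy product — and then reduce to the linear factors $z-\alpha_j$, each of which contributes a single side of slope $-\mathrm{val}(\alpha_j)$.
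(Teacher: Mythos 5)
The paper offers no proof of this lemma at all: it is quoted verbatim from \cite[Theorem 7.4.47]{gouvea}. So there is no internal argument to compare against, and what you have written is the standard self-contained proof, which is correct. The two estimates you isolate are exactly the content of the textbook argument: the lower bound $\mathrm{val}(b_k) \geq \beta(k)$ via the ultrametric inequality applied to $e_{d-k}$, and equality at the indices $k$ with $\lambda_{d-k} < \lambda_{d-k+1}$, where the minimizing $(d-k)$-element subset of roots is forced (it must consist of all roots of valuation $\leq \lambda_{d-k}$, and there are exactly $d-k$ of them), so the minimal-valuation monomial is unique and no cancellation occurs. The convexity bookkeeping that upgrades these to the identity $\ell = \beta$ is also carried out correctly: $\ell \geq \beta$ from the first estimate, equality at break points from the second, and $\ell \leq \beta$ on each interval between consecutive break points because a convex function agreeing with an affine one at the endpoints lies below it. The multiplicativity route you sketch at the end is the other standard proof and would work equally well. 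One notational point: your argument (correctly) takes the Newton polygon to be the lower convex hull of the points $(k,\mathrm{val}(b_k))$, as in Gouv\^{e}a; the paper's displayed definition via the regions $\{0 \leq x \leq i,\ y \geq \mathrm{val}(b_i)\}$ does not literally produce that object (for $g(z) = p+z$ it would give a horizontal lower edge rather than one of slope $-1$), so you are right to anchor the statement to the standard convention rather than to that display.
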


We conclude our exposition with a minor reformulation, which brings one back to more familiar territory insofar as the quantum connection is concerned. Namely, let's use the rescaled variable $q$ from \eqref{eq:t-q}, so that our connection is given by
\begin{equation}
\nabla_{q\partial_q} = q\partial_q + \bar{A}_0 + \bar{A}_1 q + \bar{A}_2 q^2 + \dots,
\end{equation}
with $A_m = \pi^m \bar{A}_m$. In these terms, the definition of Frobenius structure involves the pullback $q \mapsto -q^p/p$. Such a structure is 
\begin{equation} \label{eq:frobenius-2}
\begin{aligned}
& \bar{\Phi}(q) = \bar\Phi_0 + \bar\Phi_1 q + \cdots, \\
& q\partial_q \bar\Phi + \bar{A}(q)\bar\Phi(q) - p \bar{\Phi}(q) \bar{A}(-q^p/p) = 0;
\end{aligned}
\end{equation}
or written out as in \eqref{eq:order-by-order-2},
\begin{equation} \label{eq:order-by-order-3}
\begin{aligned}
& m \Phi_m + A_0\Phi_m - p\Phi_m A_0 \\ & \qquad \qquad = 
\!\!\sum_{0 < i \leq m} -A_{m-i}\Phi_i - \Phi_i A_{\frac{m-i}{p}} (-p)^{1-(m-i)/p}.
\end{aligned}
\end{equation}
Again, this is related to our usual definition by a simple rescaling, which for the coefficients is $\Phi_m = \pi^m \bar{\Phi}_m$. When thinking in these terms, overconvergence means that $\bar\Phi(q)$ has convergence radius $> p^{-1/(p-1)}$ in $q$. 

\begin{example}
Take $\nabla_{q\partial_q} = q\partial_q - cq$ with $c \in \bZ$, which corresponds to $\nabla_{t\partial_t} = t\partial_t - c\pi t$ (compare Examples \ref{th:exponential} and \ref{th:exponential-2}). The Frobenius structure is $\bar\Phi(q) = D(q)^c$, which is defined over $\bQ_p$.
\end{example}

Similarly, when applied to the quantum connection, we have $\nabla_{q\partial_q}$ defined over $\bQ_p$. Hence, if the constant term of $\bar\Phi_0 = \Phi_0$ is defined over $\bQ_p$, then the same holds for the entire $\bar\Phi(q)$. Assuming overconvergence, one then considers the roots of $\phi(\theta,z) = \bar\phi(\pi\theta,z)$, where
\begin{equation} \label{eq:characteristic-polynomial-2}
\bar\phi(q,z) = \mathrm{det}\big(z I - \bar\Phi(q) \big) = \bar\phi_0(q) + z\bar\phi_1(q) + \cdots + z^r \bar\phi_r(q)
\end{equation}
again has $\bQ_p$-coefficients.
By Lemma \ref{th:newton}, the valuations of the roots depend only on the valuations of the numbers $\bar\phi_k(\pi\theta)$. These are independent of $\theta$, due to the following elementary observation:

\begin{lemma} \label{th:equal-valuations}
Let $g(q)$ be a power series over $\bQ_p$ with convergence radius $> p^{-1/(p-1)}$. Then, the valuations of $g(\pi\theta)$, for different choices of $(p-1)$-st roots of unity $\theta$, are the same.
\end{lemma}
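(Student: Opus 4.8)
The plan is to reduce the claim to a Galois-theoretic statement about the action of $\mathrm{Gal}(K/\bQ_p)$ on the values $g(\pi\theta)$. First I would observe that the different $(p-1)$-st roots of unity $\theta$ are all the nonzero elements of $\bZ_p$ of the form $\theta_1^a$ for a fixed primitive one $\theta_1$, but a cleaner route is this: recall from \eqref{eq:pi} that $K = \bQ_p(\mu)$ is totally ramified of degree $p-1$ over $\bQ_p$, with uniformizer $\pi$ satisfying $\pi^{p-1} = -p$. The Galois group $\mathrm{Gal}(K/\bQ_p) \cong (\bZ/p)^\times$ acts on $\pi$ by $\sigma_c(\pi) = \zeta_c \pi$ where $\zeta_c$ runs over the $(p-1)$-st roots of unity in $\bZ_p$ (since $\sigma_c(\pi)^{p-1} = -p$ forces $\sigma_c(\pi)/\pi$ to be such a root, and all of them occur as $c$ ranges over the Galois group). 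Since $g$ has coefficients in $\bQ_p$, each $\sigma_c$ fixes $g$ coefficientwise, so $\sigma_c(g(\pi)) = g(\sigma_c(\pi)) = g(\zeta_c \pi) = g(\pi \theta)$ for the corresponding $\theta = \zeta_c$. The one technical point to check is that $g(\pi)$ actually lies in $K$ and that $\sigma_c$ commutes with the infinite sum; this is where the convergence hypothesis enters.

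The key steps, in order, are: (1) Verify $\pi\theta$ lies in the open disc of convergence of $g$: since $g$ converges for $|t| < p^{-1/(p-1)}\cdot p^\epsilon$ for some $\epsilon > 0$, and $\mathrm{val}(\pi\theta) = \mathrm{val}(\pi) = \tfrac{1}{p-1}$ exactly (as $\theta$ is a unit), the value $g(\pi\theta)$ is a well-defined element of $K$. (2) Identify the Galois action on $\pi$: the elements $\{\zeta\pi : \zeta^{p-1}=1\}$ are precisely the $\mathrm{Gal}(K/\bQ_p)$-conjugates of $\pi$, because $\pi$ has minimal polynomial $X^{p-1}+p$ over $\bQ_p$ (Eisenstein, hence irreducible) whose roots are exactly these. (3) Since $g$ is a convergent power series with $\bQ_p$-coefficients and $\sigma \in \mathrm{Gal}(K/\bQ_p)$ is a continuous field automorphism fixing $\bQ_p$, we have $\sigma(g(\pi)) = g(\sigma(\pi))$; thus $g(\pi\zeta)$ and $g(\pi)$ are Galois conjugate for every $(p-1)$-st root of unity $\zeta$. (4) Galois-conjugate elements of $K$ have equal valuation (the valuation on $K$ is the unique extension of the one on $\bQ_p$, hence Galois-invariant). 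Stringing these together gives $\mathrm{val}(g(\pi\theta)) = \mathrm{val}(g(\pi))$ for all $\theta$, which is the claim.

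The main obstacle — really the only place requiring care — is step (3): justifying that a continuous automorphism may be pulled inside the infinite sum $g(\pi) = \sum_m a_m \pi^m$. This follows because $\sigma$ is an isometry of $K$ (it preserves the unique valuation), so the partial sums $\sum_{m\le n} a_m (\sigma\pi)^m = \sigma\big(\sum_{m\le n} a_m \pi^m\big)$ converge to $\sigma(g(\pi))$ as $n \to \infty$, and also converge to $g(\sigma\pi)$ by definition since $\sigma\pi = \zeta\pi$ lies in the same disc of convergence (same valuation as $\pi$). Everything else is standard ramification theory, and no genuinely hard estimate is needed; the convergence radius $> p^{-1/(p-1)}$ is used only to ensure all the points $\pi\theta$, which sit exactly on the "boundary circle" $\mathrm{val} = \tfrac{1}{p-1}$ of the exponential's disc, are strictly inside the disc where $g$ is defined.
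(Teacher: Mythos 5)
Your proof is correct, but it takes a genuinely different route from the paper. The paper's argument is purely elementary: write $g(q) = \sum_{j=0}^{p-2} q^j g_j(q^{p-1})$, observe that at $q = \pi\theta$ one has $(\pi\theta)^{p-1} = -p$ independently of $\theta$, so $g(\pi\theta) = \sum_j (\pi\theta)^j g_j(-p)$ with the $j$-th term having valuation in $j/(p-1) + \bZ$; these are pairwise distinct, so the valuation of the sum is $\min_j\big(\mathrm{val}(g_j(-p)) + j/(p-1)\big)$, visibly independent of $\theta$. You instead identify the values $g(\pi\theta)$ as $\mathrm{Gal}(K/\bQ_p)$-conjugates of $g(\pi)$ — using that $X^{p-1}+p$ is Eisenstein, that the Galois group acts transitively on its roots $\zeta\pi$, and that Galois automorphisms are isometries and hence commute with convergent sums of $\bQ_p$-coefficient series — and conclude since conjugates have equal valuation. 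Your steps all check out (in particular the continuity argument in step (3) is exactly the right point to flag). What your approach does not deliver is the explicit formula for the common valuation, recorded in the paper as \eqref{eq:fractional-valuation}; that formula is not decorative — it is invoked in the Appendix to compute Newton polygons of $\bar\phi(q,\pi)$ from data in $\bQ_p$. So for the lemma as stated your proof is a clean, more conceptual alternative, but the paper's decomposition carries extra computational content that is used downstream.
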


\begin{proof}
Write $g(q) = \sum_{j=0}^{p-2} q^j g_j(q^{p-1})$, so that
\begin{equation}
g(\pi\theta) = \sum_{j=0}^{p-2} (\pi\theta)^j g_j(-p),
\end{equation}
The $j$-th term has valuation in $j/(p-1) + \bZ$, so these valuations are pairwise distinct, and
\begin{equation} \label{eq:fractional-valuation}
\mathrm{val}(g(\pi\theta)) = \mathrm{min}_j\big( \mathrm{val}(g_j(-p)) + j/(p-1) \big).
\end{equation}
\end{proof}


\section{The $P$-adic Gamma class\label{sec:gamma-class}}
We now return to the quantum connection. The map $\Pi$ from Lemma \ref{th:gauge-transformation} is essentially Givental's fundamental solution (see e.g.\ \cite[Section 1.3]{pandharipande97}). 
\begin{equation} \label{eq:givental}
\begin{aligned}
& \int_M x \smile \Pi(y)  = \int_M x \smile y + \sum_{m>0} q^{(\mathrm{deg}(x)+\mathrm{deg}(y))/2+m+1-\mathrm{dim}_{\bC}(M)}(-1)^{m+1} \langle x, \psi^m(y) \rangle, \\
& \int_M x \smile \Pi^{-1}(y) = \int_M x \smile y + \sum_{m>0} q^{(\mathrm{deg}(x)+\mathrm{deg}(y))/2+m+1-\mathrm{dim}_{\bC}(M)} \langle \psi^m(x), y \rangle.
\end{aligned}
\end{equation}
Here, the expression in $\langle \cdot \rangle$ is a genus zero two-point Gromov-Witten invariant with gravitational descendants inserted at one of the marked points. For dimension reasons, it contains only contributions from curves in homology classes $A$ such that
\begin{equation}
\mathrm{dim}_{\bC}(M) + \int_A c_1(\mathit{TM}) - 1 =  (\mathrm{deg}(x) + \mathrm{deg}(y))/2 + m.
\end{equation}
We have written this in terms of the variable $q$, which is more natural geometrically. To discuss Frobenius structures, we now apply the usual change of variables \eqref{eq:t-q}, so we are considering the quantum connection in the form \eqref{eq:quantum-connection-2}.

\begin{lemma} \label{th:general-structure}
(i) Take any $b \in H^{\mathrm{even}}(M;K)$ whose degree $0$ part is nonzero. There is a unique Frobenius structure $\Phi(t)$ for $\nabla_{t\partial_t}$, whose constant term is \eqref{eq:b-frobenius}.

(ii) Any such structure $\Phi(t)$ has radius of convergence $\geq p^{(1-2\mathrm{dim}_{\bC}(M))/(p-1)}$.

(iii) Assume that for some choice of $b$, the associated Frobenius structure converges on the open disc of radius $1$. Then the same holds for all Frobenius structures.
\end{lemma}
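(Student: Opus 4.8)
The plan is to obtain all three parts as consequences of the general theory of Section~\ref{sec:diff-eq}, once two elementary features of the quantum connection \eqref{eq:quantum-connection-2} are in place: that $A_0 = c_1(TM)\smile-$ is nilpotent with controlled Jordan type, and that the coefficients $A_m$ satisfy the growth estimate \eqref{eq:a-growth} with $\sigma = 1/(p-1)$.

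For (i), I would first note that the endomorphism $\Phi_0$ of \eqref{eq:b-frobenius} is invertible: in a basis of $H^{\mathrm{even}}(M;K)$ adapted to the cohomological grading it is the composite of the diagonal rescaling $x\mapsto p^{-\mathrm{deg}(x)/2}x$ with cup product by $b$, and the latter is block-triangular with the nonzero scalar $b_0$ (the degree-$0$ component of $b$) along the diagonal. Then I would verify the intertwining relation \eqref{eq:constant-term}: for $x$ of pure degree $2d$ one computes $A_0\Phi_0(x) = p^{-d}\,c_1(TM)\smile b\smile x$, while $p\Phi_0 A_0(x) = p\cdot p^{-(d+1)}\,b\smile c_1(TM)\smile x$, and these agree by commutativity of the cup product on even classes. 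An invertible $\Phi_0$ satisfying \eqref{eq:constant-term} extends uniquely to a Frobenius structure by Lemma~\ref{th:make-frobenius}(i).

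For (ii), the point is the estimate $\mathrm{val}(A_m)\ge m/(p-1)$ for all $m\ge 0$. Indeed $A_m = \pi^m\,(c_1(TM)\ast^{(m)}-)$, and the structure constants of $\ast^{(m)}$ are integral genus-zero Gromov--Witten numbers, so in a $\bZ$-basis of $H^{\mathrm{even}}(M;\bZ)/\mathrm{torsion}$ the matrix of $A_m$ has entries in $\pi^m O$; since $\mathrm{val}(\pi^m) = m/(p-1)$ this gives \eqref{eq:a-growth} with $\sigma = 1/(p-1)$. Moreover $A_0^{\,\mathrm{dim}_{\bC}(M)+1} = 0$ for degree reasons, so the largest Jordan block of $A_0$ has size $N+1\le \mathrm{dim}_{\bC}(M)+1$. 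Lemma~\ref{th:gauge-transformation}(iii) then shows that $\Pi$ converges on the disc of radius $\rho := p^{(1-2N)/(p-1)}\ge p^{(1-2\mathrm{dim}_{\bC}(M))/(p-1)}$; and since $c_1(TM)\neq 0$ (the case $M = \mathrm{pt}$ being trivial) we have $N\ge 1$, hence $\rho<1$ and $\rho^{1/p}>\rho$, so by Lemma~\ref{th:make-frobenius}(iv) every Frobenius structure converges on the disc of radius $\min(\rho,\rho^{1/p}) = \rho\ge p^{(1-2\mathrm{dim}_{\bC}(M))/(p-1)}$.

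For (iii): by (ii) the gauge transformation $\Pi$ converges on some disc about $0$, so the hypothesis that the Frobenius structure for one choice of $b$ converges on the disc of radius $1$ is exactly the input required by Lemma~\ref{th:make-frobenius}(v), which then concludes both that $\Pi$ converges on the full disc of radius $1$ and that every Frobenius structure does too --- in particular the one for any other choice of $b$. No step presents a genuine obstacle; the only place requiring care is the bookkeeping in (ii): confirming the $p$-integrality of the Gromov--Witten structure constants so that the prefactor $\pi^m$ controls $\mathrm{val}(A_m)$, identifying the right Jordan-block bound to feed into Lemma~\ref{th:gauge-transformation}(iii), and tracking the inequality $\rho<1$ so that $\min(\rho,\rho^{1/p})$ simplifies to $\rho$.
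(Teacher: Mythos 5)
Your proposal is correct and follows essentially the same route as the paper: invertibility of $\Phi_0$ plus the commutation relation \eqref{eq:constant-term} feed into Lemma \ref{th:make-frobenius}(i) for part (i), the integrality of the Gromov--Witten structure constants gives $\sigma = 1/(p-1)$ in Lemma \ref{th:gauge-transformation}(iii) (with Jordan block size $\leq \dim_{\bC}M+1$) and then Lemma \ref{th:make-frobenius}(iv) for part (ii), and Lemma \ref{th:make-frobenius}(v) for part (iii). You have merely spelled out the details (the degree bookkeeping for \eqref{eq:constant-term}, the simplification $\min(\rho,\rho^{1/p})=\rho$) that the paper leaves implicit.
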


\begin{proof}
These are applications of Lemma \ref{th:make-frobenius}, and only (ii) requires some explanation. Recall that the coefficients in \eqref{eq:quantum-product} are integers, as one can see from their symplectic topology definition (see e.g.\ \cite[Chapter 7]{mcduff-salamon-big}). Therefore, the coefficients in \eqref{eq:quantum-connection-2} satisfy the assumption from Lemma \ref{th:gauge-transformation}(iii) with $\sigma = \mathrm{val}(\pi) = 1/(p-1)$; and the nilpotent pole term $x \mapsto c_1(TM) \smile x$ has maximal Jordan block size $N + 1= \dim_{\bC} M + 1$. One then uses Lemma \ref{th:make-frobenius}(iv).
\end{proof}

Let
\begin{equation}
\Gamma_p(E) \in H^{\mathrm{even}}(M;\bQ_p)
\end{equation}
be the multiplicative characteristic class associated to the $p$-adic Gamma function, applied to a complex vector bundle $E \rightarrow M$. In terms of Chern roots $c(E) = 1 + c_1(E) + c_2(E) + \cdots = \prod_{i=1}^n (1+r_i)$, 
\begin{equation} \label{eq:gamma-characteristic-class}
\begin{aligned}
\Gamma_p(E) = \prod_{i=1}^n \Gamma_p(r_i) &
= 1 + \Gamma_p'(0)c_1(E) + \Gamma_p'(0)^2 (c_1(E)^2/2) + 
\\[-.5em] 
& 
+ \Gamma_p'''(0) \big(c_3(E)/2 - c_1(E)c_2(E)/2 + c_1(E)^3/6 \big) \\ 
& 
+ \Gamma_p'(0)^3 \big(-\!c_3(E)/2 + c_1(E)c_2(E)/2 \big) + \cdots
\end{aligned}
\end{equation}
where we have used \eqref{eq:second-gamma-derivative}. Note that as a consequence of \eqref{eq:gamma-symmetry},
\begin{equation} \label{eq:inverse-gamma}
\Gamma_p(E^\vee) = \Gamma_p(E)^{-1}.
\end{equation}
Equivalently one can work in terms of \eqref{eq:log-gamma} and the Chern character, leading to slightly simpler formulae using only $\mathit{ch}_m(E)$ for odd $m$. Explicitly,
\begin{equation} \label{eq:log-gamma-class}
\begin{aligned}
\log\, \Gamma_p(E) & = \sum_{m \geq 1} l_m \,\mathit{ch}_m(E) \\
& = \Gamma_p'(0) \,\mathit{ch}_1(E) + \big(\Gamma'''_p(0) - \Gamma_p'(0)^3\big) \,\mathit{ch}_3(E) \\
& \qquad + \big
( \Gamma^{(5)}_p(0) - 10 \Gamma_p'(0)^2 \Gamma_p'''(0) + 
9 \Gamma_p'(0)^5\big) \mathit{ch}_5(E) + \cdots
\end{aligned}
\end{equation}

At this point, we have assembled all the ingredients that enter into Conjectures \ref{th:gamma-conjecture} and \ref{th:gamma-conjecture-2}. A few additional remarks may be helpful.

\begin{remark}
Suppose that Conjecture \ref{th:gamma-conjecture} holds. From Lemma \ref{th:make-frobenius}(v), it then follows that the series in \eqref{eq:givental} must converge for $|q| < p^{-1/(p-1)}$. Equivalently, $-\mathrm{val}(m! \langle x, \psi^m(y) \rangle )$ grows slower than linearly.
\end{remark}

\begin{remark}
Take a monotone {\em Spin} Lagrangian submanifold $L \subset M$, with nonzero (Poincar{\'e} dual) cohomology class $[L]$. To fit it into the framework of our discussion, let's assume that $\mathrm{dim}(L) = \mathrm{dim}_{\bC}(M)$ is even. There is a disc-counting invariant $w_L \in \bZ$, such that \cite[Proposition 2.4.A]{biran-membrez16}
\begin{equation}
c_1(TM) \ast_q [L] = q\, w_L [L].
\end{equation} 
In particular, \eqref{eq:quantum-connection-2} restricted to the one-dimensional subspace spanned by $[L]$ is $t\partial_t + \pi t w_L$. Because $TM|L \iso TL \otimes_{\bR} \bC$, we have $\mathit{ch}_m(TM) \smile [L] = 0$ for odd $m$, hence 
\begin{equation}
\Gamma_p(TM) \smile [L] = [L].
\end{equation}
The Frobenius structure from Conjecture \ref{th:gamma-conjecture}, restricted to our one-dimensional subspace, is $\Phi(t) = p^{-\mathrm{dim}_{\bC}(M)/2} D(\pi t)^{-w_L}$; except for the constant, this is as in Example \ref{th:exponential}, hence overconvergent. From Example \ref{th:exponential-2} we see that the valuation of $\Phi(\theta)$ is $-\mathrm{dim}_{\bC}(M)/2$, compatibly with Conjecture \ref{th:gamma-conjecture-2}.
\end{remark}

\begin{remark}
Suppose that there was a monotone symplectic manifolds $M$ which, for generic almost complex structure, contains no rational curves (this can't happen in algebraic geometry, and is thought to be unlikely in symplectic geometry, but can't be ruled out at present). Then for any $b$, the constant Frobenius $\Phi = \Phi_0$ is obviously overconvergent; and any such Frobenius also satisfies Conjecture \ref{th:gamma-conjecture-2}.
\end{remark}

\section{Mirror symmetry for Fano toric varieties\label{sec:ms}}
The aim of this section, which follows (a simplified version of) arguments from \cite{iritani17, iritani2017b}, is to prove a form of closed string mirror symmetry for compact Fano toric varieties. The proof passes through computations in equivariant quantum cohomology, but that is in a sense a technical tool; the ultimately desired statements drop the equivariance. 

\begin{convention}
Our setup will involve several quantum variables $(q_1,\dots,q_r)$, one for each toric divisor. Nevertheless, the collection of such variables will often be denoted by a single $q$, as in the quantum product \eqref{eq:equivariant-quantum-product} or the superpotential \eqref{eq:q-superpotential}. In particular, we use notation
\begin{equation} \label{eq:multi-q}
\begin{aligned}
& \vec{v} = (v_1,\dots,v_r) \in \bZ^r, \\
& q^{\vec{v}} = q_1^{v_1} \cdots q_r^{v_r}.
\end{aligned}
\end{equation}
The reason for this multivariable setup is not to have a richer theory: rather, we need it in order to carry out a specific argument (Section \ref{sec:constant-term}). Following that, we will specialize back to a single variable (Section \ref{sec:single-variable}).
\end{convention}

\subsection{Fano toric varieties\label{sec:toric}}
Let $T$ be a rank $n$ torus, and $N = \mathit{Hom}(\bC^*,T)$ its cocharacter lattice (of one-parameter subgroups). We fix an identification $T = (\bC^*)^n$, and correspondingly $N = \bZ^n$.  Take a fan in $N \otimes \bR$ giving rise to a smooth projective Fano toric variety $M$, on which $T$ acts. Let $\bfe_1,\dots,\bfe_r \in N$ be the primitive generators of the rays of the fan. The following statement expresses combinatorially the smoothness of $M$:

\begin{lemma} \label{th:toric-smoothness}
Each $k$-dimensional cone of the fan is of the form $\bR^{\geq 0} \bfe_{i_1} + \cdots + \bR^{\geq 0} \bfe_{i_k}$, where $(\bfe_{i_1},\dots,\bfe_{i_k})$ form part of a basis of the abelian group $N$.
\end{lemma}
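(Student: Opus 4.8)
This is the combinatorial shadow of the classical fact that a toric variety is smooth exactly when every cone of its fan is generated by part of a $\bZ$-basis of the cocharacter lattice; I would either quote it directly from a standard reference (Fulton, \emph{Introduction to Toric Varieties}, §2.1, or Cox--Little--Schenck, \emph{Toric Varieties}, Theorem 3.1.19) or reconstruct the short argument as outlined below.

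First reduce to the case of a full-dimensional cone. Since $M$ is projective it is complete, so the fan is complete and its support is all of $N\otimes\bR$; consequently every cone $\tau$ of the fan lies in some $n$-dimensional cone $\sigma$ and hence, by the fan axioms, is a face of such a $\sigma$. Granting the claim for $n$-dimensional cones, write $\sigma = \bR^{\geq 0}\bff_1 + \cdots + \bR^{\geq 0}\bff_n$ with $(\bff_1,\dots,\bff_n)$ a $\bZ$-basis of $N$. The faces of a cone spanned by linearly independent vectors are exactly the $\sum_{j\in J}\bR^{\geq 0}\bff_j$ for subsets $J\subseteq\{1,\dots,n\}$, the primitive ray generators of such a face are precisely the $\bff_j$ with $j\in J$ (each is primitive, being part of a basis), and those vectors form part of the basis $(\bff_1,\dots,\bff_n)$; since they are ray generators of a cone of the fan they are among $\bfe_1,\dots,\bfe_r$. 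So it is enough to treat $n$-dimensional cones.

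For an $n$-dimensional cone $\sigma$ the affine chart $U_\sigma = \operatorname{Spec}\bC[S_\sigma]$, with $S_\sigma = \sigma^\vee\cap N^\vee$, contains a unique torus-fixed point $x_\sigma$, whose maximal ideal $\frakm$ is generated by the monomials $\chi^u$, $u\in S_\sigma\setminus\{0\}$. Since $\sigma$ is strongly convex, $\sigma^\vee$ is full-dimensional, $S_\sigma$ has no nonzero units, and every nonzero element of $S_\sigma$ is a finite sum of irreducible elements; moreover $\frakm^2$ is spanned by the $\chi^u$ with $u$ a sum of two nonzero elements, so $\frakm/\frakm^2$ has as basis the classes of $\chi^u$ with $u$ irreducible. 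Smoothness of $M$ forces $\dim\frakm/\frakm^2 = \dim U_\sigma = n$, so $S_\sigma$ has exactly $n$ irreducible elements $u_1,\dots,u_n$ and $S_\sigma = \bN u_1 + \cdots + \bN u_n$. As $\sigma^\vee$ is full-dimensional, $S_\sigma$ generates $N^\vee$ as a group, so $u_1,\dots,u_n$ span $N^\vee$ over $\bZ$ and, being $n$ in number, form a $\bZ$-basis. Then $S_\sigma$ is the free monoid on $(u_1,\dots,u_n)$, hence $\sigma^\vee$ is the simplicial cone on this basis and $\sigma = (\sigma^\vee)^\vee$ is the simplicial cone on the dual basis $u_1^\ast,\dots,u_n^\ast$ of $N$; the $u_i^\ast$ are primitive, so they are exactly the primitive ray generators of $\sigma$ and thus lie among $\bfe_1,\dots,\bfe_r$, completing the full-dimensional case.

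\textbf{Main obstacle.} There is no genuine difficulty here: the whole statement is classical toric geometry. The one point needing care is the identification of $\frakm/\frakm^2$ with the span of the irreducible monoid generators and the passage from ``$n$ irreducible generators of $S_\sigma$'' to ``$S_\sigma$ free on a lattice basis'' — but this is precisely the smooth-cone criterion and is available off the shelf, so in the write-up I would most likely just cite it.
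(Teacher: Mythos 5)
The paper offers no proof of this lemma at all: it is stated as the standard combinatorial characterization of smoothness for toric varieties (available, e.g., as Theorem 3.1.19 in the Cox--Little--Schenck reference the paper already cites elsewhere). Your argument is the correct classical one — reduction to full-dimensional cones via completeness of the fan, followed by the computation of $\frakm/\frakm^2$ at the torus-fixed point of $U_\sigma$ and the observation that $n$ monoid generators of a rank-$n$ lattice must form a basis — and in the write-up a citation would indeed suffice.
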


Given $\bfn \in N$, write 
\begin{equation} \label{eq:nonnegative-linear}
\mathbf{n} = \psi_1 \bfe_1 + \cdots + \psi_r \bfe_r, \quad \psi_i = \psi_i(\bfn) \geq 0,
\end{equation}
with the property that all $\bfe_i$ with $\psi_i>0$ belong to a single cone of the fan. This expression is unique, by Lemma \ref{th:toric-smoothness}, hence defines a function
\begin{equation} \label{eq:vec-psi}
\vec{\psi} = (\psi_1,\dots,\psi_r): N \longrightarrow (\bZ^{\geq 0})^r.
\end{equation}
Then,
\begin{equation} \label{eq:toric-weight-function}
w = \psi_1 + \cdots + \psi_r: N \longrightarrow \bZ^{\geq 0}
\end{equation}
is the unique function which is linear on each cone of the fan, and such that $w(\bfe_i) = 1$ for all $i$. We extend $w$ to $N \otimes \bR \rightarrow \bR^{\geq 0}$, without changing notation. The following statement expresses combinatorially the fact that $M$ is Fano \cite[Lemma 6.1.13 and Theorem 6.1.14]{cox-little-schenk}:

\begin{lemma} \label{th:toriccombinatorics}
(i) The function $w$ is convex, meaning that
\begin{equation} \label{eq:convex-psi}
w(r_1 \bfn_1 + \cdots + r_k \bfn_k) \leq r_1 w(\bfn_1) + \cdots + r_k w(\bfn_k)
\quad
\text{for $\bfn_j \in N \otimes \bR$, $r_j > 0$.}
\end{equation}

(ii) $w$ is strictly convex in the following sense: if in \eqref{eq:convex-psi}, not all $\bfn_j$ are contained in a single cone of the fan, then the inequality is strict.
\end{lemma}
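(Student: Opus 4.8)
The plan is to identify $w$ with (a sign-normalized version of) the support function of the anticanonical divisor $-K_M = \sum_i D_i$, and then to read off (i) and (ii) from the classical dictionary between nefness/ampleness and convexity/strict convexity of support functions.

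First I would record the piecewise-linear structure of $w$. Since $M$ is projective the fan is complete, so its maximal cones cover $N\otimes\bR$, and by Lemma \ref{th:toric-smoothness} each maximal cone $\sigma$ is spanned by rays $\bfe_i$, $i\in I_\sigma$, which form a $\bZ$-basis of $N$; consequently there is a unique $m_\sigma\in\mathit{Hom}(N,\bZ)$ with $\langle m_\sigma,\bfe_i\rangle = 1$ for $i\in I_\sigma$, and the defining property of $w$ gives $w|_\sigma = \langle m_\sigma,-\rangle$. In particular $w$ is positively homogeneous of degree $1$, and, using $w=\psi_1+\cdots+\psi_r$ with $\psi_i\geq 0$, one sees $w(\bfn)=0$ only for $\bfn=0$.

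Next comes the one place where the Fano hypothesis enters. Up to the usual sign convention, $w$ is the $T$-Cartier support function of $-K_M$, which is ample; by \cite[Lemma 6.1.13 and Theorem 6.1.14]{cox-little-schenck} this amounts to the combinatorial statement that for every maximal cone $\sigma$ and every ray $\bfe_j$ one has $\langle m_\sigma,\bfe_j\rangle\leq 1$, with equality if and only if $\bfe_j\in\sigma$. Spreading this over the (complete) fan by linearity, I obtain that for every maximal $\sigma$ and every $\bfn\in N\otimes\bR$,
\begin{equation}
\langle m_\sigma,\bfn\rangle \leq w(\bfn),
\end{equation}
with equality (for $\bfn\neq 0$) precisely when $\bfn\in\sigma$; the strict clause uses that the smallest cone of the fan containing $\bfn$ is a face, so if all of its rays lay in $\sigma$ then $\bfn$ would lie in $\sigma$ as well.

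With this in hand, (i) and (ii) are formal. Given $\bfn_j\in N\otimes\bR$ and $r_j>0$, pick a maximal cone $\sigma$ containing $\sum_j r_j\bfn_j$ (the degenerate case $\sum_j r_j\bfn_j = 0$ is disposed of separately using $w(\bfn)=0\Leftrightarrow\bfn=0$). Then
\begin{equation}
w\Big(\textstyle\sum_j r_j\bfn_j\Big) = \Big\langle m_\sigma,\textstyle\sum_j r_j\bfn_j\Big\rangle = \sum_j r_j\langle m_\sigma,\bfn_j\rangle \leq \sum_j r_j\, w(\bfn_j),
\end{equation}
which is (i); and if equality holds, then since each summand satisfies $\langle m_\sigma,\bfn_j\rangle\leq w(\bfn_j)$ and $r_j>0$, every such inequality must be an equality, so by the strict clause above each $\bfn_j$ lies in $\sigma$, contradicting the hypothesis of (ii). I expect the only genuine obstacle to be the middle step, i.e.\ the translation of ``$-K_M$ ample'' into the inequality with its equality case; if one wished to avoid citing \cite{cox-little-schenck}, this is precisely the toric dictionary between ampleness and strict convexity of the support function (equivalently, that the anticanonical polytope $\{m:\langle m,\bfe_i\rangle\leq 1\ \text{for all }i\}$ has the given fan as its normal fan), and reproving it is where all the work would sit.
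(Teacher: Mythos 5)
Your argument is correct, and it is the same route the paper takes: the paper offers no proof beyond pointing to \cite[Lemma 6.1.13 and Theorem 6.1.14]{cox-little-schenk}, i.e.\ to exactly the ampleness--strict-convexity dictionary for the support function of $-K_M$ that you invoke and then unwind. Your write-up simply makes explicit the deduction (via the linear functionals $m_\sigma$ and the equality case $\langle m_\sigma,\bfe_j\rangle=1\Leftrightarrow\bfe_j\in\sigma$) that the paper leaves to the reference, and the details, including the degenerate case $\sum_j r_j\bfn_j=0$, check out.
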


Geometrically, the $\{ \bfe_i \}$ correspond to the toric divisors $\{ D_i \subset M \}$, hence to equivariant cohomology classes $[D_i]_T \in H^2_T(M;\bZ)$. These form a basis of that (free) cohomology group, $H^2_T(M;\bZ) = \bZ^r$. Taking the $k$-th components of the vectors $\bfe_i$, we get classes 
\begin{equation}\label{eq:lambdak}
\lambda_k = e_{1,k} [D_1]_T + \cdots + e_{r,k} [D_r]_T \in H^2_T(M;\bZ), \quad k = 1,\dots,n.
\end{equation}
The $\lambda_k$ are the generators of $H^2_T(\mathit{point};\bZ) \subset H^2_T(M;\bZ)$; and 
\begin{equation} \label{eq:forget-2}
H^2(M;\bZ) = H^2_T(M;\bZ)/(\bZ \lambda_1 \oplus \cdots \oplus \bZ \lambda_n).
\end{equation}
The cohomology $H^*(M;\bZ)$ is concentrated in even degrees, and is a free abelian group. Hence, the Leray--Serre spectral sequence from $H^*_T(\mathit{point};\bZ) \otimes H^*(M;\bZ)$ to $H^*_T(M;\bZ)$ degenerates, and the forgetful map 
\begin{equation} \label{eq:forgetful}
H^*_T(M;\bZ) \longrightarrow H^*(M;\bZ) 
\end{equation}
is split surjective. From a choice of splitting, we get an isomorphism of $H^*_T(\mathit{point};\bZ)$-modules,
\begin{equation} \label{eq:equi-free}
H^*_T(M;\bZ) \iso H^*(M;\bZ) \otimes H^*_T(\mathit{point};\bZ),
\end{equation}
which reduces to \eqref{eq:forgetful} when composed with $H^*_T(\mathit{point};\bZ) \rightarrow \bZ$. This shows that \eqref{eq:forgetful} induces an isomorphism
\begin{equation} \label{eq:non-equivariant-limit}
H^*_T(M;\bZ)/(\lambda_1,\dots,\lambda_n)H^*_T(M;\bZ) \stackrel{\iso}{\longrightarrow} H^*(M;\bZ),
\end{equation}
generalizing \eqref{eq:forget-2}. For any $\mathbf{n} \in N$, define
\begin{equation} \label{eq:srbasis} 
\alpha_{\mathbf{n}} = \prod_{i=1}^{r} [D_i]^{\psi_i(\mathbf{n})} \in H_T^{2w(\mathbf{n})}(M;\mathbb{Z}). \end{equation} 
These elements form a basis for $H^*_T(M;\mathbb{Z})$ over $\bZ$. For future purposes, we introduce the following:

\begin{definition} \label{def:f-basis}
Let $F$ be any field. An $F$-basis set is a finite set $\Delta \subset N$ such that the non-equivariant images of the classes $(\alpha_{\bfn})_{\bfn \in \Delta}$ form a basis of $H^*(M;F)$.
\end{definition}

\begin{lemma} \label{th:f-basis}
(i) For any field $F$, there is an $F$-basis set.

(ii) If $\Delta$ is an $F$-basis set, it has the same property for any other field with the same characteristic.

(iii) If $\Delta$ is an basis set for a characteristic $p>0$ field, then the same holds in characteristic $0$.
\end{lemma}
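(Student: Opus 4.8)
The plan is to reduce all three statements to elementary linear algebra over $\bZ$ and over prime fields, the only nontrivial input being the fact recorded just after \eqref{eq:srbasis}, that the classes $\alpha_{\bfn}$, $\bfn \in N$, form a $\bZ$-basis of $H^*_T(M;\bZ)$. Throughout, write $b = \sum_k b_{2k}(M)$ for the total Betti number. Since $H^*(M;\bZ)$ is free of rank $b$, for every field $F$ one has $H^*(M;F) = H^*(M;\bZ) \otimes_{\bZ} F$, of dimension $b$, and for a field extension $F_0 \subseteq F$ one has $H^*(M;F) = H^*(M;F_0) \otimes_{F_0} F$. For a finite subset $\Delta = \{\bfn_1,\dots,\bfn_P\} \subset N$ and a field $F$, let $\phi_{\Delta,F}\colon F^P \to H^*(M;F)$ be the $F$-linear map taking the standard basis vectors to the non-equivariant images of $\alpha_{\bfn_1},\dots,\alpha_{\bfn_P}$ (after reduction mod $\mathrm{char}(F)$). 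By definition, $\Delta$ is an $F$-basis set precisely when $\phi_{\Delta,F}$ is an isomorphism; and $\phi_{\Delta,F}$ is the base change of the integral map $\phi_{\Delta,\bZ}\colon \bZ^P \to H^*(M;\bZ)$ along $\bZ \to F$, in particular of $\phi_{\Delta,F_0}$ along $F_0 \to F$.

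First I would record two facts. (a) By \eqref{eq:non-equivariant-limit}, the non-equivariant images of the $\alpha_{\bfn}$, $\bfn \in N$, generate $H^*(M;\bZ)$ as an abelian group, hence span $H^*(M;F)$ over $F$ for every field $F$. (b) For a field extension $F_0 \subseteq F$, a fixed $\phi_{\Delta,F_0}$ is an isomorphism if and only if $\phi_{\Delta,F} = \phi_{\Delta,F_0} \otimes_{F_0} F$ is: tensoring over a field is exact, so it carries $\ker(\phi_{\Delta,F_0})$ and $\mathrm{coker}(\phi_{\Delta,F_0})$ to their scalar extensions, and a finite-dimensional $F_0$-vector space vanishes iff its scalar extension to $F$ vanishes.

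Now part (i) follows from (a): over a field a spanning family contains a basis, so there are $b$ distinct elements $\bfn_1,\dots,\bfn_b \in N$ whose images form a basis of $H^*(M;F)$, and $\{\bfn_1,\dots,\bfn_b\}$ is then an $F$-basis set. Part (ii) follows from (b): any two fields of the same characteristic share a common prime field $F_0$ ($\bQ$ or $\bF_p$), and $\Delta$ is an $F$-basis set iff it is an $F_0$-basis set, which does not depend on the chosen $F \supseteq F_0$. For part (iii), by (ii) we may assume $\Delta$ is an $\bF_p$-basis set for the prime field; then $P = |\Delta| = b$, and after choosing a $\bZ$-basis of $H^*(M;\bZ)$ the map $\phi_{\Delta,\bZ}$ is given by a square $b \times b$ integer matrix $A$. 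That $\Delta$ is an $\bF_p$-basis set says $\det A \not\equiv 0 \pmod p$; hence $\det A \neq 0$ in $\bZ$, so $\phi_{\Delta,\bQ}$ is an isomorphism, i.e.\ $\Delta$ is a $\bQ$-basis set, and by (ii) it is an $F$-basis set for every field $F$ of characteristic $0$.

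There is no serious obstacle here: once the integral picture is set up, everything rests on exactness of $-\otimes_{F_0} F$ over a field, together with the triviality that the residue mod $p$ of a nonzero integer determinant can itself be nonzero. If anything needs care, it is only verifying that the map $\vec\psi$ of \eqref{eq:vec-psi} identifies $N$ with the face-supported exponent vectors, so that the $\alpha_{\bfn}$ genuinely constitute the monomial $\bZ$-basis of $H^*_T(M;\bZ)$ referred to above; but this is immediate from Lemma \ref{th:toric-smoothness} and completeness of the fan, and is already implicit in the cited basis statement.
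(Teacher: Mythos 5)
Your proposal is correct and follows essentially the same route as the paper: part (i) via the spanning property coming from surjectivity of the forgetful map with $F$-coefficients, and parts (ii)--(iii) via the integer matrix of $\phi_{\Delta,\bZ}$ and the observation that its determinant is nonzero in characteristic $0$ iff it is nonzero in $\bZ$, and nonzero in characteristic $p$ iff it is not divisible by $p$. The only cosmetic difference is that you phrase (ii) through base change to the prime field rather than directly through the determinant, which amounts to the same thing.
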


\begin{proof}
(i) By applying the universal coefficient theorem, one sees that the $(\alpha_{\bfn})_{\bfn \in N}$ form a basis of $H^*_T(M;F)$, and that the forgetful map \eqref{eq:forgetful} is onto with $F$-coefficients.

(ii), (iii) Take the map $\bZ^\Delta \rightarrow H^*(M;\bZ)$ which takes the standard generators to the non-equivariant images of the $\alpha_{\bfn}$. We can think of it as a matrix with integer coefficients. Then, $\Delta$ is a generating set for fields of characteristic $0$ if the matrix (is square and) has nonzero determinant; and in characteristic $p$, if the determinant is not a multiple of $p$.
%
%
\end{proof}

Returning to integer coefficients, we recall the combinatorial description of the ring structure of $H_T^*(M;\bZ)$ (see e.g.\ \cite{davisjanuszkiewicz91}). The equivariant cup product is 
\begin{equation} \label{eq:sr}
\alpha_{\mathbf{m}} \smile \alpha_{\mathbf{n}} = 
\begin{cases}
 \alpha_{\mathbf{m} +\mathbf{n}} & \text{if } \mathbf{m} \text{ and } \mathbf{n} \text{ lie in a common cone}, \\
0 & \text{otherwise}.
\end{cases}
\end{equation}
Applying \eqref{eq:forgetful} recovers the simpler fact that the $[D_i]$ are generators of $H^*(M;\bZ)$ as a ring.

The anticanonical line bundle corresponds to the function \eqref{eq:toric-weight-function}, which geometrically means that
\begin{equation} \label{eq:firstchern}
c_1(\mathit{TM}) = \sum_{i=1}^r [D_i] \in H^2(M;\bZ).
\end{equation}
Extending \eqref{eq:firstchern}, the total Chern class of our toric variety \cite[Proposition 13.1.2]{cox-little-schenk} is 
\begin{equation}
c(\mathit{TM}) = \prod_i (1+[D_i]) \in H^*(M;\bZ).
\end{equation}
Correspondingly, 
\begin{equation}
\Gamma_p(\mathit{TM}) = \prod_{i=1}^r \Gamma_p([D_i]) \in H^*(M;\bQ_p);
\end{equation}
and by \eqref{eq:inverse-gamma}, also
\begin{equation} \label{eq:inverse-toric-gamma}
\Gamma_p(\mathit{TM})^{-1} = \Gamma_p(TM^\vee) = \prod_{i=1}^r \Gamma_p(-[D_i]).
\end{equation}

\subsection{Equivariant quantum cohomology\label{sec:equivariant}}
Identify $H_2^T(M;\bZ) = \mathbb{Z}^r$ via the basis dual to $[D_i]_T \in H^2_T(M;\bZ)$, so that $\bQ[H_2^T(M)] = \bQ[q_1^{\pm 1},\dots,q_r^{\pm 1}]$. Recall the notation \eqref{eq:multi-q}. We also write 
\begin{equation}
|\vec{v}| = v_1+\cdots+v_r.
\end{equation}
Consider the ring
\begin{equation} \label{eq:lambdageq0} 
\Lambda = \big\{ \sum_{|\vec{v}| \geq 0} a_{\vec{v}} q^{\vec{v}} \big\} \subset \bQ[q_1^{\pm 1},\dots,q_r^{\pm 1}]
\end{equation}
and the ideal
\begin{equation}
I\Lambda = \big\{ \sum_{|\vec{v}| > 0} a_{\vec{v}} q^{\vec{v}} \big\} \subset \Lambda.
\end{equation}
Occasionally we find it useful to have a grading, given by setting $\mathrm{deg}(q_i) = 2$; which means that $\Lambda$ is nonnegatively graded, and $I\Lambda$ is the part with positive gradings.

The equivariant quantum product
\begin{equation} \label{eq:equivariant-quantum-product}
\ast_{q}^T: H^*_T(M;\Lambda) \otimes H^*_T(M;\Lambda) \longrightarrow H^*_T(M;\Lambda) 
\end{equation}
counts curves in any class $A \in H_2(M)$ with $q_1^{A \cdot D_1} \cdots q_r^{A \cdot D_r}$. Note that $(A \cdot D_1) + \cdots (A \cdot D_r) = A \cdot c_1(TM) \geq 0$, since our variety is Fano. Equivalently, the weight can be written as $q^{\vec{j}(A)}$, where $\vec{j}: H_2(M) \rightarrow H_2^T(M) = \bZ^r$ is the canonical map. We add a loop rotation parameter $u$ (corresponding to the trivial action of $S^1$ on $M$), and define the equivariant quantum connection as the collection of mutually commuting operators
\begin{equation} \label{eq:equivariant-quantum-connection}
\begin{aligned}
& \nabla_{uq_i\partial_{q_i}}^T: H^*_T(M;\Lambda[u]) \longrightarrow H^*_T(M;\Lambda[u]), \\
& \nabla_{uq_i\partial_{q_i}}^T(x) = uq_i\partial_{q_i} x + [D_i]_T \ast_{q}^T x.
\end{aligned}
\end{equation}
For the grading, we set $\mathrm{deg}(u) = 2$ as well, so that 
\begin{align} \label{eq:degreeQHeq} 
\deg(\alpha u^k q^{\vec{v}}) = \deg(\alpha)+2k+2|\vec{v}|. 
\end{align} 
Then, \eqref{eq:equivariant-quantum-product} preserves the grading, and \eqref{eq:equivariant-quantum-connection} has degree $2$.

For any $\mathbf{n} \in N$ there is a shift operator on the $(T \times S^1)$-equivariant quantum cohomology of $M$. We consider these in two slightly different versions:
\begin{align} 
\label{eq:latticeshift} 
& S_\mathbf{n}: H^*_T(M;\Lambda[u]) \longrightarrow H^*_T(M;\Lambda[u]), \\
\label{eq:tildeshift}
& \tilde{S}_{\mathbf{n}}: H^*_T(M;\bQ[q_1^{\pm 1},\dots,q_r^{\pm 1},u]) \longrightarrow
H^*_T(M; \bQ[q_1^{\pm 1},\dots,q_r^{\pm 1},u]),
\end{align}
related by a monomial rescaling given by \eqref{eq:vec-psi}:
\begin{equation} \label{eq:monomial-rescaling}
S_\mathbf{n} = q^{\vec{\psi}(\mathbf{n})} \tilde{S}_\mathbf{n} \,|_{H^*_T(M;\Lambda[u])}.
\end{equation}
The construction of shift operators involves a fibration
\begin{equation} \label{eq:e-fibration}
E_\mathbf{n} \longrightarrow \bC P^1 
\end{equation}
with fibre $M$, which carries an action of $T \times S^1$. One considers classes $A \in H_2(E_{\bfn};\bZ)$ which have intersection number $1$ with the fibre. For \eqref{eq:tildeshift}, the equivariant count of holomorphic curves in \eqref{eq:e-fibration} is weighted, as in \eqref{eq:equivariant-quantum-product}, by powers of $q$ corresponding to the intersection number of $A$ with divisors in $E_{\bfn}$ which are fibrewise versions of the $D_i$. To express this in a more invariant way, consider the Borel space $M_T = M \times_T ET \rightarrow BT$. A cocharacter $\mathbf{n} \in N$ defines a map
\begin{align} \label{eq:inclusioncorrcharacter} 
\mathbb{C}P^1 \hookrightarrow \bC P^\infty = BS^1 \longrightarrow BT.
\end{align}
Topologically, $E_\mathbf{n}$ is the pullback of $M_T$ along \eqref{eq:inclusioncorrcharacter}. From the resulting map $E_{\mathbf{n}} \rightarrow M_T$ we get $\vec{j}_{\bfn}: H_2(E_{\mathbf{n}};\bZ) \rightarrow H_2(M_T;\bZ) = H_2^T(M;\bZ) = \bZ^r$, and the weighting of class $A$ in \eqref{eq:tildeshift} is given by $q^{\vec{j}_{\bfn}(A)}$. It follows from \cite[Lemmas 2.1, 3.3]{iritani17} that the nonzero terms have coefficients in $q^{-\vec{\psi}(\mathbf{n})}\Lambda[u]$; hence \eqref{eq:monomial-rescaling} yields an operator $S_{\mathbf{n}}$ which preserves the subspace $H^*_T(M;\Lambda[u])$.

\begin{remark} \label{rem:nobulkdeformedshift}
Our conventions for the shift operators follow  \cite[\S 2.3]{iritani17} and \cite[\S 3]{iritani2017b} with only minor changes. We use a slightly different coefficient ring, and work in less generality: \cite{iritani17} allows for bulk-deformations with respect to classes $\tau \in H_T^*(M)$, while our shift operators correspond to the case where those bulk-parameters are set to zero.
\end{remark}

\begin{scholium}\label{thm:shiftproperties}
(i) The endomorphisms $\tilde{S}_\mathbf{n}$ are $\Lambda[u]$-linear. For any $\xi \in H^2_T(\mathit{point};\mathbb{Z})$, 
\begin{align} 
\label{eq:shiftu} 
\xi \smile \tilde{S}_\mathbf{n}(x)= \tilde{S}_\mathbf{n}\big(\xi \smile x + (\xi \cdot \mathbf{n})u x\big), 
\end{align}
where the pairing $\xi \cdot \mathbf{n} \in \bZ$ uses the identification $H_2^T(\mathit{point};\mathbb{Z}) = \bZ^n$. The same then holds for $S_{\mathbf{n}}$. These properties are straightforward consequences of the definitions \cite[Definition 2.2]{iritani17}.

(ii) For the trivial $\bfn = \mathbf{0}$, we have $\tilde{S}_{\mathbf{0}} = S_{\mathbf{0}} = \mathit{id}$.
We have
\begin{align} \label{eq:shiftcompositionrule}
& \tilde{S}_{\bfm} \circ \tilde{S}_{\bfn} = \tilde{S}_{\bfm + \bfn}, \\
& S_{\bfm} \circ S_{\bfn} = q^{\vec{\psi}(\bfm) + \vec{\psi}(\bfn) - \vec{\psi}(\bfm+\bfn)} S_{\bfm+\bfn}.
\end{align}
After adjusting to our coefficient rings, this is \cite[Cor.~3.4]{iritani17}, specialized to the toric case \cite[Prop.~2.4 (3)]{iritani17}. Note that $q^{\vec{\psi}(\bfm) + \vec{\psi}(\bfn) - \vec{\psi}(\bfm+\bfn)} \in \Lambda$, since $|\vec{\psi}| = w$ is convex (Lemma \ref{th:toriccombinatorics}). 

(iii) If we reduce the shift operators by setting $u = 0$, then
\begin{align} \label{eq:shiftmoduproduct}
& \tilde{S}_{\bfm}(1)_{u=0} \ast^T_{q} \tilde{S}_{\bfn}(1)_{u=0} = \tilde{S}_{\bfm+\bfn}(1)_{u=0}, \\
& \label{eq:shiftmoduproduct-2}
S_{\bfm}(1)_{u=0} \ast^T_{q} S_{\mathbf{n}}(1)_{u=0} = q^{\vec{\psi}(\bfm) + \vec{\psi}(\bfn) - \vec{\psi}(\bfm+\bfn)}
S_{\bfm+\bfn}(1)_{u=0}.
\end{align}

(iv) We have 
\begin{align} \label{eq:mcdufftolman} 
& \tilde{S}_{\bfe_i}(1) = q_i^{-1} [D_i]_{T}, \\
& S_{\bfe_i}(1)= [D_i]_{T}. 
\end{align}
This is \cite[Eq.~5.18]{liebenschutz-jones21} (building on \cite[Theorem 1.9(iii), Example 5.3]{mcDuff-tolman06}), again after adjusting to our conventions.

(v) With respect to the gradings \eqref{eq:degreeQHeq}, $\tilde{S}_{\bfn}$ is of degree $0$, and correspondingly $S_\mathbf{n}$ is of degree $2w(\mathbf{n})$. One can derive this from the counterpart of \eqref{eq:firstchern} for the fibrewise tangent bundle of \eqref{eq:e-fibration}.
\end{scholium}

\begin{lemma} \label{th:leadingorder} 
We have
\begin{equation}
S_{\mathbf{n}}(1)_{u = 0} = \alpha_{\mathbf{n}}\;\; \mymod I\Lambda,
\end{equation}
where $\alpha_\mathbf{n}$ is the cohomology class \eqref{eq:srbasis}. 
\end{lemma}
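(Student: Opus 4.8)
The plan is to induct on the weight $w(\mathbf n)$, reducing the general case to the generators $\mathbf e_i$ via the composition rule \eqref{eq:shiftmoduproduct-2}. First I would dispose of the base cases: for $\mathbf n = \mathbf 0$, Scholium \ref{thm:shiftproperties}(ii) gives $S_{\mathbf 0}(1) = 1 = \alpha_{\mathbf 0}$, and for $\mathbf n = \mathbf e_i$, part (iv) of the Scholium gives $S_{\mathbf e_i}(1) = [D_i]_T$, whose image under the forgetful map is $[D_i] = \alpha_{\mathbf e_i}$; since all of these classes already lie in $H^*_T(M;\bZ) = H^*_T(M;\Lambda)|_{q=0}$, there is no correction and the claim holds exactly, not just mod $I\Lambda$. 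Here I am using that $w(\mathbf e_i) = 1$ forces $\vec\psi(\mathbf e_i) = \mathbf e_i^{\mathrm{std}}$ (the $i$-th standard vector), so $\alpha_{\mathbf e_i} = [D_i]$.

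For the inductive step, take $\mathbf n \in N$ with $w(\mathbf n) \geq 2$, and choose a cone $\tau$ of the fan containing $\mathbf n$ in its relative interior; write $\mathbf n = \psi_{i_1}\mathbf e_{i_1} + \cdots + \psi_{i_k}\mathbf e_{i_k}$ with all $\psi_{i_j} > 0$ as in \eqref{eq:nonnegative-linear}, where $\mathbf e_{i_1},\dots,\mathbf e_{i_k}$ span $\tau$. Pick an index, say $i_1$, and set $\mathbf m = \mathbf n - \mathbf e_{i_1}$, which still lies in $\tau$ (all coefficients stay $\geq 0$), with $\vec\psi(\mathbf m) = \vec\psi(\mathbf n) - \mathbf e_{i_1}^{\mathrm{std}}$ and $w(\mathbf m) = w(\mathbf n) - 1$. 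Then $\vec\psi(\mathbf m) + \vec\psi(\mathbf e_{i_1}) - \vec\psi(\mathbf m + \mathbf e_{i_1}) = 0$, so \eqref{eq:shiftmoduproduct-2} reads
\begin{equation*}
S_{\mathbf m}(1)_{u=0} \ast^T_q S_{\mathbf e_{i_1}}(1)_{u=0} = S_{\mathbf n}(1)_{u=0}.
\end{equation*}
By the inductive hypothesis $S_{\mathbf m}(1)_{u=0} = \alpha_{\mathbf m} + (\text{terms in } I\Lambda)$, and $S_{\mathbf e_{i_1}}(1)_{u=0} = [D_{i_1}]$. Since $\ast^T_q x = x \smile \mathrm{(stuff)}$ agrees with the ordinary equivariant cup product modulo $I\Lambda$ (the quantum corrections carry positive powers of $q$, by the Fano condition $(A\cdot D_1)+\cdots+(A\cdot D_r) = A\cdot c_1(TM) > 0$ for $A\neq 0$), and $I\Lambda$ is an ideal, reducing mod $I\Lambda$ gives $S_{\mathbf n}(1)_{u=0} \equiv \alpha_{\mathbf m}\smile [D_{i_1}] \pmod{I\Lambda}$. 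Finally, $\mathbf m$ and $\mathbf e_{i_1}$ both lie in the cone $\tau$, so \eqref{eq:sr} gives $\alpha_{\mathbf m}\smile [D_{i_1}] = \alpha_{\mathbf m}\smile \alpha_{\mathbf e_{i_1}} = \alpha_{\mathbf m + \mathbf e_{i_1}} = \alpha_{\mathbf n}$, completing the induction.

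The main thing to be careful about is the bookkeeping of the monomial rescaling \eqref{eq:monomial-rescaling}: one must check that the correction terms in $S_{\mathbf m}(1)_{u=0}$ genuinely lie in $I\Lambda$ rather than merely in $\Lambda$, which is exactly what the inductive hypothesis provides, and that multiplying such terms by $[D_{i_1}]$ (an honest cohomology class, with no $q$) keeps them in $I\Lambda$ — immediate since $I\Lambda$ is an ideal. One should also double check the degenerate possibility that $\vec\psi(\mathbf m)+\vec\psi(\mathbf e_{i_1})-\vec\psi(\mathbf n)$ could fail to vanish; it does not, precisely because $\mathbf m, \mathbf e_{i_1}$ lie in the common cone $\tau$, where $\vec\psi$ is additive (it is the linear extension of $\mathbf e_i \mapsto \mathbf e_i^{\mathrm{std}}$ on $\tau$, by Lemma \ref{th:toric-smoothness}). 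No genuinely hard analysis is needed here; the content is entirely in correctly combining Scholium \ref{thm:shiftproperties}(ii)--(iv) with the Stanley–Reisner relation \eqref{eq:sr}.
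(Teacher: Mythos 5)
Your proof is correct and follows essentially the same route as the paper: both arguments rest on the composition rule \eqref{eq:shiftmoduproduct-2} with vanishing $q$-exponent inside a single cone, Scholium \ref{thm:shiftproperties}(iv) for $S_{\bfe_i}(1) = [D_i]_T$, and the fact that the quantum product agrees with the cup product modulo $I\Lambda$; the paper simply applies the composition rule once to get the full iterated product, whereas you peel off one $\bfe_{i}$ at a time by induction on $w(\bfn)$. (One tiny note: the identity $\alpha_{\bfe_i} = [D_i]_T$ already holds equivariantly, so there is no need to invoke the forgetful map in the base case.)
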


\begin{proof} 
Apply \eqref{eq:shiftmoduproduct-2} to $\bfn$ written as in \eqref{eq:nonnegative-linear}. Since all the $\bfe_i$ with $\psi_i(\bfn) > 0$ lie in a single cone, the $q$-exponent in that formula is zero in this case, and
\begin{align} 
S_\mathbf{n}(1)_{u=0}= (S_{\bfe_i}(1)_{u=0})^{\ast_{q}^T \psi_1(\bfn)} \ast_{q}^T \cdots \ast_{q}^T
(S_{\bfe_r}(1)_{u=0})^{\ast_{q}^T \psi_r(\bfn)}.
\end{align}
The quantum product agrees with the cup product mod $I\Lambda$; the result follows from this and Scholium \ref{thm:shiftproperties}(iv).
\end{proof} 

\begin{lemma}\label{lemma:intertwining}
The operators \eqref{eq:tildeshift} commute with the equivariant quantum connection: 
\begin{align}\label{eqn:commute} 
\nabla_{uq_i\partial_{q_i}}^T \circ \tilde{S}_\mathbf{n} = \tilde{S}_\mathbf{n} \circ \nabla_{uq_i\partial_{q_i}}^T.
\end{align}
\end{lemma}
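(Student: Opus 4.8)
The plan is to prove the intertwining relation \eqref{eqn:commute} by reducing it, via the structural properties collected in Scholium \ref{thm:shiftproperties}, to the already-established difference equation \eqref{eq:shiftu} together with the composition rule \eqref{eq:shiftcompositionrule}. The operator $\nabla^T_{uq_i\partial_{q_i}}$ has two pieces, the differential part $uq_i\partial_{q_i}$ and the quantum multiplication $[D_i]_T\ast^T_q(-)$, so I expect the verification to split accordingly. First I would recall that, by the very construction of the shift operators as counts of curves in the fibration \eqref{eq:e-fibration}, one has a known identity expressing the composition of a shift operator with the connection; in the references \cite{iritani17, iritani2017b} the shift operators are characterized precisely by such a commutation, so the honest content here is bookkeeping to check it survives the passage to our simplified coefficient ring $\Lambda[u]$ and the absence of bulk parameters (Remark \ref{rem:nobulkdeformedshift}).

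The key steps, in order: (1) Observe that $\tilde{S}_{\mathbf{n}}$ is $\Lambda[u]$-linear (Scholium \ref{thm:shiftproperties}(i)), so it commutes with $uq_i\partial_{q_i}$ up to the correction coming from the fact that $\tilde{S}_{\mathbf{n}}$ does not commute with multiplication by the equivariant parameters $\lambda_k$; quantitatively this correction is exactly \eqref{eq:shiftu} applied to $\xi=\lambda_k$. (2) Write $[D_i]_T$ in terms of the $\lambda_k$ and the image of a shift operator: by \eqref{eq:mcdufftolman}, $[D_i]_T=\tilde{S}_{\bfe_i}(1)\cdot q_i$ at $u=0$, but more usefully one uses the full $u$-deformed statement so that quantum multiplication by $[D_i]_T$ can be recognized, via \eqref{eq:shiftmoduproduct} and its $u$-deformed refinement, as a shift operator $\tilde{S}_{\bfe_i}$ composed appropriately; here is where \eqref{eq:shiftcompositionrule}, $\tilde{S}_{\bfe_i}\circ\tilde{S}_{\mathbf{n}}=\tilde{S}_{\bfe_i+\mathbf{n}}=\tilde{S}_{\mathbf{n}}\circ\tilde{S}_{\bfe_i}$, delivers the commutation of the quantum-multiplication part with $\tilde{S}_{\mathbf{n}}$. (3) Combine the two computations: the differential term produces a correction proportional to $(\lambda_k\cdot\mathbf{n})u$ from step (1), and this must cancel against the analogous term produced when quantum multiplication by $[D_i]_T$ is commuted past $\tilde{S}_{\mathbf{n}}$; the cancellation is precisely the compatibility between the $u$-connection and the relation \eqref{eq:shiftu}. (4) Since both sides of \eqref{eqn:commute} are $\bQ[q^{\pm},u]$-linear operators determined by their effect on a generating set of $H^*_T(M)$ (for instance the $\alpha_{\mathbf{n}}$), it suffices to check the identity after applying it to $1\in H^0_T(M)$ and using the module structure — this reduces the problem to an identity among elements of $H^*_T(M;\bQ[q^{\pm},u])$ rather than operators.

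The main obstacle I anticipate is step (2)–(3): getting the $u$-dependence to match. The subtlety is that quantum multiplication by $[D_i]_T$ is \emph{not} literally a shift operator, and \eqref{eq:mcdufftolman} only identifies $\tilde{S}_{\bfe_i}(1)$ with $q_i^{-1}[D_i]_T$ at the level of the unit, whereas what I need is a statement about the operator $[D_i]_T\ast^T_q(-)$. Bridging this gap requires either (a) invoking the full divisor-shift compatibility from \cite{iritani17} — essentially that $\tilde{S}_{\bfe_i}=q_i^{-1}\big(\text{something built from }\nabla^T\big)$, which already encodes \eqref{eqn:commute} for $\mathbf{n}=\bfe_i$ and then bootstrapping to general $\mathbf{n}$ via \eqref{eq:shiftcompositionrule} — or (b) a direct geometric argument comparing the moduli spaces of sections of $E_{\mathbf{n}}$ with insertions of $D_i$ to those without. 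Approach (a) is the economical one given that the paper explicitly positions itself as following \cite{iritani17, iritani2017b}; the remaining work is then to confirm that reducing $u$, $\lambda_k$, and the bulk parameters to our setting does not break the argument, which is routine but must be stated. I would also flag that one should check the identity holds over $\bQ[q_1^{\pm1},\dots,q_r^{\pm1},u]$ before restricting to $\Lambda[u]$, since the intertwining for $S_{\mathbf{n}}$ then follows from \eqref{eq:monomial-rescaling} by a monomial conjugation that commutes with everything in sight.
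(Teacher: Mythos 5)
Your proposal correctly locates the difficulty --- quantum multiplication by $[D_i]_T$ is not itself a shift operator, so the composition rule \eqref{eq:shiftcompositionrule} cannot directly absorb the second term of $\nabla^T_{uq_i\partial_{q_i}}$ --- but the mechanism you propose for resolving it does not close, and the essential input is never actually named. Two of your steps are wrong as stated. In step (1), the commutator $[uq_i\partial_{q_i},\tilde S_{\mathbf n}]$ has nothing to do with the failure of $\tilde S_{\mathbf n}$ to commute with cup product by the $\lambda_k$: the relation \eqref{eq:shiftu} controls commutation with \emph{multiplication} by equivariant parameters, whereas $[uq_i\partial_{q_i},\tilde S_{\mathbf n}]$ is governed by the $q$-dependence of the matrix entries of $\tilde S_{\mathbf n}$, which none of the properties in Scholium \ref{thm:shiftproperties} determine. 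In step (4), the commutator $[\nabla^T_{uq_i\partial_{q_i}},\tilde S_{\mathbf n}]$ is indeed $\Lambda[u]$-linear, but $H^*_T(M;\Lambda[u])$ is not generated by $1$ as a $\Lambda[u]$-module, so vanishing on $1$ proves nothing; and your fallback identification $\tilde S_{\bfe_i}=q_i^{-1}\nabla^T_{uq_i\partial_{q_i}}$ cannot hold as an operator identity (the right-hand side is not $\Lambda[u]$-linear, the left-hand side is), so it cannot ``already encode'' \eqref{eqn:commute} for $\mathbf n=\bfe_i$; the two operators merely agree on the unit, by \eqref{eq:mcdufftolman}.

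The missing idea is the one the paper uses: pass to the bulk-deformed theory and combine (i) Iritani's commutation of shift operators with the connection in the \emph{bulk-parameter} directions, $\nabla_{u\partial_{\tau_i}}\circ S_{\mathbf n}=S_{\mathbf n}\circ\nabla_{u\partial_{\tau_i}}$ (equation \eqref{eq:iritanicommuting}, from \cite[Prop.~2.4(2)]{iritani17}), with (ii) the divisor equation for shift operators, $q_i\partial_{q_i}(S_{\mathbf n})=(\partial_{\tau_i}+\psi_i(\mathbf n))(S_{\mathbf n})$ (equation \eqref{eq:shiftdivisorequation}), then set $\tau=0$. Item (ii) is exactly what converts the unknown $q$-derivative of the operator into a known $\tau$-derivative; it is the step your sketch replaces with a hoped-for cancellation between \eqref{eq:shiftu}-type corrections and quantum-product corrections, and no such cancellation can be assembled from the Scholium alone. (Your reduction of general $\mathbf n$ to the cases $\mathbf n=\bfe_i$ via \eqref{eq:nonnegative-linear} and \eqref{eq:shiftcompositionrule} is valid, but it only relocates the problem.)
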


\begin{proof}
This is a variant of \cite[Proposition 2.4(2)]{iritani17}, and can be deduced from it using the divisor equation for shift operators \cite[Remark 3.14]{iritani17}. As mentioned in Remark \ref{rem:nobulkdeformedshift}, \cite{iritani17} includes bulk deformations of the quantum product and shift operators. For our purpose, it suffices to consider bulk parameters
$(\tau_1,\dots,\tau_r)$ corresponding to the standard generators of $H^2_T(M)$. Both $\ast^T_{q}$ and $S_{\mathbf{n}}$ will now depend on these parameters, without specifically indicating this in the notation. By \cite[Proposition 2.4(2)]{iritani17}, the operation $\nabla_{u\partial_{\tau_i}} = u\partial_{\tau_i} + [D_i]_T \ast^T_{q} \cdot$ satisfies
\begin{align} \label{eq:iritanicommuting} 
\nabla_{u\partial_{\tau_i}} \circ S_{\mathbf{n}} = S_{\mathbf{n}} \circ \nabla_{u\partial_{\tau_i}}. 
\end{align} 
By the divisor equation for shift operators \cite[Remark 3.14]{iritani17}, 
\begin{equation} \label{eq:shiftdivisorequation}
q_i \partial_{q_i} (S_{\mathbf{n}}) = (\partial_{\tau_i} + \psi_i(\mathbf{n})) (S_{\mathbf{n}}).
\end{equation}
Combining \eqref{eq:iritanicommuting} and \eqref{eq:shiftdivisorequation}, we get \eqref{eqn:commute} for general bulk parameters; specializing to $\tau_i = 0$ gives the version we wanted.
\end{proof}

\subsection{Toric mirror symmetry\label{sect:toricms}}
Let
\begin{equation}
\label{eq:positiveLaurentring} 
R_{\Lambda} =\Big\{\sum_{\substack{(\vec{v},\mathbf{n}) \\ |\vec{v}| \geq w(\mathbf{n})}}a_{\vec{v},\mathbf{n}}q^{\vec{v}}z^{\mathbf{n}} \Big\} \subset \bQ[q_i^{\pm 1}, z_j^{\pm 1}].
\end{equation}
As a $\Lambda$-module, this is freely generated by $q^{\vec\psi(\bfn)}z^\bfn$, $\bfn \in N$. The mirror superpotential is 
\begin{equation} \label{eq:q-superpotential}
W_{q} = \sum_{i=1}^r  q_i z^{\bfe_i} \in R_\Lambda.
\end{equation}
Similarly, we consider the space $\Omega^k_{\Lambda}$ of differential $k$-forms in the variables $(z_1,\dots,z_n)$; this is the free $R_\Lambda$-module with basis 
\begin{equation} \label{eq:differential-form-basis}
(dz_{i_1}/z_{i_1}) \wedge \cdots \wedge (dz_{i_k}/z_{i_k}),\quad i_1 < \cdots < i_k.
\end{equation}
In particular, write
\begin{equation} \label{eq:vol}
\mathit{vol} = \frac{dz_1}{z_1} \wedge \cdots \wedge \frac{dz_n}{z_n} \in \Omega^n_\Lambda.
\end{equation}
We re-insert the parameter $u$, and consider $\Omega^*_\Lambda[u]$ with differential
\begin{equation}
ud + dW_{q}: \Omega^*_\Lambda[u] \longrightarrow \Omega^{*+1}_{\Lambda}[u],
\end{equation}
where $dW_q$ acts by wedge product on the left; and with the connection
\begin{equation} \label{eq:u-mirror-connection}
\nabla_{uq_i\partial_{q_i}} = uq_i\partial_{q_i} + (q_i\partial_{q_i}W_{q}) =
uq_i\partial_{q_i} + q_i z^{\bfe_i}:
\Omega^*_{\Lambda}[u] \longrightarrow \Omega^*_{\Lambda}[u].
\end{equation}
Consider the $\Lambda[u]$-linear map
\begin{equation} \label{eq:thetamap} 
\begin{aligned}
& \Theta_u: \Omega_{\Lambda}^n[u] = R_\Lambda \, vol \longrightarrow H^*_{T}(M;\Lambda[u]), \\
& \Theta_u(q^{\vec\psi(\mathbf{n})}z^{\mathbf{n}} \,\mathit{vol})= S_\mathbf{n}(1). 
\end{aligned}
\end{equation}
The following is a simplified version of \cite[Theorem 3.21]{iritani17}:

\begin{lemma} \label{lem:shift1} 
(i) $\Theta_u$ is an isomorphism.

(ii) $\Theta_u(\mathit{vol})=1$. 

(iii) $\Theta_u$ intertwines connections: 
\begin{equation} 
\Theta_u \circ \nabla_{uq_i\partial_{q_i}} = \nabla_{uq_i\partial_{q_i}}^T \circ \Theta_u. \end{equation} 
\end{lemma}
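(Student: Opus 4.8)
\textbf{Proof plan for Lemma \ref{lem:shift1}.}

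The plan is to prove (i), (ii), (iii) essentially in reverse logical order: first establish the algebraic identities that make $\Theta_u$ well-defined and a module map, then deduce that it intertwines the connections, and finally use the connection-intertwining together with the leading-order computation from Lemma \ref{th:leadingorder} to conclude that $\Theta_u$ is an isomorphism. Part (ii) is immediate from the definition \eqref{eq:thetamap} applied to $\bfn = \mathbf{0}$, since $S_{\mathbf{0}} = \mathit{id}$ by Scholium \ref{thm:shiftproperties}(ii), so $\Theta_u(\mathit{vol}) = S_{\mathbf 0}(1) = 1$.

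For the fact that $\Theta_u$ is well-defined and $\Lambda[u]$-linear: the source is the free $\Lambda[u]$-module on the generators $q^{\vec\psi(\bfn)}z^\bfn\,\mathit{vol}$, $\bfn \in N$, so a priori $\Theta_u$ is defined on generators and extended $\Lambda[u]$-linearly; the only thing to check is that this is consistent, which it is because the generators are genuinely a free basis. The real content is part (iii). I would verify the intertwining on the generating set. On the mirror side, $\nabla_{uq_i\partial_{q_i}}$ applied to $q^{\vec\psi(\bfn)}z^\bfn\,\mathit{vol}$ produces $u(q_i\partial_{q_i})$ of the coefficient plus multiplication by $q_iz^{\bfe_i}$; the monomial $q_iz^{\bfe_i} \cdot q^{\vec\psi(\bfn)}z^\bfn$ has the form $q^{\vec\psi(\bfn)+\bfe_i}z^{\bfn+\bfe_i}$, and one rewrites $\vec\psi(\bfn)+\bfe_i = \vec\psi(\bfn+\bfe_i) + (\text{correction in }(\bZ^{\geq 0})^r)$, the correction being exactly $\vec\psi(\bfn) + \vec\psi(\bfe_i) - \vec\psi(\bfn+\bfe_i)$ after noting $\vec\psi(\bfe_i) = \bfe_i$ (the $i$-th standard basis vector). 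So I must show that $\Theta_u$ of this monomial equals $q^{\text{correction}}\,S_{\bfn+\bfe_i}(1)$, which is precisely the $u$-deformed composition rule; for this one uses Scholium \ref{thm:shiftproperties}: the product $S_{\bfe_i} \circ S_\bfn$ rule \eqref{eq:shiftcompositionrule} together with $S_{\bfe_i}(1) = [D_i]_T$ from \eqref{eq:mcdufftolman}, plus Lemma \ref{lemma:intertwining} which says the $\tilde S_\bfn$ (hence, after the monomial rescaling \eqref{eq:monomial-rescaling}, the $S_\bfn$ suitably interpreted) commute with $\nabla^T_{uq_i\partial_{q_i}}$. The $uq_i\partial_{q_i}$ term on the source matches the $[D_i]_T \ast^T_q$ term on the target because applying $\nabla^T$ to $S_\bfn(1)$ and using the commutation of shift operators with $\nabla^T$ reduces it to $S_\bfn(\nabla^T_{uq_i\partial_{q_i}}1) = S_\bfn([D_i]_T)$, and then one identifies $[D_i]_T \smile (-)$ on $S_\bfn(1)$ with the shift of $[D_i]_T$ via \eqref{eq:shiftu}; the $(\xi\cdot\bfn)u$ correction term in \eqref{eq:shiftu} is exactly what accounts for the $uq_i\partial_{q_i}$ acting on the scalar $q^{\vec\psi(\bfn)}$ on the source side, since $q_i\partial_{q_i}(q^{\vec\psi(\bfn)}) = \psi_i(\bfn)\,q^{\vec\psi(\bfn)}$ and $\psi_i(\bfn)$ is the relevant pairing. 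Bookkeeping these two corrections against each other is the delicate point.

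Finally, for (i): $\Theta_u$ is a map of free $\Lambda[u]$-modules of the same (infinite, but filtered by degree) rank, sending the basis element $q^{\vec\psi(\bfn)}z^\bfn\,\mathit{vol}$ to $S_\bfn(1)$. Restricting to $u=0$ and reducing mod $I\Lambda$, Lemma \ref{th:leadingorder} gives $S_\bfn(1)_{u=0} \equiv \alpha_\bfn \bmod I\Lambda$, and for a suitable finite $F$-basis set $\Delta$ (Lemma \ref{th:f-basis}) the classes $\alpha_\bfn$ are a basis; more precisely, on each graded piece $\Theta_u$ is given by a matrix over $\Lambda[u]$ whose reduction mod $(u, I\Lambda)$ is the change-of-basis matrix between $\{q^{\vec\psi(\bfn)}z^\bfn\,\mathit{vol}\}$ and $\{\alpha_\bfn\}$, which is upper-triangular unipotent with respect to the degree filtration (or invertible by the basis property). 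Since $\Lambda[u]$ is a graded ring with $\Lambda[u]/(u,I\Lambda) = \bQ$ and the relevant modules are finitely generated in each degree, an endomorphism whose reduction is invertible is itself invertible — this is a graded Nakayama-type argument applied degree by degree. I expect the main obstacle to be part (iii), specifically the careful matching of the two $u$-linear correction terms (the $(\xi\cdot\bfn)u$ in the shift-operator identity \eqref{eq:shiftu} versus the $u q_i\partial_{q_i}$ hitting the monomial prefactor $q^{\vec\psi(\bfn)}$), together with making sure the monomial-rescaling convention \eqref{eq:monomial-rescaling} is threaded consistently through the composition rule so that the $q$-power corrections land exactly where \eqref{eq:shiftmoduproduct-2} predicts.
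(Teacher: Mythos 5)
Your treatment of (ii) is identical to the paper's, and your filtration-plus-Nakayama argument for (i) is essentially the paper's proof (the paper uses the filtrations by $w(\bfn)$ on the source and by cohomological degree on the target, notes that $u\Lambda[u]$ and $I\Lambda[u]$ sit in strictly positive degree, and invokes Lemma \ref{th:leadingorder} on the associated graded). One small correction there: the relevant basis fact is that $(\alpha_{\bfn})_{\bfn\in N}$ is a basis of all of $H^*_T(M;\bZ)$ (stated after \eqref{eq:srbasis}); the finite $F$-basis sets of Lemma \ref{th:f-basis} concern the non-equivariant quotient and are not what is needed for (i).

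For (iii) your overall computation can be made to work, but the mechanism you give for the matching of the $u$-linear terms is wrong. You attribute the cancellation of $uq_i\partial_{q_i}\bigl(q^{\vec\psi(\bfn)}\bigr)=u\psi_i(\bfn)q^{\vec\psi(\bfn)}$ to the $(\xi\cdot\bfn)u$ correction in \eqref{eq:shiftu}; but \eqref{eq:shiftu} applies only to $\xi\in H^2_T(\mathit{point};\bZ)$, i.e.\ to the classes $\lambda_j$, and in particular cannot be used to "identify $[D_i]_T\smile(-)$ with the shift of $[D_i]_T$", since $[D_i]_T$ is not pulled back from a point. That identity is the engine of Lemma \ref{lem:lambdaction}, not of Lemma \ref{lem:shift1}(iii). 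The correct source of the matching term is the rescaling $S_{\bfn}=q^{\vec\psi(\bfn)}\tilde S_{\bfn}$ together with Lemma \ref{lemma:intertwining}: applying $\nabla^T_{uq_i\partial_{q_i}}$ to $S_{\bfn}(1)$ and using the Leibniz rule on the prefactor gives $u\psi_i(\bfn)S_{\bfn}(1)+q^{\vec\psi(\bfn)}\tilde S_{\bfn}([D_i]_T)$, whose first summand matches the derivative term on the source side, while the second matches $\Theta_u(q_i z^{\bfn+\bfe_i}q^{\vec\psi(\bfn)}\,\mathit{vol})$ via $\tilde S_{\bfe_i}(1)=q_i^{-1}[D_i]_T$ and the composition rule. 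The paper avoids this bookkeeping altogether by enlarging coefficients to $\bQ[q_i^{\pm1},z_j^{\pm1}]$ and working with $\tilde S_{\bfn}$, so that $\Theta_u(z^{\bfn}\,\mathit{vol})=\tilde S_{\bfn}(1)$ carries no $q$-prefactor and the source-side connection produces no derivative term at all; you may want to adopt that normalization, or else replace your appeal to \eqref{eq:shiftu} with the Leibniz-rule argument just described (equivalently, the divisor equation \eqref{eq:shiftdivisorequation}).
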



\begin{proof}
(i) Take the increasing $\Lambda[u]$-linear filtrations
\begin{align}
\label{eq:domain-f} & F_i \Omega_{\Lambda}^n[u] \subset \Omega_{\Lambda}^n[u], \\
\label{eq:range-f} & F_i H^*_T(M;\Lambda[u]) \subset H^*_T(M;\Lambda[u])
\end{align}
where: \eqref{eq:domain-f} is generated by $q^{\vec{\psi}(\bfn)} z^{\mathbf{n}}\, \mathit{vol}$ with $w(\bfn) \leq i$; and \eqref{eq:range-f} is generated by $H^{\leq 2i}_T(M;\bZ)$. When grading $q_i$ and $u$ as in \eqref{eq:degreeQHeq}, any element of $u\Lambda[u]$ or $I\Lambda[u]$ has strictly positive degree. Therefore, it follows from Scholium \ref{thm:shiftproperties}(v) that $\Theta_u$ preserves the filtrations defined above, and from Lemma \ref{th:leadingorder} that the induced map on the graded spaces of the filtration is an isomorphism. This implies the desired result.

(ii) By Scholium \ref{thm:shiftproperties}(ii), $\Theta_u(\mathit{vol}) = S_{\mathbf{0}}(1) = 1$. 

(iii) For simplicity, let's enlarge both domain and range to
\begin{equation}
\begin{aligned}
& \Theta_u: \bQ[q_i^{\pm 1},z_j^{\pm 1}]\, \mathit{vol} \longrightarrow H^*_T(M;\bQ[q_i^{\pm 1},u]), 
\\
& \Theta_u(z^{\mathbf{n}}\,\mathit{vol}) = q^{-\vec{\psi}(\bfn)} \Theta_u( q^{\vec{\psi}(\bfn)} z^{\bfn}\, \mathit{vol}) =
q^{-\vec{\psi}(\bfn)} S_{\bfn}(1) = \tilde{S}_{\bfn}(1).
\end{aligned}
\end{equation}
By the definition of \eqref{eq:u-mirror-connection} and Scholium \ref{thm:shiftproperties}(ii),
\begin{equation} \label{eq:ha}
\Theta_u\big( \nabla_{uq_i\partial_{q_i}}(z^{\mathbf n} \,\mathit{vol})\big) = \Theta_u(q_i  z^{\mathbf{n} + \bfe_i} \,\mathit{vol}) = \tilde{S}_{\bfn+\bfe_i}(q_i\,1) = \tilde{S}_{\bfn}(q_i \tilde{S}_{\bfe_i}(1)).
\end{equation}
By Lemma \ref{lemma:intertwining},
\begin{equation} \label{eq:haha}
\nabla_{uq_i\partial_{q_i}}^T \Theta_u(z^\mathbf{n} \, \mathit{vol}) = 
\nabla_{uq_i\partial_{q_i}}^T \tilde{S}_\mathbf{n}(1) =
\tilde{S}_{\mathbf{n}}(\nabla_{uq_i \partial_{q_i}}^T(1)) = \tilde{S}_{\bfn}([D_i]_T).
\end{equation}
Scholium \ref{thm:shiftproperties}(iv) shows that the right hand sides of \eqref{eq:ha} and \eqref{eq:haha} are equal.
\end{proof}

\begin{lemma} \label{lem:lambdaction}
Take the standard basis of $\Omega^{n-1}_\Lambda$ over $R_\Lambda$,
\begin{equation}\label{eq:standardbasisomeganminusone}
\omega_j = (-1)^{j-1} \frac{dz_1}{z_1} \wedge \cdots \wedge \widehat{\frac{dz_j}{z_j}} \wedge \cdots \wedge \frac{dz_n}{z_n}.
\end{equation}
For any $j$ and $\bfn \in N$, the class \eqref{eq:lambdak} satisfies
\begin{equation}  
\Theta_u \big( (ud+dW_{q})(q^{\vec{\psi}(\bfn)} z^\bfn \omega_j)\big) = \lambda_j \smile \Theta_u(q^{\vec{\psi}(\bfn)} z^\bfn \,\mathit{vol}).
\end{equation}
\end{lemma}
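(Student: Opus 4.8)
The plan is to compute $(ud + dW_q)(q^{\vec\psi(\bfn)} z^\bfn \omega_j)$ explicitly, expand the result in the $R_\Lambda$-basis $\{q^{\vec\psi(\bfm)} z^\bfm \,\mathit{vol}\}$, apply $\Theta_u$, and match the outcome to $\lambda_j \smile \Theta_u(q^{\vec\psi(\bfn)} z^\bfn\,\mathit{vol}) = \lambda_j \smile S_\bfn(1)$ using Scholium \ref{thm:shiftproperties}. First I would carry out the differential calculus. Since $d(z^\bfn \omega_j) = n_j z^\bfn \,\mathit{vol}$ where $n_j$ is the $j$-th coordinate of $\bfn$ (the sign in \eqref{eq:standardbasisomeganminusone} is arranged precisely so this holds), and $dW_q \wedge \omega_j = \sum_{i=1}^r q_i e_{i,j} z^{\bfe_i} z^\bfn \,\mathit{vol}$ (the $j$-th coordinate of $\bfe_i$ appearing from $d z^{\bfe_i} = \sum_k e_{i,k} z^{\bfe_i} \frac{dz_k}{z_k}$, only the $k=j$ term surviving after wedging with $\omega_j$), one gets
\begin{equation*}
(ud+dW_q)(q^{\vec\psi(\bfn)} z^\bfn \omega_j) = u\, n_j\, q^{\vec\psi(\bfn)} z^\bfn\,\mathit{vol} + \sum_{i=1}^r e_{i,j}\, q^{\vec\psi(\bfn)} q_i z^{\bfn+\bfe_i}\,\mathit{vol}.
\end{equation*}

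Next I would apply $\Theta_u$. For the first term, $\Theta_u(q^{\vec\psi(\bfn)} z^\bfn \,\mathit{vol}) = S_\bfn(1)$, contributing $u\, n_j\, S_\bfn(1)$. For the summands in the second term, I rewrite $q^{\vec\psi(\bfn)} q_i z^{\bfn+\bfe_i} = q^{\vec\psi(\bfn)+\vec\psi(\bfe_i)-\vec\psi(\bfn+\bfe_i)} \cdot q^{\vec\psi(\bfn+\bfe_i)} z^{\bfn+\bfe_i}$ (using $\vec\psi(\bfe_i) = $ the $i$-th standard vector, so $q^{\vec\psi(\bfe_i)} = q_i$), whence $\Theta_u$ of it equals $q^{\vec\psi(\bfn)+\vec\psi(\bfe_i)-\vec\psi(\bfn+\bfe_i)} S_{\bfn+\bfe_i}(1) = S_\bfn \circ S_{\bfe_i}(1)$ by the composition rule \eqref{eq:shiftcompositionrule}. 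Using Scholium \ref{thm:shiftproperties}(iv), $S_{\bfe_i}(1) = [D_i]_T$, so the $i$-th term gives $e_{i,j}\, S_\bfn([D_i]_T)$. Summing over $i$ and pulling the $\Lambda[u]$-linear $S_\bfn$ out front, the image of the whole expression under $\Theta_u$ is
\begin{equation*}
u\, n_j\, S_\bfn(1) + S_\bfn\Big( \sum_{i=1}^r e_{i,j} [D_i]_T \Big) = S_\bfn\big( \lambda_j \smile 1 + (\lambda_j \cdot \bfn)\, u \cdot 1\big),
\end{equation*}
where I have identified $\sum_i e_{i,j}[D_i]_T = \lambda_j$ by \eqref{eq:lambdak} and noted $\lambda_j \cdot \bfn = \sum_i e_{i,j} n_i$ — wait, I should be careful: $\lambda_j \cdot \bfn$ is the pairing of $\lambda_j \in H^2_T(\mathit{point})$ with $\bfn \in N$, and under the identifications this is the $j$-th coordinate $n_j$ of $\bfn$, since $\lambda_j$ is the $j$-th generator of $H^2_T(\mathit{point};\bZ) = \bZ^n$. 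So the expression equals $S_\bfn(\lambda_j \smile 1 + (\lambda_j \cdot \bfn) u \cdot 1)$.

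Finally, I apply the shift-operator identity \eqref{eq:shiftu} with $\xi = \lambda_j$ and $x = 1$: this says precisely $\lambda_j \smile S_\bfn(1) = S_\bfn(\lambda_j \smile 1 + (\lambda_j \cdot \bfn) u \cdot 1)$. Hence the right-hand side above equals $\lambda_j \smile S_\bfn(1) = \lambda_j \smile \Theta_u(q^{\vec\psi(\bfn)} z^\bfn\,\mathit{vol})$, which is the claim. The main obstacle is purely bookkeeping: getting the signs in \eqref{eq:standardbasisomeganminusone} to produce the clean $d(z^\bfn\omega_j) = n_j z^\bfn \,\mathit{vol}$, and correctly tracking which coordinate of $\bfn$ versus which coordinate of $\bfe_i$ appears where; there is no conceptual difficulty once \eqref{eq:shiftcompositionrule}, \eqref{eq:mcdufftolman}, and \eqref{eq:shiftu} are in hand.
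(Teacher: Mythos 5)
Your proof is correct and follows essentially the same route as the paper's: the same de Rham computation, the same use of the composition rule and of $S_{\bfe_i}(1)=[D_i]_T$, and the same appeal to \eqref{eq:shiftu} with $\xi=\lambda_j$ (your self-correction that $\lambda_j\cdot\bfn=n_j$ is right). The only cosmetic difference is that the paper first enlarges coefficients so as to work with $\tilde S_\bfn$ and multiply by $q^{\vec\psi(\bfn)}$ at the very end, whereas you track the $q^{\vec\psi}$ prefactors throughout via \eqref{eq:shiftcompositionrule}; both are fine.
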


\begin{proof} 
We enlarge coefficients as in the proof of Lemma \ref{lem:shift1}(iii), to simplify the computation. With that in mind,
\begin{equation}
(ud+dW_{q})(z^{\bfn} \omega_j) = \Big( u n_j z^{\bfn} + \sum_{i=1}^r q_i e_{i,j} z^{\bfn+\bfe_i} \Big) \,\mathit{vol}.
\end{equation}
Using Scholium \eqref{thm:shiftproperties}(ii),(iv) we can write
\begin{equation} \label{eq:d1}
\begin{aligned}
& \Theta_u (ud+dW_{q})(z^{\bfn} \omega_j)  = u n_j\, \tilde{S}_{\mathbf{n}}(1) + \sum_{i=1}^r q_i e_{i,j}\, \tilde{S}_{\bfn+\bfe_i}(1) 
\\[-.5em] & \qquad = u n_j\, \tilde{S}_{\mathbf{n}}(1) 
+ \sum_{i=1}^r q_i e_{i,j}\,\tilde{S}_{\mathbf{n}}(\tilde{S}_{\bfe_i}(1))
    = \tilde{S}_{\mathbf{n}}(u n_j\,1 + \sum_{i=1}^r e_{i,j} [D_i]_T) = \tilde{S}_{\bfn}(u n_j\,1 + \lambda_j).
\end{aligned}
\end{equation}
On the other hand, by Scholium \ref{thm:shiftproperties}(i) with $\xi = \lambda_j$,
\begin{equation} \label{eq:d2}
\lambda_j \smile \Theta_u(z^{\bfn} \mathit{vol}) = \lambda_j \smile \tilde{S}_{\bfn}(1) =
\tilde{S}_{\bfn}(\lambda_j + u n_j\, 1) .
\end{equation}
Multiplying both \eqref{eq:d1} and \eqref{eq:d2} by $q^{\vec{\psi}(\bfn)}$ yields the desired result.
\end{proof}

As a corollary of Lemmas \ref{lem:shift1} and \ref{lem:lambdaction}, $\Theta_u$ induces an isomorphism 
\begin{equation} \label{eq:theta-induced}
H^n(\Omega^*_\Lambda[u], ud + dW_{q}) \stackrel{\iso}{\longrightarrow} 
H^*_T(M;\Lambda[u]) /(\lambda_1,\dots,\lambda_n) H^*_T(M;\Lambda[u]) \iso H^*(M;\Lambda[u]).
\end{equation}
Here, $H^n$ is the $n$-cohomology for the standard grading of differential forms. The second isomorphism in \eqref{eq:theta-induced} is the $u$-linear extension of \eqref{eq:non-equivariant-limit}. Because that isomorphism is defined by setting the equivariant parameters $\lambda_i$ to $0$, the map \eqref{eq:theta-induced} intertwines \eqref{eq:u-mirror-connection} with the non-equivariant quantum connection,
\begin{equation} \label{eq:u-quantum-connection}
\begin{aligned}
& \nabla_{uq_i\partial_{q_i}}: H^*(M;\Lambda[u]) \longrightarrow H^*(M;\Lambda[u]), \\
& \nabla_{uq_i\partial_{q_i}}(x) = uq_i\partial_{q_i} x + [D_i] \ast_{q} x.
\end{aligned}
\end{equation}
As a concluding step, we further modify the framework by seting $u = 1$. This means that we consider $\Omega^*_\Lambda$ with the following differential and connection:
\begin{align}
& d+dW_{q}: \Omega^*_\Lambda \longrightarrow \Omega^{*+1}_{\Lambda}, \\
\label{eq:GMconnections}
& \nabla_{q_i\partial_{q_i}} = q_i\partial_{q_i} + (q_i\partial_{q_i}W_q): \Omega^*_\Lambda \longrightarrow \Omega^*_{\Lambda}.
\end{align}
On the quantum cohomology side, we set $u = 1$ in \eqref{eq:u-quantum-connection}, which yields
\begin{equation} \label{eq:multiquantum-connection}
\begin{aligned}
& \nabla_{q_i\partial_{q_i}}: H^*(M;\Lambda) \longrightarrow H^*(M;\Lambda), \\
& \nabla_{q_i\partial_{q_i}}(x) = q_i\partial_{q_i} x + [D_i] \ast_{q} x.
\end{aligned}
\end{equation}

\begin{theorem} \label{thm:toricmirror} 
There is a unique $\Lambda$-linear isomorphism
\begin{equation} 
\label{eq:mirrornoneqiso} 
\Theta: H^n(\Omega^*_\Lambda,d+dW_q) \stackrel{\iso}{\longrightarrow} H^*(M;\Lambda),
\end{equation} 
such that:
\begin{align}
& \Theta([\mathit{vol}]) = 1; \\
& \Theta \text{ intertwines the connections \eqref{eq:GMconnections} and \eqref{eq:multiquantum-connection}}.
\end{align}
\end{theorem}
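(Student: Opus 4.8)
The plan is to obtain Theorem \ref{thm:toricmirror} from Lemma \ref{lem:shift1} and Lemma \ref{lem:lambdaction} essentially by specializing the loop-rotation parameter $u$ to $1$. First I would recall the isomorphism \eqref{eq:theta-induced}: combining the fact that $\Theta_u$ is an isomorphism (Lemma \ref{lem:shift1}(i)) with Lemma \ref{lem:lambdaction}, which identifies the subspace of $\Omega^n_\Lambda[u]$ spanned by the images of $ud+dW_q$ (applied to $(n-1)$-forms of the shape $q^{\vec\psi(\bfn)}z^\bfn\omega_j$) with the submodule $(\lambda_1,\dots,\lambda_n)H^*_T(M;\Lambda[u])$ under $\Theta_u$. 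One has to check that these particular exact forms span the whole relevant quotient: since $\Omega^{n-1}_\Lambda$ is freely generated over $R_\Lambda$ by $\omega_1,\dots,\omega_n$, and $R_\Lambda$ is freely generated over $\Lambda$ by the $q^{\vec\psi(\bfn)}z^\bfn$, the forms $q^{\vec\psi(\bfn)}z^\bfn\omega_j$ are a $\Lambda$-basis of $\Omega^{n-1}_\Lambda$, and their extension by $u$ gives a $\Lambda[u]$-basis of $\Omega^{n-1}_\Lambda[u]$; so the image of $ud+dW_q$ on $\Omega^{n-1}_\Lambda[u]$ is exactly the span of their images, and Lemma \ref{lem:lambdaction} shows $\Theta_u$ carries that span onto $(\lambda_1,\dots,\lambda_n)H^*_T(M;\Lambda[u])$. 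Hence $\Theta_u$ descends to the stated isomorphism \eqref{eq:theta-induced}, which by Lemma \ref{lem:shift1}(iii) and the fact that \eqref{eq:non-equivariant-limit} is defined by killing the $\lambda_i$ intertwines \eqref{eq:u-mirror-connection} with \eqref{eq:u-quantum-connection}, and by Lemma \ref{lem:shift1}(ii) sends $[\mathit{vol}]$ to $1$.

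Next I would specialize $u=1$. Setting $u=1$ is the base change along $\Lambda[u]\to\Lambda$, $u\mapsto 1$; I need that it commutes with taking $H^n$ of the complex $(\Omega^*_\Lambda[u],ud+dW_q)$ and with passing to the quotient by $(\lambda_1,\dots,\lambda_n)$. For the quotient this is immediate. For the cohomology, the cleanest route is to observe that in degree $n$ (top degree) the complex has no outgoing differential, so $H^n$ is just the cokernel of $ud+dW_q:\Omega^{n-1}_\Lambda[u]\to\Omega^n_\Lambda[u]$, and cokernels commute with the right-exact base change $-\otimes_{\Lambda[u]}\Lambda$. Thus $H^n(\Omega^*_\Lambda,d+dW_q)$ is the $u=1$ specialization of $H^n(\Omega^*_\Lambda[u],ud+dW_q)$, and likewise $H^*(M;\Lambda)$ is the $u=1$ specialization of $H^*(M;\Lambda[u])$; so $\Theta$ is defined as the induced map, it is an isomorphism, it sends $[\mathit{vol}]$ to $1$, and it intertwines \eqref{eq:GMconnections} with \eqref{eq:multiquantum-connection} because those are literally \eqref{eq:u-mirror-connection} and \eqref{eq:u-quantum-connection} at $u=1$ and $\Theta_u$ already intertwined the latter.

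Finally, uniqueness. Suppose $\Theta'$ is another $\Lambda$-linear isomorphism with the same two properties. Then $\Theta^{-1}\circ\Theta'$ is a $\Lambda$-linear automorphism of $H^n(\Omega^*_\Lambda,d+dW_q)$ fixing $[\mathit{vol}]$ and commuting with all the connections $\nabla_{q_i\partial_{q_i}}$. Using that $\Theta$ transports the situation to $H^*(M;\Lambda)$, it suffices to show that a $\Lambda$-linear automorphism $g$ of $H^*(M;\Lambda)$ fixing $1$ and commuting with all $\nabla_{q_i\partial_{q_i}}$ is the identity. Modulo $I\Lambda$ the connection $\nabla_{q_i\partial_{q_i}}$ reduces to cup product with $[D_i]$, so $g$ modulo $I\Lambda$ is a $\bQ$-linear endomorphism of $H^*(M;\bQ)$ fixing $1$ and commuting with multiplication by each $[D_i]$; since the $[D_i]$ generate $H^*(M;\bQ)$ as a ring, $g$ is multiplication by $g(1)=1$, i.e.\ the identity mod $I\Lambda$. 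Then one bootstraps order by order in the grading: writing $g=\mathrm{id}+h$ with $h$ raising the $q$-degree, the relation $\nabla_{q_i\partial_{q_i}}\circ g=g\circ\nabla_{q_i\partial_{q_i}}$ gives at each order a relation forcing the lowest-order term of $h$ to commute with all $[D_i]\smile\cdot$ and to kill $1$, hence to vanish; iterating (using that $\Lambda$ is graded with finite-dimensional pieces, or completing and using that $H^*(M;\bQ)$ is finite-dimensional) gives $h=0$. The main obstacle I expect is precisely this uniqueness bootstrap: one must be careful that the recursion is well-posed (the operator $\mathrm{deg}+[\text{ad of }[D_i]]$-type term that appeared in Lemma \ref{th:gauge-transformation}(i) and Lemma \ref{th:make-frobenius}(i) is invertible here only after the grading shift), and that the grading on $\Lambda$ genuinely controls the filtration so the induction terminates or converges. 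Everything else is a formal consequence of the already-established equivariant statements and the right-exactness of specialization at $u=1$.
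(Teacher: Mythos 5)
For the existence part your argument is essentially the paper's: both take $\Theta$ to be the $u=1$ specialization of \eqref{eq:theta-induced}, and your justification — that in top degree $H^n$ is the cokernel of $ud+dW_q$ on $\Omega^{n-1}_\Lambda[u]$, so it commutes with the right-exact base change $u\mapsto 1$, while Lemma \ref{lem:lambdaction} applied to the $\Lambda[u]$-basis $q^{\vec\psi(\bfn)}z^{\bfn}\omega_j$ identifies the image of the differential with $(\lambda_1,\dots,\lambda_n)H^*_T(M;\Lambda[u])$ — correctly fills in the steps the paper leaves implicit.

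For uniqueness the paper takes a shorter route that avoids your bootstrap entirely. It observes that the two conditions determine $\Theta^{-1}$ on every class $\nabla_{q_{i_1}\partial_{q_{i_1}}}\cdots\nabla_{q_{i_k}\partial_{q_{i_k}}}(1)$, and then uses \eqref{eq:cup-d}: the difference between $[D_{i_1}]\smile\cdots\smile[D_{i_k}]$ and that class lies in $H^{<2k}(M;\bQ)\otimes I\Lambda$ for degree reasons, so an induction on cohomological degree shows these classes generate $H^*(M;\Lambda)$ as a $\Lambda$-module; $\Lambda$-linearity then pins down $\Theta^{-1}$ in one stroke, with no recursion. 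Your reduction to ``an automorphism $g$ fixing $1$ and commuting with all $\nabla_{q_i\partial_{q_i}}$ is the identity'' is logically equivalent, but your execution has two soft spots. First, $\Lambda/I\Lambda$ is the degree-zero graded piece of $\Lambda$, spanned by all $q^{\vec v}$ with $|\vec v|=0$; for $r\geq 2$ this is strictly larger than $\bQ$, so modulo $I\Lambda$ the connection is $q_i\partial_{q_i}+[D_i]\smile\cdot$ rather than just cup product, and one must separately kill the components of $g$ supported on monomials $q^{\vec v}$ with $\vec v\neq 0$, $|\vec v|=0$ (this works because $\mathrm{ad}([D_i]\smile\cdot)$ is nilpotent, so it cannot equal a nonzero scalar $-v_i$). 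Second, the higher-order recursion needs the same nilpotency input, which you correctly flag as the main obstacle but do not carry out. Both issues are repairable, but the paper's generation argument makes them unnecessary; I would recommend replacing the bootstrap by that argument.
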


\begin{proof}
If we take $\Theta$ to be the $u = 1$ specialization of \eqref{eq:theta-induced}, then the discussion so far, going back to Lemma \ref{lem:shift1}, shows that it has the desired properties. For uniqueness, note that the conditions stated above determine the image under $\Theta^{-1}$ of any class
$\nabla_{q_{i_1}\partial_{q_{i_1}}} \cdots \nabla_{q_{i_k}\partial_{q_{i_k}}}(1)$. By definition of the quantum connection,
\begin{equation} \label{eq:cup-d}
[D_{i_1}] \smile \cdots \smile [D_{i_k}] = \nabla_{q_{i_1}\partial_{q_{i_1}}} \cdots \nabla_{q_{i_k}\partial_{q_{i_k}}}(1) \;\; \text{ mod } I\Lambda.
\end{equation}
The left hand side has degree $2k$, and the right hand side degree $\leq 2k$, in our usual grading of $H^*(M;\Lambda)$. Because $I\Lambda$ is positively graded, the difference between the two sides must lie in $H^{<2k}(M;\bQ) \otimes I\Lambda$. This shows that the classes $\nabla_{q_{i_1}\partial_{q_{i_1}}} \cdots \nabla_{q_{i_k}\partial_{q_{i_k}}}(1)$ generate $H^*(M;\Lambda)$; hence $\Theta^{-1}$ is completely determined.
\end{proof}

We need one more concrete consequence:

\begin{corollary} \label{cor:generatingalgebraicGM}
Let $\Delta \subset N$ be a $\bQ$-basis set (Definition \ref{def:f-basis}). Then, the classes
\begin{equation}
[q^{\vec{\psi}(\bfn)} z^{\bfn} \, \mathit{vol}] \in H^n(\Omega^*_\Lambda,d+dW_q), \quad \bfn \in \Delta,
\end{equation}
freely generate that cohomology as a $\Lambda$-module.
\end{corollary}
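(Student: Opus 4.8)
The plan is to move the statement to quantum cohomology via Theorem~\ref{thm:toricmirror} and then run a triangularity argument adapted to the grading of $\Lambda$. Write $c_{\bfn} := [q^{\vec\psi(\bfn)}z^{\bfn}\,\mathit{vol}] \in H^n(\Omega^*_\Lambda, d+dW_q)$ and let $\Theta$ be the $\Lambda$-linear isomorphism from that group to $H^*(M;\Lambda)$ supplied by Theorem~\ref{thm:toricmirror}. Since $\Delta$ is a $\bQ$-basis set (Definition~\ref{def:f-basis}), the non-equivariant classes $(\bar\alpha_{\bfn})_{\bfn\in\Delta}$ already form a free $\Lambda$-basis of $H^*(M;\Lambda) = H^*(M;\bQ)\otimes_\bQ \Lambda$, and in particular $|\Delta| = \dim_\bQ H^*(M;\bQ)$. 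So it is enough to show that the matrix $P = (P_{\bfn\bfm})_{\bfn,\bfm\in\Delta}$ determined by $\Theta(c_{\bfn}) = \sum_{\bfm\in\Delta}P_{\bfn\bfm}\,\bar\alpha_{\bfm}$ is invertible over $\Lambda$; I will in fact argue that $\det P = 1$.

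First I would identify $\Theta(c_{\bfn})$. By construction (see the proof of Theorem~\ref{thm:toricmirror}), $\Theta$ is the $u=1$ specialization of the cohomology map induced by $\Theta_u$ of~\eqref{eq:thetamap}, which sends $q^{\vec\psi(\bfn)}z^{\bfn}\,\mathit{vol}$ to $S_{\bfn}(1)$; hence $\Theta(c_{\bfn})$ is the class obtained from $S_{\bfn}(1)$ by passing to non-equivariant cohomology and setting $u=1$. By Scholium~\ref{thm:shiftproperties}(v), $S_{\bfn}(1)$ is homogeneous of degree $2w(\bfn)$ for the grading with $\deg q_i = \deg u = 2$. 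Writing its non-equivariant image as $\sum_{a\ge 0}u^a\,\eta_{\bfn,a}$ with $\eta_{\bfn,a}\in H^*(M;\Lambda)$ homogeneous of total degree $2w(\bfn)-2a$ (a finite sum, since cohomology and $\Lambda$ are non-negatively graded), one gets $\Theta(c_{\bfn}) = \sum_{a\ge 0}\eta_{\bfn,a}$, and the top term $\eta_{\bfn,0}$, being the non-equivariant image of $S_{\bfn}(1)_{u=0}$, satisfies $\eta_{\bfn,0}\equiv\bar\alpha_{\bfn}$ modulo $I\Lambda\cdot H^*(M;\Lambda)$ by Lemma~\ref{th:leadingorder}.

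Next I would read off $P$ by bookkeeping of degrees. Expanding $\eta_{\bfn,a} = \sum_{\bfm\in\Delta} t^{(a)}_{\bfn\bfm}\,\bar\alpha_{\bfm}$, the coefficient $t^{(a)}_{\bfn\bfm}$ must lie in the degree-$\bigl(2w(\bfn)-2a-2w(\bfm)\bigr)$ part of $\Lambda$, hence vanishes unless $w(\bfm)\le w(\bfn)-a\le w(\bfn)$. Therefore $P_{\bfn\bfm} = \sum_a t^{(a)}_{\bfn\bfm} = 0$ whenever $w(\bfm) > w(\bfn)$, while for $w(\bfm) = w(\bfn)$ only $a = 0$ contributes, so $P_{\bfn\bfm} = t^{(0)}_{\bfn\bfm}$ lies in the degree-zero part $\Lambda_0$ of $\Lambda$; by the congruence $\eta_{\bfn,0}\equiv\bar\alpha_{\bfn}\pmod{I\Lambda}$ we then have $t^{(0)}_{\bfn\bfm}\equiv\delta_{\bfn\bfm}\pmod{I\Lambda}$, and since $\Lambda_0\cap I\Lambda = 0$ this forces $t^{(0)}_{\bfn\bfm} = \delta_{\bfn\bfm}$. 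Ordering $\Delta$ so that $w$ is non-decreasing, these statements say exactly that $P$ is lower triangular with all diagonal entries equal to $1$; hence $\det P = 1$, a unit of $\Lambda$. Thus $(\Theta(c_{\bfn}))_{\bfn\in\Delta}$, and therefore $(c_{\bfn})_{\bfn\in\Delta}$, is a free $\Lambda$-basis, which is the claim.

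I do not anticipate a real obstacle, since this is a corollary; the step that needs genuine care is the last one, where one must exploit that $\Lambda$ is non-negatively graded — both that negative-degree coefficients vanish and that $\Lambda_0\cap I\Lambda = 0$, which is what makes the leading and diagonal terms honest units of $\Lambda$ rather than merely elements with nonzero constant term. If one prefers to keep the shift operators out of this proof, an equivalent route is: from $\nabla_{q_i\partial_{q_i}}(q^{\vec v}z^{\bfm}\,\mathit{vol}) = v_i\,q^{\vec v}z^{\bfm}\,\mathit{vol} + q^{\vec v}q_i z^{\bfm+\bfe_i}\,\mathit{vol}$ and uniqueness of $\vec\psi$ (Lemma~\ref{th:toric-smoothness}), the class $c_{\bfn}$ equals $\nabla_{q_1\partial_{q_1}}^{\psi_1(\bfn)}\!\cdots\nabla_{q_r\partial_{q_r}}^{\psi_r(\bfn)}([\mathit{vol}])$ up to a $\Lambda$-combination of the $c_{\bfm}$ with $w(\bfm) < w(\bfn)$; applying $\Theta$, using $\Theta([\mathit{vol}]) = 1$ and that $\Theta$ intertwines the connections, this reduces via~\eqref{eq:cup-d} to the same lower-triangular system with leading terms $\bar\alpha_{\bfn}$.
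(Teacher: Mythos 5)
Your proof is correct and follows essentially the same route as the paper: both reduce the statement via the mirror isomorphism to showing that the non-equivariant images of $S_{\bfn}(1)$, $\bfn\in\Delta$, freely generate quantum cohomology, using Lemma \ref{th:leadingorder} together with the degree statement of Scholium \ref{thm:shiftproperties}(v). Your explicit unipotent-triangular transition matrix is just the filtration argument of Lemma \ref{lem:shift1}(i) (which the paper's proof cites) spelled out, with the only cosmetic difference that you work directly at $u=1$ rather than first establishing freeness over $\Lambda[u]$ and then specializing.
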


\begin{proof}
By definition and Lemma \ref{th:leadingorder},
\begin{equation}
\Theta_u( q^{\vec{\psi}(\bfn)} z^{\bfn} \mathit{vol}) = S_{\bfn}(1) = \alpha_{\bfn} \text{ mod } (I\Lambda, u\Lambda),
\end{equation}
As in the proof of Lemma \ref{lem:shift1}(i), it follows that the classes $(S_{\bfn}(1))_{\bfn \in \Delta}$ freely generate $H^*(M;\Lambda[u])$ over $\Lambda[u]$. Since \eqref{eq:theta-induced} is an isomorphism, it follows that the cohomology classes of $(q^{\vec{\psi}(\bfn)} z^{\bfn}\, \mathit{vol})_{\bfn \in \Delta}$ freely generate $H^n(\Omega^*_\Lambda[u], ud+dW_q)$ over $\Lambda[u]$. Setting $u = 1$ completes the argument.
\end{proof}

\section{A mod $p$ computation\label{sec:modp}}
In this section, we discuss a mod $p$ version of the mirror symmetry result, which will be used in Section \ref{sec:dwork} as a building block for the computation of cohomology with $p$-adic coefficients. The setup here may, at first, look like a straightforward characteristic $p$ version of Section \ref{sect:toricms}. However, we are actually interested in the cohomology of a truncated differential (the associated graded with respect to a suitable filtration), and that requires a different approach. 

Start with the mod $p$ analogues of \eqref{eq:lambdageq0} and \eqref{eq:positiveLaurentring},
\begin{align} \label{eq:bar-lambda}
& \bar\Lambda = \Big\{ \sum_{|\vec{v}| \geq 0} a_{\vec{v}} q^{\vec{v}} \Big\} \subset \bF_p[q_i^{\pm 1}],
\\
\label{eq:bar-r}
&
R_{\bar\Lambda} =\Big\{\sum_{\substack{(\vec{v},\mathbf{n}) \\ |\vec{v}| \geq w(\mathbf{n})}}a_{\vec{v},\mathbf{n}}q^{\vec{v}}z^{\mathbf{n}} \Big\} \subset \bF_p[q_i^{\pm 1}, z_j^{\pm 1}],
\end{align}
as well as the corresponding spaces $\Omega^*_{\bar\Lambda}$ of differential forms. $R_{\bar\Lambda}$ and its generalization $\Omega^*_{\bar\Lambda}$ come with (decreasing, bounded above, non-complete) filtrations: $F^sR_{\bar\Lambda}$ is defined by the condition $|\vec{v}| \geq w(\mathbf{n}) + s$ in \eqref{eq:bar-r}; and that is extended to differential forms using the basis \eqref{eq:differential-form-basis}. These filtrations are compatible with wedge product of differential forms, by Lemma \ref{th:toriccombinatorics}(i), and also with the de Rham differential. We denote the associated graded spaces by $GR_{\bar\Lambda}$ and $G\Omega^*_{\bar\Lambda}$; they are still modules over $\bar\Lambda$. Explicitly, because of Lemma \ref{th:toriccombinatorics}(ii), multiplication in $GR_{\bar\Lambda}$ is given by
\begin{equation} \label{eq:new-sr}
q^{\vec{v}} z^{\mathbf{m}} \cdot q^{\vec{w}} z^{\mathbf{n}} =
\begin{cases}
q^{\vec{v}+\vec{w}} z^{\mathbf{m}+\mathbf{n}} & \text{if $\mathbf{m}$ and $\mathbf{n}$ lie in the same cone of the fan,} \\
0 & \text{otherwise.} 
\end{cases}
\end{equation}
By comparing \eqref{eq:new-sr} with \eqref{eq:sr}, we see that the map
\begin{equation} \label{eq:cohomology-coincidence}
\begin{aligned}
& GR_{\bar\Lambda} \longrightarrow H^*_T(M;\bar\Lambda), \\
& q^{\vec{\psi}(\bfn)}z^{\bfn} \longmapsto \alpha_{\bfn}
\end{aligned}
\end{equation}
is an isomorphism of rings. The superpotential is the mod $p$ version of \eqref{eq:q-superpotential}, for which we use the same notation. Write 
\begin{equation} 
\bar\lambda_k = \sum_{i=1}^r e_{i,k} q_i z^{\bfe_i} \in R_{\bar\Lambda}, \quad 1 \leq k \leq n,
\end{equation}
where $e_{i,k}$ is the $k$-th component of $\bfe_i$, as in \eqref{eq:lambdak}. Then,
\begin{equation} \label{eq:d-bar-w}
dW_q = \sum_{k=1}^n \bar\lambda_k \frac{dz_k}{z_k}.
\end{equation}
From now on, we will consider the $\bar\lambda_k$ as elements of $GR_{\bar\Lambda}$. Under the isomorphism \eqref{eq:cohomology-coincidence}, they map to the generators of $H^*_T(\mathit{point};\bar\Lambda)$, as described in \eqref{eq:lambdak}. The following is essentially \cite[Theorem 2.14, Theorem 2.17]{adolphson-sperber89} (the complex in question is isomorphic to the Koszul complex studied there, tensored with $\bar\Lambda$ over $\bF_p$); we give a more geometric argument.

\begin{lemma} \label{th:g-cohomology}
(i) We have
\begin{equation}
H^i(G\Omega^*_{\bar\Lambda}, dW_q) = 0 \quad \text{for } i<n.
\end{equation}

(ii) The map \eqref{eq:cohomology-coincidence} descends to an isomorphism
\begin{equation} \label{eq:hn}
H^n(G\Omega^*_{\bar\Lambda}, dW_q) \iso H^*(M;\bar\Lambda).
\end{equation}
\end{lemma}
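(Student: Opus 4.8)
The statement asserts that the graded complex $(G\Omega^*_{\bar\Lambda}, dW_q)$ has cohomology concentrated in top degree $n$, and that this top cohomology recovers $H^*(M;\bar\Lambda)$ via the ring map \eqref{eq:cohomology-coincidence}. My plan is to exploit the explicit combinatorial model afforded by \eqref{eq:new-sr}: the graded ring $GR_{\bar\Lambda}$ decomposes as a direct sum indexed by cones of the fan (the Stanley--Reisner-type decomposition), and the differential $dW_q$ is the Koszul-type operator $\sum_k \bar\lambda_k \, dz_k/z_k$, so I would reduce the whole computation to a cone-by-cone analysis.

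\textbf{Step 1: Decompose along the fan.} Each monomial $q^{\vec v} z^{\bfn}$ appearing in $GR_{\bar\Lambda}$ is supported on the unique minimal cone $\sigma$ containing $\bfn$. Both wedge multiplication \eqref{eq:new-sr} and the de Rham differential respect this support, so $G\Omega^*_{\bar\Lambda}$ splits as a direct sum of subcomplexes, one for each cone $\sigma$ of the fan (including $\sigma = \{0\}$). By Lemma \ref{th:toric-smoothness}, a $k$-dimensional cone $\sigma$ is spanned by $\bfe_{i_1},\dots,\bfe_{i_k}$ forming part of a $\bZ$-basis of $N$; I would change coordinates so that these are the first $k$ standard basis vectors. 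In those coordinates the part of $dW_q$ acting on the $\sigma$-summand is, up to units, $\sum_{j=1}^k q_{i_j} z_j \, (dz_j/z_j)\wedge(-)$, i.e. the Koszul differential on the exterior algebra in $dz_1/z_1,\dots,dz_n/z_n$ for the regular sequence $(q_{i_1}z_1,\dots,q_{i_k}z_k)$ acting on the relevant monomial module. The key point is that within the $\sigma$-summand the monomials $z_1,\dots,z_k$ (twisted by $q$) act \emph{injectively} and form a regular sequence, while $z_{k+1},\dots,z_n$ do not appear; so the Koszul complex is exact except in the top exterior degree $k$ of the \emph{relevant} variables, which after accounting for the $n-k$ ``inert'' one-forms $dz_{k+1}/z_{k+1},\dots$ contributes only in total degree $n$. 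Summing over all cones gives part (i), and identifies $H^n$ as a direct sum over cones of the corresponding cokernels.

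\textbf{Step 2: Identify the top cohomology.} For each cone $\sigma$, the cokernel computed in Step 1 is spanned, as a $\bar\Lambda$-module, by the classes of $q^{\vec\psi(\bfn)} z^{\bfn}\,\mathit{vol}$ with $\bfn$ ranging over lattice points whose support is exactly $\sigma$ (the ``interior'' lattice points), together with a free $\bar\Lambda$-action; more precisely the relations $q_{i_j} z_j \cdot (\text{form}) \equiv (\text{exact})$ let one reduce any monomial to one not divisible by the $z_j$ beyond what $\vec\psi$ dictates. Assembling over all cones, $H^n(G\Omega^*_{\bar\Lambda},dW_q)$ is the free $\bar\Lambda$-module on $\{[q^{\vec\psi(\bfn)}z^{\bfn}\,\mathit{vol}] : \bfn \in N\}$ modulo the relations that kill $\alpha_{\bfm}\alpha_{\bfn}$ when $\bfm,\bfn$ lie in no common cone — which is exactly the presentation of $GR_{\bar\Lambda}$, hence of $H^*_T(M;\bar\Lambda)$ via \eqref{eq:cohomology-coincidence}. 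One then checks that modding out further by $dW_q = \sum_k \bar\lambda_k\, dz_k/z_k$ corresponds precisely, under \eqref{eq:cohomology-coincidence}, to setting the equivariant parameters $\bar\lambda_k$ to zero, i.e. to passing from $H^*_T(M;\bar\Lambda)$ to $H^*(M;\bar\Lambda)$ via the mod-$p$ analogue of \eqref{eq:non-equivariant-limit}. This gives the isomorphism \eqref{eq:hn} and shows it is induced by \eqref{eq:cohomology-coincidence}, proving (ii).

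\textbf{Main obstacle.} The delicate point is Step 1: verifying that the ``twisted Koszul'' differential really is exact below the top degree. The subtlety is that the relevant $\bar\Lambda$-module for a fixed cone $\sigma$ is not free over a polynomial ring in an obvious way — it is the span of monomials $q^{\vec v}z^{\bfn}$ with $|\vec v| \geq w(\bfn)$ and $\mathrm{supp}(\bfn)\subseteq\sigma$, and one must confirm that multiplication by $q_{i_j}z_j$ is injective on it and that these operators form a regular sequence in the appropriate sense, so that the Koszul homology vanishes. Here the strict convexity in Lemma \ref{th:toriccombinatorics}(ii) is what guarantees that the filtration behaves well and that no unexpected cancellations occur across cone boundaries; making this bookkeeping airtight (rather than hand-waving "it's a Koszul complex") is the crux. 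Alternatively, as the authors note, one can simply quote \cite[Theorems 2.14, 2.17]{adolphson-sperber89} for the vanishing and identification at the level of the Koszul complex and then tensor with $\bar\Lambda$, sidestepping the geometric argument entirely; I would present the geometric decomposition as the main line and cite Adolphson--Sperber as corroboration.
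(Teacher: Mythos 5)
Your Step 1 contains a genuine error: the complex does \emph{not} split as a direct sum of subcomplexes indexed by cones. The differential is wedge product with $dW_q=\sum_k\bar\lambda_k\,dz_k/z_k$, i.e.\ it involves multiplication by the $q_iz^{\bfe_i}$, and by \eqref{eq:new-sr} this sends a monomial $q^{\vec v}z^{\bfn}$ supported on a cone $\sigma$ to one supported on the (generally strictly larger) cone generated by $\sigma$ and $\bfe_i$. Already for $M=\bC P^1$: the element $1=z^{\mathbf 0}$, supported on the zero cone, maps to $q_1z-q_2z^{-1}$ times $dz/z$, which lives in the summands of the two rays. So at best you get a filtration by dimension of the supporting cone, and the associated graded differential on the $\sigma$-piece ($\dim\sigma=k$) is the Koszul differential on only the $k$ generators lying in $\sigma$, tensored with the full exterior algebra on the remaining $n-k$ one-forms. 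That associated graded piece has cohomology in every degree from $k$ to $n$ (e.g.\ the zero-cone piece has \emph{zero} induced differential and contributes in all degrees), so your claim that each cone contributes only in total degree $n$ is false. The acyclicity below degree $n$ is produced precisely by the cross-cone terms your decomposition discards, and recovering it from the filtration would require controlling higher differentials of the resulting spectral sequence — which is essentially the whole difficulty, not a bookkeeping detail.

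The paper's argument avoids this entirely: it observes that $(G\Omega^*_{\bar\Lambda},dW_q)$ is the Koszul complex on $(\bar\lambda_1,\dots,\bar\lambda_n)\in GR_{\bar\Lambda}$, and that $GR_{\bar\Lambda}\iso H^*_T(M;\bar\Lambda)$ is a \emph{free} module over $\bar\Lambda[\bar\lambda_1,\dots,\bar\lambda_n]=H^*_T(\mathit{point};\bar\Lambda)$ by \eqref{eq:equi-free}; hence the $\bar\lambda_k$ form a regular sequence, the Koszul complex is acyclic except at the top, and $H^n=GR_{\bar\Lambda}/(\bar\lambda_1,\dots,\bar\lambda_n)$, which is $H^*(M;\bar\Lambda)$ by \eqref{eq:non-equivariant-limit}. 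Your Step 2 conclusion (quotienting by the $\bar\lambda_k$ corresponds to passing from equivariant to ordinary cohomology) is the right endpoint, but to reach it you need regularity of the sequence $(\bar\lambda_1,\dots,\bar\lambda_n)$ in $GR_{\bar\Lambda}$ as a global statement, not cone-by-cone injectivity of the individual $q_iz^{\bfe_i}$. Falling back on \cite[Theorems 2.14, 2.17]{adolphson-sperber89} would be legitimate, but as written your ``main line'' does not work.
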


\begin{proof} 
Our complex can be written as
\begin{equation} \label{eq:koszul}
0 \rightarrow GR_{\bar\Lambda} \rightarrow GR_{\bar\Lambda}^{\oplus n} \rightarrow GR_{\bar\Lambda}^{\oplus \left(\begin{smallmatrix} n \\ 2\end{smallmatrix}\right)} \rightarrow \cdots \rightarrow GR_{\bar\Lambda} \rightarrow 0.
\end{equation}
From \eqref{eq:d-bar-w}, one sees that the differentials are built from multiplication with the $\bar\lambda_k$. This is in fact the Koszul complex associated to $(\bar\lambda_1,\dots,\bar\lambda_n) \in GR_{\bar\Lambda}$. We know from \eqref{eq:equi-free} that $H^*_T(M;\bar\Lambda)$ is a free module over $H^*_T(\mathit{point};\bar\Lambda)$. Transferring this via \eqref{eq:cohomology-coincidence} shows that $GR_{\bar\Lambda}$ is a free module over $\bar\Lambda[\bar\lambda_1,\dots,\bar\lambda_n]$. Hence, the $\bar\lambda_k$ form a regular sequence. This implies that the only nonzero cohomology group in \eqref{eq:koszul} is the last one, which is statement (i); and that nonzero group is equal to $GR_{\bar\Lambda}/(\bar\lambda_1,\dots,\bar\lambda_n)GR_{\bar\Lambda}$, which by comparison with \eqref{eq:non-equivariant-limit} yields (ii).
\end{proof}

Next, following Section \ref{sect:toricms}, we introduce an auxiliary parameter $u$ and the de Rham differential:

\begin{lemma} \label{th:truncated-equivariant}
(i) We have
\begin{equation}
H^i(G\Omega^*_{\bar\Lambda}[u], ud+dW_q) = 0 \quad \text{for } i<n.
\end{equation}

(ii) The map 
\begin{equation}
H^n(G\Omega^*_{\bar\Lambda}[u], ud+dW_q) \longrightarrow H^n(G\Omega^*_{\bar\Lambda},dW_q)
\end{equation}
obtained by setting $u = 0$ is onto; and after choosing a $\bar\Lambda$-linear splitting of it, one gets a (non-canonical) $\bar\Lambda[u]$-linear isomorphism
\begin{equation} \label{eq:u-splitting}
H^n(G\Omega^*_{\bar\Lambda},dW_q)[u] \stackrel{\iso}{\longrightarrow} H^n(G\Omega^*_{\bar\Lambda}[u], ud+dW_q).
\end{equation}
\end{lemma}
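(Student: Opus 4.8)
The plan is to treat $ud + dW_q$ on $G\Omega^*_{\bar\Lambda}[u]$ as a deformation, over the polynomial ring $\bar\Lambda[u]$, of the differential $dW_q$ studied in Lemma \ref{th:g-cohomology}, and to run a filtration/spectral-sequence argument in the auxiliary variable $u$. Concretely, I would put the $u$-adic filtration on $G\Omega^*_{\bar\Lambda}[u]$, i.e.\ filter by powers of $u$. The associated graded complex is $\bigoplus_{k\ge 0} u^k\, (G\Omega^*_{\bar\Lambda}, dW_q)$, since on the associated graded the de Rham part $ud$ raises $u$-degree and hence vanishes. By Lemma \ref{th:g-cohomology}(i) each graded piece has cohomology concentrated in degree $n$, equal to $u^k H^n(G\Omega^*_{\bar\Lambda}, dW_q)$. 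The spectral sequence of this filtration therefore degenerates for degree reasons: there is nothing in cohomological degree $<n$ on the $E_1$ page, so $H^i(G\Omega^*_{\bar\Lambda}[u], ud+dW_q)=0$ for $i<n$, which is part (i). One has to check convergence of the spectral sequence, but the filtration is exhaustive and bounded below, and in each fixed $u$-degree it is a finite filtration, so this is routine.

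For part (ii), the surjectivity of the $u=0$ map $H^n(G\Omega^*_{\bar\Lambda}[u], ud+dW_q) \to H^n(G\Omega^*_{\bar\Lambda}, dW_q)$ follows from the same spectral sequence: the $E_\infty$ page in total degree $n$ is the associated graded of the target, and the bottom piece of the filtration (the $u^0$ part) is precisely $H^n(G\Omega^*_{\bar\Lambda}, dW_q)$, which is realized by the $u=0$ evaluation. Alternatively, and perhaps more transparently, I would argue directly: given a closed form representing a class downstairs, namely $\eta_0 \in G\Omega^n_{\bar\Lambda}$ with $dW_q \wedge \eta_0 = 0$, one solves order by order in $u$ for a lift $\eta = \eta_0 + u\eta_1 + u^2\eta_2 + \cdots$ with $(ud+dW_q)\eta = 0$; the obstruction to extending the lift past order $k$ lives in $H^{n+1}(G\Omega^*_{\bar\Lambda}, dW_q)$, which by Lemma \ref{th:g-cohomology}(i) is... not obviously zero, since $n+1 > n$. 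So in fact the direct argument needs that $H^{n+1}=0$ as well; but $G\Omega^*_{\bar\Lambda}$ is concentrated in degrees $0,\dots,n$, so $H^{n+1}(G\Omega^*_{\bar\Lambda}, dW_q)=0$ trivially, and the inductive lift goes through. (Note the lift need not terminate, but it defines an element of $G\Omega^n_{\bar\Lambda}[[u]]$; one must then observe that since we only ever need finitely many $u$-orders to represent a class modulo any fixed power of $u$, and since we are ultimately going to set $u=1$ only after all is said and done — or rather, work with $\bar\Lambda[u]$ and note the relevant module is finitely generated in each $q$-degree — the lift can be taken polynomial in $u$. I would phrase this via the spectral-sequence argument to avoid convergence fuss.)

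Finally, the isomorphism \eqref{eq:u-splitting}: a $\bar\Lambda$-linear splitting of the surjection in (ii) gives a $\bar\Lambda$-linear section $s: H^n(G\Omega^*_{\bar\Lambda},dW_q) \to H^n(G\Omega^*_{\bar\Lambda}[u], ud+dW_q)$, and extending $\bar\Lambda[u]$-linearly produces a map $H^n(G\Omega^*_{\bar\Lambda},dW_q)[u] \to H^n(G\Omega^*_{\bar\Lambda}[u], ud+dW_q)$. That this is an isomorphism follows because the source and target are both free $\bar\Lambda[u]$-modules (the target by the degeneration of the $u$-adic spectral sequence, which identifies its associated graded with $H^n(G\Omega^*_{\bar\Lambda},dW_q)[u]$), and the map is an isomorphism on the associated graded of the $u$-adic filtration by construction of $s$; a standard filtered-module argument then upgrades this to an isomorphism. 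The main obstacle, such as it is, is bookkeeping: making sure the filtration arguments are carried out with the correct (decreasing, bounded-above, non-complete) filtration conventions already in play for $R_{\bar\Lambda}$ and not confused with the $u$-adic one, and making sure all modules in sight are recognized as free over the appropriate polynomial ring so that "isomorphism on associated graded implies isomorphism" is legitimate. I expect no genuine difficulty beyond that.
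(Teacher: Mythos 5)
Your approach is the same as the paper's: filter $G\Omega^*_{\bar\Lambda}[u]$ $u$-adically, observe that the $E_1$-page is concentrated in form-degree $n$ by Lemma \ref{th:g-cohomology}, conclude degeneration for degree reasons, and deduce (i) and the freeness statement in (ii) by standard filtered-module arguments. The one point where your write-up does not hold up is exactly the one non-routine point: convergence. The $u$-adic filtration on $G\Omega^*_{\bar\Lambda}[u]$ is decreasing, exhaustive and separated, but it is \emph{not} finite (or complete) in a fixed form-degree --- $G\Omega^n_{\bar\Lambda}[u]$ contains arbitrarily large powers of $u$ --- so your assertion that ``in each fixed $u$-degree it is a finite filtration'' is not a valid justification, and for such filtrations the spectral sequence can in general compute the cohomology of the completion ($G\Omega^*_{\bar\Lambda}[[u]]$) rather than of the complex itself. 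The paper's fix is to introduce the auxiliary grading $\deg(u^l q^{\vec v} z^{\bfn}\,dz_{j_1}/z_{j_1}\wedge\cdots\wedge dz_{j_k}/z_{j_k}) = 2|\vec v| + 2l - k$, which is raised by exactly $1$ by $ud + dW_q$; in each fixed auxiliary degree only finitely many $u$-powers occur, so the complex splits as a direct sum of finite filtered pieces and convergence is genuinely routine. You should supply this (or an equivalent) observation; without it the argument is incomplete, though easily repaired.

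Two smaller remarks. First, your degeneration argument via ``$E_1$ concentrated in total degree $n$'' and the paper's via ``concentrated in degrees congruent to $n$ mod $2$'' are the same argument in different bookkeeping; either is fine. Second, in your alternative direct lifting argument for the surjectivity in (ii), you correctly notice that the order-by-order lift a priori lands in $G\Omega^n_{\bar\Lambda}[[u]]$ rather than $G\Omega^n_{\bar\Lambda}[u]$; the clean resolution is again the auxiliary grading (a class in fixed auxiliary degree admits a lift involving only boundedly many powers of $u$), so this is another place where that grading earns its keep rather than being mere bookkeeping.
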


\begin{proof}
Let's temporarily change the grading on $G\Omega^*_{\bar\Lambda}[u]$ by setting
\begin{equation}
\deg(u^l q^{\vec{v}} z^{\mathbf{n}}\frac{dz_{j_{1}}}{z_{j_{1}}}\wedge \frac{dz_{j_{2}}}{z_{j_{2}}} \wedge \cdots \wedge \frac{dz_{j_k}}{z_{j_k}})= 2|\vec{v}|+2l-k,
\end{equation}
which is compatible with our differential. For this alternative grading, the $u$-adic filtration is bounded in a given degree (only finitely many $u$-powers can arise); hence, the associated spectral sequence is convergent. The starting page is nonzero only in degrees congruent to $n$ mod $2$, by Lemma \ref{th:g-cohomology}, and hence the spectral sequence must degenerate. The desired statements follow from this by a standard argument.
\end{proof}

\begin{corollary} \label{th:less-than-n}
$H^i(G\Omega^*_{\bar\Lambda},dW_q) = 0$ for $i<n$.
\end{corollary}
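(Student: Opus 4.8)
The statement of Corollary \ref{th:less-than-n} repeats, word for word, part (i) of Lemma \ref{th:g-cohomology}, so strictly speaking nothing new needs to be proved; recording it separately is a convenience for later citations (cf.\ Section \ref{sec:dwork}). It is nonetheless worth noting that the vanishing can be deduced directly from Lemma \ref{th:truncated-equivariant}, without re-running the Koszul-complex computation, and that is the derivation I would write here.

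The plan is to compare the $u$-deformed complex with its $u=0$ specialization. Multiplication by $u$ is a chain map on $(G\Omega^*_{\bar\Lambda}[u],\,ud+dW_q)$, since $u$ is a scalar parameter and therefore $(ud+dW_q)(u\omega)=u\,(ud+dW_q)(\omega)$; its cokernel complex is $(G\Omega^*_{\bar\Lambda},\,dW_q)$. This gives a short exact sequence of complexes
\[
0 \longrightarrow (G\Omega^*_{\bar\Lambda}[u],\,ud+dW_q) \stackrel{\cdot u}{\longrightarrow} (G\Omega^*_{\bar\Lambda}[u],\,ud+dW_q) \longrightarrow (G\Omega^*_{\bar\Lambda},\,dW_q) \longrightarrow 0 .
\]
Passing to the associated long exact sequence in cohomology and inserting Lemma \ref{th:truncated-equivariant}(i), which gives $H^i(G\Omega^*_{\bar\Lambda}[u],ud+dW_q)=0$ for $i<n$, the group $H^i(G\Omega^*_{\bar\Lambda},dW_q)$ is sandwiched between two vanishing terms whenever $i<n-1$, hence is zero.

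The only step with any content is the borderline case $i=n-1$. There the long exact sequence identifies $H^{n-1}(G\Omega^*_{\bar\Lambda},dW_q)$ with the kernel of multiplication by $u$ on $H^n(G\Omega^*_{\bar\Lambda}[u],ud+dW_q)$. By Lemma \ref{th:truncated-equivariant}(ii) the latter is $\bar\Lambda[u]$-linearly isomorphic to $H^n(G\Omega^*_{\bar\Lambda},dW_q)[u]$, a free module over the polynomial ring in $u$, on which multiplication by $u$ is injective; so this kernel is zero and $H^{n-1}(G\Omega^*_{\bar\Lambda},dW_q)=0$ as well. I do not expect any genuine obstacle here: the argument is just bookkeeping of the long exact sequence together with the freeness already recorded in Lemma \ref{th:truncated-equivariant}(ii).
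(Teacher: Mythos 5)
Your reading of the statement is literal, and on that literal reading you are right that it coincides with Lemma \ref{th:g-cohomology}(i); the long exact sequence argument you give for it is also correct as far as it goes. But the corollary is not meant to be a verbatim repeat: the differential $dW_q$ in its statement is a slip for $d+dW_q$. This is forced both by the paper's one-line proof (``follows from Lemma \ref{th:truncated-equivariant} by setting $u=1$'' --- setting $u=0$ recovers $dW_q$, whereas setting $u=1$ gives $d+dW_q$) and by the way the corollary is invoked in the proof of Proposition \ref{prop:AS31q}, where the complex $\bar C^*$ whose vanishing is needed is $(G\Omega^*_{\bar\Lambda}, d+dW_q)$. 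So the actual content is $H^i(G\Omega^*_{\bar\Lambda}, d+dW_q)=0$ for $i<n$, and your argument does not establish that: your short exact sequence computes the cokernel of multiplication by $u$, i.e.\ the $u=0$ specialization, which lands you back at the Koszul complex and hence at a statement with no new content.

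The repair is small and uses exactly your technique: replace multiplication by $u$ with multiplication by $u-1$. This is still an injective chain endomorphism of $(G\Omega^*_{\bar\Lambda}[u],\, ud+dW_q)$, since the underlying module is free over $\bF_p[u]$ and $u$ is central, and now the quotient complex is $(G\Omega^*_{\bar\Lambda},\, d+dW_q)$. The long exact sequence together with Lemma \ref{th:truncated-equivariant}(i) kills $H^i$ for $i<n-1$, and the borderline case $i=n-1$ reduces to injectivity of $u-1$ on $H^n(G\Omega^*_{\bar\Lambda}[u], ud+dW_q)$, which follows from the freeness recorded in Lemma \ref{th:truncated-equivariant}(ii) exactly as in your $u=0$ argument. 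That is the honest content behind the paper's phrase ``setting $u=1$''.
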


\begin{proof}
This follows from Lemma \ref{th:truncated-equivariant} by setting $u = 1$.
\end{proof}

The next result is our version of \cite[Theorem 2.18]{adolphson-sperber89}. Formally, it is analogous to Corollary \ref{cor:generatingalgebraicGM}:

\begin{corollary} \label{cor:generatingalgebraicGM2}
Let $\Delta \subset N$ be an $\bF_p$-basis set (Definition \ref{def:f-basis}). Then: 

(i) The classes
\begin{equation} \label{eq:bar-lambda-generator}
[q^{\vec{\psi}(\bfn)} z^{\bfn} \, \mathit{vol}] \in H^n(G\Omega^*_{\bar\Lambda},dW_q), \quad
\bfn \in \Delta
\end{equation}
freely generate that cohomology group as a $\bar\Lambda$-module.

(ii) The same remains true if we replace the differential in \eqref{eq:bar-lambda-generator} with $d+dW_q$.
\end{corollary}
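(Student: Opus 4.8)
\emph{Proof plan for Corollary \ref{cor:generatingalgebraicGM2}.}
For part (i) I would argue directly from Lemma \ref{th:g-cohomology}(ii). The isomorphism \eqref{eq:hn} is induced by \eqref{eq:cohomology-coincidence}, and under the Koszul description \eqref{eq:koszul} (whose last term $GR_{\bar\Lambda}$ is identified with $G\Omega^n_{\bar\Lambda}$ via $\omega \mapsto \omega\,\mathit{vol}$) it carries the class $[q^{\vec\psi(\bfn)} z^\bfn\,\mathit{vol}]$ to the class of $\alpha_\bfn \in H^*_T(M;\bar\Lambda)$ modulo $(\bar\lambda_1,\dots,\bar\lambda_n)$, i.e.\ to the non-equivariant image of $\alpha_\bfn$ in $H^*(M;\bar\Lambda)$. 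By Definition \ref{def:f-basis}, for $\bfn \in \Delta$ these images form an $\bF_p$-basis of $H^*(M;\bF_p)$; and since $H^*(M;\bZ)$ is torsion-free, $H^*(M;\bar\Lambda) = H^*(M;\bF_p)\otimes_{\bF_p}\bar\Lambda$, so they form a free $\bar\Lambda$-basis of $H^*(M;\bar\Lambda)$. Transporting back through \eqref{eq:hn} gives (i).

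For part (ii) I would use the auxiliary variable $u$, as in the passage from Lemma \ref{lem:shift1} to Corollary \ref{cor:generatingalgebraicGM}. First observe that, being a form of top degree, $q^{\vec\psi(\bfn)} z^\bfn\,\mathit{vol}$ is killed both by $d$ and by $dW_q\wedge(-)$, hence is a cocycle for all three differentials $dW_q$, $d + dW_q$, and $ud + dW_q$, and its reduction at $u = 0$ represents the class considered in (i). Using Lemma \ref{th:truncated-equivariant}(ii) together with part (i), I would then pick the $\bar\Lambda$-linear splitting of the $u = 0$ reduction map that sends the basis element $[q^{\vec\psi(\bfn)} z^\bfn\,\mathit{vol}] \in H^n(G\Omega^*_{\bar\Lambda},dW_q)$ to $[q^{\vec\psi(\bfn)} z^\bfn\,\mathit{vol}] \in H^n(G\Omega^*_{\bar\Lambda}[u], ud + dW_q)$; this is legitimate precisely because the latter are honest cocycles, and it identifies $H^n(G\Omega^*_{\bar\Lambda}[u], ud + dW_q)$ with the free $\bar\Lambda[u]$-module on the classes $[q^{\vec\psi(\bfn)} z^\bfn\,\mathit{vol}]$, $\bfn \in \Delta$.

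Finally I would specialize $u = 1$ via the short exact sequence of complexes
\begin{equation*}
0 \longrightarrow G\Omega^*_{\bar\Lambda}[u] \stackrel{u-1}{\longrightarrow} G\Omega^*_{\bar\Lambda}[u] \longrightarrow G\Omega^*_{\bar\Lambda} \longrightarrow 0,
\end{equation*}
with differential $ud + dW_q$ on the left two terms and $d + dW_q$ on the right. Since $H^i(G\Omega^*_{\bar\Lambda}[u], ud + dW_q) = 0$ for $i < n$ by Lemma \ref{th:truncated-equivariant}(i), since $G\Omega^*_{\bar\Lambda}$ is concentrated in degrees $\leq n$, and since $u - 1$ is a non-zero-divisor in $\bar\Lambda[u]$ and hence acts injectively on the free module $H^n(G\Omega^*_{\bar\Lambda}[u], ud + dW_q)$, the long exact cohomology sequence collapses to $H^i(G\Omega^*_{\bar\Lambda}, d + dW_q) = 0$ for $i<n$ (reproving Corollary \ref{th:less-than-n}) and an isomorphism $H^n(G\Omega^*_{\bar\Lambda}, d + dW_q) \iso H^n(G\Omega^*_{\bar\Lambda}[u], ud + dW_q)/(u - 1)$. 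As the quotient map is induced by $u \mapsto 1$ and the forms $q^{\vec\psi(\bfn)} z^\bfn\,\mathit{vol}$ contain no $u$, this isomorphism sends the $\bar\Lambda[u]$-basis of the previous paragraph to the classes $[q^{\vec\psi(\bfn)} z^\bfn\,\mathit{vol}]$, $\bfn \in \Delta$, which therefore form a free $\bar\Lambda$-basis of $H^n(G\Omega^*_{\bar\Lambda}, d + dW_q)$.

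The only real subtlety is bookkeeping: one must track the specific classes $[q^{\vec\psi(\bfn)} z^\bfn\,\mathit{vol}]$ faithfully through the isomorphisms of Lemmas \ref{th:g-cohomology} and \ref{th:truncated-equivariant}, and in particular choose the splitting in Lemma \ref{th:truncated-equivariant}(ii) compatibly with them. I do not expect a genuine obstacle here; the argument is a mod-$p$, truncated transcription of what is already done over $\bQ$ in Theorem \ref{thm:toricmirror} and Corollary \ref{cor:generatingalgebraicGM}, the one new input being Lemma \ref{th:g-cohomology} in place of Lemma \ref{lem:shift1}.
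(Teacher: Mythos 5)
Your proposal is correct and follows essentially the same route as the paper: part (i) via the isomorphism \eqref{eq:hn} identifying $[q^{\vec\psi(\bfn)}z^{\bfn}\,\mathit{vol}]$ with the non-equivariant image of $\alpha_{\bfn}$, and part (ii) via Lemma \ref{th:truncated-equivariant} and the quotient by $(u-1)$. You simply spell out details the paper leaves implicit (the choice of splitting compatible with the monomial cocycles, and the short exact sequence justifying the $(u-1)$-specialization), and these details check out.
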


\begin{proof}
(i) By definition, the isomorphism \eqref{eq:hn} maps $[q^{\vec{\psi}(\bfn)}z^{\bfn}\,\mathit{vol}] \in H^n(G\Omega^*,dW_q)$ to the non-equivariant image of $\alpha_{\bfn}$. Hence, our assumption implies the statement in question.

(ii) By Lemma \ref{th:truncated-equivariant}, the same monomials freely generate $H^*(G\Omega_{\bar\Lambda}^*[u], ud+dW_q)$ over $\bar\Lambda[u]$. Finally, 
\begin{equation}
H^*(G\Omega_{\bar\Lambda}^*,d+dW_q) \iso H^*(G\Omega_{\bar\Lambda}^*[u], ud+dW_q)/(u-1),
\end{equation}
and so the desired result follows.
\end{proof}

\section{Dwork's inverse Frobenius\label{sec:dwork}}
We will construct a version of Dwork's inverse Frobenius from \cite{dwork74} for the quantum connection on Fano toric varieties. The analytic framework in the present section is not sufficient to prove overconvergence, but it easy to define and to do computations in.

\begin{convention}
This section and the following one use parameters $(t_1,\dots,t_r)$. These will ultimately be related to the previous $(q_1,\dots,q_r)$ by rescaling as in \eqref{eq:t-q}, see \eqref{eq:iota} below. We write $t^{\vec{v}} = t_1^{v_1} \cdots t_r^{v_r}$ as in \eqref{eq:multi-q}.
\end{convention}

\subsection{Basic definitions}
The first step is to introduce appropriate spaces of $p$-adic functions and differential forms, motivated by ones in \cite[\S 1]{adolphson-sperber89}. Recall that $K = \bQ_p(\mu)$; $O \subset K$ is the ring of integers; and $\pi$ is the constant \eqref{eq:pi}. 

\begin{definition} \label{th:b}
(i) Let $B$ be the ring of series 
\begin{equation} \label{eq:define-b}
\sum_{|\vec{v}| \geq 0} 
a_{\vec{v}} t^{\vec{v}}, \quad a_{\vec{v}} \in K,
\end{equation}
such that:
\begin{equation} \label{eq:b-condition}
\parbox{36em}{for any $C \in \bR$, there are only finitely many $\vec{v}$ with $\mathrm{val}(a_{\vec{v}}) \leq C$.}
\end{equation}
(One could write it as $B = K\langle t_1, (t_1^{-1}t_2)^{\pm 1},\dots\rangle$; in the language of nonarchimedean analytic geometry, it is an affinoid algebra.) Write $IB \subset B$ for the subspace where we only allow $|\vec{v}| > 0$. Write $OB \subset B$ for the subspace where $a_{\vec{v}} \in O$. Because the valuations appearing in any element of $B$ are bounded below, we have $B = OB \otimes_O K$.

(ii) Let $R_B$ be the space of series
\begin{equation} \label{eq:define-rb}
\sum_{\substack{
\vec{v}, \, \bfn \\ 
|\vec{v}| \geq w(\bfn)}}
a_{\vec{v},\bfn} \,\pi^{w(\bfn)} t^{\vec{v}} z^{\bfn}, \quad a_{\vec{v},\bfn} \in K,
\end{equation}
with the same kind of condition as before:
\begin{equation} \label{eq:valuationconditionB}
\parbox{35em}{for any $C \in \bR$, there are only finitely many $(\vec{v},\mathbf{n})$ with
$\mathrm{val}(a_{\vec{v},\bfn}) \leq C$.}
\end{equation}
Let $R_{OB} \subset R_B$ be the subspace where $a_{\vec{v},\bfn} \in O$. We again have $R_B = R_{OB} \otimes_O K$.

(iii) Let $\Omega^k_B$ be the space of $k$-forms in the variables $(z_1,\dots,z_n)$ with coefficients in $B$, which is the free $R_B$-module with basis \eqref{eq:differential-form-basis}. As before, we can write $\Omega^*_B = \Omega^*_{OB} \otimes_O K$.
\end{definition}

$R_B$ is a ring (and a $B$-algebra). To check that, take two elements written as in \eqref{eq:define-rb}, with coefficients $a^k_{\vec{v},\bfn}$, $k = 1,2$. The formal product, written in the same way, has coefficients
\begin{equation} \label{eq:prod-coeff}
        a_{\vec{v},\bfn} = \sum_{\substack{\vec{v}_1 + \vec{v}_2 = \vec{v} \\ \mathbf{n}_1 + \mathbf{n}_2 = \mathbf{n}} } a_{\vec{v}_1,\mathbf{n}_1}^1 a_{\vec{v}_2,\mathbf{n}_2}^2 \pi^{w(\mathbf{n}_1) + w(\mathbf{n}_2) - w(\mathbf{n})}.
\end{equation}
In order for this sum to be nonempty, we need $|\vec{v}_k| \geq w(\mathbf{n}_k)$ for both $k$, which implies $|\vec{v}| \geq w(\bfn)$ by Lemma \ref{th:toriccombinatorics}(i). For the same reason, the $\pi$-powers in \eqref{eq:prod-coeff} are all nonnegative. Let $C_{\operatorname{min}} \in \bR$ be the minimum of the valuations of $a_{\vec{v},\bfn}^k$ over all $(\vec{v},\bfn,k)$. Choose some $C \in \bR$. In any sum \eqref{eq:prod-coeff}, there are only finitely many $(\vec{v}_1,\bfn_1)$ where $\mathrm{val}(a_{\vec{v}_1,\bfn_1}^1) \leq C - C_{\mathrm{min}}$. For all others, we have 
\begin{equation} \label{eq:c-min-c}
\mathrm{val}\big(a_{\vec{v}_1,\mathbf{n}_1}^1 a_{\vec{v}_2,\mathbf{n}_2}^2 \pi^{w(\mathbf{n}_1) + w(\mathbf{n}_2) - w(\mathbf{n})}\big) \geq \mathrm{val}(a_{\vec{v}_1,\bfn_1}^1) +  
\mathrm{val}(a_{\vec{v}_2,\bfn_2}^2) \geq C;
\end{equation}
this implies that the series \eqref{eq:prod-coeff} is $p$-adically convergent. One can slightly refine the argument to show that the $a_{\vec{v},\bfn}$ satisfy \eqref{eq:valuationconditionB}. Namely, when looking at an expression \eqref{eq:prod-coeff}, there are two possibilities: either there are $(\vec{v}_1,\bfn_1,\vec{v}_2,\bfn_2)$ such that $\mathrm{val}(a_{\vec{v}_k,\bfn_k}^k) \leq C-C_{\operatorname{min}}$ for both $k$, which is the case for only finitely many $(\vec{v},\bfn)$; or else \eqref{eq:c-min-c} applies to all terms in the series, so that $\mathrm{val}(a_{\vec{v},\bfn}) \geq C$.


The superpotential is now
\begin{equation} \label{eq:t-superpotential}
W_t = \pi \sum_{i=1}^r t_i z^{\bfe_i} \in R_B.
\end{equation}
As before, we consider the differential and connection
\begin{align} \label{eq:d-t}
& d+dW_t : \Omega^*_B \longrightarrow \Omega^{*+1}_B, \\
\label{eq:t-connection}
& \nabla_{t_i\partial_{t_i}} = t_i\partial_{t_i} + (t_i\partial_{t_i}W_t): \Omega^*_B \longrightarrow \Omega^*_B.
\end{align}

We will also need Frobenius-twisted versions of the previous notions.

\begin{definition} \label{th:pb}
(i) Denote by $R_B^{(p)}$ the space of series
\begin{equation} \label{eq:bp-series}
\sum_{\substack{\vec{v},\,\bfn \\ |\vec{v}| \geq p\, w(\mathbf{n})}} a_{\vec{v},\mathbf{n}} \, \pi^{w(\mathbf{n})} t^{\vec{v}} z^\mathbf{n} 
\end{equation}
such that \eqref{eq:valuationconditionB} holds.

(ii) The spaces $\Omega_B^{(p),*}$ of differential forms are defined as before, but using $R_B^{(p)}$ as coefficients.
\end{definition}

Consider the Frobenius map on the base $B$,
\begin{equation} \label{eqn:t-p-power}
F(t_1, \dots, t_r) = (t_1^p, \dots, t_r^p),
\end{equation}
and the associated map (for which we use the same notation)
\begin{equation} \label{eqn:t-p-power-2}
F: R_B \longrightarrow R_B^{(p)}.
\end{equation}
If we introduce formal roots $t_i^{1/p}$, with $(t_i^{1/p})^p = t_i$, then \eqref{eqn:t-p-power-2} induces an isomorphism
\begin{equation} \label{eq:BpisFrobeniuspullback}
R_B[t_1^{1/p},\dots,t_r^{1/p}] \stackrel{\iso}{\longrightarrow} R_B^{(p)};
\end{equation}
the same holds for differential forms. In that sense, the spaces in Definition \ref{th:pb} are Frobenius pullbacks of those in Definition \ref{th:b}. Correspondingly, we use the pullback superpotential, differential, and connection:
\begin{align}
& W_{t^p} = \pi \sum_{i=1}^r t_i^p z^{\bfe_i} \in R_B^{(p)}, \\
& d + dW_{t^p}: \Omega^{(p),*}_B \longrightarrow \Omega^{(p),*+1}_B, \\
\label{eq:derham-frobenius-connection}
& \nabla_{t_i\partial_{t_i}}^{(p)} = t_i\partial_{t_i} + (t_i\partial_{t_i}W_{t^p})
: \Omega^{(p),*}_B \longrightarrow \Omega^{(p),*}_B.
\end{align}
The differential form version of \eqref{eqn:t-p-power-2} then satisfies
\begin{align}
& (d + dW_{t^p}) \circ F = F \circ (d+dW_t), \\
\label{eq:power}
& \nabla_{t_i\partial_{t_i}}^{(p)} \circ F = F \circ (p\nabla_{t_i\partial_{t_i}}), 
\end{align}
where the extra $p$ in \eqref{eq:power} comes from the effect of $t \mapsto t^p$ on the rotational vector field $t\partial_t$. 

\subsection{Functional analysis setup}
The standard Gauss norm on $B$ is
\begin{equation}
\| \sum_{\vec{v}} a_{\vec{v}} t^{\vec{v}} \| = \mathrm{max}_{\vec{v}}(|a_{\vec{v}}|) = 
p^{-\mathrm{min}_{\vec{v}}(\mathrm{val}(a_{\vec{v}}))}.
\end{equation}
We similarly equip $R_B$ with the norm
\begin{equation}\label{eqn:B-norm}
    \| \sum_{\vec{v},\mathbf{n}} a_{\vec{v},\bfn}\, \pi^{w(\mathbf{n})} t^{\vec{v}} z^\mathbf{n} \| = \mathrm{max}_{\vec{v},\bfn}(|a_{\vec{v},\bfn}|)
    = p^{-\mathrm{min}_{\vec{v},\bfn}(\mathrm{val}(a_{\vec{v},\bfn}))}.
\end{equation}
    
\begin{lemma}\label{lem:BisforBanach}
The norm \eqref{eqn:B-norm} makes $R_B$ into a Banach algebra.
\end{lemma}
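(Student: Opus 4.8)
The plan is to verify the three Banach-algebra axioms for $(R_B, \|\cdot\|)$: that $\|\cdot\|$ is a complete norm on $R_B$ as a $K$-vector space, that $\|xy\| \leq \|x\|\,\|y\|$, and that $\|1\| = 1$. The last is immediate from \eqref{eqn:B-norm} since the constant series $1 = q^{\vec 0}z^{\mathbf 0}$ has a single coefficient equal to $1$, of valuation $0$.

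For completeness, I would argue as follows. Given a Cauchy sequence in $R_B$ with respect to \eqref{eqn:B-norm}, each coefficient sequence $(a^{(s)}_{\vec v,\bfn})_s$ is Cauchy in $K$ by the definition of the norm, hence converges to some $a_{\vec v,\bfn} \in K$; this defines a candidate limit series with the correct support condition $|\vec v| \geq w(\bfn)$ (which is preserved under coefficientwise limits). The two things to check are that this candidate lies in $R_B$ — i.e.\ satisfies the finiteness condition \eqref{eq:valuationconditionB} — and that the sequence converges to it in norm. Both follow from the uniform Cauchy estimate: given $\varepsilon > 0$, pick $S$ so that $\|x^{(s)} - x^{(s')}\| < \varepsilon$ for $s,s' \geq S$; then for each $(\vec v,\bfn)$, $\mathrm{val}(a_{\vec v,\bfn} - a^{(S)}_{\vec v,\bfn}) \geq -\log_p\varepsilon$, so $\mathrm{val}(a_{\vec v,\bfn})$ and $\mathrm{val}(a^{(S)}_{\vec v,\bfn})$ differ by a bounded amount; since $x^{(S)} \in R_B$ satisfies \eqref{eq:valuationconditionB}, so does the limit, and the same estimate shows $\|x^{(s)} - (\text{limit})\| \leq \varepsilon$ for $s \geq S$.

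The submultiplicativity $\|xy\| \leq \|x\|\,\|y\|$ is the one point requiring genuine (though brief) attention, and it is essentially already contained in the discussion preceding the lemma: the product coefficients are given by \eqref{eq:prod-coeff}, and every term in that sum has valuation at least $\mathrm{val}(a^1_{\vec v_1,\bfn_1}) + \mathrm{val}(a^2_{\vec v_2,\bfn_2})$ because the $\pi$-exponent $w(\bfn_1) + w(\bfn_2) - w(\bfn)$ is nonnegative by the convexity of $w$ (Lemma \ref{th:toriccombinatorics}(i)). Hence $\mathrm{val}(a_{\vec v,\bfn}) \geq \min_{\vec v,\bfn}\mathrm{val}(a^1_{\vec v,\bfn}) + \min_{\vec v,\bfn}\mathrm{val}(a^2_{\vec v,\bfn})$ by the ultrametric inequality, which is exactly $\|xy\| \leq \|x\|\,\|y\|$. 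That the formal product indeed lies in $R_B$ (so the norm is defined on it) was already established in the paragraph after Definition \ref{th:pb}.

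The main obstacle, such as it is, is bookkeeping rather than conceptual: one must be careful that the nonnegativity of the $\pi$-exponents — which is what makes the $\pi^{w(\bfn)}$ normalization in \eqref{eq:define-rb} interact correctly with multiplication — is used at the right place, and that the finiteness condition \eqref{eq:valuationconditionB} is genuinely closed under the coefficientwise limits arising from Cauchy sequences. Neither is difficult, but both are the kind of thing that is easy to state too casually; I would make the uniform-bound argument explicit as above.
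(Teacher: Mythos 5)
Your proposal is correct and follows essentially the same route as the paper, which likewise disposes of completeness as standard (coefficientwise limits plus the uniform Cauchy bound) and derives submultiplicativity from the product formula \eqref{eq:prod-coeff} together with the first inequality in \eqref{eq:c-min-c}, i.e.\ the nonnegativity of the exponent $w(\bfn_1)+w(\bfn_2)-w(\bfn)$ coming from Lemma \ref{th:toriccombinatorics}(i). The only quibble is that "the valuations differ by a bounded amount" is not literally what the ultrametric inequality gives you; the correct statement, which suffices for \eqref{eq:valuationconditionB}, is that for a fixed threshold $C$ and $\varepsilon$ with $-\log_p\varepsilon > C$, any coefficient of the limit with valuation $\leq C$ forces the corresponding coefficient of $x^{(S)}$ to have the same valuation.
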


\begin{proof}
Completeness is standard. To show that this is a Banach algebra, it suffices to look at the coefficients of the product, see \eqref{eq:prod-coeff}, and the first inequality in \eqref{eq:c-min-c}.
\end{proof}

One can introduce the same norm on $\Omega^*_B$, using the basis \eqref{eq:differential-form-basis}. The same works for the Frobenius-twisted versions $R_B^{(p)}$, using the norms of the coefficients in \eqref{eq:bp-series}; and then \eqref{eqn:t-p-power-2} is a map of Banach algebras. Again, the same applies to $\Omega^{(p),*}_B$.

\begin{lemma}\label{lemma:bounded}
The connection \eqref{eq:t-connection} is a bounded operator, of norm $\leq 1$.
\end{lemma}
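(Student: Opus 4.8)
The plan is to show that the connection $\nabla_{t_i\partial_{t_i}} = t_i\partial_{t_i} + (t_i\partial_{t_i}W_t)$ on $\Omega^*_B$ has operator norm at most $1$ with respect to the Gauss norm \eqref{eqn:B-norm}, by treating the two summands separately. Since $\Omega^*_B$ is a free $R_B$-module on the basis \eqref{eq:differential-form-basis} and the norm is the sup over basis components, it suffices to bound each operator on $R_B$ itself.

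First I would check the differential-operator part $t_i\partial_{t_i}$. On a monomial $\pi^{w(\bfn)} t^{\vec v} z^{\bfn}$ it acts by multiplication by $v_i \in \bZ$, and since $|v_i|\leq 1$ in the non-archimedean sense (it lies in $\bZ \subset \bZ_p$, so $\mathrm{val}(v_i)\geq 0$), the coefficient $a_{\vec v,\bfn}$ is replaced by $v_i a_{\vec v,\bfn}$, which has valuation $\geq \mathrm{val}(a_{\vec v,\bfn})$. Moreover $t_i\partial_{t_i}$ does not mix distinct monomials, so taking the min over all $(\vec v,\bfn)$ the norm does not increase; hence $\|t_i\partial_{t_i}\|\leq 1$. (Here one also uses that $t_i\partial_{t_i}$ preserves $R_B$, i.e.\ the convergence condition \eqref{eq:valuationconditionB} is preserved, which is immediate since the index set and the valuations only improve.)

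Next I would handle the zeroth-order part, multiplication by $t_i\partial_{t_i}W_t = \pi t_i z^{\bfe_i}$. This element lies in $R_{OB}$: indeed it is of the form $\pi^{w(\bfe_i)} t^{\vec v} z^{\bfe_i}$ with $\vec v$ the $i$-th standard basis vector, $w(\bfe_i)=1$, coefficient $1 \in O$, and it trivially satisfies \eqref{eq:valuationconditionB}. So its Gauss norm is $\leq 1$. Since $R_B$ is a Banach algebra (Lemma \ref{lem:BisforBanach}), multiplication by an element of norm $\leq 1$ is an operator of norm $\leq 1$ on $R_B$, and likewise on $\Omega^*_B$ by $R_B$-linearity. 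Combining the two estimates via the non-archimedean triangle inequality $\|Af+Bf\|\leq \max(\|Af\|,\|Bf\|)\leq \|f\|$ gives $\|\nabla_{t_i\partial_{t_i}}\|\leq 1$.

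There is no serious obstacle here; the only point requiring a moment's care is confirming that both summands genuinely map $\Omega^*_B$ into itself (preservation of the support condition $|\vec v|\geq w(\bfn)$ and of the finiteness condition \eqref{eq:valuationconditionB}). For $t_i\partial_{t_i}$ the support and valuations are unchanged up to multiplication by $v_i$; for multiplication by $\pi t_i z^{\bfe_i}$ this is the $B$-algebra property already established when $R_B$ was shown to be a ring, specialized to one explicit factor. Once stability is known, the norm bound is the elementary computation above.
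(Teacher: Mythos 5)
Your proof is correct and follows essentially the same route as the paper's: split the connection into the derivation $t_i\partial_{t_i}$ (which acts on monomials by the integer $v_i$, of $p$-adic norm $\leq 1$) and multiplication by $\pi t_i z^{\bfe_i}$, then use the non-archimedean triangle inequality. The only cosmetic difference is that for the multiplication term you invoke the Banach-algebra property of Lemma \ref{lem:BisforBanach} (whose proof already encodes the convexity of $w$), whereas the paper redoes the monomial computation directly and cites \eqref{eq:convex-psi}; the two are equivalent.
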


\begin{proof}
It suffices to prove the assertion for $\Omega_B^0 = R_B$. We consider the two terms in \eqref{eq:t-connection} separately; the desired statement follows because the norm on $R_B$ is non-archimedean. We have
\begin{equation} \label{eq:t-dt}
t_i\partial_{t_i} \big( \pi^{w(\bfn)} t^{\vec{v}} z^{\bfn} \big) 
= v_i \pi^{w(\bfn)} t^{\vec{v}} z^{\bfn}.
\end{equation}
Since the $v_i$ are integral, $|v_i| \leq 1$. Hence, if we apply the same to series \eqref{eq:define-rb}, the norm of each coefficient cannot increase. For the second term, let $\vec{f}_i \in \bZ^r$ be the $i$-th unit vector, so that $t_i = t^{\vec{f}_i}$ and $(t_i\partial_{t_i}W_t) = \pi t^{\vec{f}_i} z^{\bfe_i}$. Since $w(\bfe_i) = 1$ by definition, we have
\begin{equation} \label{eq:tt}
(t_i\partial_{t_i} W_t) \cdot \big( \pi^{w(\bfn)} t^{\vec{v}} z^{\bfn} \big) =
\pi^{w(\bfn)+1} t^{\vec{v}+\vec{f}_i} z^{\bfn+\bfe_i}
= \pi^{w(\bfn)+w(\bfe_i)-w(\bfn + \bfe_i)}\, \big( \pi^{w(\bfn+\bfe_i)} t^{\vec{v}+\vec{f}_i} z^{\bfn+\bfe_i}\big),
\end{equation}
where the coefficient $\pi^{w(\bfn)+w(\bfe_i)-w(\bfn + \bfe_i)}$ has norm $\leq 1$ by \eqref{eq:convex-psi}.
\end{proof}

\subsection{Computing Dwork cohomology\label{subsec:compute-cohomology}}
Our strategy is by reduction to the mod $p$ computations from Section \ref{sec:modp}. We have 
\begin{equation}
\begin{aligned}
& OB \twoheadrightarrow OB/\pi \stackrel{\iso}{\longrightarrow} \bar\Lambda, \\
& \sum_{\vec{v}} a_{\vec{v}} t^{\vec{v}} \longmapsto \sum_{\vec{v}} \bar{a}_{\vec{v}} q^{\vec{v}},
\end{aligned}
\end{equation}
where $\bar{a}_{\vec{v}} \in \bF_p$ is the mod $\pi$ image of $a_{\vec{v}} \in O$; the condition \eqref{eq:b-condition} ensures that the sum on the target side is finite. Similarly,
\begin{equation}
\begin{aligned} \label{eq:reductionhom} 
& R_{OB} \twoheadrightarrow R_{OB}/\pi \stackrel{\iso}{\longrightarrow} GR_{\bar\Lambda}, \\
& \sum_{\bfn,\vec{v}} a_{\mathbf{n},\vec{v}}\, \pi^{w(\mathbf{n})} t^{\vec{v}} z^\mathbf{n}\longmapsto \sum_{\bfn,\vec{v}} \overline{a_{\mathbf{n},\vec{v}}}\, q^{\vec{v}} z^{\mathbf{n}};
\end{aligned}
\end{equation}
the fact that the mod $\pi$ reduction of the ring structure on $R_{OB}$ satisfies the relations \eqref{eq:new-sr} follows from Lemma \ref{th:toriccombinatorics}. The same applies to $\Omega^*_{OB} \rightarrow G\Omega^*_{\bar\Lambda}$, using the basis \eqref{eq:differential-form-basis}. Comparing \eqref{eq:q-superpotential} and \eqref{eq:t-superpotential} shows that \eqref{eq:reductionhom} maps $W_t$ to $W_q$. As a consequence, we have an isomorphism of complexes
\begin{equation} \label{eq:derham-reduction}
(\Omega^*_{OB}, d+dW_t)/\pi \iso (G\Omega^*_{\bar\Lambda}, d+dW_q)
\end{equation}

The following technical result is a variant of \cite[Prop.~A.1]{adolphson-sperber89}, with the same proof.

\begin{lem} \label{lem:ASA1}
Let 
\begin{equation}
C^* = \{ \cdots \to C^{n-2} \to C^{n-1} \to C^{n} \to 0 \} 
\end{equation}
be a bounded above complex of flat $O$-modules, such that the $\pi$-adic filtration on each $C^i$ is separated and complete. Let $\bar{C}^* = C^*/\pi$ be the associated complex of $\bF_p$-vector spaces. 

(i) The natural map $H^n(C^*)/\pi \rightarrow H^n(\bar{C}^*)$ ($n$ being the degree above which the complex is trivial) is an isomorphism.

(ii) For any $i$, $H^i(\bar{C}^*) = 0$ implies $H^i(C^*) = 0$. 

(iii) Suppose that $H^{i-1}(\bar{C}^*) = 0$ for some $i$. Then: $H^i(C^*)$ is a flat $O$-module; the module $\operatorname{im}(d: C^{i-1} \to C^{i})$ is complete with respect to the $\pi$-adic filtration; and $H^i(\bar{C}^*)$ is separated with respect to the $\pi$-adic filtration.
\end{lem}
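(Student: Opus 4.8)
The statement is the standard "reduction mod $\pi$" package for bounded-above complexes of flat $O$-modules, and I would prove it exactly as in \cite[Prop.~A.1]{adolphson-sperber89}, by descending induction on degree together with the snake lemma. Write $d^i: C^i \to C^{i+1}$, $Z^i = \ker d^i$, $B^i = \operatorname{im} d^{i-1}$, and similarly $\bar Z^i, \bar B^i$ for $\bar C^*$; since $C^n \to C^{n+1} = 0$ we have $Z^n = C^n$. The engine is the short exact sequence $0 \to C^* \xrightarrow{\pi} C^* \to \bar C^* \to 0$ of complexes (exact because the $C^i$ are flat, hence torsion-free, over the DVR $O$), whose long exact cohomology sequence reads
\begin{equation} \label{eq:snake}
\cdots \to H^{i-1}(\bar C^*) \to H^i(C^*) \xrightarrow{\pi} H^i(C^*) \to H^i(\bar C^*) \to H^{i+1}(C^*) \to \cdots
\end{equation}

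\textbf{Key steps.} (ii) From \eqref{eq:snake}, $H^i(\bar C^*) = 0$ forces $\pi: H^i(C^*) \to H^i(C^*)$ to be surjective; since the complex is bounded above, $H^i(C^*)$ is built from the $C^j$ with $j \le i$ which are $\pi$-adically separated and complete, so $H^i(C^*)$ is a $\pi$-adically separated $O$-module (one checks $\bigcap_k \pi^k C^i = 0$ survives to cohomology using completeness of $B^{i+1}$ as below, or more simply argues by the same induction). A $\pi$-adically separated module on which $\pi$ is surjective is zero by Nakayama-type reasoning (iterate $\pi$-surjectivity and pass to the limit), giving $H^i(C^*) = 0$. (i) For degree $n$: $H^n(\bar C^*) = \bar C^n/\bar B^n$ and $H^n(C^*) = C^n/B^n$, so the claim is that $C^n/(B^n + \pi C^n) \to \bar C^n / \bar B^n$ is an isomorphism, i.e.\ that $\bar B^n$ is exactly the image of $B^n$ in $\bar C^n$; this is immediate since reduction mod $\pi$ is surjective on $C^{n-1}$. (One also needs that this is injective, i.e.\ that an element of $C^n$ mapping into $B^n \bmod \pi$ already lies in $B^n + \pi C^n$, which is the definition of "image of $B^n$".) (iii) Assume $H^{i-1}(\bar C^*) = 0$. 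The map $\pi: H^i(C^*) \to H^i(C^*)$ is then injective by \eqref{eq:snake}, i.e.\ $H^i(C^*)$ is torsion-free, hence flat over $O$; then $H^i(C^*)/\pi \iso H^i(\bar C^*)$ by the same argument as (i) but now also using injectivity of $\pi$ on $H^i$. For the completeness of $B^i = \operatorname{im}(d^{i-1})$: the vanishing $H^{i-1}(\bar C^*) = 0$ together with (ii) applied in degree $i-1$ (or rather the exactness it provides) shows $B^i = Z^i \cap (\text{stuff})$ fits into a short exact sequence $0 \to Z^{i-1} \to C^{i-1} \to B^i \to 0$ where $Z^{i-1}$ is $\pi$-adically closed in $C^{i-1}$ (because the quotient $C^{i-1}/Z^{i-1} = B^i \hookrightarrow C^i$ is $\pi$-torsion-free, so $\pi$-adic closedness of $Z^{i-1}$ follows); a closed submodule and a Hausdorff quotient of a complete module are complete, so $B^i$ is $\pi$-adically complete. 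Finally $H^i(\bar C^*) = \bar Z^i/\bar B^i$ is separated because $\bar B^i = B^i/\pi B^i$ is the image of the complete (hence closed) $B^i$, so it is closed in $\bar Z^i$.

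\textbf{Main obstacle.} The only genuinely delicate point is the bookkeeping around completeness and separatedness in (iii): one must pin down that $Z^{i-1} \subset C^{i-1}$ is $\pi$-adically closed and that $B^i$ is both closed in $C^i$ and complete in its own topology, and that these two topologies on $B^i$ (subspace from $C^i$ vs.\ quotient from $C^{i-1}$) are compatible enough for the conclusion. This is where the hypothesis $H^{i-1}(\bar C^*) = 0$ is used crucially — it is precisely what makes $C^{i-1} \twoheadrightarrow B^i$ have $\pi$-adically closed kernel — and it is the step requiring the most care to state correctly. Everything else is a formal diagram chase with \eqref{eq:snake}. Since the excerpt explicitly says "with the same proof" as \cite[Prop.~A.1]{adolphson-sperber89}, I would present the above outline and refer to that source for the routine verifications.
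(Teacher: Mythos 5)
Your overall strategy --- the long exact sequence of $0 \to C^* \xrightarrow{\pi} C^* \to \bar C^* \to 0$ combined with $\pi$-adic successive approximation --- is exactly the Adolphson--Sperber argument that the paper invokes (it gives no proof beyond the citation). Part (i) and the flatness claim in (iii) are fine, and your completeness argument for $B^i = \operatorname{im}(d^{i-1})$ is essentially right; in fact it needs no hypothesis at all, since $\pi^k B^i = d(\pi^k C^{i-1})$ means the $\pi$-adic filtration on $B^i$ coincides with the quotient filtration from $C^{i-1}$, and a quotient of a complete filtered module is complete. (Your stated reason for $Z^{i-1}$ being closed --- torsion-freeness of $B^i$ --- is not the relevant one; what you want is that $Z^{i-1} = d^{-1}(0)$ with $d$ filtration-preserving and $C^i$ separated.)

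The genuine gap is in the separatedness claims, in two places. In (ii), surjectivity of $\pi$ on $H^i(C^*)$ only gives $H^i(C^*) = \bigcap_k \pi^k H^i(C^*)$, and $H^i(C^*)$ is \emph{not} a priori $\pi$-adically separated: it is a subquotient of separated complete modules, and such subquotients need not be separated; moreover your suggested justification ("using completeness of $B^{i+1}$") would require the hypothesis of (iii) in a degree where it is not assumed. The correct route avoids separatedness of $H^i$ altogether and runs at the cochain level: given $z \in Z^i$, write $z = \pi z_1 + dc_0$, iterate, and sum $c = \sum_k \pi^k c_k$ in the complete module $C^{i-1}$; then $z - dc \in \bigcap_k \pi^k C^i = 0$ by separatedness of $C^i$, so $z \in B^i$. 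In (iii), the final conclusion as printed ("$H^i(\bar C^*)$ is separated") is vacuous, since $\pi$ annihilates $\bar C^*$ and the $\pi$-adic filtration on an $\bF_p$-vector space is trivially separated; it is surely a typo for "$H^i(C^*)$ is separated", which is the statement the paper actually uses in the proof of Proposition \ref{prop:AS31q}. Your argument about $\bar B^i$ being closed in $\bar Z^i$ does not address this. The meaningful statement does need the hypothesis: from $[z] \in \bigcap_k \pi^k H^i(C^*)$ and injectivity of $\pi$ on $H^i(C^*)$ (which you correctly derived from $H^{i-1}(\bar C^*)=0$) one extracts a coherent sequence of classes $[y_k]$ with $[y_0]=[z]$ and $[y_k]=\pi[y_{k+1}]$, and then the same summation argument as in (ii) shows $[z]=0$.
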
 

The following is an analogue of \cite[Prop.~3.1]{adolphson-sperber89}. 

\begin{prop} \label{prop:AS31q}  
(i) $H^i(\Omega^*_{OB}, d+dW_t)=0$ for $i<n$.

(ii) Let $\Delta \subset N$ be an $\bF_p$-basis set (Definition \ref{def:f-basis}). Then, the classes
\begin{equation} \label{eq:ob-module-basis}
[t^{\vec{\psi}(\bfn)}\pi^{w(\bfn)}z^{\bfn}]_{\bfn \in \Delta} \in H^n(\Omega^*_{OB},d+dW_t), \;\;
\bfn \in \Delta,
\end{equation}
freely generate that cohomology as an $OB$-module.

(iii) Analogues of (i) and (ii) apply to $(\Omega^*_B, d+dW_t)$, after replacing $OB$ by $B$ everywhere.
\end{prop}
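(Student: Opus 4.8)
The plan is to apply Lemma \ref{lem:ASA1} to the complex $C^* = (\Omega^*_{OB}, d+dW_t)$, using the mod $p$ input from Section \ref{sec:modp} via the identification \eqref{eq:derham-reduction}. First I would check the hypotheses of Lemma \ref{lem:ASA1}: the complex is bounded above (it is concentrated in degrees $0,\dots,n$); each $\Omega^k_{OB}$ is a flat (indeed torsion-free) $O$-module; and the $\pi$-adic filtration on $\Omega^k_{OB}$ is separated and complete — separatedness is clear because a series all of whose coefficients are divisible by arbitrarily high powers of $\pi$ is zero, and completeness follows from the valuation condition \eqref{eq:b-condition}/\eqref{eq:valuationconditionB}, which guarantees that a $\pi$-adically Cauchy sequence of such series converges coefficientwise to an element still satisfying the condition. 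Then $\bar C^* = C^*/\pi \iso (G\Omega^*_{\bar\Lambda}, d+dW_q)$ by \eqref{eq:derham-reduction}.

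Now (i) is immediate: by Corollary \ref{th:less-than-n} (equivalently Lemma \ref{th:g-cohomology}(i) combined with Lemma \ref{th:truncated-equivariant}), $H^i(\bar C^*) = 0$ for $i<n$, so Lemma \ref{lem:ASA1}(ii) gives $H^i(\Omega^*_{OB},d+dW_t) = 0$ for $i<n$.

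For (ii), consider the $OB$-linear map $OB^{\Delta} \to H^n(\Omega^*_{OB},d+dW_t)$ sending the standard generators to the classes $[t^{\vec\psi(\bfn)}\pi^{w(\bfn)}z^{\bfn}]$. Reducing mod $\pi$ and using Lemma \ref{lem:ASA1}(i), this becomes the map $\bar\Lambda^{\Delta} \to H^n(\bar C^*) = H^n(G\Omega^*_{\bar\Lambda},d+dW_q)$ which, by Corollary \ref{cor:generatingalgebraicGM2}(ii), is an isomorphism. So our map is an isomorphism mod $\pi$. To upgrade this to an isomorphism over $OB$, I would argue as follows. Surjectivity: since $H^{n-1}(\bar C^*) = 0$, Lemma \ref{lem:ASA1}(iii) tells us $H^n(\Omega^*_{OB},d+dW_t)$ is a flat (torsion-free) $O$-module, complete and separated for the $\pi$-adic filtration (the completeness/separatedness of $H^n$ follows from that of $\Omega^n_{OB}$ together with completeness of the image of $d$ from part (iii), via the short exact sequence); a $\pi$-adically complete module whose mod $\pi$ reduction is generated by the images of our elements is itself generated by those elements, by a standard successive-approximation argument. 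Injectivity: if $\sum_{\bfn} c_{\bfn} [t^{\vec\psi(\bfn)}\pi^{w(\bfn)}z^{\bfn}] = 0$ with not all $c_{\bfn}$ zero, divide through by the highest power of $\pi$ dividing all the $c_{\bfn}$ (possible since $H^n$ is $\pi$-adically separated and torsion-free) to get a relation with coefficients not all in $\pi O B$, contradicting injectivity mod $\pi$. Hence the classes freely generate.

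For (iii), I would simply tensor with $K$ over $O$: since $\Omega^*_B = \Omega^*_{OB}\otimes_O K$ and $K$ is flat over $O$, cohomology commutes with this base change, so $H^i(\Omega^*_B,d+dW_t) = H^i(\Omega^*_{OB},d+dW_t)\otimes_O K$, and both the vanishing in degrees $<n$ and the freeness statement (now over $B = OB\otimes_O K$) follow at once.

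The main obstacle is the passage from ``isomorphism mod $\pi$'' to ``isomorphism over $OB$'' in part (ii): this is exactly where one needs the completeness and separatedness conclusions of Lemma \ref{lem:ASA1}(iii), and one must be careful that these apply to $H^n$ itself and not merely to the chain groups — the exact sequence $0 \to \mathrm{im}(d) \to \ker(d) \to H^n \to 0$ with $\mathrm{im}(d)$ complete is what makes this work, since $\ker(d) \subset \Omega^n_{OB}$ is closed hence complete. Everything else is routine homological bookkeeping, parallel to the algebraic statements in Section \ref{sect:toricms} and the mod $p$ statements in Section \ref{sec:modp}.
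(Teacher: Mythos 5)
Your proposal is correct and follows essentially the same route as the paper: apply Lemma \ref{lem:ASA1} to $(\Omega^*_{OB},d+dW_t)$ with mod-$\pi$ reduction $(G\Omega^*_{\bar\Lambda},d+dW_q)$ via \eqref{eq:derham-reduction}, feed in Corollaries \ref{th:less-than-n} and \ref{cor:generatingalgebraicGM2}, and lift by a Nakayama/successive-approximation argument, then tensor with $K$ for (iii). The only (immaterial) difference is bookkeeping: the paper passes through the $\pi$-adic completion $\widehat{H^n}$ and applies Nakayama twice (once for surjectivity, once to kill the intersection with $\operatorname{im}(d+dW_t)$), whereas you argue directly that $H^n$ is complete via the exact sequence and handle injectivity by dividing a putative relation by the maximal power of $\pi$ — both are valid given the flatness, completeness, and separatedness conclusions of Lemma \ref{lem:ASA1}(iii).
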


\begin{proof} 
We apply Lemma \ref{lem:ASA1} with $C^*$ being $(\Omega^*_{OB}, d+dW_t)$, and $\bar{C}^*$ being $(G\Omega^*_{\bar{\Lambda}}, d+dW_q)$, using the isomorphism \eqref{eq:derham-reduction}. The necessary cohomological vanishing for $\bar{C}^*$ comes from Corollary \ref{th:less-than-n}. Part (i) then follows immediately.

Let's turn to (ii). Lemma \ref{lem:ASA1} implies that $H^n(C^*)$ is is $O$-flat and $\pi$-separated. Consider the natural map
\begin{equation} \label{eq:into-completion}
\Big( \bigoplus_{\bfn \in \Delta} t^{\vec{\psi}(\bfn)}\pi^{w(\bfn)}z^{\bfn} \, \mathit{vol} \cdot \mathit{OB} \Big) \longrightarrow H^n(C^*) \longrightarrow \widehat{H^n(C^*)},
\end{equation}
where the last term is $\pi$-adic completion. After reduction mod $\pi$, this map is an isomorphism; that follows from Lemma \ref{lem:ASA1}(i) and Corollary \ref{cor:generatingalgebraicGM2}. Nakayama's lemma (in the form \cite[Lemma 10.96.1 (2)]{stacks-project-00M9}) then shows that \eqref{eq:into-completion} is surjective. We already know that the second map in \eqref{eq:into-completion} is injective, that being separability. Therefore, the first map must also be surjective (and the second map an isomorphism). We next show that
\begin{equation} \label{eq:int-with-d}
\Big( \bigoplus_{\bfn \in \Delta} t^{\vec{\psi}(\bfn)}\pi^{w(\bfn)}z^{\bfn} \, \mathit{vol} \cdot \mathit{OB} \Big) \cap (d+dW_t)(\Omega^{n-1}_{OB}) = 0.
\end{equation}
From Lemma \ref{lem:ASA1} we know that $(d+dW_t)(\Omega^{n-1}_{OB})$ is complete, hence so is the intersection \eqref{eq:int-with-d}. From Corollary \ref{cor:generatingalgebraicGM2} it follows that if we reduce that intersection modulo $\pi$, it is zero. Therefore, it vanishes by Nakayama's Lemma.

(iii) Follows by taking the tensor product with $K$, over $O$ (this is an exact functor).
\end{proof} 

Our next concern is to compare the analytic theory constructed here with the algebraic one from Section \ref{sect:toricms}. For that, consider the change of coordinates
\begin{equation} \label{eq:iota}
\begin{aligned}
& \Lambda \longrightarrow B, \\
& q_i \longmapsto \pi t_i;
\end{aligned}
\end{equation}
similarly $R_\Lambda \rightarrow R_B$, and also for differential forms. This maps $W_q$ to $W_t$, hence we get a homomorphism of twisted de Rham complexes
\begin{equation} \label{eq:algebraictoB} 
(\Omega^*_\Lambda ,d+dW_q) \longrightarrow (\Omega^*_{B},d+dW_t). 
\end{equation}
Moreover, this map is compatible with the connections \eqref{eq:GMconnections}, \eqref{eq:t-connection}.

\begin{corollary} \label{th:compare-cohomologies}
The map \eqref{eq:algebraictoB} induces an isomorphism
\begin{equation} \label{eq:algebraictoBcomparison}
H^n(\Omega^*_\Lambda,d+dW_q) \otimes_\Lambda B \stackrel{\iso}{\longrightarrow} H^*(\Omega^*_B,d+dW_t),
\end{equation}
where the tensor product is taken with respect to \eqref{eq:iota}. 
\end{corollary}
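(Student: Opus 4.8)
The plan is to choose a single index set $\Delta$ that serves as a free-module basis on both sides of \eqref{eq:algebraictoBcomparison}, and then to observe that the map \eqref{eq:algebraictoB} carries one basis onto the other, so that it is manifestly an isomorphism.

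First I would fix a finite set $\Delta \subset N$ which is an $\bF_p$-basis set (these exist by Lemma \ref{th:f-basis}(i)); by Lemma \ref{th:f-basis}(iii) it is then simultaneously a $\bQ$-basis set. For this $\Delta$, Corollary \ref{cor:generatingalgebraicGM} shows that $\big([q^{\vec\psi(\bfn)} z^{\bfn}\,\mathit{vol}]\big)_{\bfn\in\Delta}$ is a free $\Lambda$-basis of $H^n(\Omega^*_\Lambda, d+dW_q)$; in particular this cohomology is $\Lambda$-free, so $H^n(\Omega^*_\Lambda, d+dW_q)\otimes_\Lambda B$ is $B$-free on the same finite index set. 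On the analytic side, Proposition \ref{prop:AS31q}(iii) gives $H^i(\Omega^*_B, d+dW_t)=0$ for $i<n$, and there are no degrees $i>n$ since $\Omega^k_B=0$ for $k>n$; hence $H^*(\Omega^*_B,d+dW_t)=H^n(\Omega^*_B,d+dW_t)$, which by the same proposition is $B$-free on $\big([t^{\vec\psi(\bfn)}\pi^{w(\bfn)}z^{\bfn}\,\mathit{vol}]\big)_{\bfn\in\Delta}$. This already reconciles the $H^*$ on the right of \eqref{eq:algebraictoBcomparison} with the $H^n$ on the left.

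Next I would note that \eqref{eq:algebraictoB} is $\Lambda$-linear through the substitution \eqref{eq:iota}, so the induced map on cohomology $H^n(\Omega^*_\Lambda,d+dW_q)\to H^n(\Omega^*_B,d+dW_t)$ extends uniquely to a $B$-linear map out of $H^n(\Omega^*_\Lambda,d+dW_q)\otimes_\Lambda B$; this is the map in \eqref{eq:algebraictoBcomparison}. Since $q_i\mapsto \pi t_i$ and $w(\bfn)=|\vec\psi(\bfn)|=\psi_1(\bfn)+\cdots+\psi_r(\bfn)$, one has $q^{\vec\psi(\bfn)} z^{\bfn}\,\mathit{vol}\mapsto \pi^{w(\bfn)}t^{\vec\psi(\bfn)}z^{\bfn}\,\mathit{vol}$; that is, the chosen free $\Lambda$-basis element maps exactly to the chosen free $B$-basis element. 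A map of $B$-modules taking a free basis to a free basis is an isomorphism, which proves the corollary.

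There is no genuine obstacle here beyond bookkeeping: the substantive inputs — the two free-generation statements (Corollary \ref{cor:generatingalgebraicGM} and Proposition \ref{prop:AS31q}(iii)) and the fact that a single $\Delta$ works in both characteristics — are already in hand. The only point requiring a moment's care is the matching of $\pi$-powers under \eqref{eq:iota}: one must observe that the normalization $\pi^{w(\bfn)}$ built into the definition \eqref{eq:define-rb} of $R_B$ is precisely what the coordinate change $q_i\mapsto\pi t_i$ produces, so that the two sets of generators coincide on the nose rather than merely up to units.
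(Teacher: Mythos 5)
Your proposal is correct and follows essentially the same route as the paper: fix an $\bF_p$-basis set $\Delta$, use Lemma \ref{th:f-basis}(iii) to see it is also a $\bQ$-basis set, note that the free $\Lambda$-basis of $H^n(\Omega^*_\Lambda,d+dW_q)$ from Corollary \ref{cor:generatingalgebraicGM} is carried by \eqref{eq:iota} onto the free $B$-basis \eqref{eq:ob-module-basis} from Proposition \ref{prop:AS31q}, and conclude. Your explicit check that the $\pi$-powers match under $q_i\mapsto\pi t_i$ is a detail the paper leaves implicit, but it is exactly the right point to verify.
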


\begin{proof}
Let $\Delta \subset N$ be an $\bF_p$-basis set. By Lemma \ref{th:f-basis}(iii), this is also a $\bQ$-basis set, hence gives a basis \eqref{eq:bar-lambda-generator} for the free $\Lambda$-module $H^*(\Omega_{\Lambda},d+dW_q)$. Under \eqref{eq:iota}, the elements of this basis map to those in \eqref{eq:ob-module-basis}, so appealing to Proposition \ref{prop:AS31q}(iii) completes the argument.
 \end{proof}


\subsection{The inverse Frobenius\label{subsec:dwork-frobenius}}
Dwork's construction for the (inverse) Frobenius comes in two steps. Start with a differential form in $\Omega^*_B$. The first step is multiplication with an invertible power series constructed from the Dwork exponential \eqref{eq:dwork-exponential}, namely
\begin{equation} \label{eq:dwork-twist-3}
\begin{aligned}
G_t & = D(\pi t_1 z^{\bfe_1}) \cdots D(\pi t_r z^{\bfe_r}) \\
& = \sum_{m_1, \dots, m_r \geq 0} d_{m_1} \cdots d_{m_r} \pi^{m_1 + \cdots + m_r}\,
t^{\vec{m}} z^{m_1 \bfe_1 + \cdots + m_r \bfe_r}.
\end{aligned}
\end{equation}

\begin{remark}
A word of caution: recall (see Section \ref{sec:p-adic-functions}) that the Dwork exponential has $p$-adic radius of convergence $<1$. As a consequence, $G_t \notin R_B$, so multiplication with that function does not preserve the spaces $\Omega^*_B$.
\end{remark}

The second step is to make the substitution $z_i \mapsto z_i^{1/p}$ and throw away non-integer powers of $z_i$, which is denoted by $\mathit{Loc}$. This is extended to differential forms so as to be compatible with the de Rham differential. Explicitly, 
\begin{equation} \label{eq:dwork-root-3}
\mathit{Loc}\Big( z_1^{i_1} \cdots z_n^{i_n} \frac{dz_{j_1}}{z_{j_1}} \wedge 
\cdots \wedge \frac{dz_{j_l}}{z_{j_l}} \Big) =
\begin{cases} 
z_1^{i_1/p} \cdots z_n^{i_n/p} \cdot p^{-l} \frac{dz_{j_1}}{z_{j_1}} \wedge 
\cdots \wedge \frac{dz_{j_l}}{z_{j_l}}
& \text{$i_1,\dots,i_n \in p\bZ$,} \\
0 & \text{otherwise.}
\end{cases}
\end{equation}

\begin{lemma} \label{th:stupid-lemma}
If $\theta \in \Omega^*_B$, then $\mathit{Loc}(G_t \cdot \theta) \in \Omega^{(p),*}_B$.
\end{lemma}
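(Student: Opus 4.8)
The plan is to peel off the differential-form part, reduce to the coefficient ring, and then run an explicit $p$-adic valuation estimate. Since $\mathit{Loc}$ and multiplication by $G_t$ are $K$-linear and $\Omega^*_B=\Omega^*_{OB}\otimes_O K$, I may assume $\theta\in\Omega^*_{OB}$. Writing $\theta=\sum_\eta f_\eta\,\eta$ as a finite sum over the basis forms $\eta=(dz_{j_1}/z_{j_1})\wedge\cdots\wedge(dz_{j_l}/z_{j_l})$ with $f_\eta\in R_{OB}$, formula \eqref{eq:dwork-root-3} gives $\mathit{Loc}(G_t\theta)=\sum_\eta p^{-l}\,\mathit{Loc}_0(G_t f_\eta)\,\eta$, where $\mathit{Loc}_0\big(\sum_{\bfa}c_\bfa z^\bfa\big)=\sum_{\bfa\in pN}c_\bfa z^{\bfa/p}$. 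As $p^{-l}$ is a harmless scalar and $R_B^{(p)}$ is a $K$-module, the whole statement reduces to showing $\mathit{Loc}_0(G_t f)\in R_B^{(p)}$ for $f\in R_{OB}$.

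Expanding $f=\sum_{\vec v,\bfn}a_{\vec v,\bfn}\pi^{w(\bfn)}t^{\vec v}z^{\bfn}$ (with $a_{\vec v,\bfn}\in O$, $|\vec v|\geq w(\bfn)$) against \eqref{eq:dwork-twist-3}, a term of $G_t f$ is indexed by a triple $(\vec m,\vec v,\bfn)$ with $\vec m\geq 0$ and equals $d_{\vec m}\,a_{\vec v,\bfn}\,\pi^{|\vec m|+w(\bfn)}\,t^{\vec m+\vec v}\,z^{\sum_i m_i\bfe_i+\bfn}$, where $d_{\vec m}:=d_{m_1}\cdots d_{m_r}$. Applying $\mathit{Loc}_0$, the surviving terms are those with $\sum_i m_i\bfe_i+\bfn\in pN$; for such a term set $\bfn'=(\sum_i m_i\bfe_i+\bfn)/p$ and $\vec v'=\vec m+\vec v$, so that
\[
\mathit{Loc}_0(G_t f)=\sum_{\vec v',\bfn'}a'_{\vec v',\bfn'}\,\pi^{w(\bfn')}t^{\vec v'}z^{\bfn'},\qquad a'_{\vec v',\bfn'}=\sum d_{\vec m}\,a_{\vec v,\bfn}\,\pi^{|\vec m|+w(\bfn)-w(\bfn')},
\]
the inner sum ranging over $(\vec m,\vec v,\bfn)$ with $\vec m\geq 0$, $\vec m+\vec v=\vec v'$, $\sum_i m_i\bfe_i+\bfn=p\bfn'$. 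It then remains to verify: (a) whenever the inner sum is nonempty, $|\vec v'|\geq p\,w(\bfn')$ (so the monomials occurring obey the support condition of \eqref{eq:bp-series}); (b) each inner summand has valuation $\geq c|\vec m|+\mathrm{val}(a_{\vec v,\bfn})$ for a constant $c>0$; (c) the $a'_{\vec v',\bfn'}$ are well-defined elements of $K$ satisfying \eqref{eq:valuationconditionB}.

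For (a): if a summand occurs then $p\,w(\bfn')=w\big(\sum_i m_i\bfe_i+\bfn\big)\leq |\vec m|+w(\bfn)\leq |\vec m|+|\vec v|=|\vec v'|$, using $w(\bfe_i)=1$, the convexity and positive homogeneity of $w$ (Lemma \ref{th:toriccombinatorics}(i), applied to the indices with $m_i>0$), and $|\vec v|\geq w(\bfn)$. For (b): the inequality $p\,w(\bfn')\leq|\vec m|+w(\bfn)$ just used gives $|\vec m|+w(\bfn)-w(\bfn')\geq\tfrac{p-1}{p}(|\vec m|+w(\bfn))\geq\tfrac{p-1}{p}|\vec m|$, so the $\pi$-power has valuation $\geq|\vec m|/p$; combined with $\mathrm{val}(d_{\vec m})\geq\tfrac{1-2p}{p^3-p^2}|\vec m|$ from \eqref{eq:dwork-exponential-val} this yields summand valuation $\geq c|\vec m|+\mathrm{val}(a_{\vec v,\bfn})$ with $c=\tfrac{p^2-3p+1}{p^2(p-1)}$, which is positive because $p$ is an odd prime (so $p^2-3p+1\geq 1$); note also $\mathrm{val}(a_{\vec v,\bfn})\geq 0$ since $a_{\vec v,\bfn}\in O$. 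For (c): by (b) the inner sum has terms of valuation $\geq c|\vec m|\to\infty$, hence converges in the complete field $K$, and $\mathrm{val}(a'_{\vec v',\bfn'})\geq 0$; moreover if $\mathrm{val}(a'_{\vec v',\bfn'})\leq C$ then some summand has valuation $\leq C$, forcing $c|\vec m|\leq C$ and $\mathrm{val}(a_{\vec v,\bfn})\leq C$, which leaves only finitely many admissible triples $(\vec m,\vec v,\bfn)$ — finitely many $\vec m\geq 0$ of bounded weight, then finitely many $(\vec v,\bfn)$ by \eqref{eq:valuationconditionB} for $f$ — hence finitely many $(\vec v',\bfn')$. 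This shows $\mathit{Loc}_0(G_t f)\in R_B^{(p)}$ and completes the argument.

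The real content, and the only point requiring care, is that $G_t$ itself does \emph{not} lie in $R_B$: the Dwork exponential is only mildly overconvergent, so the valuations of its coefficients drift to $-\infty$ and no formal map-of-rings argument is available. What rescues the estimate is that $\mathit{Loc}$ trades the division $\bfa\mapsto\bfa/p$ on the $z$-exponents for a genuine gain in the $\pi$-adic weight, quantified by the convexity bound $p\,w(\bfn')\leq |\vec m|+w(\bfn)$; the positivity of the resulting constant $c$ — equivalently $p^2-3p+1>0$, valid since $p\geq 3$ — is precisely what makes this gain outweigh the loss coming from $d_{\vec m}$.
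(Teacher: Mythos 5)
Your proof is correct and follows essentially the same route as the paper's: the same explicit expansion indexed by $(\vec m,\vec v,\bfn)$, the same convexity bound $p\,w(\bfn')\le|\vec m|+w(\bfn)\le|\vec v'|$ for the support condition, and the same valuation estimate with constant $\tfrac{p^2-3p+1}{p^2(p-1)}$ yielding both convergence of the coefficient sums and the finiteness condition. The preliminary reduction to $O$-coefficients and the explicit peeling-off of the form basis are minor presentational differences.
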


\begin{proof}
We explain the proof for degree $0$ differential forms, meaning $R_B = \Omega^0_B$; the general case works in the same way. If the original element is written as in \eqref{eq:define-rb}, the image under $\mathit{Loc}(G_t \cdot)$ is 
\begin{equation} \label{eqn:explicit-frobenius}
\sum a_{\vec{v},\bfn} d_{m_1} \cdots d_{m_r} \pi^{|\vec{m}| + w(\mathbf{n})} t^{\vec{m} + \vec{v}}z^{(\mathbf{n} + m_1 \bfe_1 + \cdots + m_r \bfe_r)/p},
\end{equation}
where the summation is over $(\vec{m},\vec{v},\bfn)$ satisfying
\begin{equation} \label{eq:mvn}
\begin{aligned}
& m_1,\dots,m_r \geq 0, \\
& |\vec{v}| \geq w(\bfn), \\ 
& \bfn + m_1 \bfe_1 + \cdots + m_r \bfe_r \in p\bZ^n.
\end{aligned}
\end{equation}
By Lemma \ref{th:toriccombinatorics}(i), each collection \eqref{eq:mvn} satisfies
\begin{equation}
\begin{aligned}
        &p \cdot w ( (\mathbf{n} + m_1 \bfe_1 + \cdots + m_r \bfe_r)/p ) \\
        &= w (\mathbf{n} + m_1 \bfe_1 + \cdots + m_r \bfe_r) \leq w(\mathbf{n}) + |\vec{m}| \leq |\vec{v} + \vec{m}|,
\end{aligned}
\end{equation}
which meets the condition placed on monomials for elements in $B^{(p)}$. To match \eqref{eq:bp-series} more closely, write \eqref{eqn:explicit-frobenius} as
\begin{equation} \label{eq:explicit-frobenius-2}
\begin{aligned}
& \sum b_{\vec{m}, \vec{v}, \mathbf{n}} \,\pi^{w((\bfn + m_1\bfe_1 + \cdots + m_r\bfe_r)/p)}
t^{\vec{m}+\vec{v}} z^{(\bfn+m_1 \bfe_1 + \cdots + m_r \bfe_r)/p}, \\
& b_{\vec{m}, \vec{v}, \bfn} = a_{\vec{v},\bfn} d_{m_1} \cdots d_{m_r} \pi^{|\vec{m}| + w(\mathbf{n}) - w((\mathbf{n} + m_1 \bfe_1 + \cdots + m_r \bfe_r)/p)}.
\end{aligned}
\end{equation}
Then
\begin{equation}\label{eqn:stupid-computation}
\begin{aligned}
    \mathrm{val}(b_{\vec{m},\vec{v},\bfn}) & = \mathrm{val}(a_{\vec{v},\bfn}) +
    \sum_{i=1}^r \mathrm{val}(d_{m_i}) \\ & \qquad \qquad + \frac{1}{p-1} \big(|\vec{m}| + w(\bfn) -
    w((\bfn + m_1\bfe_1 + \cdots + m_r\bfe_r)/p) \big)
\\ &
    \geq \mathrm{val}(a_{\vec{v},\mathbf{n}}) + |\vec{m}|\big(\frac{1-2p}{p^3 - p^2}\big) + 
    \frac{1}{p} \big(|\vec{m}| + w(\bfn)\big) \\ & \qquad \qquad
    + \frac{1}{p^2-p} \big( |\vec{m}| + w(\bfn) - w(\bfn + m_1\bfe_1 + \cdots + m_r\bfe_r) \big) 
    \\
& \geq \mathrm{val}(a_{\vec{v},\mathbf{n}}) + \frac{1}{p}w(\mathbf{n}) + \frac{p^2 - 3p +1}{p^3 - p^2} |\vec{m}|.
\end{aligned}
\end{equation}
In the third line, we have used the estimate \eqref{eq:dwork-exponential-val}; the last inequality comes from Lemma \ref{th:toriccombinatorics}(i). The last line, seen as a function of $(\vec{m},\vec{v},\bfn)$, is proper and bounded below: it is the sum of a proper bounded below function in the variables $(\vec{v},{\bfn})$ (the first term), another such function in $\vec{m}$ (the last term), and a nonnegative middle term. Hence, for any $C$ there are only finitely many $(\vec{m},\vec{v},\bfn)$ with $\mathrm{val}(b_{\vec{m},\vec{v},\bfn}) < C$.

The first consequence is that for each fixed value of $(\vec{m}+\vec{v}, \bfn + m_1\bfe_1 + \cdots + m_r\bfe_r)$, the relevant terms in \eqref{eq:explicit-frobenius-2} form a $p$-adically convergent series. Secondly, once one adds up those terms, the outcome is a series in the $(t,z)$ variables which satisfies the finiteness condition defining $B^{(p)}$.
\end{proof}

Having checked that this makes sense, we now state the outcome formally:

\begin{definition} \label{defn:dwork-frobenius}
The Dwork inverse Frobenius is the $B$-linear map
\begin{equation} \label{eqn:dwork-frobenius}
\begin{aligned}
\Psi: \Omega^*_{B} &\longmapsto \Omega_B^{(p),*}, \\
\theta &\longmapsto  \mathit{Loc}(G_t\cdot \theta).
\end{aligned}
\end{equation}
\end{definition}

\begin{remark}
The computation in Lemma \ref{th:stupid-lemma} shows that $\Psi$ is a bounded operator between Banach spaces, of norm $\leq 1$.
\end{remark}

\begin{lemma} \label{th:intertwinesconnections}
(i) $\Psi$ is a chain map.

(ii) $\Psi$ intertwines the connections \eqref{eq:t-connection} and \eqref{eq:derham-frobenius-connection}:
\begin{equation}
\nabla_{t_i\partial_{t_i}}^{(p)} \circ \Psi = \Psi \circ \nabla_{t_i \partial_{t_i}}.
\end{equation}
\end{lemma}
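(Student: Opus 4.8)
The plan is to verify both statements at the level of the defining formula $\Psi(\theta) = \mathit{Loc}(G_t \cdot \theta)$, by checking that the two operations composing $\Psi$ interact correctly with the de Rham differential and with the logarithmic derivations. The key algebraic input is the functional equation $D'(z) = (1+z^{p-1})D(z)$ for the Dwork exponential, which gives, for each $i$,
\begin{equation} \label{eq:dwork-log-derivative}
t_i \partial_{t_i} \log D(\pi t_i z^{\bfe_i}) = \pi t_i z^{\bfe_i} + \pi^p t_i^p z^{p\bfe_i},
\end{equation}
and the analogous identity for $z_k \partial_{z_k}$. Summing \eqref{eq:dwork-log-derivative} over $i$ shows that $G_t^{-1} (t_i\partial_{t_i} G_t) = (t_i\partial_{t_i} W_t) - F(t_i\partial_{t_i} W_{t}) + (t_i\partial_{t_i}W_{t^p})$ — more precisely, that $t_i\partial_{t_i}$ applied to $G_t$ produces exactly the mirror superpotential derivative $q_i z^{\bfe_i}$ upgraded by $\pi$, plus its Frobenius-pulled-back counterpart in the $z^{p\bfe_i}$ variables. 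One must be slightly careful that $G_t \notin R_B$ (as the Remark warns), so the manipulations take place in the larger ring of formal series where $G_t$ lives; but since $\mathit{Loc}$ lands back in $\Omega^{(p),*}_B$ by Lemma~\ref{th:stupid-lemma}, each identity below is an identity of elements of $\Omega^{(p),*}_B$ once $\mathit{Loc}$ is applied, and the intermediate steps only need to make formal sense.

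\textbf{(i)} First I would record that multiplication by $G_t$ intertwines the twisted differentials: because $dG_t = G_t \cdot \sum_k \big(z_k\partial_{z_k}\log G_t\big)\frac{dz_k}{z_k}$ and, by the $z_k$-analogue of \eqref{eq:dwork-log-derivative}, $z_k\partial_{z_k}\log G_t = (z_k\partial_{z_k}W_t) + (z_k\partial_{z_k}W_{t^p})|_{z\mapsto z^p\text{-substitution data}}$, one gets
\begin{equation}
(d + dW_{t^p})\big(G_t \cdot \theta\big) = G_t \cdot \Big( d\theta + dW_t \wedge \theta + \big(dW_{t^p} - \textstyle\sum_k (z_k\partial_{z_k}W_{t^p})'\frac{dz_k}{z_k}\big)\wedge\theta \Big),
\end{equation}
where the parenthetical correction term is built entirely from monomials $z^{p\bfe_i}$, i.e. from $z$-variables all of whose exponents are divisible by $p$. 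The second ingredient is that $\mathit{Loc}$ is by construction a chain map for the de Rham differential, and moreover $\mathit{Loc}\big( z^{p\bfe_i} \cdot \eta \big)$ equals $t$-independent rescalings of $z^{\bfe_i}\cdot \mathit{Loc}(\eta)$-type terms, so the correction term, after $\mathit{Loc}$, reassembles exactly into $dW_{t^p}\wedge \Psi(\theta)$. Putting these together yields $(d+dW_{t^p})\Psi(\theta) = \Psi\big((d+dW_t)\theta\big)$; this is a bookkeeping computation tracking which monomials survive $\mathit{Loc}$.

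\textbf{(ii)} This is the same computation with $t_i\partial_{t_i}$ in place of $d$. Using \eqref{eq:dwork-log-derivative} summed over the rays,
\begin{equation}
t_i\partial_{t_i}\big(G_t\cdot\theta\big) = G_t\cdot\Big( t_i\partial_{t_i}\theta + (t_i\partial_{t_i}W_t)\,\theta + \pi^p t_i^p z^{p\bfe_i}\,\theta \Big),
\end{equation}
and since $t_i\partial_{t_i}$ commutes with $\mathit{Loc}$ while $\mathit{Loc}(\pi^p t_i^p z^{p\bfe_i}\cdot\theta)$ contributes exactly $(t_i\partial_{t_i}W_{t^p})\cdot\Psi(\theta)$ — note $W_{t^p} = \pi\sum_i t_i^p z^{\bfe_i}$ so $t_i\partial_{t_i}W_{t^p} = p\pi t_i^p z^{\bfe_i}$, and the factor $p$ is supplied by the $p^{-l}$ in \eqref{eq:dwork-root-3} together with the extra power of $z_i$ being divided down — one obtains $\nabla^{(p)}_{t_i\partial_{t_i}}\Psi(\theta) = \Psi(\nabla_{t_i\partial_{t_i}}\theta)$.

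The main obstacle I anticipate is purely one of careful bookkeeping: matching the $\mathit{Loc}$ of a monomial $z^{p\bfe_i}\theta$ (where the substitution $z_i\mapsto z_i^{1/p}$ turns $z^{p\bfe_i}$ into $z^{\bfe_i}$) against the superpotential term $t_i^p z^{\bfe_i}$ of $W_{t^p}$, and verifying that the combinatorial factors of $p$ coming from \eqref{eq:dwork-root-3} line up with the $p$ in the definitions \eqref{eq:derham-frobenius-connection}, \eqref{eq:power} of the Frobenius-pulled-back connection. There is no deep content beyond the Dwork functional equation; the care is entirely in the $\mathit{Loc}$ conventions, and one should double-check the degree-$0$ case first (as in the proof of Lemma~\ref{th:stupid-lemma}) before asserting the general differential-form case by the same argument.
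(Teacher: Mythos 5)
Your approach coincides with the paper's: one writes $G_t=\prod_i\exp(\pi(t_iz^{\bfe_i}-t_i^pz^{p\bfe_i}))$ (the content of the Dwork functional equation), notes that multiplication by $G_t$ conjugates the twisted differential for $W_t$ into the one for $\pi\sum_i t_i^pz^{p\bfe_i}$, and that $\mathit{Loc}$ carries the latter to $W_{t^p}$; for (ii) the paper performs exactly the direct computation you outline, treating everything as a formal identity, as you suggest. Two bookkeeping slips should be fixed, though neither changes the structure of the argument. First, the sign: since $\pi^{p-1}=-p$ by \eqref{eq:pi}, the extra term $\pi^pt_i^pz^{p\bfe_i}=-p\pi t_i^pz^{p\bfe_i}$ produced by differentiating $G_t$ becomes, after $\mathit{Loc}$, \emph{minus} $(t_i\partial_{t_i}W_{t^p})\,\Psi(\theta)$, and it is precisely this minus sign that cancels the $+\pi p\,t_i^pz^{\bfe_i}$ appearing in $\nabla^{(p)}_{t_i\partial_{t_i}}$; read literally, your claim that it ``contributes exactly $(t_i\partial_{t_i}W_{t^p})\cdot\Psi(\theta)$'' would yield $\nabla^{(p)}\circ\Psi=\Psi\circ\nabla+2(t_i\partial_{t_i}W_{t^p})\Psi$. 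Second, the factor of $p$ is not supplied by the $p^{-l}$ in \eqref{eq:dwork-root-3}: for the multiplicative identity $\mathit{Loc}(z^{p\bfe_i}\cdot\eta)=z^{\bfe_i}\cdot\mathit{Loc}(\eta)$ the form degree $l$ is unchanged, so $p^{-l}$ appears identically on both sides and cancels; the $p$ comes from $\pi^{p}=-p\pi$ on one side and from $t_i\partial_{t_i}(t_i^p)=p\,t_i^p$ on the other.
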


\begin{proof}
This is a formal computation, which can be carried out without considering the convergence conditions involved in our Banach spaces. The formula
\begin{equation}
G_t = \prod_{i=1}^r\exp(\pi(t_iz^{\bfe_i} - t_i^p z^{p\bfe_i})),
\end{equation}
indicates that multiplication with $G_t$ conjugates the twisted de Rham differentials associated to $W_t$ and $\pi \sum_i t_i^p z^{p\bfe_i}$; and $\mathit{Loc}$ relates the latter function to $W_{t^p}$. Compatibility of connections can be derived from the same conjugation argument, but we prefer a direct computation:
\begin{equation}
\begin{aligned}
& \nabla^{(p)}_{t_i \partial_{t_i}} \mathit{Loc}(G_t \, \theta) 
= \big(t_i\partial_{t_i} + (\pi p t_i^p z^{\bfe_i})\big) \mathit{Loc}(G_t \, \theta) \\
& \qquad = \mathit{Loc}\big(t_i \partial_{t_i}(G_t \, \theta) + G_t \, \pi p t_i^p z^{p \bfe_i} \theta\big) \\
            &\qquad = \mathit{Loc}\big(G_t \, (t_i \partial_{t_i} \theta) + G_t \, \pi(t_i z^{\bfe_i} - p t_i^p z^{p\bfe_i})\, \theta + G_t \, \pi p t_i^p z^{p\bfe_i} \, \theta\big) \\
            & \qquad = \mathit{Loc}\big(G_t \cdot (t_i \partial_{t_i} \theta + \pi t_i z^{\bfe_i})\theta\big) = \mathit{Loc}(G_t \cdot \nabla_{t_i \partial_{t_i}} \theta).
        \end{aligned}
\end{equation}
\end{proof}

\subsection{Conclusion\label{subsec:conclusion}}
Let's use mirror symmetry to re-express the inverse Frobenius in terms of the toric Fano manifold $M$. On the quantum cohomology side, we switch coordinates from $q_i$ to $t_i$, which means working with the multivariable version of \eqref{eq:quantum-connection-2}:
\begin{equation} \label{eq:quantum-connectionB}
\begin{aligned}
& \nabla_{t_i\partial_{t_i}}: H^*(M;B) \longrightarrow H^*(M;B), \\
& \nabla_{t_i\partial_{t_i}}(x) = t_i\partial_{t_i} x + [D_i] \ast_{\pi t} x.
\end{aligned}
\end{equation}
As usual, this is abbreviated notation, with $\pi t$ meaning the parameters $(\pi t_1,\dots,\pi t_r)$. The Frobenius pullback of that connection is
\begin{equation} \label{eq:quantum-connectionBP}
\begin{aligned}
& \nabla_{t_i\partial_{t_i}}^{(p)}: H^*(M;B) \longrightarrow H^*(M;B), \\
& \nabla_{t_i\partial_{t_i}}^{(p)}(x) = t_i\partial_{t_i} x + p [D_i] \ast_{\pi t^p} x.
\end{aligned}
\end{equation}
Let's extend constants in \eqref{eq:mirrornoneqiso} from $\Lambda$ to $B$ (by tensor product), and use Corollary \ref{th:compare-cohomologies} to compare the twisted de Rham complexes. The outcome is a $B$-linear isomorphism
\begin{equation} 
\label{eq:mirrornoneqisoB} 
\Theta: H^n(\Omega^*_B,d+dW_t) \stackrel{\iso}{\longrightarrow} H^*(M;B),
\end{equation} 
such that
\begin{align}
& \Theta([\mathit{vol}]) = 1; \\
\label{eq:i}
& \Theta \text{ intertwines the connections \eqref{eq:t-connection} and \eqref{eq:quantum-connectionB}}.
\end{align}
After further enlarging constants from $B$ to $B[t_1^{1/p},\dots,t_r^{1/p}]$ in \eqref{eq:mirrornoneqisoB}, one can use that to define another $B$-linear isomorphism $\Theta^{(p)}$, by the diagram
\begin{equation}
\xymatrix{
\ar[d]_-{\Theta^{(p)}}^-{\iso} H^n(\Omega^{(p),*}_B, d+dW_{t^p})
&& \ar[ll]^-{\iso}_-{\eqref{eq:BpisFrobeniuspullback}}
H^n(\Omega^*_B[t_1^{1/p},\dots,t_r^{1/p}], d+dW_t) 
\ar[d]^-{\Theta}_-{\iso}
\\
H^*(M;B) 
&& \ar[ll]^-{\iso}_-{F}
H^*(M;B[t_1^{1/p},\dots,t_r^{1/p}])
}
\end{equation}
Here $F$, as usual, is $(t_1,\dots,t_r) \mapsto (t_1^p,\dots,t_r^p)$; note that $\nabla_{t_i\partial_{t_i}}^{(p)} \circ F = F \circ (p\nabla_{t_i\partial_{t_i}})$, exactly as in \eqref{eq:power}. As a consequence of the definition,
\begin{align}
\label{eq:frobenius-unit}
& \Theta^{(p)}([\mathit{vol}]) = 1; \\
\label{eq:i2}
& \Theta^{(p)} \text{ intertwines the connections \eqref{eq:derham-frobenius-connection} and \eqref{eq:quantum-connectionBP}}.
\end{align}

Finally, the outcome of this section can be summarized as follows:

\begin{corollary} \label{th:multivariable}
The map \eqref{eqn:dwork-frobenius}, combined with the isomorphisms $\Theta$ and $\Theta^{(p)}$, yields an inverse Frobenius for the quantum connection. Explicitly, this is a $B$-linear map which intertwines \eqref{eq:quantum-connectionB} and \eqref{eq:quantum-connectionBP}:
\begin{align} \label{eq:frob-geometric}
& \Psi: H^*(M;B) \longrightarrow H^*(M;B), \\
& \nabla^{(p)}_{t_i\partial_{t_i}} \circ \Psi = \Psi \circ \nabla_{t_i\partial_{t_i}}.
\label{eq:geometric-intertwine}
\end{align}
\end{corollary}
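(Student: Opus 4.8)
The plan is to transport the Dwork inverse Frobenius $\Psi$ of Definition~\ref{defn:dwork-frobenius} across the mirror isomorphisms and then read off \eqref{eq:geometric-intertwine} from the compatibilities already in hand. First I would note that by Lemma~\ref{th:intertwinesconnections}(i), $\Psi\colon \Omega^*_B \to \Omega^{(p),*}_B$ is a morphism of complexes from $(\Omega^*_B, d+dW_t)$ to $(\Omega^{(p),*}_B, d+dW_{t^p})$, hence induces a $B$-linear map on top cohomology
\begin{equation}
\Psi_* \colon H^n(\Omega^*_B, d+dW_t) \longrightarrow H^n(\Omega^{(p),*}_B, d+dW_{t^p});
\end{equation}
it is $B$-linear because $\Psi$ is (by construction in \eqref{eqn:dwork-frobenius}), and it is well defined on cohomology precisely because of the chain-map property. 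I would then \emph{define} the geometric inverse Frobenius as the composite
\begin{equation}
\Psi := \Theta^{(p)} \circ \Psi_* \circ \Theta^{-1} \colon H^*(M;B) \longrightarrow H^*(M;B),
\end{equation}
where $\Theta$ is the isomorphism \eqref{eq:mirrornoneqisoB} and $\Theta^{(p)}$ is the isomorphism defined by the commuting square preceding the statement. Being a composite of $B$-linear isomorphisms with the $B$-linear $\Psi_*$, it is $B$-linear.

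It then remains to check the intertwining \eqref{eq:geometric-intertwine}, which is a diagram chase using three inputs: the intertwining property \eqref{eq:i2} of $\Theta^{(p)}$ (relating \eqref{eq:derham-frobenius-connection} and \eqref{eq:quantum-connectionBP}), the form-level identity $\nabla^{(p)}_{t_i\partial_{t_i}} \circ \Psi = \Psi \circ \nabla_{t_i\partial_{t_i}}$ from Lemma~\ref{th:intertwinesconnections}(ii) (passed to cohomology), and the intertwining property \eqref{eq:i} of $\Theta$ (relating \eqref{eq:t-connection} and \eqref{eq:quantum-connectionB}). Concretely, for each $i$,
\begin{equation}
\nabla^{(p)}_{t_i\partial_{t_i}} \circ \Psi
= \Theta^{(p)} \circ \big(\nabla^{(p)}_{t_i\partial_{t_i}} \circ \Psi_*\big) \circ \Theta^{-1}
= \Theta^{(p)} \circ \Psi_* \circ \big(\nabla_{t_i\partial_{t_i}} \circ \Theta^{-1}\big)
= \Psi \circ \nabla_{t_i\partial_{t_i}},
\end{equation}
where the outer two equalities use \eqref{eq:i2} and \eqref{eq:i} and the middle one uses Lemma~\ref{th:intertwinesconnections}(ii). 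Since the $p$-twist coming from $t \mapsto t^p$ (the factor in \eqref{eq:power}) is already absorbed into the definitions of the Frobenius-pulled-back connections on both sides, no extra bookkeeping is needed here.

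Since all the analytic content — that $\Psi$ actually lands in $\Omega^{(p),*}_B$ (Lemma~\ref{th:stupid-lemma}) and that it is a chain map intertwining the connections even though $G_t \notin R_B$ (Lemma~\ref{th:intertwinesconnections}, a formal conjugation computation) — has already been isolated, I do not expect a substantive obstacle: the corollary is essentially a bookkeeping assembly. The one place I would take care is to make sure that the identification $\Omega^{(p),*}_B \iso \Omega^*_B[t_1^{1/p},\dots,t_r^{1/p}]$ from \eqref{eq:BpisFrobeniuspullback} used to define $\Theta^{(p)}$ is the one compatible with the target of $\Psi$, so that $\Theta^{(p)} \circ \Psi_*$ is literally the object in the statement; this is immediate from comparing the monomials allowed in \eqref{eq:bp-series} with the Frobenius pullback under \eqref{eqn:t-p-power} and from the fact that $\mathit{Loc}$ produces exactly the $z_i^{1/p}$-monomials.
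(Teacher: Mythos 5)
Your proposal is correct and is exactly the argument the paper intends: the corollary is stated without a separate proof precisely because it is the assembly $\Theta^{(p)}\circ\Psi_*\circ\Theta^{-1}$ of the chain-level map from Definition~\ref{defn:dwork-frobenius} (well defined on cohomology and connection-intertwining by Lemma~\ref{th:intertwinesconnections}) with the two mirror isomorphisms, whose intertwining properties \eqref{eq:i} and \eqref{eq:i2} give \eqref{eq:geometric-intertwine} by the diagram chase you write out. No gaps; your closing remark about matching \eqref{eq:BpisFrobeniuspullback} with the target of $\mathit{Loc}$ is the only point of care, and you address it correctly.
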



\section{The initial term\label{sec:constant-term}}

The aim of this section is to compute the previously constructed inverse Frobenius modulo $IB$, relating it to the $p$-adic Gamma function. The key idea in the computation is to insert one of the operations \eqref{eq:t-connection} into the Mahler expansion \eqref{eq:mahler} with $j = 0$:
\begin{equation} \label{eq:gamma-of-nabla}
\Gamma_p(-p \nabla_{t_i\partial_{t_i}}) = \sum_{m\geq 0} (-p)^m d_{m p}\,
 \nabla_{t_i\partial_{t_i}} (\nabla_{t_i\partial_{t_i}} - 1) \cdots ( \nabla_{t_i\partial_{t_i}} - m + 1):
 \Omega_B^* \longrightarrow \Omega_B^*.
\end{equation}
This makes sense since: $\nabla_{t_i\partial_{t_i}}$ is an endomorphism of $\Omega_B^*$ of norm $\leq 1$ (Lemma \ref{lemma:bounded}), hence so is $(\nabla_{t_i\partial_{t_i}}-  k)$ for any $k \in \bZ$; and $p^m d_{m p}$ goes to zero $p$-adically as $m \rightarrow \infty$, by \eqref{eq:val-mahler}. This means that the series \eqref{eq:gamma-of-nabla} converges absolutely in the operator norm of the space of endomorphisms of $\Omega^*_B$. 
The operators $\nabla_{t_i\partial_{t_i}}$ commute with each other, hence so do the endomorphisms \eqref{eq:gamma-of-nabla}; and we can write their composition as
\begin{equation} \label{eq:gamma-of-nabla-2}
\begin{aligned}
& \prod_{i=1}^r \Gamma_p(-p \nabla_{t_i\partial_{t_i}}) \\ & \quad = \!\!\!\sum_{m_1,\dots,m_r\geq 0} \!\!\!(-p)^{m_1+\cdots+m_r} d_{m_1 p}\cdots d_{m_r p}
\prod_{i=1}^r \nabla_{t_i\partial_{t_i}} (\nabla_{t_i\partial_{t_i}} - 1) \cdots ( \nabla_{t_i\partial_{t_i}} - m_i + 1).
\end{aligned}
\end{equation}
We now apply this to the element \eqref{eq:vol}.

\begin{lemma} \label{th:mahler-1}
In $\Omega^n_B$, we have
\begin{equation} \label{eq:gamma-of-nabla-4}
\begin{aligned}
& \prod_{i=1}^r \Gamma_p(-p \nabla_{t_i\partial_{t_i}}) \mathit{vol} 
\\ & \qquad = \sum_{\!\!\!\!m_1,\dots,m_r \geq 0} d_{m_1p}\cdots d_{m_rp} \pi^{(m_1+\cdots+m_r)p} t_1^{m_1}\cdots t_r^{m_r} z^{m_1 \bfe_1 + \cdots + m_r \bfe_r} \mathit{vol}
\end{aligned}
\end{equation}
\end{lemma}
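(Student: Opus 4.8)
The plan is to evaluate the right-hand side of \eqref{eq:gamma-of-nabla-2} applied to the element $\mathit{vol}$ of \eqref{eq:vol} term by term. This is legitimate because, as noted just before the statement, the series converges absolutely in the operator norm on $\mathrm{End}(\Omega^*_B)$, and each operator maps $\Omega^*_B$ into itself. So it suffices to compute, for every multi-index $(m_1,\dots,m_r)$, the element
\[
\prod_{i=1}^r \nabla_{t_i\partial_{t_i}}(\nabla_{t_i\partial_{t_i}} - 1)\cdots(\nabla_{t_i\partial_{t_i}} - m_i + 1)\,\mathit{vol} \in \Omega^n_B .
\]

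First I would record the basic action of $N_i := \nabla_{t_i\partial_{t_i}} = t_i\partial_{t_i} + \pi t_i z^{\bfe_i}$ from \eqref{eq:t-connection}. Since $t_i\partial_{t_i}$ annihilates everything independent of $t_i$ and satisfies $t_i\partial_{t_i}(t_i^m\zeta) = m\, t_i^m\zeta$ for $\zeta$ independent of $t_i$, one obtains the key identity $(N_i - m)(t_i^m\zeta) = \pi z^{\bfe_i} t_i^{m+1}\zeta$ whenever $\zeta$ is independent of $t_i$. From this, an induction on $m$ shows that for any $\eta \in \Omega^*_B$ independent of $t_i$,
\[
N_i(N_i - 1)\cdots(N_i - m + 1)\,\eta = (\pi t_i z^{\bfe_i})^m\,\eta ,
\]
where the inductive step applies the displayed identity with $\zeta = (\pi z^{\bfe_i})^m\eta$, which is again independent of $t_i$.

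Next I would apply the operators for distinct indices $i$ one at a time, starting from $\mathit{vol}$, which is independent of all the $t_i$. After applying the $t_1$-operators one gets $(\pi t_1 z^{\bfe_1})^{m_1}\mathit{vol}$; this is a monomial in $t_1$ and the $z_j$, hence independent of $t_2,\dots,t_r$, so applying the $t_2$-operators multiplies it by $(\pi t_2 z^{\bfe_2})^{m_2}$, and so on. The outcome is
\[
\prod_{i=1}^r N_i(N_i-1)\cdots(N_i - m_i+1)\,\mathit{vol} = \prod_{i=1}^r (\pi t_i z^{\bfe_i})^{m_i}\,\mathit{vol} = \pi^{m_1+\cdots+m_r}\, t_1^{m_1}\cdots t_r^{m_r}\, z^{m_1\bfe_1+\cdots+m_r\bfe_r}\,\mathit{vol}.
\]

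Finally I would substitute this into \eqref{eq:gamma-of-nabla-2}: the $(m_1,\dots,m_r)$-term acquires the scalar $(-p)^{m_1+\cdots+m_r}\, d_{m_1 p}\cdots d_{m_r p}\,\pi^{m_1+\cdots+m_r}$, and using $\pi^{p-1} = -p$ from \eqref{eq:pi} we have $(-p)^{s}\pi^{s} = \pi^{(p-1)s}\pi^{s} = \pi^{ps}$ for $s = m_1+\cdots+m_r$, so the coefficient becomes $d_{m_1 p}\cdots d_{m_r p}\,\pi^{(m_1+\cdots+m_r)p}$. Summing over all multi-indices gives exactly \eqref{eq:gamma-of-nabla-4}. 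I do not expect a genuine obstacle here; the only points requiring care are choosing the induction hypothesis in the sufficiently general ``$\eta$ independent of $t_i$'' form so that the step closes, observing that every intermediate expression is a monomial (so that the operators in different variables can be applied successively), and the final bookkeeping identity $(-p)^s\pi^s = \pi^{ps}$.
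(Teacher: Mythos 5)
Your proposal is correct and follows essentially the same route as the paper: the paper phrases the key identity as $t_i^m\nabla_{\partial_{t_i}}^m = \nabla_{t_i\partial_{t_i}}(\nabla_{t_i\partial_{t_i}}-1)\cdots(\nabla_{t_i\partial_{t_i}}-m+1)$ with $\nabla_{\partial_{t_i}} = \partial_{t_i}+\pi z^{\bfe_i}$ and computes $\nabla_{\partial_{t_i}}^m\mathit{vol} = (\pi z^{\bfe_i})^m\mathit{vol}$, which is exactly your induction in different notation (your version has the minor virtue of never writing down the operator $\nabla_{\partial_{t_i}}$, which by itself does not preserve $\Omega^*_B$). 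The term-by-term evaluation justified by operator-norm convergence and the final substitution $(-p)^s\pi^s = \pi^{ps}$ are the same as in the paper.
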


\begin{proof}
Consider 
\begin{equation} \label{eq:no-sense}
\nabla_{\partial_{t_i}} = t_i^{-1}\nabla_{t_i\partial_{t_i}} = \partial_{t_i} + \partial_{t_i}W_t = \partial_{t_i} + \pi z^{\bfe_i}. 
\end{equation}
This makes no sense as an endomorphism of $\Omega_B^*$, since it does not preserve the inequality on monomials from \eqref{eq:define-rb}. However, $t_i^m \nabla_{\partial_{t_i}}^m$ does make sense for any $m \geq 1$, and (as a general property of connections)
\begin{equation}
t_i^m \nabla_{\partial_{t_i}}^m = \nabla_{t_i\partial_{t_i}} (\nabla_{t_i \partial_{t_i}} - 1) \cdots
(\nabla_{t_i \partial_{t_i}} - m + 1).
\end{equation}
Hence, one can rewrite \eqref{eq:gamma-of-nabla} as
\begin{equation} 
\Gamma_p(-p \nabla_{t_i\partial_{t_i}}) = \sum_{m \geq 0} (-p)^m d_{m p}\, t_i^m \nabla_{\partial_{t_i}}^m.
\end{equation}
The same works for \eqref{eq:gamma-of-nabla-2}, yielding
\begin{equation} \label{eq:gamma-of-nabla-3}
\prod_{i=1}^r \Gamma_p(-p \nabla_{t_i\partial_{t_i}}) = \sum_{m_1,\dots,m_r \geq 0} (-p)^{m_1+\cdots +m_r} d_{m_1p} \cdots d_{m_rp} t_1^{m_1}\cdots t_r^{m_r} \nabla_{\partial_{t_1}}^{m_1} \cdots
\nabla_{\partial_{t_r}}^{m_r}.
\end{equation}
By applying \eqref{eq:no-sense} repeatedly, one gets
\begin{equation}
\begin{aligned}
& (-p)^{m_1+\cdots+m_r} t_1^{m_1}\cdots t_r^{m_r} \nabla_{\partial_{t_1}}^{m_1}\cdots \nabla_{\partial_{t_r}}^{m_r} \mathit{vol} 
\\ & \qquad 
= (-p)^{m_1+\cdots+m_r} t_1^{m_1}\cdots t_r^{m_r} \pi^{m_1+\cdots+m_r} z^{m_1 \bfe_1 + \cdots + m_r \bfe_e} \mathit{vol}
\\ & \qquad = 
\pi^{(m_1+\cdots+m_r)p} t_1^{m_1} \cdots t_r^{m_r} z^{m_1 \bfe_1 + \cdots + m_r \bfe_r} \mathit{vol}.
\end{aligned}
\end{equation}
Inserting that into \eqref{eq:gamma-of-nabla-3} yields \eqref{eq:gamma-of-nabla-4}.
\end{proof}

\begin{lemma} \label{th:mahler-2}
In $\Omega^{(p),n}_B$, we have
\begin{equation} \label{eq:gamma-of-nabla-5}
\begin{aligned}
& \prod_{i=1}^r \Gamma_p(-\nabla^{(p)}_{t_i\partial_{t_i}})\, \mathit{vol}
\\ & \quad = \!\!\! \sum_{\substack{m_1,\dots,m_r \geq 0 \\ m_1 \bfe_1+ \cdots + m_r \bfe_r \in pN}} \!\!\! d_{m_1}\cdots d_{m_r} \pi^{m_1+\cdots+m_r} t_1^{m_1}\cdots t_r^{m_r} z^{(m_1 \bfe_1 + \cdots + m_r \bfe_r)/p} \mathit{vol} \;\; \mathrm{mod}\; IB. 
\end{aligned}
\end{equation}
\end{lemma}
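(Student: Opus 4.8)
The plan is to reduce to Lemma~\ref{th:mahler-1} and then carry out the bookkeeping that rewrites the answer in the shape \eqref{eq:gamma-of-nabla-5}. By \eqref{eq:BpisFrobeniuspullback} the space $\Omega^{(p),*}_B$ is the Frobenius pullback of $\Omega^*_B$, and under this identification $\nabla^{(p)}_{t_i\partial_{t_i}}$ corresponds to $p\,\nabla_{t_i\partial_{t_i}}$ by \eqref{eq:power}, while $\mathit{vol}$ corresponds to $\mathit{vol}$. In particular $\Gamma_p(-\nabla^{(p)}_{t_i\partial_{t_i}})$ is legitimate, being carried to the already-defined operator $\Gamma_p(-p\,\nabla_{t_i\partial_{t_i}})$ of \eqref{eq:gamma-of-nabla}; the point behind this is that $\mathit{vol}$, and everything the operators $\nabla^{(p)}_{t_i\partial_{t_i}}$ produce from it, lies in the part of $\Omega^{(p),*}_B$ where all $t_i$-exponents are divisible by $p$, on which $\nabla^{(p)}_{t_i\partial_{t_i}}$ really is $p$ times a connection. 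Hence $\prod_i\Gamma_p(-\nabla^{(p)}_{t_i\partial_{t_i}})\,\mathit{vol}$ corresponds to $\prod_i\Gamma_p(-p\,\nabla_{t_i\partial_{t_i}})\,\mathit{vol}$, which is computed by Lemma~\ref{th:mahler-1}; translating \eqref{eq:gamma-of-nabla-4} back through the identification, which replaces each $t_i$ by $t_i^p$, yields in $\Omega^{(p),n}_B$
\begin{equation} \label{eq:plan-intermediate}
\prod_{i=1}^r \Gamma_p(-\nabla^{(p)}_{t_i\partial_{t_i}})\,\mathit{vol} = \sum_{m_1,\dots,m_r\geq 0} d_{m_1 p}\cdots d_{m_r p}\,\pi^{p(m_1+\cdots+m_r)}\, t_1^{p m_1}\cdots t_r^{p m_r}\, z^{m_1\bfe_1+\cdots+m_r\bfe_r}\,\mathit{vol}.
\end{equation}

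It remains to identify the right-hand side of \eqref{eq:plan-intermediate} with that of \eqref{eq:gamma-of-nabla-5} modulo $IB\,\Omega^{(p),n}_B$. An element of $\Omega^{(p),n}_B$ lies in $IB\,\Omega^{(p),n}_B$ exactly when each of its monomials $t^{\vec v}z^{\bfn}\,\mathit{vol}$ is \emph{non-minimal}, i.e.\ satisfies $|\vec v| > p\,w(\bfn)$ (for such a monomial one can factor off a single $t_i$), so it suffices to show the two sides carry the same coefficient on every \emph{minimal} monomial, $|\vec v| = p\,w(\bfn)$. In \eqref{eq:plan-intermediate} the monomial $t^{p\vec m}z^{\sum_i m_i\bfe_i}$ is minimal iff $|\vec m| = w(\sum_i m_i\bfe_i)$; by strict convexity of $w$ (Lemma~\ref{th:toriccombinatorics}(ii)) this forces all $\bfe_i$ with $m_i>0$ to lie in a single cone, so that $\vec m = \vec\psi(\bfn)$ with $\bfn := \sum_i m_i\bfe_i$, by uniqueness of cone expansions (Lemma~\ref{th:toric-smoothness} and \eqref{eq:vec-psi}). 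Carrying out the same analysis on the right-hand side of \eqref{eq:gamma-of-nabla-5} — there a minimal monomial $t^{\vec m'}z^{(\sum_i m_i'\bfe_i)/p}$ with $\sum_i m_i'\bfe_i\in pN$ again forces the active rays into one cone and then $\vec m' = p\,\vec\psi(\bfn')$ with $\bfn' := (\sum_i m_i'\bfe_i)/p\in N$ — shows both sides reduce modulo $IB$ to
\begin{equation} \label{eq:plan-final}
\sum_{\bfn\in N} d_{p\psi_1(\bfn)}\cdots d_{p\psi_r(\bfn)}\,\pi^{p\,w(\bfn)}\, t^{p\vec\psi(\bfn)}\, z^{\bfn}\,\mathit{vol},
\end{equation}
and the proof is complete.

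I expect the main obstacle to be this final comparison rather than anything analytic: it is where the combinatorics of the Fano fan — strict convexity of $w$ together with the unimodularity of its cones — is genuinely used, in order to reconcile the two superficially different presentations \eqref{eq:plan-intermediate} and \eqref{eq:gamma-of-nabla-5}. A secondary point requiring care, in the first step, is the precise sense in which $\Gamma_p$ is being applied to the operator $\nabla^{(p)}_{t_i\partial_{t_i}}$, which is not itself divisible by $p$.
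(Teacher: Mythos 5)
Your proof is correct and follows essentially the same route as the paper's: push Lemma \ref{th:mahler-1} through $F$ using \eqref{eq:power}, then match the two sums modulo $IB$ via the toric combinatorics. Your "minimal monomial" bookkeeping is just a repackaging of the paper's two observations — that non-single-cone monomials lie in $IB$ by strict convexity of $w$, and that on single-cone terms the conditions $m_i\in p\bZ$ and $\sum_i m_i\bfe_i\in pN$ coincide by unimodularity of the cones.
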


\begin{proof}
Take the result from Lemma \ref{th:mahler-1} and map it to the Frobenius-twisted version by \eqref{eqn:t-p-power-2} to the Frobenius-twisted version. The outcome, bearing \eqref{eq:power} in mind, is
\begin{equation} \label{eq:gamma-p-vol}
\begin{aligned}
& \prod_{i=1}^r \Gamma_p(-\nabla_{t_i\partial_{t_i}}^{(p)})\, \mathit{vol} 
= F\Big( \prod_{i=1}^r \Gamma_p(-p\nabla_{t_i\partial_{t_i}})\, \mathit{vol}\Big)
\\ & \qquad = \sum_{\substack{m_1,\dots,m_r \geq 0\\ m_i \in \bZ}} d_{m_1p}\cdots d_{m_rp} \pi^{(m_1+\cdots+m_r)p} t_1^{pm_1}\cdots t_r^{pm_r} z^{m_1 \bfe_1 + \cdots + m_r \bfe_r} 
\\ & \qquad = \sum_{\substack{m_1,\dots,m_r \geq 0\\ m_i \in p\bZ}} d_{m_1}\cdots d_{m_r} \pi^{m_1+\cdots+m_r} t_1^{m_1}\cdots t_r^{m_r} z^{(m_1 \bfe_1 + \cdots + m_r \bfe_r)/p} \mathit{vol};
\end{aligned}
\end{equation}
in the last line, we have simply changed the indices $m_i$ to $p m_i$.

From Lemma \ref{th:toriccombinatorics} we see that a monomial $t_1^{m_1}\cdots t_r^{m_r} z^{m_1 \bfe_1+\cdots+m_r \bfe_r}$ lies in $IB \cdot R_B$ unless the following holds:
\begin{equation} \label{eq:single-cone}
\parbox{35em}{
For all $i$ such that $m_i>0$, the rays $\bfe_i$ belong to a common cone of the fan.
}
\end{equation}
In $R_B^{(p)}$, the same applies to $t_1^{m_1}\cdots t_r^{m_r} z^{(m_1\bfe_1+\cdots+m_r \bfe_r)/p}$. Because of Lemma \ref{th:toric-smoothness}, we also have:
\begin{equation} \label{eq:single-cone-2}
\parbox{35em}{If $m_1\bfe_1 + \cdots + m_r\bfe_r \in pN$ and \eqref{eq:single-cone} holds, then $m_i \in p\bZ$ for all $i$.}
\end{equation}
With that in mind, when working modulo $IB$ we can ignore all the summands in \eqref{eq:gamma-p-vol} where \eqref{eq:single-cone} fails, and then apply \eqref{eq:single-cone-2} to replace the condition $m_i \in p\bZ$ by $m_1\bfe_1+\cdots+m_r\bfe_r \in pN$. This yields the desired formula.
\end{proof}

We will use $\sim$ to indicate that two elements of $\Omega^n_B$ are homologous modulo $IB$:
\begin{equation} \label{eq:theta-and-eta}
\alpha \sim \beta\;\; \Leftrightarrow\;\; \alpha-\beta = (d+dW_t)(\gamma) + \delta, \text{ for some } \gamma \in \Omega^{n-1}_B,\; \delta \in IB \cdot \Omega^n_B.
\end{equation}
The same notation will also be applied to $\Omega_B^{(p),n}$. Mirror symmetry yields the following:

\begin{lemma} \label{th:modt-1}
For any $k_1,\dots,k_r \geq 0$ such that $k_1 + \cdots + k_r > n$, we have
\begin{equation}
\nabla_{t_1\partial_{t_1}}^{k_1} \cdots \nabla_{t_r\partial_{t_r}}^{k_r} \mathit{vol} \sim 0.
\end{equation}
Moreover, the relevant elements $\gamma = \gamma_{k_1,\dots,k_r}$ and $\delta = \delta_{k_1,\dots,k_r}$ in \eqref{eq:theta-and-eta} can be chosen so as to be uniformly bounded, with respect to the Banach norm on $\Omega^*_B$.
\end{lemma}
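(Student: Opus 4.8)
The plan is to transport the statement to quantum cohomology via the mirror isomorphism $\Theta$ of \eqref{eq:mirrornoneqisoB}, where the vanishing modulo $IB$ becomes the elementary fact that a product of divisor classes of total degree exceeding $\dim_{\bC}M$ is zero, and then to upgrade this to the asserted uniform Banach bound using the integral structures set up in Section~\ref{sec:dwork}. Write $\omega = \nabla_{t_1\partial_{t_1}}^{k_1}\cdots\nabla_{t_r\partial_{t_r}}^{k_r}\,\mathit{vol}\in\Omega^n_B$. Since $\mathit{vol}$ has Banach norm $1$ and each $\nabla_{t_i\partial_{t_i}}$ has norm $\leq 1$ (Lemma~\ref{lemma:bounded}), we have $\omega\in\Omega^n_{OB}$. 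Because $\Theta$ satisfies $\Theta([\mathit{vol}])=1$ and intertwines \eqref{eq:t-connection} with the quantum connection \eqref{eq:quantum-connectionB}, we get $\Theta([\omega]) = \nabla_{t_1\partial_{t_1}}^{k_1}\cdots\nabla_{t_r\partial_{t_r}}^{k_r}(1)$, the iterated quantum connection applied to the unit. By \eqref{eq:cup-d} (base-changed along \eqref{eq:iota}; equivalently, a short induction using that $t_i\partial_{t_i}$ and $\ast_{\pi t}$ preserve $IB\cdot H^*(M;B)$, and that $[D_i]\ast_{\pi t}x\equiv[D_i]\smile x$ modulo $IB\cdot H^*(M;B)$), this is congruent to $[D_1]^{k_1}\smile\cdots\smile[D_r]^{k_r}$ modulo $IB\cdot H^*(M;B)$. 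For $k_1+\cdots+k_r>n$ that cup product vanishes in $H^*(M)$ on degree grounds, so $\Theta([\omega])\in IB\cdot H^*(M;B)$, and since $\Theta$ is a $B$-linear isomorphism, $[\omega]\in IB\cdot H^n(\Omega^*_B,d+dW_t)$.

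Next I would convert this into the decomposition in \eqref{eq:theta-and-eta}. Fix an $\bF_p$-basis set $\Delta$. By Proposition~\ref{prop:AS31q}(iii), the classes $[t^{\vec\psi(\bfn)}\pi^{w(\bfn)}z^\bfn\,\mathit{vol}]$, $\bfn\in\Delta$, form a $B$-basis of $H^n(\Omega^*_B,d+dW_t)$, so $[\omega]=\sum_{\bfn\in\Delta}c_\bfn\,[t^{\vec\psi(\bfn)}\pi^{w(\bfn)}z^\bfn\,\mathit{vol}]$ with uniquely determined coefficients $c_\bfn$, all lying in $IB$. Setting $\delta = \sum_{\bfn\in\Delta}c_\bfn\,t^{\vec\psi(\bfn)}\pi^{w(\bfn)}z^\bfn\,\mathit{vol}\in IB\cdot\Omega^n_B$ and choosing any $\gamma\in\Omega^{n-1}_B$ with $(d+dW_t)\gamma = \omega-\delta$ yields $\omega\sim 0$.

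For the uniform bound I would keep track of integrality throughout. Since $\omega\in\Omega^n_{OB}$, its class lies in $H^n(\Omega^*_{OB},d+dW_t)$, on which the same monomials form an $OB$-basis by Proposition~\ref{prop:AS31q}(ii). Because $H^n(\Omega^*_{OB},d+dW_t)$ is $O$-flat (Lemma~\ref{lem:ASA1}(iii), with Corollary~\ref{th:less-than-n} supplying $H^{n-1}=0$), the natural map $H^n(\Omega^*_{OB})\to H^n(\Omega^*_B)$ is injective, so the coefficients $c_\bfn$ found above in fact lie in $OB$. Hence $\|c_\bfn\|\leq 1$ for all $\bfn$; since $\|t^{\vec\psi(\bfn)}\pi^{w(\bfn)}z^\bfn\,\mathit{vol}\|=1$ and $\Delta$ is finite, $\|\delta\|\leq 1$, i.e. $\delta\in\Omega^n_{OB}$. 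Then $\omega-\delta\in\Omega^n_{OB}$ is $(d+dW_t)$-exact over $B$, hence — again by injectivity of $H^n(\Omega^*_{OB})\to H^n(\Omega^*_B)$ — already exact over $OB$; choosing $\gamma\in\Omega^{n-1}_{OB}$ with $(d+dW_t)\gamma=\omega-\delta$ forces $\|\gamma\|\leq 1$. Thus $\gamma_{k_1,\dots,k_r}$ and $\delta_{k_1,\dots,k_r}$ can be taken of Banach norm $\leq 1$, independently of $(k_1,\dots,k_r)$.

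The two reductions are routine given what is already in place; the step I expect to require the most care is this last integral refinement — in particular the assertion that an integral, globally exact $n$-form admits an integral primitive, which is exactly where the $O$-flatness of $H^n(\Omega^*_{OB},d+dW_t)$ and the estimate $\|\nabla_{t_i\partial_{t_i}}\|\leq 1$ both enter.
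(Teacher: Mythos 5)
Your argument is correct, and its first half (reduce the vanishing mod $IB$ to the degree-reason vanishing of $[D_1]^{k_1}\smile\cdots\smile[D_r]^{k_r}$ via $\Theta$ and \eqref{eq:i}) is exactly what the paper does. Where you diverge is the uniform bound. The paper's route is more elementary: it fixes arbitrary choices of $(\gamma,\delta)$ for the finitely many tuples with $k_1+\cdots+k_r=n+1$, and then obtains every other tuple by applying a further composition of the operators $\nabla_{t_i\partial_{t_i}}$, which commute with $d+dW_t$, preserve $IB\cdot\Omega^n_B$, and have norm $\leq 1$ by Lemma~\ref{lemma:bounded}; this yields a uniform (but unspecified) bound with no appeal to integral structures. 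Your route instead exploits the $OB$-integral cohomology computation: since $\mathit{vol}$ and the connection operators preserve $\Omega^*_{OB}$, the class of $\omega$ lives in $H^n(\Omega^*_{OB},d+dW_t)$, which is a free $OB$-module by Proposition~\ref{prop:AS31q}(ii) and injects into $H^n(\Omega^*_B,d+dW_t)$ (torsion-freeness, via Lemma~\ref{lem:ASA1}(iii) and Corollary~\ref{th:less-than-n}); comparing the two basis decompositions pins the coefficients $c_{\bfn}$ in $OB\cap IB$ and lets you take both $\gamma$ and $\delta$ of norm $\leq 1$. This buys a sharper, explicit bound ($\leq 1$ rather than just uniform) at the cost of invoking the heavier machinery of Section 7; for the application in Lemma \ref{th:modt-3}, where only uniform boundedness is needed, the two are interchangeable. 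One small presentational point: the existence of \emph{some} $(\gamma,\delta)$ already follows from your integral analysis, so your intermediate non-integral step (decomposing over $B$ first) could be folded into the integral one.
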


\begin{proof}
For a single $(k_1,\dots,k_r)$, the desired statement follows from the corresponding property of the quantum connection, and \eqref{eq:i}: the cup product of more than $n$ divisor classes is zero for degree reasons, and hence the associated composition of quantum connections is zero modulo $IB$. Note that there are finitely many $(k_1,\dots,k_r)$ such that $k_1 + \cdots + k_r = n+1$. The general case can be derived from those finitely many by applying some composition of the operators $\nabla_{t_i\partial_{t_i}}$; since those have norm $\leq 1$, we get the desired uniform bound.
\end{proof}

Let's introduce some temporary notation:
\begin{equation}
z(z-1)\cdots(z-{m+1}) = \sum_k c_{m,k} z^k,
\end{equation}
where $c_{m,k} \in \bZ$ (for $k > m$ we set $c_{m,k} = 0$). By definition, the coefficients in \eqref{eq:gamma-taylor} are 
\begin{equation} \label{eq:c-g}
g_k = \sum_m (-p)^{m-k} d_{mp} c_{m,k}.
\end{equation}

\begin{lemma} \label{th:modt-2}
For any $(m_1,\dots,m_r)$, we have
\begin{equation} \label{eq:bounded-gamma-expansion}
\begin{aligned}
& \prod_{i=1}^r \nabla_{t_i\partial_{t_i}}(\nabla_{t_i\partial_{t_i}}-1)\cdots (\nabla_{t_i\partial_{t_i}}-m_i+1)\, \mathit{vol} 
\\ & \qquad \sim \sum_{\!\!\!\!\substack{k_1,\dots, k_r \geq 0 \\ k_1+\cdots+k_r \leq n}} c_{m_1,k_1} \cdots c_{m_r,k_r} \nabla_{t_1\partial_{t_1}}^{k_1} \cdots \nabla_{t_r\partial_{t_r}}^{k_r} \mathit{vol},
\end{aligned}
\end{equation}
Moreover, as before, the relevant forms $\gamma = \gamma_{m_1,\dots,m_r}$ and $\delta = \delta_{m_1,\dots,m_r}$ from \eqref{eq:theta-and-eta} can be taken uniformly bounded.
\end{lemma}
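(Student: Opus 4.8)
The plan is to expand the left-hand side using the definition of the integers $c_{m,k}$ and then truncate using Lemma \ref{th:modt-1}. Since the operators $\nabla_{t_i\partial_{t_i}}$ commute with each other (as recorded at the start of this section), we have
\[
\prod_{i=1}^r \nabla_{t_i\partial_{t_i}}(\nabla_{t_i\partial_{t_i}}-1)\cdots(\nabla_{t_i\partial_{t_i}}-m_i+1) = \sum_{k_1,\dots,k_r\geq 0} c_{m_1,k_1}\cdots c_{m_r,k_r}\, \nabla_{t_1\partial_{t_1}}^{k_1}\cdots\nabla_{t_r\partial_{t_r}}^{k_r},
\]
a finite sum, since $c_{m_i,k_i}=0$ once $k_i>m_i$. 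Applying this to $\mathit{vol}$ and separating the terms with $k_1+\cdots+k_r\leq n$ from those with $k_1+\cdots+k_r>n$ produces exactly the right-hand side of \eqref{eq:bounded-gamma-expansion} together with a remainder
\[
\sum_{\substack{k_1,\dots,k_r\geq 0\\ k_1+\cdots+k_r>n}} c_{m_1,k_1}\cdots c_{m_r,k_r}\, \nabla_{t_1\partial_{t_1}}^{k_1}\cdots\nabla_{t_r\partial_{t_r}}^{k_r}\,\mathit{vol}.
\]

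Next I would apply Lemma \ref{th:modt-1} to each summand of this remainder. For each $(k_1,\dots,k_r)$ with $k_1+\cdots+k_r>n$, the lemma provides $\gamma_{k_1,\dots,k_r}\in\Omega^{n-1}_B$ and $\delta_{k_1,\dots,k_r}\in IB\cdot\Omega^n_B$, uniformly bounded in the Banach norm, with $\nabla_{t_1\partial_{t_1}}^{k_1}\cdots\nabla_{t_r\partial_{t_r}}^{k_r}\mathit{vol}=(d+dW_t)\gamma_{k_1,\dots,k_r}+\delta_{k_1,\dots,k_r}$. Substituting and collecting terms yields \eqref{eq:bounded-gamma-expansion} with
\[
\gamma_{m_1,\dots,m_r}=\!\!\sum_{k_1+\cdots+k_r>n}\!\! c_{m_1,k_1}\cdots c_{m_r,k_r}\,\gamma_{k_1,\dots,k_r},\qquad \delta_{m_1,\dots,m_r}=\!\!\sum_{k_1+\cdots+k_r>n}\!\! c_{m_1,k_1}\cdots c_{m_r,k_r}\,\delta_{k_1,\dots,k_r},
\]
both finite sums; the second lies in $IB\cdot\Omega^n_B$ because that is a submodule closed under finite sums.

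The one point that needs a moment's care — and the closest thing to an obstacle — is the uniform bound on $\gamma_{m_1,\dots,m_r}$ and $\delta_{m_1,\dots,m_r}$, since the number of multi-indices $(k_1,\dots,k_r)$ entering the sums grows with the $m_i$. Here the non-archimedean nature of the Banach norm on $\Omega^*_B$ is essential: the coefficients $c_{m_i,k_i}$ are integers, hence have $p$-adic absolute value $\leq 1$, so $\|\gamma_{m_1,\dots,m_r}\|\leq\max_{k_1+\cdots+k_r>n}\|\gamma_{k_1,\dots,k_r}\|$ and likewise for $\delta$, and these maxima are finite by the uniform bound of Lemma \ref{th:modt-1}, independently of $(m_1,\dots,m_r)$. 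This completes the argument.
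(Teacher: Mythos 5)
Your proposal is correct and follows the same route as the paper: expand the product of operators via the coefficients $c_{m_i,k_i}$ (using commutativity), discard the terms with $k_1+\cdots+k_r>n$ by Lemma \ref{th:modt-1}, and use the integrality of the $c_{m,k}$ together with the non-archimedean norm to preserve the uniform bounds. The paper's proof is just a terser version of exactly this argument.
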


\begin{proof}
When we take the left hand side of \eqref{eq:bounded-gamma-expansion} and expand out the compositions, the outcome is the right hand side without the restriction $k_1 + \dots + k_r \leq n$. However, the terms which violate that restriction are covered by Lemma \ref{th:modt-1} (and since the $c_{m,k}$ are integers, multiplication with them preserves the given bounds on the norm).
\end{proof}

\begin{lemma} \label{th:modt-3}
We have
\begin{equation}
\prod_{i=1}^r \Gamma_p(-p\nabla_{t_i\partial_{t_i}}) \mathit{vol} \sim
\sum_{\substack{k_1,\dots,k_r \geq 0 \\ k_1+\cdots+k_r \leq n}} g_{k_1}\cdots g_{k_r} (-p\nabla_{t_1\partial_{t_1}})^{k_1} \cdots
(-p\nabla_{t_r\partial_{t_r}})^{k_r} \mathit{vol}.
\end{equation}
\end{lemma}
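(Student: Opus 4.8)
The plan is to start from the operator identity \eqref{eq:gamma-of-nabla-2}, apply both sides to $\mathit{vol}$, and substitute the expansions from Lemma \ref{th:modt-2} term by term. First I would note that
\[
\prod_{i=1}^r \Gamma_p(-p\nabla_{t_i\partial_{t_i}})\,\mathit{vol} = \sum_{m_1,\dots,m_r\geq 0} (-p)^{m_1+\cdots+m_r} d_{m_1p}\cdots d_{m_rp}\, \prod_{i=1}^r \nabla_{t_i\partial_{t_i}}(\nabla_{t_i\partial_{t_i}}-1)\cdots(\nabla_{t_i\partial_{t_i}}-m_i+1)\,\mathit{vol}
\]
converges in $\Omega^n_B$: each factor $\nabla_{t_i\partial_{t_i}}(\nabla_{t_i\partial_{t_i}}-1)\cdots(\nabla_{t_i\partial_{t_i}}-m_i+1)$ has operator norm $\leq 1$ by Lemma \ref{lemma:bounded}, while $\mathrm{val}\big((-p)^{m_1+\cdots+m_r}d_{m_1p}\cdots d_{m_rp}\big) \geq (m_1+\cdots+m_r)(p^2-3p+1)/(p^2-p)$ by \eqref{eq:val-mahler}, which tends to $+\infty$.

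Next I would replace, for each multi-index $(m_1,\dots,m_r)$, the summand $\prod_i \nabla_{t_i\partial_{t_i}}(\nabla_{t_i\partial_{t_i}}-1)\cdots(\nabla_{t_i\partial_{t_i}}-m_i+1)\,\mathit{vol}$ by the right-hand side of \eqref{eq:bounded-gamma-expansion} plus the error $(d+dW_t)(\gamma_{m_1,\dots,m_r}) + \delta_{m_1,\dots,m_r}$, where by Lemma \ref{th:modt-2} the forms $\gamma_{m_1,\dots,m_r} \in \Omega^{n-1}_B$ and $\delta_{m_1,\dots,m_r} \in IB\cdot\Omega^n_B$ can be chosen with a common norm bound. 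Thanks to this uniform bound and the coefficient decay above, the two series $\sum_{m_1,\dots,m_r} (-p)^{m_1+\cdots+m_r}d_{m_1p}\cdots d_{m_rp}\,\gamma_{m_1,\dots,m_r}$ and $\sum_{m_1,\dots,m_r} (-p)^{m_1+\cdots+m_r}d_{m_1p}\cdots d_{m_rp}\,\delta_{m_1,\dots,m_r}$ converge, the first in the Banach space $\Omega^{n-1}_B$ and the second in $IB\cdot\Omega^n_B$, which is closed in the Banach norm (it is cut out by the vanishing of the coefficients of the monomials $\pi^{w(\bfn)}t^{\vec{v}}z^{\bfn}$ with $|\vec{v}|=w(\bfn)$). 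Since $d+dW_t$ is a bounded operator of norm $\leq 1$ (by the same computation as in the proof of Lemma \ref{lemma:bounded}), it commutes with these sums, so the accumulated error has the form $(d+dW_t)(\gamma) + \delta$ and may be discarded modulo $\sim$. This yields
\[
\prod_{i=1}^r \Gamma_p(-p\nabla_{t_i\partial_{t_i}})\,\mathit{vol} \sim \sum_{m_1,\dots,m_r\geq 0}(-p)^{m_1+\cdots+m_r}d_{m_1p}\cdots d_{m_rp}\!\!\!\sum_{\substack{k_1,\dots,k_r\geq 0\\ k_1+\cdots+k_r\leq n}}\!\!\! c_{m_1,k_1}\cdots c_{m_r,k_r}\,\nabla_{t_1\partial_{t_1}}^{k_1}\cdots\nabla_{t_r\partial_{t_r}}^{k_r}\,\mathit{vol}.
\]

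Then I would interchange the two summations — legitimate because the inner sum over $(k_1,\dots,k_r)$ is finite and the outer one converges — and gather the coefficient of each $\nabla_{t_1\partial_{t_1}}^{k_1}\cdots\nabla_{t_r\partial_{t_r}}^{k_r}\,\mathit{vol}$. That coefficient is $\prod_{i=1}^r\big(\sum_{m_i} (-p)^{m_i}d_{m_ip}c_{m_i,k_i}\big) = \prod_{i=1}^r (-p)^{k_i}\big(\sum_{m_i}(-p)^{m_i-k_i}d_{m_ip}c_{m_i,k_i}\big) = \prod_{i=1}^r (-p)^{k_i}g_{k_i}$ by the definition \eqref{eq:c-g} of $g_k$. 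Since $(-p)^{k_i}\nabla_{t_i\partial_{t_i}}^{k_i} = (-p\nabla_{t_i\partial_{t_i}})^{k_i}$ and the operators commute, the right-hand side becomes $\sum_{k_1+\cdots+k_r\leq n} g_{k_1}\cdots g_{k_r}(-p\nabla_{t_1\partial_{t_1}})^{k_1}\cdots(-p\nabla_{t_r\partial_{t_r}})^{k_r}\,\mathit{vol}$, which is the asserted formula. The main obstacle is the middle step: one must pass the equivalence relation $\sim$ through an infinite $p$-adic sum, and the uniform boundedness of the auxiliary forms recorded in Lemmas \ref{th:modt-1} and \ref{th:modt-2} is exactly the ingredient that makes this work; the remaining points (convergence of the series, closedness of $IB\cdot\Omega^n_B$, boundedness of $d+dW_t$, and the rearrangement) are then routine.
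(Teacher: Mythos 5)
Your proof is correct and follows essentially the same route as the paper: expand via \eqref{eq:gamma-of-nabla-2}, substitute Lemma \ref{th:modt-2} into each term (with the uniform boundedness of the auxiliary forms justifying the passage of $\sim$ through the infinite sum), then rearrange and apply \eqref{eq:c-g}. The paper states the convergence and closedness points more tersely, but the argument is the same.
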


\begin{proof}
We start with \eqref{eq:gamma-of-nabla-2} and apply Lemma \ref{th:modt-2} to each term; this can be done for all terms at the same time, due to the uniform boundedness properties derived above. The outcome is
\begin{equation}
\begin{aligned}
&\prod_{i=1}^r \Gamma_p(-p\nabla_{t_i\partial_{t_i}}) \mathit{vol} \sim
\sum_{\!\!\!\!\substack{k_1,\dots, k_r \geq 0 \\ k_1+\cdots+k_r \leq n}} 
\sum_{m_1,\dots,m_r \geq 0} d_{m_1p} \cdots d_{m_rp} c_{m_1,k_1} \cdots c_{m_r,k_r}
\\[-1.5em] & \qquad \qquad \qquad \qquad \qquad \qquad \qquad \qquad \qquad \qquad (-p)^{m_1+\cdots+m_r} \nabla_{t_1\partial_{t_1}}^{k_1} \cdots \nabla_{t_r\partial_{t_r}}^{k_r}\, \mathit{vol}.
\end{aligned}
\end{equation}
Applying \eqref{eq:c-g} to the right hand simplifies this to the desired expression.
\end{proof}

The following brings the computational part of our argument to its conclusion:

\begin{lemma} \label{th:final-computation}
Dwork's inverse Frobenius (from Definition \ref{defn:dwork-frobenius}) satisfies
\begin{equation}
\Psi(\mathit{vol}) \sim p^{-n} 
\sum_{\!\!\!\!\substack{m_1,\dots, m_r \geq 0 \\ m_1+\cdots+m_r \leq n}} 
g_{m_1}\cdots g_{m_r} (-\nabla^{(p)}_{t_1\partial_{t_1}})^{m_1} \cdots
(-\nabla^{(p)}_{t_r\partial_{t_r}})^{m_r}\, \mathit{vol}.
\end{equation}
\end{lemma}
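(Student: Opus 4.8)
The plan is to chain together the explicit formula for $\Psi(\mathit{vol})$ with Lemmas \ref{th:mahler-2} and \ref{th:modt-3}, transporting the latter to the Frobenius-twisted side along the map $F$ of \eqref{eqn:t-p-power-2}.

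First I would compute $\Psi(\mathit{vol}) = \mathit{Loc}(G_t\cdot\mathit{vol})$ directly, as in the proof of Lemma \ref{th:stupid-lemma}, but keeping track of the fact that $\mathit{vol}$ is an $n$-form, so that $\mathit{Loc}$ contributes an extra factor $p^{-n}$. Since $G_t\cdot\mathit{vol} = \sum_{m_1,\dots,m_r\ge 0} d_{m_1}\cdots d_{m_r}\,\pi^{m_1+\cdots+m_r}\,t^{\vec m}\,z^{m_1\bfe_1+\cdots+m_r\bfe_r}\,\mathit{vol}$ and $\mathit{Loc}$ annihilates every monomial whose $z$-exponent is not divisible by $p$,
\begin{equation*}
\Psi(\mathit{vol}) = p^{-n}\!\!\sum_{\substack{m_1,\dots,m_r\ge 0\\ m_1\bfe_1+\cdots+m_r\bfe_r\in pN}}\!\! d_{m_1}\cdots d_{m_r}\,\pi^{m_1+\cdots+m_r}\,t^{\vec m}\,z^{(m_1\bfe_1+\cdots+m_r\bfe_r)/p}\,\mathit{vol}.
\end{equation*}
By Lemma \ref{th:mahler-2}, the sum on the right equals $\prod_{i=1}^r\Gamma_p(-\nabla^{(p)}_{t_i\partial_{t_i}})\,\mathit{vol}$ modulo $IB$, hence $\Psi(\mathit{vol})\sim p^{-n}\prod_{i=1}^r\Gamma_p(-\nabla^{(p)}_{t_i\partial_{t_i}})\,\mathit{vol}$.

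It remains to show $\prod_{i=1}^r\Gamma_p(-\nabla^{(p)}_{t_i\partial_{t_i}})\,\mathit{vol}\sim\sum_{k_1+\cdots+k_r\le n} g_{k_1}\cdots g_{k_r}\,(-\nabla^{(p)}_{t_1\partial_{t_1}})^{k_1}\cdots(-\nabla^{(p)}_{t_r\partial_{t_r}})^{k_r}\,\mathit{vol}$. Rather than rerun the arguments of Lemmas \ref{th:modt-1}--\ref{th:modt-3} on the twisted side, I would apply $F$ to the $\sim$-relation of Lemma \ref{th:modt-3}. This is legitimate: $F$ is a chain map for the twisted de Rham differentials, so it takes exact forms to exact forms; it sends $IB\cdot\Omega^n_B$ into $IB\cdot\Omega^{(p),n}_B$ since $|\vec v|>0$ forces $|p\vec v|>0$; and it fixes $\mathit{vol}$, which has no $t$-dependence. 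Thus $F$ preserves $\sim$. On operators, \eqref{eq:power} gives $\nabla^{(p)}_{t_i\partial_{t_i}}\circ F = F\circ(p\nabla_{t_i\partial_{t_i}})$; iterating this and passing it through the (absolutely convergent) series for $\Gamma_p(-p\nabla_{t_i\partial_{t_i}})$ — where $\Gamma_p(-\nabla^{(p)}_{t_i\partial_{t_i}})$ likewise makes sense, because $\nabla^{(p)}_{t_i\partial_{t_i}}$ has norm $\le 1$ by essentially the argument of Lemma \ref{lemma:bounded} (convexity of $w$, plus the extra factor $p$) — one obtains $\Gamma_p(-\nabla^{(p)}_{t_i\partial_{t_i}})\circ F = F\circ\Gamma_p(-p\nabla_{t_i\partial_{t_i}})$ and $(-\nabla^{(p)}_{t_i\partial_{t_i}})^{k_i}\circ F = F\circ(-p\nabla_{t_i\partial_{t_i}})^{k_i}$. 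Feeding Lemma \ref{th:modt-3} through $F$ then produces exactly the displayed twisted identity, and combining it with the previous paragraph finishes the proof.

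The only point requiring genuine care is the manipulation of $F$ against an infinite series of operators: one must know that $\Gamma_p(-\nabla^{(p)}_{t_i\partial_{t_i}})$ is a well-defined bounded endomorphism (i.e.\ the norm-$\le 1$ bound on $\nabla^{(p)}_{t_i\partial_{t_i}}$), and that $F$, being bounded of norm $\le 1$, commutes with the corresponding limit of partial sums. Everything else is substitution of results already in hand.
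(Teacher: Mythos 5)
Your proposal is correct and follows essentially the same route as the paper: the explicit formula for $\Psi(\mathit{vol})$ coming from the definition of $\mathit{Loc}$ and $G_t$, combined with Lemma \ref{th:mahler-2} and with the image under the Frobenius map \eqref{eqn:t-p-power-2} of Lemma \ref{th:modt-3}. The additional checks you spell out (that $F$ preserves the relation $\sim$, and that $\Gamma_p(-\nabla^{(p)}_{t_i\partial_{t_i}})$ is a well-defined bounded operator) are details the paper leaves implicit, but they do not alter the argument.
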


\begin{proof} 
By definition,
\begin{equation}
\Psi(\mathit{vol}) = p^{-n} \sum_{\substack{m_1,\dots,m_r \geq 0 \\ m_1\bfe_1+\cdots+m_r\bfe_r \in pN}} d_{m_1}\cdots d_{m_r} \pi^{m_1+\cdots+m_r} t_1^{m_1} \cdots t_r^{m_r} z^{(m_1\bfe_1 + \cdots + m_r\bfe_r)/p} \mathit{vol}.
\end{equation}
The desired equality now follows from combining Lemma \ref{th:mahler-2} with the image under \eqref{eqn:t-p-power-2} of Lemma \ref{th:modt-3}.
\end{proof}

Let's switch to thinking of the Frobenius as acting on $H^*(M;B)$, as in Corollary \ref{th:multivariable}. 

\begin{theorem} \label{th:constant-term}
Modulo $IB$, the Frobenius \eqref{eq:frob-geometric} agrees with (the $t$-independent map)
\begin{equation} \label{eq:gamma-const-term}
x \longmapsto p^{\mathrm{deg}/2-n} (\Gamma_p(TM)^{-1} \smile x),
\end{equation}
where $p^{\mathrm{deg}/2}$ multiplies $H^{2i}(M)$ with $p^i$.
\end{theorem}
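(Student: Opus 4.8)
The plan is to transport the form-level identity of Lemma~\ref{th:final-computation} to quantum cohomology through the mirror isomorphism $\Theta^{(p)}$, then reduce modulo $IB$ and recognize the $p$-adic Gamma class.

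First I would apply $\Theta^{(p)}$ to Lemma~\ref{th:final-computation}. By Corollary~\ref{th:multivariable}, the geometric Frobenius \eqref{eq:frob-geometric} satisfies $\Psi(1) = \Theta^{(p)}(\Psi(\mathit{vol}))$. Since the twisted differential is $B$-linear, $IB\cdot\Omega^{(p),*}_B$ is a subcomplex, so the relation $\sim$ from \eqref{eq:theta-and-eta} descends, via the $B$-linear isomorphism $\Theta^{(p)}$, to a congruence of classes in $H^*(M;B)$ modulo $IB\cdot H^*(M;B)$. Using $\Theta^{(p)}([\mathit{vol}]) = 1$ \eqref{eq:frobenius-unit} and the fact that $\Theta^{(p)}$ intertwines \eqref{eq:derham-frobenius-connection} with \eqref{eq:quantum-connectionBP} \eqref{eq:i2}, applying $\Theta^{(p)}$ term by term to the finite sum in Lemma~\ref{th:final-computation} gives
\begin{equation}
\Psi(1) \;\equiv\; p^{-n} \sum_{\substack{m_1,\dots,m_r \geq 0 \\ m_1+\cdots+m_r \leq n}} g_{m_1}\cdots g_{m_r}\, (-\nabla^{(p)}_{t_1\partial_{t_1}})^{m_1}\cdots(-\nabla^{(p)}_{t_r\partial_{t_r}})^{m_r}(1) \pmod{IB\cdot H^*(M;B)}.
\end{equation}

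Next I would reduce the Frobenius-pullback connection modulo $IB$. As $H^*(M;\bZ)$ is free, $H^*(M;B) = H^*(M;\bQ_p)\otimes_{\bQ_p}B$ and $IB\cdot H^*(M;B) = H^*(M;\bQ_p)\otimes_{\bQ_p}IB$; moreover $t_i\partial_{t_i}$ preserves $IB$ and annihilates constants, while $[D_i]\ast_{\pi t^p}x \equiv [D_i]\smile x$ modulo $IB$, the non-classical contributions carrying positive powers of $t$. Hence $\nabla^{(p)}_{t_i\partial_{t_i}}$ preserves $IB\cdot H^*(M;B)$ and descends to $x\mapsto p[D_i]\smile x$ on $H^*(M;K) = H^*(M;B)/IB\cdot H^*(M;B)$; so the iterated operators applied to $1$ reduce modulo $IB$ to $(-p)^{m_1+\cdots+m_r}[D_1]^{m_1}\smile\cdots\smile[D_r]^{m_r}$. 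A cup product of more than $n = \dim_{\bC}M$ divisor classes vanishes, so the constraint $m_1+\cdots+m_r\leq n$ may be dropped, and since $p^{\mathrm{deg}/2}$ is a ring automorphism of $H^*(M;K)$ and $\Gamma_p(z) = \sum_m g_m z^m$,
\begin{equation}
\Psi(1) \;\equiv\; p^{-n}\prod_{i=1}^r\Big(\sum_{m\geq 0} g_m(-p)^m[D_i]^m\Big) = p^{\mathrm{deg}/2-n}\Big(\prod_{i=1}^r\Gamma_p(-[D_i])\Big) = p^{\mathrm{deg}/2-n}\,\Gamma_p(TM)^{-1} \pmod{IB},
\end{equation}
using \eqref{eq:inverse-toric-gamma} for the last equality. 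Thus $\Psi$ agrees with $\Xi\colon x\mapsto p^{\mathrm{deg}/2-n}(\Gamma_p(TM)^{-1}\smile x)$ on the unit, modulo $IB$.

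Finally I would upgrade this to the full endomorphism. Both $\Psi$ and $\Xi$ are $B$-linear and preserve $IB\cdot H^*(M;B)$, so it is enough to compare them modulo $IB$ on a $B$-module spanning set; the cup-product monomials $[D_{i_1}]\smile\cdots\smile[D_{i_k}]$ (including the empty product $1$) suffice, since by the Stanley--Reisner presentation they span $H^*(M;\bQ_p)$. Modulo $IB$ one has $[D_{i_1}]\smile\cdots\smile[D_{i_k}] \equiv \nabla_{t_{i_1}\partial_{t_{i_1}}}\cdots\nabla_{t_{i_k}\partial_{t_{i_k}}}(1)$, because $\nabla_{t_i\partial_{t_i}}$ of \eqref{eq:quantum-connectionB} descends to $[D_i]\smile{-}$ on $H^*(M;K)$ just as above. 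Applying $\Psi$, using the intertwining \eqref{eq:geometric-intertwine} $k$ times, and then the value of $\Psi(1)$ together with $\overline{\nabla^{(p)}_{t_i\partial_{t_i}}} = p[D_i]\smile{-}$, gives $\Psi([D_{i_1}]\smile\cdots\smile[D_{i_k}]) \equiv p^k\,[D_{i_1}]\smile\cdots\smile[D_{i_k}]\smile p^{\mathrm{deg}/2-n}(\Gamma_p(TM)^{-1})$ modulo $IB$; since $[D_{i_1}]\smile\cdots\smile[D_{i_k}]$ has degree $2k$ and $p^{\mathrm{deg}/2}$ is multiplicative, this equals $p^{\mathrm{deg}/2-n}(\Gamma_p(TM)^{-1}\smile[D_{i_1}]\smile\cdots\smile[D_{i_k}]) = \Xi([D_{i_1}]\smile\cdots\smile[D_{i_k}])$, completing the proof. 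The substantive work all sits in the earlier sections, so this last argument is essentially mechanical; the only points needing care are checking that every operator in sight preserves $IB\cdot H^*(M;B)$ and descends compatibly, and the extension step, which combines the intertwining relation with the generation of $H^*(M)$ by divisor classes.
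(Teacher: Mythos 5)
Your proposal is correct and follows essentially the same route as the paper: translate Lemma~\ref{th:final-computation} through the mirror isomorphism, reduce the Frobenius-pullback connection modulo $IB$ to cup product with $p[D_i]$, identify the resulting sum with $p^{-n}\prod_i \Gamma_p(-[D_i]) = p^{-n}\Gamma_p(TM)^{-1}$ via \eqref{eq:inverse-toric-gamma}, and then extend from $\Psi(1)$ to the full endomorphism using the intertwining relation \eqref{eq:geometric-intertwine} together with the fact that the divisor classes generate $H^*(M)$ as a ring. The extra care you take in verifying that every operator preserves $IB\cdot H^*(M;B)$ is implicit in the paper's argument but is correctly spelled out.
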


\begin{proof}
Remember from \eqref{eq:frobenius-unit} that the cohomology class of $\mathit{vol} \in \Omega^{(p),n}_B$ corresponds to $1 \in H^0(M;B)$. The connection \eqref{eq:quantum-connectionBP} satisfies
\begin{equation} 
(\nabla_{t_1\partial_{t_1}}^{(p)})^{m_1} \cdots (\nabla_{t_r\partial_{t_r}}^{(p)})^{m_r} (1) =
p^{m_1+\cdots+m_r} [D_1]^{m_1} \smile \cdots \smile [D_r]^{m_r}\;\; \text{ mod } IB.
\end{equation}
Therefore, Lemma \ref{th:final-computation} translates into
\begin{equation}
\Psi(1) = \sum_{\!\!\!\!\substack{k_1,\dots, k_r \geq 0 \\ k_1+\cdots+k_r \leq n}} 
p^{m_1+\cdots+m_r-n}
g_{m_1}\cdots g_{m_r} (-[D_1])^{m_1} \smile \cdots \smile (-[D_r])^{m_r} \;\; \text{ mod } IB.
\end{equation}
Comparing this with \eqref{eq:inverse-toric-gamma}, where $\Gamma_p(-[D_i])$ is defined in terms of the Taylor expansion, yields exactly the $x = 1$ case of \eqref{eq:gamma-const-term}. The fact that $\Psi$ interwines the connection and its Frobenius pullback implies that
\begin{equation}
\Psi([D_i] \smile y) = p[D_i] \smile \Psi(y) \;\;\text{ mod } IB.
\end{equation}
Since the $[D_i]$ generate $H^*(M;B)$ as a ring, it follows that
\begin{equation}
\Psi(x \smile y) = p^{\mathrm{deg}(x)/2} x \smile \Psi(y) \;\; \text{ mod } IB
\end{equation}
for all $x,y \in H^*(M;B)$. In particular, $\Psi(x) = p^{\mathrm{deg}(x)/2} x \smile \Psi(1)$, which yields the general case of \eqref{eq:gamma-const-term}.
\end{proof}

\section{The single-variable theory\label{sec:single-variable}}
So far, we have worked in the multivariable framework introduced at the start of Section \ref{sec:ms}. Having completed the step for which that setup was required (the proof of Theorem \ref{th:constant-term}), we want to revert to the single-variable version, which will be used for the concluding construction in Section \ref{sec:overconvergent}. The current section has a very simple aim: to review the results, after that simplification has been applied to them.

\begin{convention}
From now on, $q$ or $t$ will again denote a single variable. The relation to the previous multivariable theory is that we set all $q_i$ (or $t_i$) to be equal.
\end{convention}

Let us begin with the mirror symmetry statements. Our ground ring is $\bQ[q]$, and we simplify the multivariable theory by changing coefficients along the map
\begin{align} \label{eq:q-equal}
\Lambda \twoheadrightarrow \Lambda/(q_iq_j^{-1} - 1) \stackrel{\iso}{\longrightarrow} \bQ[q], \quad  q_i \longmapsto q \text{ for all $i$.} 
\end{align}
The ring of functions becomes
\begin{equation} \label{eq:rq}
R_{\bQ[q]} = R_{\Lambda}/(q_iq_j^{-1} - 1) = R_{\Lambda} \otimes_\Lambda \bQ[q] = \Big\{ \sum_{\substack{k, \,\mathbf{n} \\ k \geq w(\bfn)}} a_{k,\mathbf{n}} q^k z^\mathbf{n} \Big\} \subset \bQ[q^{\pm 1},z_j^{\pm 1}],
\end{equation}
and correspondingly differential forms $\Omega^*_{\bQ[q]}$. We keep the same notation for the image of the superpotential under $R_{\Lambda} \rightarrow R_{\bQ[q]}$: concretely,
\begin{equation}
W_q = \sum_{i=1}^{r} q\, z^{\mathbf{e}_i} \in R_{\bQ[q]}.
\end{equation}
The resulting single-variable twisted de Rham differential and single-variable connection are
\begin{equation}
\begin{aligned} 
& d + dW_q: \Omega^{*}_{\bQ[q]} \longrightarrow \Omega^{*+1}_{\bQ[q]}, \\ \label{eq:GMconnectionversionq}
& \nabla_{q\partial_{q}} = q\partial_{q} + (q\partial_q W_{q}) = q\partial_q + W_q: \Omega^{*}_{\bQ[q]} \longrightarrow \Omega^{*}_{\bQ[q]}. 
\end{aligned}
\end{equation}
Base-changing Theorem \ref{thm:toricmirror} (which is unproblematic since the cohomology groups involved are free $\Lambda$-modules) yields:

\begin{corollary} 
There is a $\bQ[q]$-linear isomorphism 
\begin{equation} \label{eq:one-variable-mirror}
\Theta: H^n(\Omega^*_{\bQ[q]}, d+dW_q) \stackrel{\iso}{\longrightarrow} H^*(M;\bQ[q]),
\end{equation}
such that:
\begin{align}
& \Theta([\mathit{vol}]) = 1; \\
& \text{$\Theta$ intertwines \eqref{eq:GMconnectionversionq} with the quantum connection \eqref{eq:quantum-connection}.}
\end{align}
\end{corollary}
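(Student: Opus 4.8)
The plan is to deduce the corollary from Theorem~\ref{thm:toricmirror} by the base change \eqref{eq:q-equal}, $\Lambda \twoheadrightarrow \bQ[q]$, $q_i \mapsto q$. First I would record how the relevant objects behave under this specialization. By \eqref{eq:rq} we have $R_{\bQ[q]} = R_\Lambda \otimes_\Lambda \bQ[q]$, and since each $\Omega^k_\Lambda$ is free over $R_\Lambda$ on the basis \eqref{eq:differential-form-basis} (which involves only the untouched variables $z_j$), also $\Omega^*_{\bQ[q]} = \Omega^*_\Lambda \otimes_\Lambda \bQ[q]$ as graded $\bQ[q]$-modules. The multivariable superpotential $\sum_i q_i z^{\bfe_i}$ maps to the single-variable $W_q = \sum_i q z^{\bfe_i}$; the de Rham differential in the $z$-variables is $\Lambda$-linear and so commutes with the base change; hence the twisted complex $(\Omega^*_{\bQ[q]}, d+dW_q)$ is canonically $(\Omega^*_\Lambda, d+dW_q) \otimes_\Lambda \bQ[q]$.

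Next I would note that passing to $H^n$ commutes with this base change for an essentially trivial reason: $n$ is the top nonzero degree of the complex, so $H^n = \operatorname{coker}(d^{n-1})$, and cokernels commute with $- \otimes_\Lambda \bQ[q]$ by right-exactness (the freeness over $\Lambda$ recorded in Corollary~\ref{cor:generatingalgebraicGM}, or of the terms of the complex, is not even needed here). Therefore $H^n(\Omega^*_{\bQ[q]}, d+dW_q) = H^n(\Omega^*_\Lambda, d+dW_q) \otimes_\Lambda \bQ[q]$, and likewise $H^*(M;\bQ[q]) = H^*(M;\Lambda) \otimes_\Lambda \bQ[q]$. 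Applying $- \otimes_\Lambda \bQ[q]$ to the isomorphism $\Theta$ of Theorem~\ref{thm:toricmirror} then gives a $\bQ[q]$-linear isomorphism between the two cohomology groups in \eqref{eq:one-variable-mirror}, which still sends $[\mathit{vol}]$ to $1$ because that identity already holds before the base change.

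It remains to see that this induced map intertwines the single-variable connections \eqref{eq:GMconnectionversionq} and \eqref{eq:quantum-connection}. The key observation is that both single-variable connections are obtained from the respective families of multivariable connections by summing over $i$ and then restricting to the diagonal $q_i = q$: by the chain rule $\sum_i q_i\partial_{q_i}$ specializes to $q\partial_q$; on the Landau--Ginzburg side $\sum_i(q_i\partial_{q_i}W_q) = \sum_i q_i z^{\bfe_i}$ specializes to $W_q = q\partial_q W_q$, matching \eqref{eq:GMconnectionversionq}; and on the quantum side $\sum_i [D_i]\ast_q(-)$ specializes to $c_1(TM)\ast_q(-)$ by \eqref{eq:firstchern}, matching \eqref{eq:quantum-connection}. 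Since Theorem~\ref{thm:toricmirror} asserts that $\Theta$ intertwines each multivariable pair \eqref{eq:GMconnections}, \eqref{eq:multiquantum-connection}, it intertwines the sums over $i$, and hence after the base change it intertwines the two single-variable connections. There is no real obstacle in this argument; the one point I would write out carefully is the diagonal-restriction bookkeeping for the connection term, since that is where the chain-rule factor could in principle be mishandled.
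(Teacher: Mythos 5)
Your argument is correct and is essentially the paper's: the corollary is obtained by base-changing Theorem \ref{thm:toricmirror} along \eqref{eq:q-equal}, the paper justifying this in one line by the freeness of the cohomology groups over $\Lambda$ (Corollary \ref{cor:generatingalgebraicGM}). Your additional observations — that right-exactness of $-\otimes_\Lambda\bQ[q]$ already handles the top-degree cohomology, and the explicit check that $\sum_i q_i\partial_{q_i}$ restricts to $q\partial_q$ on the diagonal while $\sum_i[D_i]=c_1(TM)$ matches the connection terms — are accurate fillings-in of details the paper leaves implicit.
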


For the construction of the Dwork's inverse Frobenius, our ground ring is the single-variable Tate algebra (the $p$-adic completion of $K[t]$):
\begin{equation}
K\langle t \rangle = \Big\{ \sum_{i \geq 0} a_k t^k \;:\; a_k \in K, \, \mathrm{val}(a_k) \rightarrow +\infty\Big\}.
\end{equation}
As before, we have a homomorphism from $B$ (Definition \ref{th:b}) to it:
\begin{equation} \label{eq:t-equal}
B \longrightarrow K \langle t \rangle, \quad t_i \longmapsto t \text{ for all $i$.}
\end{equation}
(This can again be viewed as the quotient by the ideal generated by $t_it_j^{-1}-1$. At first sight, one might think that the the closure of the ideal would be relevant; but it is already closed, like any ideal in an affinoid algebra \cite[Prop.~3 on p.~222]{bosch-guentzer-remmert}.) Let $R_{K\langle t \rangle}$ be the space of series 
\begin{equation}\label{eq:RBonevariabledefinition}
\sum_{\substack{k, \, \bfn \\ k \geq w(\bfn)}} a_{k,\bfn} \pi^{w(\bfn)} t^k z^{\bfn}
\end{equation}
with the obvious analogue of \eqref{eq:valuationconditionB}:
\begin{equation} \label{eq:valuationconditionBonevar}
\parbox{35em}{For any $C \in \bR$, there are only finitely many $(k,\bfn)$ with $\mathrm{val}(a_{k,\bfn}) \leq C$;}
\end{equation}
and similarly for $\Omega^*_{K\langle t \rangle}$. Setting $t_i = t$ yields maps linear over \eqref{eq:t-equal},
\begin{equation} \label{eq:b-to-1}
R_B \longrightarrow R_{K\langle t\rangle}, \quad \Omega^*_B \rightarrow \Omega^*_{K\langle t \rangle}.
\end{equation}
The superpotential is 
\begin{equation} \label{eq:one-variable-superpotential}
W_t = \pi \sum_{i=1}^{r} t z^{\mathbf{e}_i}, 
\end{equation}
which gives rise to the twisted de Rham differential $d+dW_t$ and connection $\nabla_{t\partial_t}$, as before. It is true that \eqref{eq:b-to-1} is an algebraic quotient; but like in the case of \eqref{eq:t-equal}, that relies on results from non-archimedean analysis. A more straightforward way is to just repeat the computations from Section \ref{subsec:compute-cohomology} in a single-variable context, leading to a counterpart of Corollary \ref{th:compare-cohomologies}:

\begin{corollary}
The inclusion $\Omega^*_{\bQ[q]} \subset \Omega^*_{K\langle t \rangle}$, with the variables related by \eqref{eq:t-q}, induces an isomorphism (compatible with connections)
\begin{equation} \label{eq:one-variable-compare}
H^n(\Omega^*_{\bQ[q]},d+dW_q) \otimes_{\bQ[q]} K\langle t \rangle \stackrel{\iso}{\longrightarrow}
H^n(\Omega^*_{K\langle t \rangle},d+dW_t).
\end{equation}
\end{corollary}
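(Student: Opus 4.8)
The strategy is to reprove, in the single-variable setting, the exact sequence of structural results (Proposition~\ref{prop:AS31q} and Corollary~\ref{th:compare-cohomologies}) whose multivariable versions were established in Section~\ref{subsec:compute-cohomology}, and then to combine them. Concretely, one introduces the single-variable integral ring $O\langle t\rangle \subset K\langle t\rangle$ together with $R_{O\langle t\rangle}\subset R_{K\langle t\rangle}$ and $\Omega^*_{O\langle t\rangle}$, defined by requiring $a_{k,\mathbf n}\in O$ in \eqref{eq:RBonevariabledefinition}. One then identifies the mod-$\pi$ reduction $(\Omega^*_{O\langle t\rangle},d+dW_t)/\pi$ with a \emph{single-variable} graded complex $(G\Omega^*_{\bar\Lambda'},d+dW_q)$, where $\bar\Lambda'=\bF_p[q]$ is the $q_i\mapsto q$ specialization of $\bar\Lambda$; crucially, the reduction of the ring structure still obeys \eqref{eq:new-sr} because Lemma~\ref{th:toriccombinatorics} is unaffected by setting variables equal. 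The mod-$p$ input needed is the single-variable analogue of Corollary~\ref{th:less-than-n} and Corollary~\ref{cor:generatingalgebraicGM2}; here one must check that the argument of Section~\ref{sec:modp} survives the specialization --- the key point being that $GR_{\bar\Lambda'}$ is still free over $\bar\Lambda'[\bar\lambda_1,\dots,\bar\lambda_n]$, equivalently that $H^*_T(M;\bF_p)$ is free over $H^*_T(\mathrm{point};\bF_p)$ after base change along $\bar\Lambda\to\bar\Lambda'$, which follows from \eqref{eq:equi-free} being an isomorphism of $H^*_T(\mathrm{point})$-modules that is compatible with any change of the $q$-coefficients.

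With that mod-$p$ vanishing and freeness in hand, the argument of Lemma~\ref{lem:ASA1} and Proposition~\ref{prop:AS31q} applies verbatim --- one only needs each $\Omega^i_{O\langle t\rangle}$ to be $\pi$-adically separated and complete, which is immediate from the finiteness condition \eqref{eq:valuationconditionBonevar} --- yielding that $H^i(\Omega^*_{O\langle t\rangle},d+dW_t)=0$ for $i<n$ and that the classes $[t^{w(\mathbf n)}\pi^{w(\mathbf n)}z^{\mathbf n}\,\mathit{vol}]$, $\mathbf n\in\Delta$, for an $\bF_p$-basis set $\Delta$, freely generate $H^n$ over $O\langle t\rangle$; tensoring with $K$ over $O$ gives the same over $K\langle t\rangle$. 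Then one repeats the proof of Corollary~\ref{th:compare-cohomologies}: an $\bF_p$-basis set $\Delta$ is, by Lemma~\ref{th:f-basis}(iii), also a $\bQ$-basis set, so the corollary in Section~\ref{sec:single-variable} (base change of Theorem~\ref{thm:toricmirror}) gives that the same monomials freely generate the free $\bQ[q]$-module $H^n(\Omega^*_{\bQ[q]},d+dW_q)$; under the map $q\mapsto\pi t$ these basis elements go to \eqref{eq:ob-module-basis} in the single-variable sense, and matching the two free-module descriptions yields the isomorphism \eqref{eq:one-variable-compare}. Compatibility with connections is formal, since the inclusion $\Omega^*_{\bQ[q]}\subset\Omega^*_{K\langle t\rangle}$ intertwines $\nabla_{q\partial_q}$ on both sides by construction of the superpotentials \eqref{eq:one-variable-superpotential} and the change of variables \eqref{eq:t-q}.

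The main obstacle I expect is not conceptual but bookkeeping: one must verify that the single-variable rings $O\langle t\rangle$ and $R_{O\langle t\rangle}$ genuinely satisfy the hypotheses of Lemma~\ref{lem:ASA1} --- flatness over $O$ (clear, as they are $\pi$-torsion-free), and $\pi$-adic separatedness and completeness of each term of the complex. Separatedness is trivial, but completeness requires that a $\pi$-adically Cauchy sequence of series satisfying \eqref{eq:valuationconditionBonevar} has a limit again satisfying that condition; this is the same argument as for $R_B$ (Lemma~\ref{lem:BisforBanach}), now with the single index $k$ in place of $\vec v$, and goes through because for fixed $\mathbf n$ there is a lower bound $k\geq w(\mathbf n)$ but no upper bound constraint interfering with the valuation growth. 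A secondary point to be careful about is that the reduction map $R_{O\langle t\rangle}/\pi\to GR_{\bar\Lambda'}$ is genuinely an isomorphism of \emph{rings} and not merely of modules; this is exactly where one reuses Lemma~\ref{th:toriccombinatorics} as in the discussion around \eqref{eq:reductionhom}, and nothing new is required. Given these checks, the proof reduces to citing Proposition~\ref{prop:AS31q}(iii) and the single-variable corollary from Theorem~\ref{thm:toricmirror}, exactly as indicated in the text preceding the statement.
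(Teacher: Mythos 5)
Your proposal is correct and follows essentially the same route as the paper: the text explicitly declines to use the quotient description of \eqref{eq:b-to-1} and instead instructs the reader to ``repeat the computations from Section \ref{subsec:compute-cohomology} in a single-variable context,'' which is exactly what you do, including the correct identification of the points that need re-checking (the single-variable mod-$p$ freeness/vanishing, $\pi$-adic completeness of $R_{O\langle t\rangle}$, and the ring isomorphism of the mod-$\pi$ reduction). The paper itself confirms in Section \ref{sec:overconvergent} that the single-variable mod-$p$ ingredient carries over with ``nothing new,'' matching your assessment.
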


The Frobenius-twisted version $R_{K\langle t \rangle}^{(p)}$ consists of series
\begin{equation} 
\sum_{\substack{k,\mathbf{n} \\ k \geq p\, w(\mathbf{n})}} a_{k,\mathbf{n}} \pi^{w(\mathbf{n})} t^{k} z^\mathbf{n} 
\end{equation}
with the same condition \eqref{eq:valuationconditionBonevar}. Similarly for $\Omega^{(p),*}_{K\langle t \rangle}$, which carries the differential $d+dW_{t^p}$ and connection $\nabla_{t\partial_t}^{(p)}$. The specialization of Dwork's construction uses multiplication with
\begin{equation} \label{eq:dwork-twist-4}
\begin{aligned}
G_t & = D(\pi t z^{\bfe_1}) \cdots D(\pi t z^{\bfe_r}) \\
& = \sum_{m_1, \dots, m_r \geq 0} d_{m_1} \cdots d_{m_r} \pi^{m_1 + \cdots + m_r}
t^{m_1+\cdots+m_r} z^{m_1 \bfe_1 + \cdots + m_r \bfe_r},
\end{aligned}
\end{equation}
followed by the same map $\mathit{Loc}$ as before, see \eqref{eq:dwork-root-3}. The composition of those two gives a $K\langle t \rangle$-linear chain map, compatible with connections,
\begin{equation} \label{eq:DworkKlanglet} 
\Psi: (\Omega^*_{K\langle t \rangle}, d+dW_t) \longmapsto (\Omega^{(p),*}_{K \langle t \rangle}, d+dW_{t^p}).
\end{equation}
As in Section \ref{subsec:conclusion}, the combination of \eqref{eq:one-variable-mirror} and \eqref{eq:one-variable-compare} turns the cohomology level map induced by \eqref{eq:DworkKlanglet} into a $K\langle t \rangle$-linear map
\begin{equation} \label{eq:single-variable-psi}
\Psi: H^*(M;K\langle t \rangle) \longrightarrow H^*(M;K\langle t \rangle).
\end{equation}
By construction, this fits into a commutative diagram 
\begin{equation} \label{eq:specialize-frobenius}
\xymatrix{
H^*(M;B) \ar[d]_-{\eqref{eq:frob-geometric}} \ar[rrr]^-{\eqref{eq:t-equal}} &&& H^*(M;K\langle t \rangle) 
\ar[d]^-{\eqref{eq:single-variable-psi}}
\\
H^*(M;B) \ar[rrr]_-{\eqref{eq:t-equal}} &&& H^*(M;K\langle t \rangle).
}
\end{equation}

\begin{corollary} \label{th:b-frob}
(i) $\Psi$ is an inverse Frobenius for the quantum connection \eqref{eq:quantum-connection-2}. Explicitly, for that connection and its Frobenius pullback, one has 
\begin{equation} \label{eq:FrobeniuspropertyonevariablePsi}
\nabla_{t\partial_t}^{(p)} \circ \Psi = \Psi \circ \nabla_{t\partial_t}.
\end{equation}
The constant ($t = 0$) term of $\Psi$ is \eqref{eq:gamma-const-term}.

(ii) The formal power series inverse $\Phi = \Psi^{-1}$ is a Frobenius for the quantum connection, as defined in Section \ref{sec:diff-eq}. The constant term of $\Phi$ is \eqref{eq:b-frobenius} with $b = \Gamma_p(TM)$. Moreover, $\Phi(t)$ converges on the open disc of radius $1$.

(iii) Any Frobenius structure for the quantum connection (on a Fano toric variety) converges $p$-adically on the open disc of radius $1$.
\end{corollary}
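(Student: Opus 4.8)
The plan is to read off all three assertions from the constructions of Sections \ref{sec:dwork}--\ref{sec:single-variable}, so that the only genuinely analytic input is a single application of the determinant trick in Lemma \ref{th:make-frobenius}(iii). For (i): the map $\Psi$ of \eqref{eq:single-variable-psi} intertwines \eqref{eq:quantum-connection-2} with its Frobenius pullback. One way to see this is to sum the multivariable relations \eqref{eq:geometric-intertwine} over $i=1,\dots,r$ and restrict to the diagonal $t_1=\cdots=t_r=t$: there $\sum_i t_i\partial_{t_i}$ restricts to $t\partial_t$, $\sum_i[D_i]=c_1(TM)$ by \eqref{eq:firstchern}, and $\Psi$ on $H^*(M;B)$ restricts to \eqref{eq:single-variable-psi} by the commutative square \eqref{eq:specialize-frobenius}, so the summed relation becomes exactly \eqref{eq:FrobeniuspropertyonevariablePsi}; alternatively one argues directly at chain level with \eqref{eq:DworkKlanglet}, as in Lemma \ref{th:intertwinesconnections}(ii). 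The constant term of $\Psi$ is Theorem \ref{th:constant-term}: setting $t=0$ in the single-variable picture is the same as reducing \eqref{eq:frob-geometric} modulo $IB$.

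For (ii): the endomorphism \eqref{eq:gamma-const-term} of $H^*(M)$ is invertible, being the composition of cup product with the unit $\Gamma_p(TM)^{-1}$ (invertible since its degree-$0$ part is $1$) with the invertible diagonal operator scaling $H^{2i}$ by $p^{i-n}$. Hence $\Psi$ is invertible in $\operatorname{End}(H^*(M))[[t]]$, and $\Phi:=\Psi^{-1}$ is a well-defined formal power series; taking inverses in \eqref{eq:FrobeniuspropertyonevariablePsi} gives $\nabla_{t\partial_t}\circ\Phi=\Phi\circ\nabla^{(p)}_{t\partial_t}$, i.e.\ \eqref{eq:structure}, so $\Phi$ is a (formal) Frobenius structure in the sense of Definition \ref{th:structure}. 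Inverting the endomorphism \eqref{eq:gamma-const-term} and using \eqref{eq:inverse-gamma} identifies its constant term with \eqref{eq:b-frobenius} for $b=\Gamma_p(TM)$, up to the normalizing scalar $p^{n}$, which is harmless for what follows. Finally, $\Psi$ is $K\langle t\rangle$-linear by construction, hence in any $K$-basis of $H^*(M)$ it is a matrix over the Tate algebra $K\langle t\rangle$ and so converges on $|t|\le 1$; the connection matrix in \eqref{eq:quantum-connection-2} is a polynomial in $t$ (the small quantum product is polynomial on a Fano variety), so it converges on the open disc of radius $1$, and Lemma \ref{th:make-frobenius}(iii) with $\rho=1$ shows that $\Phi=\Psi^{-1}$ converges on the open disc of radius $\min(1,1^{1/p})=1$ precisely because $\Psi$ does.

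For (iii): by (ii) the Frobenius structure whose constant term is exactly \eqref{eq:b-frobenius} for $b=\Gamma_p(TM)$ (namely $p^{-n}\Phi$, unique by Lemma \ref{th:general-structure}(i)) converges on the open disc of radius $1$; Lemma \ref{th:general-structure}(iii) then propagates this to every Frobenius structure for the quantum connection on $M$.

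I do not expect a deep obstacle inside this corollary, which mainly repackages the earlier work. The point demanding the most care is the claim used in (ii) and (iii) that the cohomology-level $\Psi$ is genuinely defined over $K\langle t\rangle$ (equivalently, converges on the closed unit disc): this rests on carrying the functional-analytic estimates of Section \ref{sec:dwork} — in particular Lemma \ref{th:stupid-lemma} — into the single-variable setting, together with the $K\langle t\rangle$-linearity of the mirror isomorphisms \eqref{eq:one-variable-mirror} and \eqref{eq:one-variable-compare}.
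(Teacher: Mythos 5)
Your proposal is correct and follows essentially the same route as the paper: part (i) by specializing the multivariable intertwining relation and Theorem \ref{th:constant-term} along $t_i\mapsto t$ via \eqref{eq:specialize-frobenius}, part (ii) by formal inversion of $\Psi$ (whose constant term is invertible) together with Lemma \ref{th:make-frobenius}(iii) applied with $\rho=1$, and part (iii) from Lemma \ref{th:general-structure}(iii). Your explicit remark that the inverse of \eqref{eq:gamma-const-term} equals \eqref{eq:b-frobenius} only up to the overall scalar $p^{n}$ is a point the paper's proof passes over silently, and is indeed harmless since rescaling a Frobenius structure by a nonzero constant affects neither the intertwining property nor convergence.
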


\begin{proof}
(i) The relation between multivariable and single-variable quantum connections is given by the commutative diagram
\begin{equation}
\xymatrix{ 
H^*(M;B) \ar[rrr]^-{\eqref{eq:t-equal}} \ar[d]_-{\nabla_{t_1\partial_{t_1}} + \cdots + \nabla_{t_r\partial_{t_r}}} &&&
H^*(M;K\langle t \rangle) \ar[d]^-{\nabla_{t\partial_t}}
\\
H^*(M;B) \ar[rrr]_-{\eqref{eq:t-equal}} &&& H^*(M;K\langle t \rangle)
}
\end{equation}
Using that and \eqref{eq:specialize-frobenius}, one can read off the desired properties directly from the previously considered multivariable case (Corollary \ref{th:multivariable} and Theorem \ref{th:constant-term}). 

(ii) All of this a consequence of the results in (i). Since $\Gamma_p(TM)$ is an invertible cohomology class, the constant term of $\Psi$ is an isomorphism. Hence, it makes sense to invert $\Psi$ as a formal power series, which gives our $\Phi$; moreover, the constant term of $\Phi$ is the inverse of that for $\Psi$. Finally, convergence of $\Phi$ is inherited from that for $\Psi$ via Lemma \ref{th:make-frobenius}(iii).

(iii) Follows from (ii) and Lemma \ref{th:general-structure}(iii).
\end{proof}

Corollary \ref{th:b-frob}(iii) leaves us in a bit of a quandary: we have constructed the Dwork Frobenius for the quantum differential equation, and proved that its constant term is given by the $\Gamma_p$-class; but the convergence radius $1$ property arising from that construction carries over to all other Frobenius structures, blurring the significance of the Dwork Frobenius. This will change in the next section, where we prove overconvergence, a property that other Frobenius structures don't usually share.

\section{Overconvergence\label{sec:overconvergent}}

Our final task towards proving Theorem \ref{th:toric} is to show that the Frobenius structure on the quantum cohomology of a Fano toric variety, constructed in Corollary \ref{th:b-frob}, is overconvergent. The Banach spaces involved generalize those in \cite{adolphson-sperber89,dwork74}; the main cohomological computation (Theorem \ref{claim}) is based on methods from \cite[\S 3]{adolphson-sperber89}.

\subsection{The mod $p$ ingredient}
We need a single-variable version of some material from Section \ref{sec:modp}. There is nothing new here, but it's worth while restating the result in the precise form used later on. Take
\begin{equation}
R_{\bF_p[q]} =\Big\{\sum_{\substack{(k,\mathbf{n}) \\ k \geq w(\mathbf{n})}} a_{k,\mathbf{n}}q^{i}z^{\mathbf{n}} \Big\} \subset \bF_p[q, z_j^{\pm 1}],
\end{equation}
and its a filtration $F^sR_{\bF_p[q]}$ given by allowing only monomials $q^k z^{\bfn}$ with $k-w(\bfn) \geq s$. As before, we write $GR_{\bF_p[q]}$ for the associated graded ring; and similarly for differential forms $G\Omega^*_{\bF_p[q]}$.



\begin{lem} \label{lem:adolphsonsperbermodpLb} 
Fix $k_0 \geq w_0 \geq 0$. Take some 
\begin{equation} \label{eq:theta-form}
\alpha = \sum_{w(\bfn) = w_0} a_{\bfn} q^{k_0} z^{\bfn} \, \mathit{vol} \in G\Omega^n_{\bF_p[q]}.
\end{equation}
Let $\Delta \subset N$ be an $\bF_p$-basis set; and $\omega_j$ the basis of $(n-1)$-forms \eqref{eq:standardbasisomeganminusone}. Then one can write
\begin{equation} \label{eq:theta-beta-gamma}
\alpha = \delta + dW_q \wedge \phi
\end{equation}
(in spite of the notation, remember that this is not the wedge product itself, but rather the induced map on the associated graded space) for
\begin{align}
\label{eq:beta-form}
& \delta = \sum_{\substack{\bfn \in \Delta \\ w(\bfn) = w_0}} d_{\bfn} q^{k_0} z^{\bfn} \, \mathit{vol} \in G\Omega^n_{\bF_p[q]}, \\
\label{eq:gamma-form}
& \phi = \!\!\!\!\! \sum_{\substack{1 \leq j \leq n \\ w(\bfn) = w_0-1}} f_{j,\bfn} q^{k_0-1} z^{\bfn} \, \omega_j \in G\Omega^{n-1}_{\bF_p[q]}.
\end{align}
\end{lem}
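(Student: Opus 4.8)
The plan is to identify the degree-$n$ cohomology of the truncated complex $(G\Omega^*_{\bF_p[q]},dW_q\wedge)$ with $H^*(M;\bF_p)\otimes_{\bF_p}\bF_p[q]$, compatibly with the bigrading by the $q$-exponent and by the weight $w$ of the $z$-monomial, and then to read off the claim from the fact that an $\bF_p$-basis set picks out a basis in each cohomological degree of $M$.

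First I would record the single-variable counterpart of the relevant part of Section \ref{sec:modp}. Putting $\bar\lambda_k=\sum_{i=1}^r e_{i,k}\,q\,z^{\bfe_i}\in GR_{\bF_p[q]}$, one has $dW_q=\sum_{k=1}^n\bar\lambda_k\,dz_k/z_k$ as in \eqref{eq:d-bar-w}, hence $dW_q\wedge\omega_j=\bar\lambda_j\,\mathit{vol}$ (this is exactly what the signs in \eqref{eq:standardbasisomeganminusone} are arranged for); since $G\Omega^n_{\bF_p[q]}=GR_{\bF_p[q]}\cdot\mathit{vol}$ is free of rank one, this gives, with no vanishing input,
\[
H^n(G\Omega^*_{\bF_p[q]},dW_q)=GR_{\bF_p[q]}\big/(\bar\lambda_1,\dots,\bar\lambda_n)\,GR_{\bF_p[q]}.
\]
Next, setting all $q_i$ equal to $q$ identifies $GR_{\bF_p[q]}$ with $GR_{\bar\Lambda}\otimes_{\bar\Lambda}\bF_p[q]$: both are free over the respective coefficient rings, with bases matched by $q^{w(\bfn)}z^{\bfn}\leftrightarrow q^{\vec{\psi}(\bfn)}z^{\bfn}\otimes1$, and the product rule \eqref{eq:new-sr} only involves the cone containing the $z$-exponents. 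Base-changing along $\bar\Lambda\to\bF_p[q]$ the identification $GR_{\bar\Lambda}/(\bar\lambda_1,\dots,\bar\lambda_n)GR_{\bar\Lambda}\cong H^*(M;\bar\Lambda)$ from the proof of Lemma \ref{th:g-cohomology}(ii) (which comes from \eqref{eq:cohomology-coincidence} and \eqref{eq:non-equivariant-limit}) then yields a ring isomorphism
\[
GR_{\bF_p[q]}\big/(\bar\lambda_1,\dots,\bar\lambda_n)\ \stackrel{\iso}{\longrightarrow}\ H^*(M;\bF_p)\otimes_{\bF_p}\bF_p[q],\qquad q^{k}z^{\bfn}\ \longmapsto\ \overline{\alpha_{\bfn}}\otimes q^{\,k-w(\bfn)},
\]
where $\overline{\alpha_{\bfn}}\in H^{2w(\bfn)}(M;\bF_p)$ is the non-equivariant image of $\alpha_{\bfn}$. (Alternatively, one could repeat the Koszul argument of Section \ref{sec:modp} directly in the single-variable setting, as was done for Corollary \ref{th:compare-cohomologies}.)

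Now $dW_q\wedge$ raises the $q$-exponent, the weight of the $z$-monomial, and the form degree each by one, so $H^n(G\Omega^*_{\bF_p[q]},dW_q)$ is bigraded by $(q\text{-exponent},\text{weight})$, and its $(k_0,w_0)$-component is exactly the span of the monomials $q^{k_0}z^{\bfn}\,\mathit{vol}$ with $w(\bfn)=w_0$ — i.e.\ the forms $\alpha$ of \eqref{eq:theta-form} — modulo the $dW_q\wedge$-image of the span of the $q^{k_0-1}z^{\bfn}\,\omega_j$ with $1\le j\le n$, $w(\bfn)=w_0-1$ — i.e.\ the forms $\phi$ of \eqref{eq:gamma-form}. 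Under the isomorphism above this component is $q^{\,k_0-w_0}H^{2w_0}(M;\bF_p)$, and since $\Delta$ is an $\bF_p$-basis set the elements $\overline{\alpha_{\bfn}}$ with $\bfn\in\Delta$, $w(\bfn)=w_0$, form an $\bF_p$-basis of $H^{2w_0}(M;\bF_p)$; hence the classes $[q^{k_0}z^{\bfn}\,\mathit{vol}]$ with $\bfn\in\Delta$, $w(\bfn)=w_0$, form a basis of that component. Expanding the class of $\alpha$ in this basis then produces $\delta$ of the form \eqref{eq:beta-form} with $\alpha-\delta=dW_q\wedge\phi$ for a suitable $\phi$ of the form \eqref{eq:gamma-form}, which is the assertion. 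The only step needing attention is the degree bookkeeping in the middle paragraph: one checks that the monomial basis stays homogeneous under the base change and that reducing modulo $(\bar\lambda_k)$ annihilates every summand of non-matching weight — this is precisely what forces the indices $\bfn$ occurring in $\delta$ to satisfy $w(\bfn)=w_0$ exactly, rather than merely $w(\bfn)\le w_0$. Everything else is formal.
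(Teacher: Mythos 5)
Your proof is correct and follows essentially the same route as the paper's: both reduce to the single-variable version of Corollary \ref{cor:generatingalgebraicGM2}(i) (which you re-derive explicitly by base-changing the Koszul/quotient-ring identification $GR_{\bar\Lambda}/(\bar\lambda_1,\dots,\bar\lambda_n)\cong H^*(M;\bar\Lambda)$ along $q_i\mapsto q$), and then use the bihomogeneity of $dW_q\wedge$ for the $q$-exponent and weight gradings to force $\delta$ and $\phi$ into the stated shapes. The only cosmetic difference is that the paper first produces $\delta$ via the uniqueness statement and then argues it must be homogeneous, whereas you work within a single bigraded component from the outset; the content is the same.
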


\begin{proof}
The single-variable version of the argument from Corollary \ref{cor:generatingalgebraicGM2}(i) shows that we have a decomposition \eqref{eq:theta-beta-gamma}, where: $\delta$ lies in the $\bF_p[q]$-subspace with basis $(q^{w(\bfn)}z^{\bfn} \, \mathit{vol})_{\bfn \in \Delta}$; and $\phi \in G\Omega^{n-1}_{\bF_p[q]}$. Moreover, $\delta$ is uniquely determined.

Besides the differential form grading, the complex $(G\Omega^*_{\bF_p[q]}, dW_q)$ has two additional gradings: one by powers of $q$, which the differential increases by $1$; and the other inherited from the original filtration, which is preserved by the differential. Because of the specific form of $\alpha$ prescribed in \eqref{eq:theta-form}, and the uniqueness of $\delta$, it follows that $\delta$ must be of the form \eqref{eq:beta-form}. Moreover, one can throw away those terms in $\phi$ which are of bidegree different than $(k_0-1,k_0-w_0)$ for our two gradings in view of \eqref{eq:theta-beta-gamma}; the remaining part of $\phi$ is as in \eqref{eq:gamma-form}, and still satisfies \eqref{eq:theta-beta-gamma}.
\end{proof}
%
%

\subsection{Banach space setup} 
Fix some $b>0$. Our ground ring is
\begin{equation}
L(b) = \Big\{ \sum_{k \geq 0} a_k t^k \; : \; \mathrm{val}(a_k) - bk \text{ is bounded below}\Big\}.
\end{equation}
These power series converge for $\mathrm{val}(t) >-b$, or equivalently $|t| < p^b$, an open disc of radius greater than $1$. The appropriate Gauss norm on $L(b)$ is
\begin{equation} \label{eq:b-gauss}
\big\|\sum_{k \geq 0} a_k t^k \big\|_b = \mathrm{sup}_k(|a_k| (p^b)^k) = p^{-\mathrm{inf}_k(\mathrm{val}(a_k) + bk)}. 
\end{equation}
Given $c \in \bR$, we write $L(b,c) \subset L(b)$ for the closed ball around the origin of radius $p^{-c}$ with respect to \eqref{eq:b-gauss}. Explicitly,
\begin{equation} \label{eq:lbc}
L(b,c) = \Big\{ \sum_{k \geq 0} a_kt^k\;:\; \mathrm{val}(a_k) \geq bk+c \;\; \Leftrightarrow \;\;
|a_k| (p^b)^k \leq p^{-c} \Big\}.
\end{equation}
The space of functions is correspondingly 
\begin{equation} \label{eq:lwithq}
R_{L(b)} = \Big\{ 
\sum_{k \geq w(\mathbf{n})} a_{k,\bfn} t^k z^\mathbf{n}, \;  \mathrm{val}(a_{k,\mathbf{n}}) - bk \text{ is bounded below} \Big \}.
\end{equation}
One defines a norm $\|\cdot\|_b$ as before, making $R_{L(b)}$ into a Banach algebra. Define subspaces $R_{L(b,c)} \subset R_{L(b)}$ as in \eqref{eq:lbc}. To understand those subspaces better, one can write them as direct products by separating out the powers of $t$:
\begin{align}\label{eqn:prod-R-L(b,c)}
R_{L(b,c)} = \prod_{k \geq 0} t^k R_{L(b,c),k}, \qquad R_{L(b,c),k}= \Big\{ 
\sum_{k \geq w(\mathbf{n})} a_{k,\bfn}  z^\mathbf{n}, \;  \mathrm{val}(a_{k,\mathbf{n}}) \geq bk+c\Big\}.
\end{align}
Note that the elements of $R_{L(b,c),k}$ are finite sums (Laurent polynomials). The same works for differential forms, giving spaces $\Omega^*_{L(b)}$ and subspaces $\Omega^*_{L(b,c)}$, both of which are closed under the de Rham differential (which as usual applies to the $z$ variables only). The superpotential \eqref{eq:one-variable-superpotential} satisfies
\begin{equation}
W_t \in R_{L(b,\frac{1}{p-1} - b)}.
\end{equation}
From the definitions, we see that
\begin{align} 
\label{eq:algebraicincludestoL}
& \Omega^*_{\bQ[q]} \subset \Omega^*_{L(b)} \text{ for all $b$, with the usual change of variables \eqref{eq:t-q}}; \\
&
\label{eq:Lincludestob1}
\Omega^*_{L(b)} \subset \Omega^*_{K\langle t \rangle}, \text{ assuming that $b > 1/(p-1)$.}
\end{align}

\subsection{Cohomology computation}
Throughout the following part, we assume that
\begin{equation} \label{eq:bbiggerthan}
b > 1/(p-1).
\end{equation}

\begin{thm} \label{claim}
(i) The inclusion \eqref{eq:Lincludestob1} induces an isomorphism
\begin{equation} \label{eq:Lincludestob1iso}
H^n(\Omega^*_{L(b)},d+dW_t) \otimes_{L(b)} K\langle t\rangle \cong H^n(\Omega^*_{K \langle t \rangle},d+dW_t).
\end{equation}

(ii) The inclusion \eqref{eq:algebraicincludestoL} induces an isomorphism:
\begin{equation} \label{eq:algebraicincludestoLiso} 
H^n(\Omega^*_{\bQ[q]},d+dW_q)\otimes_{\bQ[q]} L(b) \cong H^n(\Omega^*_{L(b)}, d+ dW_t).  
\end{equation}
\end{thm}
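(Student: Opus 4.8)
The plan is to prove (ii) first and deduce (i) from it: granting (ii), I would tensor \eqref{eq:algebraicincludestoLiso} with $K\langle t\rangle$ over $L(b)$ and combine with the single-variable comparison $H^n(\Omega^*_{\bQ[q]},d+dW_q)\otimes_{\bQ[q]}K\langle t\rangle\cong H^n(\Omega^*_{K\langle t\rangle},d+dW_t)$ (the analogue of Corollary~\ref{th:compare-cohomologies} recorded around \eqref{eq:one-variable-compare}); since the inclusions $\Omega^*_{\bQ[q]}\subset\Omega^*_{L(b)}\subset\Omega^*_{K\langle t\rangle}$ are mutually compatible, the resulting isomorphism is exactly the one induced by \eqref{eq:Lincludestob1}, i.e.\ part (i). For (ii), it suffices to establish the following: \emph{the complex $(\Omega^*_{L(b)},d+dW_t)$ has $H^i=0$ for $i<n$, and for any $\bF_p$-basis set $\Delta\subset N$ the classes $[t^{w(\bfn)}\pi^{w(\bfn)}z^{\bfn}\,\mathit{vol}]$, $\bfn\in\Delta$, freely generate $H^n(\Omega^*_{L(b)},d+dW_t)$ over $L(b)$.} Granting this, (ii) is formal: by Corollary~\ref{cor:generatingalgebraicGM}, base-changed to one variable via \eqref{eq:q-equal} (harmless, the cohomology being $\Lambda$-free), the same classes freely generate $H^n(\Omega^*_{\bQ[q]},d+dW_q)$ over $\bQ[q]$, and \eqref{eq:algebraicincludestoL} carries this basis to the one above, so it induces the base-change isomorphism \eqref{eq:algebraicincludestoLiso}.

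\textbf{Associated graded.} To prove the italicized statement I would filter $\Omega^*_{L(b)}$ by the decreasing \emph{defect} filtration $F^s\Omega^*_{L(b)}$, spanned by the monomials $t^k z^{\bfn}(dz_{j_1}/z_{j_1})\wedge\cdots\wedge(dz_{j_l}/z_{j_l})$ with $k-w(\bfn)\geq s$. The de Rham differential acts only on the $z$-variables, hence preserves defect; wedging with $dW_t$ is defect-nondecreasing by Lemma~\ref{th:toriccombinatorics}(i); so $d+dW_t$ respects $F^\bullet$. On the associated graded the product becomes cone-truncated exactly as in \eqref{eq:new-sr} (Lemma~\ref{th:toriccombinatorics}(ii)), and $\mathrm{gr}(d+dW_t)$ is the de Rham differential plus cone-truncated wedging by the classes $\bar\lambda_k$ of \eqref{eq:d-bar-w}. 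Arguing as in Lemma~\ref{th:g-cohomology}: the associated graded ring is free over the subalgebra generated by the $\bar\lambda_k$ — this is \eqref{eq:equi-free} transported along the $L(b)$-coefficient analogue of \eqref{eq:cohomology-coincidence} — so $(\bar\lambda_1,\dots,\bar\lambda_n)$ is a regular sequence, the associated graded complex is its Koszul complex, its cohomology sits only in degree $n$, and there it is freely generated over $L(b)$ by the classes of our monomials; the precise form of this statement that the lifting step needs is Lemma~\ref{lem:adolphsonsperbermodpLb}.

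\textbf{Lifting, and where $b>1/(p-1)$ enters.} The remaining, and hardest, step is to pass from the associated graded to $\Omega^*_{L(b)}$ itself — this is the only place \eqref{eq:bbiggerthan} is used, and the main obstacle. Following \cite[\S3]{adolphson-sperber89}, I would reduce an arbitrary closed $n$-form by successive approximation: isolate the lowest-defect part (at each fixed power of $t$), use Lemma~\ref{lem:adolphsonsperbermodpLb} to rewrite it — modulo an exact form of the associated graded — as a combination of the basis monomials, lift the primitive $\phi$ to $\Omega^{n-1}_{L(b)}$, subtract $(d+dW_t)\phi$, and iterate on the strictly larger defect that results. The discrepancy between $dW_t\wedge\phi$ and its associated-graded value carries a factor of $\pi$-valuation $1/(p-1)$ per unit of increase in defect; because $b>1/(p-1)$, the Gauss norms $\|\cdot\|_b$ of the successive corrections decay geometrically, so the scheme converges in $\Omega^*_{L(b)}$ and writes every class of $H^n$ as an $L(b)$-combination of the $[t^{w(\bfn)}\pi^{w(\bfn)}z^{\bfn}\,\mathit{vol}]$ (the same bookkeeping yields convergent primitives in degrees $<n$, giving the vanishing there). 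Absence of relations is then easy: a nonzero $L(b)$-linear combination of these classes that bounds would, upon passing to its lowest-defect component, bound in the associated graded complex, contradicting the free generation just established. Making the norm estimates in this successive-approximation scheme work is the crux of the whole proof.
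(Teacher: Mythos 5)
Your generation argument is the paper's: the same defect filtration, the same use of Lemma~\ref{lem:adolphsonsperbermodpLb} as the mod~$p$ input, and the same successive-approximation scheme in which the primitive lives one power of $t$ lower and costs one power of $\pi$, so that each correction gains $b-1/(p-1)>0$ in the Gauss norm (the paper splits this into Lemmas~\ref{lem:polynomialHi}--\ref{prop:generationDi}; your one-paragraph account slightly conflates the $1/(p-1)$ gain of the $dW_t$-only step, which needs no hypothesis on $b$, with the $b-1/(p-1)$ gain coming from the $d\sigma$ error, which is where \eqref{eq:bbiggerthan} actually enters, but the mechanism you describe is the right one). Your order of deduction is reversed --- you prove (ii) and derive (i) by base change to $K\langle t\rangle$ --- which is formally fine once free generation over $L(b)$ is in hand.

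The gap is in how you get freeness. The paper never proves an associated-graded statement with $L(b)$ coefficients: it establishes surjectivity of $V(b)\otimes_{L(b)}K\langle t\rangle\to H^n(\Omega^*_{L(b)})\otimes_{L(b)}K\langle t\rangle$ via Lemma~\ref{prop:generationDi}, composes with the already-known isomorphism onto $H^n(\Omega^*_{K\langle t\rangle})$ from the single-variable Proposition~\ref{prop:AS31q}(iii), and reads off injectivity for free; this is why it does (i) before (ii). You instead invoke ``Koszul exactness and free generation of $H^n$ of the associated graded over $L(b)$,'' claiming it is \eqref{eq:equi-free} transported along an $L(b)$-analogue of \eqref{eq:cohomology-coincidence}. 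That transport is not formal: multiplication by $t$ \emph{raises} the defect, so the graded pieces $F^s/F^{s+1}$ are not $L(b)$-modules, the total associated graded is only a completed sum on which the regular-sequence/Koszul argument of Lemma~\ref{th:g-cohomology} does not apply verbatim (Koszul cohomology need not commute with the completion), and ``freely generated over $L(b)$'' would require an integral analogue of Corollary~\ref{cor:generatingalgebraicGM2} that the paper proves only mod~$p$. Even granting all of that, your ``pass to the lowest-defect component'' step needs one more iteration: if the primitive $\gamma$ has strictly smaller lowest defect than the bounding class, its leading term is merely a cocycle in degree $n-1$ of the associated graded, and you must correct $\gamma$ by exact terms finitely many times before the contradiction appears. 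All of this can presumably be pushed through, but it is substantial unproved machinery; the clean fix is to prove (i) first, exactly so that injectivity of $V(b)\to H^n(\Omega^*_{L(b)},d+dW_t)$ is inherited from the $K\langle t\rangle$ computation rather than argued on the associated graded.
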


Fix an $\bF_p$-basis set $\Delta$. We define 
\begin{equation}
V(b) = \bigoplus_{\bfn \in \Delta} L(b)\cdot t^{w(\bfn)} z^{\bfn} \mathit{vol}
\subset \Omega^n_{L(b)},
\end{equation}
and
\begin{equation}
V(b,c) = V(b) \cap \Omega^n_{L(b,c)} = \Big\{
\sum_{\substack{\bfn \in \Delta \\ k \geq w(\bfn)}} a_{k,\bfn} t^k z^{\bfn} \mathit{vol}, \;
\mathrm{val}(a_{k,\bfn}) \geq bk + c\Big\} \subset \Omega^n_{L(b,c)}.
\end{equation}
The following preparatory result should be compared with \cite[Proposition 3.2, Corollary 3.3]{adolphson-sperber89}.

\begin{lemma} \label{lem:polynomialHi} 
Fix $k_0 \geq 0$. Take 
\begin{equation}
\lambda = \sum_{w(\bfn) \leq k_0} l_\bfn t^{k_0} z^{\bfn} \, \mathit{vol} \in \Omega^n_{L(b,c)}.
\end{equation}
Then there are 
\begin{align}
& \rho = \sum_{\substack{\bfn \in \Delta \\ w(\bfn) \leq k_0}} r_{\bfn} t^{k_0} z^{\bfn} \, \mathit{vol} \in V(b,c), \\
& \sigma = \sum_{\substack{1 \leq j \leq n \\ w(\bfn) \leq k_0 - 1}} s_{j,\bfn} t^{k_0-1} z^{\bfn} \, \omega_j \in \Omega^{n-1}_{L(b,b+c-1/(p-1))},
\end{align}
such that
\begin{equation}
\lambda - \rho - dW_t \wedge \sigma \in \Omega^n_{L(b,c+1/(p-1))}.
\end{equation}
\end{lemma}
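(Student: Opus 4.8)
The strategy is to lift the mod-$p$ decomposition of Lemma \ref{lem:adolphsonsperbermodpLb} to characteristic $0$, working one power of $t$ at a time, and to control the $p$-adic denominators introduced by that lift so that they are absorbed by the shift $c \mapsto c + 1/(p-1)$ built into the target space. Concretely, write $\lambda = t^{k_0}\mu$ where $\mu = \sum_{w(\bfn)\le k_0} l_\bfn z^\bfn\,\mathit{vol}$ is a Laurent polynomial in $z$ whose coefficients $l_\bfn$ have $\mathrm{val}(l_\bfn)\ge bk_0+c$. Reduce $\mu$ modulo $\pi$ (after rescaling by an appropriate power of $\pi$ to land in the integral ring, exactly as in Section \ref{subsec:compute-cohomology}); the image lives in $G\Omega^n_{\bF_p[q]}$ in the homogeneous piece where the $q$-power is $k_0$. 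Apply Lemma \ref{lem:adolphsonsperbermodpLb} to this reduction to get a mod-$p$ equality $\bar\mu = \bar\delta + dW_q\wedge\bar\phi$ with $\bar\delta$ supported on $\Delta$ and $\bar\phi$ of the prescribed shape (same $q$-degree $k_0-1$, one unit lower in the $w$-filtration). Lift $\bar\delta$ and $\bar\phi$ arbitrarily to $O$-coefficient elements $\rho_0\in V$ and $\sigma_0\in\Omega^{n-1}$ of the required monomial shape; then $\mu - \rho_0 - dW_t\wedge\sigma_0$ has all coefficients divisible by $\pi$, i.e. its valuation is at least one notch higher.

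Now iterate: the remainder $\mu - \rho_0 - dW_t\wedge\sigma_0$ is again a Laurent polynomial in $z$ of the same multidegree structure but with coefficient valuations increased by $\mathrm{val}(\pi)=1/(p-1)$, so feed it back into the same procedure to produce corrections $\rho_1,\sigma_1$, and so on. Summing the geometric-type series $\rho = \sum_j \pi^j\rho_j'$ (with suitable bookkeeping of the $\pi$-powers) and $\sigma = \sum_j \pi^j\sigma_j'$ converges $\pi$-adically and gives $\mu - \rho - dW_t\wedge\sigma$ with coefficient valuations $\ge bk_0 + c + 1/(p-1)$; multiplying back by $t^{k_0}$ yields the statement, with $\rho\in V(b,c)$ and $\sigma\in\Omega^{n-1}_{L(b,\,b+c-1/(p-1))}$ — the extra $b$ and the loss of $1/(p-1)$ in $\sigma$'s radius exponent coming precisely from the drop in $q$-degree ($t^{k_0-1}$) and the drop in the $w$-filtration in Lemma \ref{lem:adolphsonsperbermodpLb}. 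Here one uses that $dW_t = \pi\sum_k \bar\lambda_k\,dz_k/z_k$ carries an explicit factor of $\pi$, so that writing $\lambda - \rho$ as $dW_t\wedge\sigma$ modulo higher-valuation terms forces $\sigma$ to have a compensating $\pi^{-1}$, accounting for the $-1/(p-1)$; the $w(\bfn)\le k_0-1$ constraint and the factor $t^{k_0-1}$ then supply the $+b$.

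The main obstacle is the bookkeeping of valuations through the iteration: one must check that the mod-$p$ lift of Lemma \ref{lem:adolphsonsperbermodpLb}, applied to the homogeneous slice $t^{k_0}(\cdots)$, genuinely only involves monomials $z^\bfn$ with $w(\bfn)\le k_0$ (for $\rho$) and $w(\bfn)\le k_0-1$ (for $\sigma$), so that after multiplication by $t^{k_0}$ resp.\ $t^{k_0-1}$ one stays inside $R_{L(b)}$ (the constraint $k\ge w(\bfn)$ in \eqref{eq:lwithq} is exactly $k_0\ge w(\bfn)$); and that the valuation gain of $1/(p-1)$ per step is uniform in $k_0$, so the series converges in the $\|\cdot\|_b$-norm and not merely $\pi$-adically coefficient-by-coefficient. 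Both points follow from the grading/filtration discipline already set up — the two auxiliary gradings on $(G\Omega^*_{\bF_p[q]}, dW_q)$ invoked in the proof of Lemma \ref{lem:adolphsonsperbermodpLb}, together with the fact that $dW_t$ raises the $t$-degree by exactly $1$ and carries a factor $\pi$ — but assembling them into the stated radius exponents $c$, $b+c-1/(p-1)$, $c+1/(p-1)$ is where care is needed. No use of \eqref{eq:bbiggerthan} is required for this lemma itself; it only enters later when comparing with $K\langle t\rangle$.
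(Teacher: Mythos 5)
There is a genuine gap at the central step of your iteration: the claim that, after one application of Lemma \ref{lem:adolphsonsperbermodpLb} and an arbitrary integral lift, the remainder $\mu - \rho_0 - dW_t\wedge\sigma_0$ ``has all coefficients divisible by $\pi$.'' Lemma \ref{lem:adolphsonsperbermodpLb} is an identity in the \emph{associated graded} of the weight filtration, in the single filtration degree $s=k_0-w_0$. When you transport it back to the honest complex, the product $dW_t\wedge\sigma_0$ produces, besides the leading terms that cancel $\lambda-\rho_0$ modulo $\pi$ at weight $w_0$, additional monomials $t^{k_0}z^{\bfn+\bfe_i}$ with $w(\bfn+\bfe_i)<w(\bfn)+1$ (exactly when $\bfn$ and $\bfe_i$ do not lie in a common cone, by the strict convexity in Lemma \ref{th:toriccombinatorics}(ii)). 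These spillover terms are generically nonzero mod $\pi$: their coefficients have valuation $\geq v$ but not $\geq v+1/(p-1)$. So the remainder after one pass has \emph{not} gained valuation uniformly; what it has gained is a strictly smaller maximal weight. The correct mechanism is therefore a finite descending induction on $w_0=\max\{w(\bfn): l_\bfn\neq 0\}$ (which is what the paper does): at each stage only the top-weight slice is treated, its coefficients gain $1/(p-1)$, and the newly created lower-weight terms are fed back in; the process terminates because $w_0\geq 0$ strictly decreases. Your proposal never identifies this descent, and the false uniform-gain claim is precisely what it would be needed to replace.

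A second, related confusion: if your infinite iteration with per-step valuation gain $1/(p-1)$ worked, the summed $\rho$ and $\sigma$ would make the remainder exactly zero, i.e.\ you would be proving $\Omega^n_{L(b,c)}=V(b,c)+dW_t\wedge\Omega^{n-1}_{L(b,c+b-1/(p-1))}$ outright. That is Lemma \ref{prop:generationHi}, whose proof in the paper is exactly the infinite iteration of the \emph{present} lemma; the present lemma is the single step, and its content is the one-notch gain obtained from the finite weight descent. Your bookkeeping of the exponents ($\pi$ in $dW_t$ accounting for $-1/(p-1)$, the drop $t^{k_0}\mapsto t^{k_0-1}$ accounting for $+b$, and the irrelevance of \eqref{eq:bbiggerthan} here) is correct, but the argument needs the descending induction on $w_0$ inserted where you currently assert the uniform divisibility by $\pi$.
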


\begin{proof} 
Suppose that all the nonzero terms in $\lambda$ belong to monomials $t^{k_0} z^{\bfn}$ with $w(\bfn) \leq w_0$, for some $w_0 \leq k_0$. In other words,
\begin{equation}
\lambda = \sum_{w(\bfn) \leq w_0} l_\bfn t^{k_0} z^{\bfn} \, \mathit{vol} \in \Omega^n_{L(b,c)}.
\end{equation}
Suppose also that $w_0$ is the smallest number with that property (there is no such $w_0$ for $\lambda = 0$, but in that case there is nothing to prove anyway). Let 
\begin{equation} \label{eq:v-bound}
v = \mathrm{min}_{w(\bfn) = w_0}(\mathrm{val}(l_{\bfn})) \geq bk_0 + c,
\end{equation}
where the inequality comes from the assumption $\lambda \in L(b,c)$. We take the mod $p$ reductions 
\begin{equation}
a_{\bfn} = \overline{\pi^{-(p-1)v}l_{\bfn}} \in \bF_p, \;\; w(\bfn) = w_0,
\end{equation}
as the coefficients of $\alpha$ from \eqref{eq:theta-form}. Apply Lemma \ref{lem:adolphsonsperbermodpLb} to that $\alpha$, which yields $(\delta, \phi)$ as in \eqref{eq:beta-form}, \eqref{eq:gamma-form}. Lift the coefficients of those forms back to characteristic zero, which means choosing 
\begin{align}
\label{eq:mu-form}
& \rho = \sum_{\substack{\bfn \in \Delta \\ w(\bfn) = w_0}} r_{\bfn} t^{k_0} z^{\bfn}\, \mathit{vol} 
&& \begin{aligned}
& \text{where } \mathrm{val}(r_{\bfn}) \geq v \\
& \text{and } \overline{\pi^{-(p-1)v}r_{\bfn}} = d_{\bfn};
\end{aligned}
\\ \label{eq:nu-form}
& \sigma = \sum_{w(\bfn) = w_0-1} s_{j,\bfn} t^{k_0-1} z^{\bfn}\, \omega_j && 
\begin{aligned}
& \text{where }\mathrm{val}(s_{j,\bfn}) \geq v-1/(p-1) \\ 
& \text{and } \overline{\pi^{-(p-1)v+1} s_{j,\bfn}} = f_{\bfn}.
\end{aligned}
\end{align}
Because of \eqref{eq:v-bound}, we have $\rho \in V(b,c)$ and $\sigma \in \Omega^{n-1}_{L(b,b+c-1/(p-1))}$. Consider
\begin{equation} \label{eq:lambda-difference}
\lambda - \rho - dW_t \wedge \sigma \in \Omega^n_{L(b,c)}.
\end{equation}
In this expression, the only nonzero monomials which occur are again of the form $t^{k_0}z^{\bfn}$ for $w(\bfn) \leq w_0$. Crucially, the mod $p$ equality \eqref{eq:theta-beta-gamma} implies the following:
\begin{equation} \label{eq:crucial}
\parbox{35em}{for those monomials in \eqref{eq:lambda-difference} where $w(\bfn) = w_0$, the associated coefficient has valuation which is strictly larger than $v$, hence $\geq v + 1/(p-1) \geq bk_0 + c + 1/(p-1)$.}
\end{equation}
In order to get from \eqref{eq:theta-beta-gamma} to this statement, the following two observations are relevant:
\begin{itemize} \itemsep.5em
\item
There is a discrepancy between $W_t$ and $W_q$, because the first one has a $\pi$ while the second doesn't. In our construction, that is accounted for by using one extra power of $\pi^{-1}$ in \eqref{eq:nu-form} compared to \eqref{eq:mu-form}. 
\item
Lemma \ref{lem:adolphsonsperbermodpLb} works not in the algebra $\Omega^*_{\bF_p[q]}$ of differential forms, but in the associated graded of the filtration $F^s$, in this case for $s = k_0 - w_0$. However, if one forms $\alpha - \delta - dW_q$ in $\Omega^*_{\bF_p[q]}$, then any terms lying in $F^s$ for $s>k_0 - w_0$ involve monomials with $w(\bfn) < w_0$, which do not make any difference to the validity of \eqref{eq:crucial}.
\end{itemize}
One can reformulate \eqref{eq:crucial} as follows. Let $\lambda' \in \Omega^n_{L(b,c)}$ be the sum of all terms in \eqref{eq:lambda-difference} with $w(\bfn) < w_0$. Then
\begin{equation}
\lambda - \rho - dW_t \wedge \sigma \in \lambda' + \Omega^n_{L(b,c+1/(p-1))}.
\end{equation}
One can now repeat the same argument with $\lambda'$ instead of $\lambda$. Since $w_0$ decreases in each step, we eventually reach the desired goal.
\end{proof}

\begin{lemma} \label{prop:generationHi} 
We have
\begin{equation}
\Omega^n_{L(b, c)} = V(b, c) + dW_t \wedge \Omega^{n-1}_{L(b, c + b - 1/(p-1))}.
\end{equation}
\end{lemma}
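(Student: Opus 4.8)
The plan is to bootstrap Lemma~\ref{lem:polynomialHi} from its ``polynomial in $t$, fixed degree $k_0$'' form to a statement about the full space $\Omega^n_{L(b,c)}$ by iterating over powers of $t$ and passing to the limit; the key point is that each application of Lemma~\ref{lem:polynomialHi} gains a factor of $p^{-1/(p-1)}$ in norm, which is summable, so the infinitely many corrections converge in the Banach norm $\|\cdot\|_b$. First I would reduce to the case where the input $\lambda \in \Omega^n_{L(b,c)}$ is ``concentrated in a single $t$-degree''. Using the product decomposition \eqref{eqn:prod-R-L(b,c)}, write a general element of $\Omega^n_{L(b,c)}$ as $\sum_{k\ge 0} \lambda_k$ with $\lambda_k = t^k(\sum_{w(\bfn)\le k} l_{k,\bfn}z^\bfn)\,\mathit{vol}$ and $\mathrm{val}(l_{k,\bfn}) \ge bk + c$; since the correction terms produced for each $\lambda_k$ again respect \eqref{eqn:prod-R-L(b,c)} (they involve only the monomial $t^{k}$, resp.\ $t^{k-1}$), one may treat each $k$ independently and recombine at the end, provided the resulting series converge — which is where \eqref{eq:bbiggerthan} enters.

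The heart of the argument is a telescoping construction applied to each $\lambda_k$. Given $\lambda_k$ with $w(\bfn)$-valuation data at level $c$, Lemma~\ref{lem:polynomialHi} produces $\rho^{(1)} \in V(b,c)$, $\sigma^{(1)} \in \Omega^{n-1}_{L(b,b+c-1/(p-1))}$ with $\lambda_k - \rho^{(1)} - dW_t\wedge\sigma^{(1)} \in \Omega^n_{L(b,c+1/(p-1))}$; feeding this residue back into Lemma~\ref{lem:polynomialHi} with $c$ replaced by $c + 1/(p-1)$ yields $\rho^{(2)} \in V(b,c+1/(p-1))$, $\sigma^{(2)} \in \Omega^{n-1}_{L(b,b+c-1/(p-1)+1/(p-1))}$, and a residue in $\Omega^n_{L(b,c+2/(p-1))}$; and so on. Setting $\rho = \sum_{j\ge 1}\rho^{(j)}$ and $\sigma = \sum_{j\ge 1}\sigma^{(j)}$, the $j$-th summands lie in balls of radius $p^{-(c+(j-1)/(p-1))}$ and $p^{-(b+c-1/(p-1)+(j-1)/(p-1))}$ respectively, so both series converge in the respective Banach spaces. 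Since $V(b,c)$ is closed and $\rho^{(j)} \in V(b,c)$ for all $j$ (the $V(b,c')$ with $c'\ge c$ are nested inside $V(b,c)$), we get $\rho \in V(b,c)$; likewise $\sigma \in \Omega^{n-1}_{L(b,b+c-1/(p-1))}$. The residues $\lambda_k - \rho^{(1)} - \cdots - \rho^{(j)} - dW_t\wedge(\sigma^{(1)}+\cdots+\sigma^{(j)})$ lie in $\Omega^n_{L(b,c+j/(p-1))}$, whose norms tend to $0$, so in the limit $\lambda_k = \rho + dW_t\wedge\sigma$ exactly.

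Two technical points deserve care, and the second is the main obstacle. The minor point: one must check that applying $dW_t\wedge(-)$ to $\Omega^{n-1}_{L(b,b+c-1/(p-1))}$ lands in $\Omega^n_{L(b,c)}$, which follows since $W_t \in R_{L(b,1/(p-1)-b)}$ and $\|\cdot\|_b$ is submultiplicative, so wedging with $dW_t$ shifts the radius exponent by $1/(p-1)-b$; this is exactly the bookkeeping already built into the statement of Lemma~\ref{lem:polynomialHi}. The main obstacle is the recombination over $k$: after handling each $\lambda_k$ we obtain $\rho = \sum_k \rho_k$ and $\sigma = \sum_k \sigma_k$ where $\rho_k$ involves only $t^k$-monomials (so the sum over $k$ is automatically in $V(b,c)$ by \eqref{eqn:prod-R-L(b,c)}, with no convergence issue beyond the defining growth condition, which is inherited term by term) and $\sigma_k$ involves only $t^{k-1}$-monomials — but here the shift $k \mapsto k-1$ in the $t$-exponent means that $\sigma_k$ contributes to the $t^{k-1}$-slot of $\Omega^{n-1}_{L(b,\,b+c-1/(p-1))}$, and one must verify $\mathrm{val}(\text{coeff of } t^{k-1} \text{ in } \sigma_k) \ge b(k-1) + (b+c-1/(p-1)) = bk + c - 1/(p-1)$, i.e.\ that the bound coming out of Lemma~\ref{lem:polynomialHi} applied at level $c$ for the degree-$k$ piece is compatible with the claimed global radius for $\sigma$. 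Tracking this through the iteration shows the bound is $\mathrm{val} \ge bk + c - 1/(p-1)$ for the $t^{k-1}$ coefficient, which is precisely what membership in $\Omega^{n-1}_{L(b,b+c-1/(p-1))}$ demands, so it works — but making the indices line up cleanly, together with confirming the growth condition ``$\mathrm{val} - bk$ bounded below'' survives summing over all $k$ and all iteration steps $j$, is the delicate part of the write-up.
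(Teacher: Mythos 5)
Your proposal is correct and follows essentially the same route as the paper: decompose via \eqref{eqn:prod-R-L(b,c)}, apply Lemma~\ref{lem:polynomialHi} to each $t$-degree, iterate the correction process gaining $1/(p-1)$ in valuation per step, and sum the resulting absolutely convergent series (the paper merely swaps the order of the two summations, doing one global pass over all $k$ per iteration step rather than iterating within each fixed $k$ first). One small correction: the convergence here is driven entirely by the $1/(p-1)$ gain and does not require \eqref{eq:bbiggerthan}; that hypothesis only becomes essential in the next step (Lemma~\ref{prop:generationDi}), where the gain per iteration is $b-1/(p-1)$.
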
 

\begin{proof} 
Given $\lambda = \lambda_0 \in \Omega^n_{L(b,c)}$, in view of \eqref{eqn:prod-R-L(b,c)}, we can apply Lemma \ref{lem:polynomialHi} to each $t^{k_0}$ component of $\lambda_0$ separately, and then form the series consisting of the results. The outcome are 
\begin{equation}
\begin{aligned}
& \rho_0 \in V(b,c), \quad \sigma_0 \in \Omega^{n-1}_{L(b, c + b - 1/(p-1))}, \\
& \text{such that } \lambda_1 = \lambda_0 - \rho_0 - dW_t \wedge \sigma_0 \in \Omega^n_{L(b,c+1/(p-1))}.
\end{aligned}
\end{equation}
One can iterate the argument, producing sequences $(\lambda_r,\rho_r,\sigma_r)$ with
\begin{equation}
\begin{aligned}
& \textstyle \rho_r \in V(b,c + \frac{r}{p-1}), \quad \sigma_r \in \Omega^{n-1}_{L(b,c + b + \frac{r-1}{p-1})}, \\ 
& \text{such that } \lambda_{r+1} = \lambda_r - \rho_r - dW_t \wedge \sigma_r \in \Omega^n_{L(b,c+\frac{r-1}{p-1})}.
\end{aligned}
\end{equation}
As a consequence (both infinite series being absolutely convergent, and $dW_t \wedge$ being continuous),
\begin{equation}
\lambda = \sum_{r\geq 0} \rho_r + dW_t \wedge \sum_{r \geq 0} \sigma_r.
\end{equation}
%
\end{proof}
  
\begin{lemma} \label{prop:generationDi} 
We have
\begin{equation}
\Omega^n_{L(b, c)} = V(b, c) + (d+dW_t) \Omega^{n-1}_{L(b, c + b - 1/(p-1))}.
\end{equation}
\end{lemma}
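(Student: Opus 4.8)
The plan is to upgrade Lemma~\ref{prop:generationHi}, which handles the truncated differential $dW_t\wedge$, to the full twisted differential $d+dW_t$ by an iteration that absorbs the purely de Rham part $d$ as a convergent correction. The point is that $d$, applied to forms in $\Omega^{n-1}_{L(b,c')}$, lands in $\Omega^n_{L(b,c')}$ with \emph{no loss} in the filtration parameter $c'$ (differentiating $z^{\bfn}$ only multiplies coefficients by integers $n_j$, which have valuation $\geq 0$), whereas each application of Lemma~\ref{prop:generationHi} to a left-over term gains $1/(p-1)$ in the $c$-parameter. So the strategy is: given $\lambda\in\Omega^n_{L(b,c)}$, first apply Lemma~\ref{prop:generationHi} to write $\lambda=\rho_0+dW_t\wedge\sigma_0$ with $\rho_0\in V(b,c)$ and $\sigma_0\in\Omega^{n-1}_{L(b,c+b-1/(p-1))}$. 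Then $(d+dW_t)\sigma_0=dW_t\wedge\sigma_0+d\sigma_0$, so $\lambda-\rho_0-(d+dW_t)\sigma_0=-d\sigma_0\in\Omega^n_{L(b,c+b-1/(p-1))}$. The crucial inequality \eqref{eq:bbiggerthan}, $b>1/(p-1)$, gives $c+b-1/(p-1)>c$, so the error term $d\sigma_0$ is \emph{strictly deeper} in the filtration than $\lambda$ was.

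Now iterate: having produced an error $\lambda_{r}\in\Omega^n_{L(b,c_r)}$ with $c_r=c+r(b-1/(p-1))$, apply Lemma~\ref{prop:generationHi} to $\lambda_r$ to get $\rho_r\in V(b,c_r)$ and $\sigma_r\in\Omega^{n-1}_{L(b,c_r+b-1/(p-1))}$ with $\lambda_r-\rho_r-dW_t\wedge\sigma_r=0$, hence
\begin{equation}
\lambda_{r+1}\stackrel{\mathrm{def}}{=}\lambda_r-\rho_r-(d+dW_t)\sigma_r=-d\sigma_r\in\Omega^n_{L(b,c_{r+1})}.
\end{equation}
Since $b-1/(p-1)>0$, we have $c_r\to+\infty$, so $\lambda_r\to 0$ in $\|\cdot\|_b$; moreover $\rho_r\in V(b,c_r)\subset V(b,c)$ and $\sigma_r\in\Omega^{n-1}_{L(b,c_r+b-1/(p-1))}\subset\Omega^{n-1}_{L(b,c+b-1/(p-1))}$, and both series $\sum_r\rho_r$, $\sum_r\sigma_r$ converge absolutely in the respective Banach spaces (their general terms have norm $\leq p^{-c_r}\to 0$). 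Using that $V(b,c)$ and $\Omega^{n-1}_{L(b,c+b-1/(p-1))}$ are closed, and that $d+dW_t$ is continuous, one concludes
\begin{equation}
\lambda=\sum_{r\geq 0}\rho_r+(d+dW_t)\sum_{r\geq 0}\sigma_r\in V(b,c)+(d+dW_t)\Omega^{n-1}_{L(b,c+b-1/(p-1))},
\end{equation}
which is the assertion.

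The main thing to watch is bookkeeping of the filtration indices: one must check that the limiting $\sigma=\sum_r\sigma_r$ really lies in $\Omega^{n-1}_{L(b,c+b-1/(p-1))}$ (not some worse space) — this holds because every $\sigma_r$ lies in that space, as $c_r\geq c$ — and that the telescoping is legitimate, i.e.\ $\sum_{r=0}^{R}\big(\rho_r+(d+dW_t)\sigma_r\big)=\lambda-\lambda_{R+1}\to\lambda$. There is no genuine analytic obstacle here; the only real input is the strict inequality \eqref{eq:bbiggerthan}, which is exactly what makes the correction-term recursion contract. This is the single-variable analogue of passing from \cite[Proposition 3.2]{adolphson-sperber89} to its $(d+dW)$-version, and the argument is parallel.
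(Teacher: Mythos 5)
Your proposal is correct and is essentially the paper's own argument: apply Lemma \ref{prop:generationHi}, observe that the leftover $-d\sigma_r$ sits one step deeper in the filtration because $d$ costs nothing while the step gains $b-1/(p-1)>0$ by \eqref{eq:bbiggerthan}, and sum the resulting geometric-type series. The bookkeeping of the indices $c_r$ and the convergence argument match the paper exactly.
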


\begin{proof}  
Given $\lambda = \lambda_0 \in \Omega^n_{L(b,c)}$, use Lemma \ref{prop:generationHi} to find 
\begin{equation}
\begin{aligned}
& \rho_0 \in V(b,c), \quad \sigma_0 \in \Omega^{n-1}_{L(b,c+b-1/(p-1))}, \\
& \text{such that } \lambda_0 = \rho_0 + dW_t \wedge \sigma_0, \\
& \text{and therefore } \lambda_1 = \lambda_0 - \rho_0 - (d+dW_t)(\sigma_0) = -d\sigma_0 
\in \Omega^n_{L(b,c+b-1/(p-1))}.
\end{aligned}
\end{equation}
We can iterate this to produce $(\lambda_r,\rho_r,\sigma_r)$ with
\begin{equation}
\begin{aligned}
& \rho_r \in V(b,c + r(b-1/(p-1))), \quad \sigma_r \in \Omega^{n-1}_{L(b,c+(r+1)(b-1/(p-1)))}, \\
& \text{such that } \lambda_r = \rho_r + dW_t \wedge \sigma_r, \\
& \text{and therefore } \lambda_{r+1} = \lambda_r - \rho_r - (d+dW_t)(\sigma_r) = -d\sigma_r 
\in \Omega^n_{L(b,c+ (r+1)(b-1/(p-1)))}.
\end{aligned}
\end{equation}
As in the proof of the previous Lemma, this yields
\begin{equation}
\lambda = \sum_{r \geq 0} \rho_r + (d+dW_t \wedge) \sum_{r \geq 0} \sigma_r.
\end{equation}
Note that this time, the convergence of these series depends on \eqref{eq:bbiggerthan}.
\end{proof}

\begin{proof}[Proof of Theorem \ref{claim}]
(i) Consider the diagram
\begin{equation}
\xymatrix{
V(b) \otimes_{L(b)} K\langle t \rangle \ar@{->>}[rr] \ar@{=}[d] 
&&
H^n(\Omega^*_{L(b)}, d+dW_t) \otimes_{L(b)} K\langle t \rangle \ar[d]^-{\eqref{eq:Lincludestob1iso}}
\\
\bigoplus_{\bfn \in \Delta} K\langle t \rangle \, z^{\bfn} \mathit{vol} \ar[rr]_{\iso}
&&
H^n(\Omega^*_{K\langle t \rangle}, d+dW_t)
}
\end{equation}
The horizontal maps are projections from cocycles to cohomology. The lower $\rightarrow$ is an isomorphism by the single-variable version of Proposition \ref{prop:AS31q}; and the upper $\rightarrow$ is onto by Lemma \ref{prop:generationDi}. It follows that the upper $\rightarrow$ must also be injective, and therefore \eqref{eq:Lincludestob1iso} must be an isomorphism as well.

(ii) It follows from part (i) that the natural map \begin{align}\label{eq:VbisLbcohomology} V(b) \longrightarrow H^n(\Omega^*_{L(b)}, d+dW_t) \end{align} is injective. By Lemma \ref{prop:generationDi}, it is also surjective and hence an isomorphism. Now consider the diagram
\begin{equation}
\xymatrix{
\bigoplus_{\bfn \in \Delta} L(b) \, z^{\bfn} \ar@{=}[d] \ar[rr]^-{\iso}
&&
H^n(\Omega_{\bQ[q]}, d+dW_q) \otimes_{\bQ[q]} L(b) \ar[d]^-{\eqref{eq:algebraicincludestoLiso}}
\\
V(b) \ar[rr]^-{\iso}_-{\eqref{eq:VbisLbcohomology}}
&&
H^n(\Omega^*_{L(b)}, d+dW_t)
}
\end{equation}
The upper $\rightarrow$ is an isomorphism by the single-variable version of Proposition \ref{prop:AS31q}; it follows that \eqref{eq:algebraicincludestoLiso} must be an isomorphism as well.
\end{proof}

In more geometric terms, this computation yields the following mirror symmetry result.

\begin{cor} \label{cor:toricmirrorsymmetryoverS}
Assuming \eqref{eq:bbiggerthan}, there is an isomorphism
\begin{align} \label{eq:mirrornoneqisoLp} 
\Theta: H^n(\Omega_{L(b)}^*,d+dW_t) \stackrel{\iso}{\longrightarrow} H^*(M;L(b)),
\end{align}
intertwining the connections $\nabla_{t\partial_t}$ on both sides.
\end{cor}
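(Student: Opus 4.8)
The plan is to combine the two isomorphisms just established in Theorem \ref{claim} with the algebraic mirror symmetry statement from the single-variable corollary (based on Theorem \ref{thm:toricmirror}), all of which hold with the indicated compatibility with connections. Concretely, I would start from the $\bQ[q]$-linear isomorphism \eqref{eq:one-variable-mirror},
\[
\Theta: H^n(\Omega^*_{\bQ[q]},d+dW_q) \stackrel{\iso}{\longrightarrow} H^*(M;\bQ[q]),
\]
which intertwines \eqref{eq:GMconnectionversionq} with the quantum connection \eqref{eq:quantum-connection}. Extending coefficients from $\bQ[q]$ to $L(b)$ along the change of variables \eqref{eq:t-q} — legitimate since $H^n(\Omega^*_{\bQ[q]},d+dW_q)$ is a free $\bQ[q]$-module, so the tensor product commutes with taking cohomology and with the connection operators — gives an $L(b)$-linear isomorphism
\[
H^n(\Omega^*_{\bQ[q]},d+dW_q)\otimes_{\bQ[q]} L(b) \stackrel{\iso}{\longrightarrow} H^*(M;L(b)),
\]
compatible with the connections $\nabla_{q\partial_q}$, i.e.\ with $\nabla_{t\partial_t}$ after rescaling.

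Next I would invoke part (ii) of Theorem \ref{claim}, namely the isomorphism \eqref{eq:algebraicincludestoLiso},
\[
H^n(\Omega^*_{\bQ[q]},d+dW_q)\otimes_{\bQ[q]} L(b) \cong H^n(\Omega^*_{L(b)}, d+ dW_t),
\]
induced by the inclusion \eqref{eq:algebraicincludestoL}. This inclusion is visibly compatible with the connections (the connection $\nabla_{t\partial_t}$ on $\Omega^*_{L(b)}$ restricts to $\nabla_{q\partial_q}$ on $\Omega^*_{\bQ[q]}$ under \eqref{eq:t-q}, since both are given by the same formula $q\partial_q + W_q$), hence the induced isomorphism on cohomology intertwines the two connections. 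Composing the inverse of this isomorphism with the extended $\Theta$ above yields the desired map $\Theta: H^n(\Omega^*_{L(b)},d+dW_t) \to H^*(M;L(b))$, and it is an isomorphism and connection-preserving because it is built from connection-preserving isomorphisms.

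In summary, the proof is a three-step concatenation: (1) base-change the algebraic mirror isomorphism \eqref{eq:one-variable-mirror} from $\bQ[q]$ to $L(b)$, using freeness of the source module; (2) identify the base-changed algebraic Gauss--Manin cohomology with the analytic one over $L(b)$ via Theorem \ref{claim}(ii); (3) compose, checking at each stage that the connection on one side matches the connection on the other because all the maps in sight come from the same explicit formulas for $d+dW_t$ and $\nabla_{t\partial_t}$ after the substitution \eqref{eq:t-q}. I do not anticipate a serious obstacle here, since the genuinely nontrivial analytic input — the cohomology comparison over the overconvergent ring $L(b)$ — has already been carried out in Theorem \ref{claim}; the only point requiring a little care is the bookkeeping of the rescaling \eqref{eq:t-q} (the factor $\pi$ relating $q$ and $t$ and the appearance of $\pi$ in $W_t$ versus $W_q$), so that the stated intertwining with $\nabla_{t\partial_t}$ — rather than with some rescaled variant — is the correct normalization. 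If a subtlety arises, it will be exactly there, but it is resolved by the same reconciliation of the $\pi$-powers already used in the proof of Lemma \ref{lem:polynomialHi}.
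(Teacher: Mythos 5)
Your proposal is correct and is exactly the argument the paper intends: the corollary is stated as an immediate consequence of base-changing the algebraic mirror isomorphism \eqref{eq:one-variable-mirror} to $L(b)$ and composing with the comparison isomorphism \eqref{eq:algebraicincludestoLiso} from Theorem \ref{claim}(ii), with the connections matching because $q\partial_q = t\partial_t$ and $W_q \mapsto W_t$ under \eqref{eq:t-q}. No difference in approach.
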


\subsection{Dwork's inverse Frobenius reconsidered}
For a careful choice of parameter $b$ for the spaces $L(b,c)$ and their Frobenius pullbacks, one can carry over the construction from Section \ref{subsec:dwork-frobenius} to the overconvergent context. First, we need to introduce the Frobenius pullback spaces. For $b>0$,
\begin{equation}
R^{(p)}_{L(b/p)} = \Big\{ \sum_{k \geq p\, w(\bfn)} a_{k,\bfn} t^k z^{\bfn}, \;
\mathrm{val}(a_{k,\bfn}) - kb \text{ is bounded below} \Big\}.
\end{equation}
A usual, we write $\Omega^{(p),*}_{L(b/p)}$ for differential forms with coefficients in $R^{(p)}_{L(b/p)}$. As in \eqref{eq:BpisFrobeniuspullback}, the map $F(t) = t^p$ induces an isomorphism
\begin{equation} \label{eq:LpisFrobeniuspullback}
\Omega^*_{L(b)}[t^{1/p}] \stackrel{\iso}{\longrightarrow} \Omega^{(p),*}_{L(b/p)}.
\end{equation}
One can use that to compute the cohomology of $H^n(\Omega^{(p),*}_{L(b/p)},d+dW_{t^p})$, by reducing it to Theorem \ref{claim}. In particular, assuming \eqref{eq:bbiggerthan}, one can define an isomorphism $\Theta^{(p)}$ which sits in a commutative diagram with that from Corollary \ref{cor:toricmirrorsymmetryoverS}:
\begin{equation}
\xymatrix{
\ar[d]_-{\Theta^{(p)}}^-{\iso} H^n(\Omega^{(p),*}_{L(b/p)}, d+dW_{t^p})
&& \ar[ll]^-{\iso}_-{\eqref{eq:LpisFrobeniuspullback}}
H^n(\Omega^*_{L(b)}[t^{1/p}], d+dW_t) 
\ar[d]^-{\Theta}_-{\iso}
\\
H^*(M;L(b/p)) 
&& \ar[ll]^-{\iso}_-{F}
H^*(M;L(b)[t^{1/p}])
}
\end{equation}

%
%
%
%

Formally, the construction of the inverse Frobenius proceeds exactly as before. Consider the function \eqref{eq:dwork-twist-4}. It follows from \eqref{eq:dwork-exponential-val} that $D(\pi tz^{\bfe_i}) \in L(\frac{p-1}{p^2})$ and hence
\begin{equation}
G_t \in L({\textstyle\frac{p-1}{p^2}}).
\end{equation}
Next, the same formula as in \eqref{eq:dwork-root-3} defines a map
\begin{equation} \label{eq:dwork-root-4} 
\mathit{Loc}_t: \Omega^*_{L(b)} \longrightarrow \Omega^{(p),*}_{L(b)}
\end{equation}
for any $b$ (and in particular $b = (p-1)/p^2$). The composition of the two yields the inverse Frobenius 
\begin{equation} \label{eq:FrobeniusLb}
\Psi: \Omega^*_{L\big(\frac{p-1}{p}\big)} \hookrightarrow \Omega^*_{L\big(\frac{p-1}{p^2}\big)} \xrightarrow{G_t}
\Omega^*_{L\big(\frac{p-1}{p^2}\big)} \xrightarrow{\eqref{eq:dwork-root-4}} \Omega^{(p),*}_{L(\frac{p-1}{p^2})}.
\end{equation}
The basic properties (compatibility with differentials and with connections) are proved by the same computations as before.

\begin{proof}[Proof of Theorem \ref{th:toric}]
There is an inclusion which is the Frobenius pullback of \eqref{eq:Lincludestob1},
\begin{align} \label{eq:Lincludestob} 
(\Omega^{(p),*}_{L(b/p)},d+dW_{t^p}) \to (\Omega^{(p),*}_{K\langle t \rangle}, d+ dW_{t^p}), \text{ provided that } b>1/(p-1).
\end{align} 
By construction, the two inverse Frobenius maps sit in a commutative diagram
\begin{equation}
\xymatrix{
H^n(\Omega^*_{L(\frac{p-1}{p})},d+dW_t) \ar[r]^{\eqref{eq:FrobeniusLb}} \ar[d]_{\eqref{eq:Lincludestob1}} & H^n(\Omega^{(p),*}_{L(\frac{p-1}{p^2})},d+dW_{t^{p}}) \ar[d]^{\eqref{eq:Lincludestob}} \\
H^n(\Omega^*_{K\langle t \rangle},d+dW_t) \ar[r]^{\eqref{eq:DworkKlanglet}} & H^n(\Omega^{(p),*}_{K \langle t \rangle},d+dW_{t^{p}})
}
\end{equation}
Applying the mirror symmetry isomorphisms  \eqref{eq:one-variable-mirror}, \eqref{eq:one-variable-compare} and \eqref{eq:mirrornoneqisoLp} turns this into
\begin{equation} \label{eq:1more}
\xymatrix{
H^*(M;L(\frac{p-1}{p})) \ar[r]^\Psi \ar[d] & H^*(M;L(\frac{p-1}{p^2})) \ar[d] \\
H^*(M;K\langle t \rangle) \ar[r]^\Psi & H^*(M;K\langle t \rangle)
}
\end{equation}
The top $\rightarrow$ is an inverse Frobenius structure for the quantum connection, which has convergence radius $>1$ by definition, and whose $t = 0$ term is the same as in Corollary \ref{th:b-frob}, because of the compatibility statement \eqref{eq:1more}. We invoke Lemma \ref{th:make-frobenius}(iii) to see that its inverse, which is the Frobenius structure from Conjecture \ref{th:gamma-conjecture}, is also overconvergent.
\end{proof}

\section{Grassmannians\label{sec:grassmannian}}
Take $M = \mathit{Gr}(k,N)$. Let $E \rightarrow M$, $Q \rightarrow M$ be the tautological subbundle and quotient bundle, respectively. We write $r_i$ for the Chern roots of $E^\vee$, so that cohomology classes on $M$ are described by symmetric polynomials in these variables. Note that any symmetric polynomial involving $r_i^N$ gives the trivial cohomology classes (as one can see from the diagram $\bC P^{N-1} \leftarrow \mathit{Fl}(N) \rightarrow \mathit{Gr}(k,N)$ involving the full flag variety). 

We adopt the same strategy as in \cite[Section 6]{galkin-golyshev-iritani16}, based on the ``Satake isomorphism''
\begin{equation} \label{eq:satake}
S: \Lambda^k H^*(\bC P^{N-1};\bQ) \stackrel{\iso}{\longrightarrow} H^{*-k(k-1)}(M;\bQ).
\end{equation}
This has a simple formula in terms of Schubert cycles, see \cite[Section 2]{bertram-ciocan-fontanine-kim05}, \cite[Section 1.6]{golyshev-manivel11}, or \cite[Equation (6.2.4)]{galkin-golyshev-iritani16}. Equivalently, in terms of Schur polynomials (see e.g.\ \cite[Chapters 6, 9]{fulton-young-tableaux} for background)
\begin{equation} \label{eq:satake-schur}
S(x^{d_1} \wedge \cdots \wedge x^{d_k}) = \frac{\mathrm{det}\big( (r_i^{d_j})_{ij} \big)}{\mathrm{det}\big( (r_i^{k-j})_{ij} \big)}.
\end{equation}
Here, $x \in H^2(\bC P^{N-1})$ is the hyperplane class. On the right hand side, both numerator and denominator are determinants of $k \times k$ matrices with coefficients in $\bQ[r_1,\dots,r_k]$; the quotient is a symmetric polynomial. The signs are set up so that
\begin{equation} \label{eq:satake-1}
S(x^{k-1} \wedge x^{k-2} \wedge \cdots \wedge x \wedge 1) = 1.
\end{equation}
In the formula \eqref{eq:satake-schur} we can allow $d_j \geq N$, in which case both sides are zero, because of the previously mentioned observation about powers of the Chern roots.

\begin{lemma} \label{th:satake}
For any $m \geq 1$, the isomorphism \eqref{eq:satake} fits into a commutative diagram
\begin{equation} \label{eq:satake-chern}
\xymatrix{
\Lambda^k H^*(\bC P^{N-1};\bQ) 
\ar_-{\iso}^-{S}[rr] 
\ar[d]_-{\Lambda^k_{\mathrm{Lie}}(\mathit{ch}_m(\scrO(1)) \smile \cdot)} 
&& 
H^{*-k(k-1)}(M;\bQ) 
\ar[d]^-{\mathit{ch}_m(E^\vee) \smile \cdot} 
\\
\Lambda^k H^*(\bC P^{N-1};\bQ) \ar_-{\iso}^-{S}[rr] && H^{*-k(k-1)}(M;\bQ)
}
\end{equation}
Here, the left $\downarrow$ is shorthand for the following: take the cup product with $\mathit{ch}_m(\scrO(1)) = x^m/m!$, as an endomorphism of $H^*(\bC P^{N-1})$; and consider its exterior product 
$\Lambda^k_{\mathrm{Lie}}$ in the sense of Lie algebra representations,
\begin{equation} 
x^{d_1} \wedge \cdots \wedge x^{d_k} \longmapsto \frac{1}{m!} \sum_l
x^{d_1} \wedge \cdots \wedge x^{d_l+m} \wedge \cdots \wedge x^{d_k}.
\end{equation}
The right $\downarrow$ is cup product with $\mathit{ch}_m(E^\vee) = \frac{1}{m!} (r_1^m + \cdots + r_k^m)$. 
\end{lemma}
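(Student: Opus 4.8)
The plan is to reduce everything to a computation with Schur polynomials, since both vertical maps in \eqref{eq:satake-chern} are given by explicit polynomial formulas once we use \eqref{eq:satake-schur}. Since the classes $S(x^{d_1}\wedge\cdots\wedge x^{d_k})$ with $N-1\geq d_1>\cdots>d_k\geq 0$ form a basis of $H^*(M;\bQ)$ (and the corresponding wedges form a basis of $\Lambda^k H^*(\bC P^{N-1};\bQ)$), it suffices to check commutativity on such a basis element, and in fact — since everything is polynomial in formal variables and the relations $r_i^N=0$ are respected by both sides (as noted in the paragraph after \eqref{eq:satake-schur}) — it suffices to prove the identity at the level of symmetric polynomials in $r_1,\dots,r_k$ \emph{before} imposing $r_i^N=0$, i.e. for all $d_1,\dots,d_k\geq 0$.

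First I would write, using \eqref{eq:satake-schur} and the Leibniz formula for the determinant,
\begin{equation}
S(x^{d_1}\wedge\cdots\wedge x^{d_k}) = \frac{1}{V(r)}\sum_{\tau\in\mathfrak S_k}\mathrm{sgn}(\tau)\, r_1^{d_{\tau(1)}}\cdots r_k^{d_{\tau(k)}},
\end{equation}
where $V(r)=\det\big((r_i^{k-j})_{ij}\big)$ is the Vandermonde determinant, which is a fixed antisymmetric polynomial independent of the $d_j$. Now I compute the two ways around the square. Going right-then-down: applying $\mathit{ch}_m(E^\vee)\smile\cdot=\frac1{m!}(r_1^m+\cdots+r_k^m)\smile\cdot$ multiplies the numerator $\det\big((r_i^{d_j})_{ij}\big)$ by $\frac1{m!}\sum_l r_l^m$. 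Going down-then-right: the left vertical map sends $x^{d_1}\wedge\cdots\wedge x^{d_k}$ to $\frac1{m!}\sum_l x^{d_1}\wedge\cdots\wedge x^{d_l+m}\wedge\cdots\wedge x^{d_k}$, and then $S$ turns this into $\frac1{m!}\sum_l \det(M_l)/V(r)$ where $M_l$ is the matrix $(r_i^{d_j})_{ij}$ with the $l$-th column exponent $d_l$ replaced by $d_l+m$. So the entire claim reduces to the polynomial identity
\begin{equation}\label{eq:key-det-identity}
\Big(\sum_{l=1}^k r_l^m\Big)\det\big((r_i^{d_j})_{ij}\big) = \sum_{l=1}^k \det\big((r_i^{d_j + m\delta_{jl}})_{ij}\big),
\end{equation}
after cancelling the common factors $1/(m!)$ and $1/V(r)$.

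Identity \eqref{eq:key-det-identity} is the genuinely computational heart, but it is elementary: expand the left-hand side by multilinearity of the determinant in its \emph{rows}. The product $(\sum_l r_l^m)\det\big((r_i^{d_j})_{ij}\big)$, expanded via the Leibniz formula, is $\sum_l\sum_{\tau}\mathrm{sgn}(\tau)\,r_1^{d_{\tau(1)}}\cdots r_l^{d_{\tau(l)}+m}\cdots r_k^{d_{\tau(k)}}$; reindexing the $\tau$-sum shows this equals $\sum_{l}\det\big((r_i^{d_j+m\delta_{jl}})_{ij}\big)$ (in the column where the exponent is bumped by $m$, the determinant expansion produces exactly the sum over which row gets the extra $r^m$ factor). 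I expect the only mildly delicate point to be keeping the bookkeeping of signs and of ``which index is bumped'' straight — whether one bumps a row exponent or a column exponent — and checking that the $V(r)$ in the denominator, being the same on both sides, genuinely cancels (this uses that $V(r)\neq 0$ as a polynomial, so one may cancel it in the field of fractions $\bQ(r_1,\dots,r_k)$ and then note both sides are honest polynomials). Once \eqref{eq:key-det-identity} is in hand, descending to $H^*(M;\bQ)$ by imposing $r_i^N=0$ (valid because both vertical maps and $S$ respect this, as recorded after \eqref{eq:satake-schur}) and invoking that $S$ is an isomorphism \eqref{eq:satake} completes the proof; the grading shift by $k(k-1)$ and the degree of $\mathit{ch}_m$ are automatically consistent and need only a one-line remark.
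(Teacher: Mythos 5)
Your proposal is correct and is essentially the paper's own argument: the paper likewise expands the numerator of \eqref{eq:satake-schur} via the Leibniz formula and observes that summing over which column exponent is bumped by $m$ factors out the power sum $r_1^m+\cdots+r_k^m$, which is exactly your identity \eqref{eq:key-det-identity}. Your additional remarks about cancelling the Vandermonde and reducing modulo $r_i^N$ are correct but left implicit in the paper.
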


\begin{proof}
We write out the Schur polynomials using Leibniz' formula, and then directly look at the effect on the numerator of \eqref{eq:satake-schur} following the order left $\downarrow$ then bottom $\rightarrow$:
\begin{equation}
\sum_{\sigma,l} (-1)^{\mathrm{sign}(\sigma)} r_{\sigma(1)}^{d_1} \cdots r_{\sigma(l)}^{d_l+m} \cdots r_{\sigma(k)}^{d_k} = 
(r_1^m + \cdots + r_k^m) \sum_{\sigma} (-1)^{\mathrm{sign}(\sigma)} r_{\sigma(1)}^{d_1} \cdots r_{\sigma(k)}^{d_k}.
\end{equation}
\end{proof}

We apply this to the $p$-adic Gamma classes:

\begin{lemma} \label{th:gamma-grassmannian}
Let $\Phi_{\bC P^{N-1},0}$ and $\Phi_{M,0}$ be the endomorphisms \eqref{eq:b-frobenius} for those two manifolds. They fit into a commutative diagram
\begin{equation}
\xymatrix{
\Lambda^k H^*(\bC P^{N-1};\bQ_p) 
\ar_-{\iso}^-{S}[rr] 
\ar[d]_-{\Lambda^k_{\mathrm{Group}}(\Phi_{\bC P^{N-1},0})} 
&& 
H^{*-k(k-1)}(M;\bQ_p) 
\ar[d]^-{p^{-k(k+1)/2} \Phi_{M,0}} 
\\
\Lambda^k H^*(\bC P^{N-1};\bQ_p) \ar_-{\iso}^-{S}[rr] && H^{*-k(k-1)}(M;\bQ_p)
}
\end{equation}
Here $\Lambda^k_{\mathrm{Group}}(\Phi_{\bC P^{N-1},0})$ is the exterior power in the sense of group representations, meaning the restriction of $\Phi_{\bC P^{N-1},0} \otimes \cdots \otimes \Phi_{\bC P^{N-1},0}$ to the antisymmetric part.
\end{lemma}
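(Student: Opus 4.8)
The plan is to deduce this multiplicative statement from the additive one in Lemma \ref{th:satake} by exponentiating. Recall that $\Phi_{\bC P^{N-1},0}$ and $\Phi_{M,0}$ are, up to the grading operator $p^{-\deg/2}$, cup product with $\Gamma_p(T\bC P^{N-1})$ and $\Gamma_p(TM)$ respectively, and that by \eqref{eq:log-gamma-class} these Gamma classes are exponentials (for the cup product) of $\sum_{m\text{ odd}} l_m\, \mathit{ch}_m$ of the respective tangent bundles. So the first step is to express $\mathit{ch}_m(T\bC P^{N-1})$ and $\mathit{ch}_m(TM)$ in terms of the bundles appearing in Lemma \ref{th:satake}. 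For projective space, $T\bC P^{N-1} \oplus \scrO = \scrO(1)^{\oplus N}$, so $\mathit{ch}_m(T\bC P^{N-1}) = N\,\mathit{ch}_m(\scrO(1))$ for $m \geq 1$. For the Grassmannian, $TM = \mathit{Hom}(E,Q) = E^\vee \otimes Q$ and $E \oplus Q = \bC^N$, from which one computes $\mathit{ch}(TM) = \mathit{ch}(E^\vee)\mathit{ch}(Q) = \mathit{ch}(E^\vee)(N - \mathit{ch}(E)) = N\,\mathit{ch}(E^\vee) + \mathit{ch}(E^\vee)\mathit{ch}(E^\vee)^{\vee,-1}$... more carefully, since $\mathit{ch}(Q) = N - \mathit{ch}(E)$ and $\mathit{ch}(E) = \sum e^{-r_i}$ while $\mathit{ch}(E^\vee) = \sum e^{r_i}$, one gets $\mathit{ch}_m(TM) = \mathit{ch}_m(E^\vee)\cdot N - \sum_{i,j}(\text{cross terms})$; the key point is that the \emph{odd} components simplify: for odd $m$, $\mathit{ch}_m(E) = -\mathit{ch}_m(E^\vee)$, and working through $\mathit{ch}(E^\vee\otimes Q)$ one finds $\mathit{ch}_m(TM) = N\,\mathit{ch}_m(E^\vee)$ for odd $m \geq 1$ as well (the bundle $E^\vee \otimes E$ contributes nothing in odd degrees since it is self-dual up to the interchange $r_i \leftrightarrow r_j$, hence has only even Chern character, and $Q = \bC^N - E$). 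I would verify this identity carefully as it is the crux of the dictionary.

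With that in hand, the second step is to match the two vertical maps in Lemma \ref{th:satake} with the logarithms of the maps in the statement. By Lemma \ref{th:satake} applied with the class $l_m\,\mathit{ch}_m$, we get that $S$ intertwines $\Lambda^k_{\mathrm{Lie}}(l_m\,\mathit{ch}_m(\scrO(1))\smile\cdot)$ on the left with $l_m\,\mathit{ch}_m(E^\vee)\smile\cdot = \frac{1}{N}\,l_m\,\mathit{ch}_m(TM)\smile\cdot$ on the right — wait, we want $\mathit{ch}_m(TM)$, so actually we should apply Lemma \ref{th:satake} to the class $N\,l_m\,\mathit{ch}_m$, or equivalently observe that $S$ intertwines $\Lambda^k_{\mathrm{Lie}}\big(\sum_m l_m\,\mathit{ch}_m(T\bC P^{N-1})\smile\cdot\big) = \Lambda^k_{\mathrm{Lie}}(\log\Gamma_p(T\bC P^{N-1})\smile\cdot)$ on the left with $\log\Gamma_p(TM)\smile\cdot$ on the right. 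This is precisely summing the diagrams of Lemma \ref{th:satake} over odd $m$ with coefficients $l_m N$. Then the third step is to handle the grading operators: $\Phi_{\bC P^{N-1},0} = p^{-\deg/2}\circ(\Gamma_p(T\bC P^{N-1})\smile\cdot)$ and similarly for $M$; since cup product with a Gamma class commutes with nothing in particular but \emph{does} interact predictably with degree, and since $S$ shifts degree by $k(k-1)$ and $\Lambda^k$ of a degree-graded space adds up degrees, I need to track how $\Lambda^k_{\mathrm{Group}}(p^{-\deg/2})$ compares to $p^{-\deg/2}$ on $H^{*-k(k-1)}(M)$. On a wedge $x^{d_1}\wedge\cdots\wedge x^{d_k}$ the left operator scales by $p^{-(d_1+\cdots+d_k)}$ (each $x^{d_j}$ has degree $2d_j$), while under $S$ this maps to a class of degree $2(d_1+\cdots+d_k) - k(k-1)$, so $p^{-\deg/2}$ on the right scales by $p^{-(d_1+\cdots+d_k)+k(k-1)/2}$. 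The discrepancy is the constant factor $p^{k(k-1)/2}$; combined with the $p^{-k(k+1)/2}$ in the statement this should come out to... $p^{-k(k+1)/2}\cdot p^{k(k-1)/2} = p^{-k}$ — I will need to recheck the bookkeeping here, possibly there is an additional shift from how $\Lambda^k_{\mathrm{Group}}$ versus $\Lambda^k_{\mathrm{Lie}}$ exponentiate, or from a normalization of $S$; the cleanest route is to check the identity on the single element $x^{k-1}\wedge\cdots\wedge 1 \mapsto 1$ using \eqref{eq:satake-1} to pin down the scalar.

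The fourth and final step is the exponentiation itself: given that $S$ intertwines a Lie-algebra action $\mathrm{ad}$ on the left with multiplication by $\log\Gamma_p(TM)$ on the right (an additive/nilpotent-type statement, valid because all these operators are nilpotent on the finite-dimensional cohomology — cup product with a positive-degree class is nilpotent, and the Lie exterior power of a nilpotent operator is nilpotent), the formal exponential $\exp$ converts this into: $S$ intertwines $\Lambda^k_{\mathrm{Group}}(\exp(\log\Gamma_p(T\bC P^{N-1})\smile\cdot))$ with $\exp(\log\Gamma_p(TM)\smile\cdot) = \Gamma_p(TM)\smile\cdot$, using the standard fact that $\exp$ of the Lie-algebra exterior power action equals the group exterior power of the exponential. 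Tensoring in the grading operators (which are diagonalizable, not nilpotent, but the compatibility is the elementary degree computation of step three, valid without any exponentiation subtlety) then yields the claimed diagram with the factor $p^{-k(k+1)/2}$. I expect the main obstacle to be step three — getting every power of $p$ and every degree shift exactly right, since the Satake isomorphism's degree shift $k(k-1)$, the grading of $\Lambda^k$, and the normalization in \eqref{eq:b-frobenius} all conspire, and an off-by-one in any of them changes the constant; pinning it down by evaluating on the distinguished element mapping to $1\in H^0(M)$ is the safe way to do it. The odd-Chern-character identity $\mathit{ch}_m(TM) = N\,\mathit{ch}_m(E^\vee)$ for odd $m$ is a close second in terms of needing care, but it is a direct computation with $TM = E^\vee\otimes Q$ and $[E]+[Q]=[\bC^N]$.
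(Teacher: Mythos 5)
Your proposal is correct and follows essentially the same route as the paper: reduce to Lemma \ref{th:satake} via $\log\Gamma_p$ and the odd Chern character, exponentiate, and then insert the grading operators. Your identity $\mathit{ch}_m(TM)=N\,\mathit{ch}_m(E^\vee)$ for odd $m$ is exactly the paper's $\log\Gamma_p(TM)=N\log\Gamma_p(E^\vee)$, obtained there from the virtual splitting $TM=\bC^N\otimes Q-\mathit{Hom}(Q,Q)$ with the self-dual summand dropping out of the odd Chern character; your variant via $E^\vee\otimes E$ being self-dual is the same argument.

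On the one point you left open, your bookkeeping is right. Evaluating on $x^{k-1}\wedge\cdots\wedge x\wedge 1\mapsto 1$, or on a general $x^{d_1}\wedge\cdots\wedge x^{d_k}$ whose Satake image sits in degree $2\sum_j d_j-k(k-1)$, the ratio between $\Lambda^k(p^{-\deg/2})$ on the source and $p^{-\deg/2}$ on the target is the constant $p^{-k(k-1)/2}$, so the commutative square holds with $p^{-k(k-1)/2}\Phi_{M,0}$ on the right; the exponent $k(k+1)/2$ in the statement appears to be a slip, as the paper's own proof only says the discrepancy comes from the grading shift in \eqref{eq:satake}, which is $k(k-1)$. (The value $k(k-1)/2$ is also the one consistent with the degenerate case $k=N-1$, where $M\cong\bC P^{N-1}$, and with the eigenvalue-valuation count of Conjecture \ref{th:gamma-conjecture-2} used at the end of Section \ref{sec:grassmannian}.) Since the constant only rescales the Frobenius structure, nothing in the overconvergence argument is affected; there is no gap in your approach.
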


\begin{proof}
As a virtual bundle,
\begin{equation}
TM = \mathit{Hom}(E,Q) \iso \bC^N \otimes Q - \mathit{Hom}(Q,Q).
\end{equation}
Since $\mathit{Hom}(Q,Q)$ is self-dual, it has vanishing odd Chern classes. The additive characteristic class $\log\,\Gamma_p$ only uses the odd components of the Chern character, see \eqref{eq:log-gamma-class}, and therefore
\begin{equation} \label{eq:gamma-grassmannian}
\log\,\Gamma_p(TM) = N \log\,\Gamma_p(Q) = -N \log\, \Gamma_p(E) = N \log\,\Gamma_p(E^\vee);
\end{equation}
the $k = 1$ special case being
\begin{equation}
\log\,\Gamma_p(T\bC P^{N-1}) = N \log\,\Gamma_p(\scrO(1)).
\end{equation}
By applying Lemma \ref{th:satake} to each term in \eqref{eq:log-gamma}, one therefore gets 
\begin{equation} \label{eq:satake-chern-2}
\xymatrix{
\Lambda^k H^*(\bC P^{N-1};\bQ_p) 
\ar_-{\iso}^-{S}[rr] 
\ar[d]_-{\Lambda^k_{\mathrm{Lie}}(\log\,\Gamma_p(T\bC P^{N-1}) \smile \cdot)}
&& 
H^{*-k(k-1)}(M;\bQ_p) 
\ar[d]^-{\log\, \Gamma_p(TM) \smile \cdot} 
\\
\Lambda^k H^*(\bC P^{N-1};\bQ_p) \ar_-{\iso}^-{S}[rr] && H^{*-k(k-1)}(M;\bQ_p)
}
\end{equation}
Exponentiating the $\downarrow$ yields a similar diagram involving $\Lambda^k_{\mathrm{Group}}(\Gamma_p(T\bC P^{N-1}) \smile \cdot)$ respectively $\Gamma_p(TM) \smile \cdot$. 
The desired result is then obtained by inserting the $p^{-\mathrm{deg}/2}$ terms; the resulting discrepancy comes from the grading shift in \eqref{eq:satake}.
\end{proof}

There is a quantum version of Lemma \ref{th:satake}: the quantum Pieri formula \cite{bertram97}, see also \cite[Proposition 6.2.3, 6.2.5]{galkin-golyshev-iritani16}. We only need the $m=1$ case:

\begin{lemma} \label{th:quantum-satake}
The isomorphism \eqref{eq:satake} fits into a commutative diagram
\begin{equation} \label{eq:quantum-satake}
\xymatrix{
\Lambda^k H^*(\bC P^{N-1};\bQ)[q]
\ar_-{\iso}^{S}[rr] 
\ar[d]_-{\Lambda^k_{\mathit{Lie}}(c_1(\scrO(1)) \ast_{\zeta q} \cdot)} 
&& 
H^{*-k(k-1)}(M;\bQ)[q]
\ar[d]^-{c_1(E^\vee) \ast_q \cdot} 
\\
\Lambda^k H^*(\bC P^{N-1};\bQ)[q] \ar_-{\iso}[rr]^{S} && H^{*-k(k-1)}(M;\bQ)[q]
}
\end{equation}
In the left $\downarrow$, we take the endomorphism of $H^*(\bC P^{N-1};\bQ)[q]$ given by quantum multiplication with the first Chern class, except for applying a substitution 
\begin{equation} \label{eq:satake-substitution}
q \longmapsto \zeta q, \;\; \text{ where } \zeta^N = (-1)^{k-1}
\end{equation}
(note that the quantum product only involves $q^N$; so no actual roots of unity arise, it's just a matter of changing signs if $k$ is even). We then let that endomorphism act on the exterior product as in \eqref{eq:satake-chern}. 
\end{lemma}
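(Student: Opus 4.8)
The plan is to reduce everything to an identity between explicit symmetric polynomials, exactly as the numerator computation was set up in the proof of Lemma \ref{th:satake}. The quantum Pieri formula of \cite{bertram97} describes $c_1(E^\vee) \ast_q \cdot$ on $H^*(M;\bQ)[q]$ in the basis of Schur polynomials: cup product with $c_1(E^\vee) = r_1 + \cdots + r_k$ against the Schur polynomial $s_{\lambda}$ (indexed by a partition $\lambda$ inside the $k \times (N-k)$ box) produces the sum of $s_{\mu}$ over $\mu$ obtained by adding one box, with a correction term $q\, s_{\mu'}$ when a box can be added in the ``wrapped around'' position. Via the Jacobi--Trudi / Weyl character presentation \eqref{eq:satake-schur}, this translates into the statement that on the numerator $\det\big( (r_i^{d_j})_{ij} \big)$, multiplication by $r_1 + \cdots + r_k$ followed by the reduction $r_i^N \rightsquigarrow (\text{sign}) \, q \, r_i^{N-k}$ agrees with $\Lambda^k_{\mathrm{Lie}}$ of the operation $x^d \mapsto x^{d+1}$ on $H^*(\bC P^{N-1};\bQ)[q]$, where the latter operation sends $x^{N-1}$ to $\zeta q\, x^{N-k-1}$ (modulo the ideal generated by $x^N - \zeta q\, x^{N-k}$, which is the relation defining the quantum cohomology of $\bC P^{N-1}$ after the substitution \eqref{eq:satake-substitution}).

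Concretely, the steps I would carry out are: (1) recall the presentation of $QH^*(\bC P^{N-1};\bQ)[q]$ as $\bQ[x,q]/(x^N - q)$, so that after $q \mapsto \zeta q$ the relation becomes $x^N = \zeta q$, and $c_1(\scrO(1)) \ast_{\zeta q} \cdot$ is just multiplication by $x$ in this ring; (2) compute $\Lambda^k_{\mathrm{Lie}}$ of that operator applied to $x^{d_1} \wedge \cdots \wedge x^{d_k}$, i.e.\ $\sum_l x^{d_1} \wedge \cdots \wedge x^{d_l + 1} \wedge \cdots \wedge x^{d_k}$, reducing any exponent that reaches $N$ via $x^N = \zeta q \, x^{N-k} \cdots$ wait, rather $x^N \mapsto \zeta q$; but the key point is that in the alternating product two equal exponents kill the term, and the surviving reduction of $x^{N-1+1} = x^N = \zeta q$ feeds back a term $\zeta q \cdot x^0 \wedge (\text{rest})$, which after antisymmetrizing is exactly $\pm q$ times the Schur polynomial for the ``removed a column'' partition; (3) on the Grassmannian side, expand $(r_1 + \cdots + r_k)\det\big((r_i^{d_j})_{ij}\big)$ by Leibniz as in the proof of Lemma \ref{th:satake}, obtaining $\sum_{\sigma,l} (-1)^{\mathrm{sign}(\sigma)} r_{\sigma(1)}^{d_1} \cdots r_{\sigma(l)}^{d_l+1} \cdots r_{\sigma(k)}^{d_k}$, and then impose the relation $r_i^N = q \cdot(\text{something})$ coming from $c_1(E^\vee)\ast_q$; (4) match the two sides, the sign $\zeta^N = (-1)^{k-1}$ in \eqref{eq:satake-substitution} being precisely the sign discrepancy between cyclically shifting a column past the other $k-1$ rows of the determinant and the single-variable relation on $\bC P^{N-1}$. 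Since \eqref{eq:satake-schur} is an isomorphism and both vertical maps are $\bQ[q]$-linear, checking commutativity on the spanning set $x^{d_1}\wedge\cdots\wedge x^{d_k}$ suffices.

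The main obstacle is bookkeeping the quantum correction term and its sign: one has to verify that the ``wrap-around'' in quantum Pieri on $\mathit{Gr}(k,N)$ — removing a full length-$N$ column (equivalently, a rim hook) and multiplying by $q$ — corresponds under Leibniz expansion of the determinant to reducing a single entry $r_i^{N}$ and then moving that column to the far end of the matrix, which costs a sign $(-1)^{k-1}$, exactly absorbed by $\zeta$. This is a finite, purely combinatorial verification: it amounts to comparing the quantum Pieri rule as stated in \cite{bertram97} (or \cite[Prop.~6.2.3, 6.2.5]{galkin-golyshev-iritani16}, where the $m=1$ case and the Satake-compatible normalization are already recorded) with the elementary description of quantum multiplication by $x$ on $\bC P^{N-1}$. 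Modulo that sign tracking, the proof is a one-line Leibniz expansion identical in structure to that of Lemma \ref{th:satake}, now carried out in the quantum rings rather than the classical ones.
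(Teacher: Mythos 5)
Your proposal is correct and follows the same route the paper takes (which simply invokes Bertram's quantum Pieri rule and \cite[Propositions 6.2.3, 6.2.5]{galkin-golyshev-iritani16}): check commutativity on the wedges $x^{d_1}\wedge\cdots\wedge x^{d_k}$, where the classical terms match by the $m=1$ case of Lemma \ref{th:satake} and the quantum correction term matches because the reordering sign $(-1)^{k-1}$, incurred when the reduced exponent $x^0$ is moved to the last slot of the wedge, is exactly absorbed by $\zeta^N=(-1)^{k-1}$. The only blemish is notational: in the paper's monotone normalization the relation in $QH^*(\bC P^{N-1})$ is $x^{\ast N}=q^N$ with $\deg q=2$ (not $x^N=q$), so after the substitution it reads $x^{\ast N}=\zeta^N q^N=(-1)^{k-1}q^N$; this does not affect your sign analysis.
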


\begin{example}
As a check on the sign \eqref{eq:satake-substitution}, one can use the geometrically uninteresting case $k = N-1$. Under the identification $M = \bC P^{N-1}$, $c_1(E^\vee) = x$. By starting with \eqref{eq:satake-1} and repeatedly applying the $m=1$ case of Lemma \ref{th:satake}, one sees that the Satake isomorphism is
\begin{equation}
S(x^{N-1} \wedge \cdots \wedge x^{i+1} \wedge x^{i-1} \wedge \cdots 1) = x^{N-1-i}.
\end{equation}
On the left column of \eqref{eq:quantum-satake} one has 
\begin{equation} \label{eq:trivial-quantum-satake}
\Lambda_{\mathit{Lie}}^{N-1}(x \ast_q \cdot)\big(
x^{N-1} \wedge \cdots \wedge x^{i+1} \wedge x^{i-1} \wedge \cdots
\big) = 
\begin{cases}
x^{N-1} \wedge \cdots \wedge x^i \wedge x^{i-2} \wedge \cdots 
& i>0, \\
(-1)^{N-2} q^N ( x^{N-2} \wedge \cdots \wedge 1) & i = 0;
\end{cases}
\end{equation}
the change of variables \eqref{eq:satake-substitution} makes the extra sign in \eqref{eq:trivial-quantum-satake} disappear; which matches what happens to the $x^{N-1-i}$ in the right column.
\end{example}

As a consequence, the quantum connections on $\Lambda^k H^*(\bC P^{N-1})$ and on $H^*(M)$ match if, in the latter, one changes coordinates as in \eqref{eq:satake-substitution}. Combining that with Lemma \ref{th:gamma-grassmannian} yields:

\begin{corollary}
Take the Frobenius structures $\Phi_{\bC P^{N-1}}$ and $\Phi_M$, with constant terms as in Lemma \ref{th:gamma-grassmannian}. These fit into a commutative diagram
\begin{equation} \label{eq:frobenius-satake}
\xymatrix{
\Lambda^k H^*(\bC P^{N-1};K)[[t]]
\ar_-{\iso}^{S}[rr] 
\ar[d]_-{\Lambda^k_{\mathit{Group}}(\Phi_{\bC P^{N-1}}(\zeta t))} 
&& 
H^{*-k(k-1)}(M;K)[[t]]
\ar[d]^-{p^{-k(k+1)/2}\Phi_M(t)} 
\\
\Lambda^k H^*(\bC P^{N-1};K)[[t]] \ar_-{\iso}[rr]^{S} && H^{*-k(k-1)}(M;K)[[t]]
}
\end{equation}
Here, as in Lemma \ref{th:quantum-satake}, the root of unity $\zeta$ only results in a change of sign, since both Frobenius structures are functions of $t^N$ (this explains why we can work over $K$, without worrying whether it contains $\zeta$).
\end{corollary}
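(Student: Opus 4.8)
The plan is to deduce the statement from three ingredients already available: that $S$ intertwines the relevant quantum connections (Lemma~\ref{th:quantum-satake}), that exterior powers of Frobenius structures are again Frobenius structures, and the uniqueness of a Frobenius structure given its constant term (Lemma~\ref{th:general-structure}(i)).

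First I would record some preliminary observations. The quantum product on $\bC P^{N-1}$ involves only the powers $q^0$ and $q^N$ of the quantum parameter (a degree-one rational curve has first Chern number $N$, and for dimension reasons no higher-degree curve can contribute to a two-point product); running the recursion \eqref{eq:order-by-order-2} then shows that $\Phi_{\bC P^{N-1}}(t)$, and likewise $\Phi_M(t)$, is a power series in $t^N$. Since $\zeta^N=(-1)^{k-1}$ and $p$ is odd (so that $\zeta^{pN}=\zeta^N$), the substitution $t\mapsto\zeta t$ of \eqref{eq:satake-substitution} is a well-defined $K$-linear operation --- it merely toggles the signs of the coefficients of a $t^N$-series --- and a direct substitution into \eqref{eq:structure} shows that it carries a Frobenius structure for a connection to one for the substituted connection. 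Next I would check the purely formal fact that if $\Phi$ is a Frobenius structure for a connection $\nabla$ as in \eqref{eq:connection}, then the group-theoretic exterior power $\Lambda^k_{\mathrm{Group}}(\Phi)$ is a Frobenius structure for the induced connection $\Lambda^k_{\mathrm{Lie}}(\nabla)$ on the $k$-th exterior power of the bundle: this is the Leibniz-rule computation, using $(\Lambda^k_{\mathrm{Lie}}\nabla)^{(p)}=\Lambda^k_{\mathrm{Lie}}(\nabla^{(p)})$ and the multiplicativity of $\Lambda^k_{\mathrm{Group}}$, and one notes that $\Lambda^k_{\mathrm{Group}}(\Phi_{\bC P^{N-1},0})$ is invertible since $\Phi_{\bC P^{N-1},0}$ is (being cup product with the invertible class $\Gamma_p(T\bC P^{N-1})$).

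Now I would assemble these. Multiplying both columns of the diagram in Lemma~\ref{th:quantum-satake} by $N$ and using $c_1(T\bC P^{N-1})=N\,c_1(\scrO(1))$ and $c_1(TM)=N\,c_1(E^\vee)$, one sees that $S$ intertwines $\Lambda^k_{\mathrm{Lie}}$ of the quantum connection of $\bC P^{N-1}$, after the substitution $t\mapsto\zeta t$, with the quantum connection of $M$; hence $S$ also intertwines their Frobenius pullbacks. By the previous paragraph, $S\circ\Lambda^k_{\mathrm{Group}}(\Phi_{\bC P^{N-1}}(\zeta t))\circ S^{-1}$ and $p^{-k(k+1)/2}\Phi_M(t)$ are both Frobenius structures for the quantum connection of $M$. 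Their constant terms at $t=0$ are $S\circ\Lambda^k_{\mathrm{Group}}(\Phi_{\bC P^{N-1},0})\circ S^{-1}$ and $p^{-k(k+1)/2}\Phi_{M,0}$, which coincide by Lemma~\ref{th:gamma-grassmannian}; moreover that common endomorphism is of the form \eqref{eq:b-frobenius} with $b=p^{-k(k+1)/2}\Gamma_p(TM)$, so it is invertible and satisfies the required compatibility. The uniqueness clause of Lemma~\ref{th:general-structure}(i) then forces the two Frobenius structures to be equal, which is precisely the commutativity of \eqref{eq:frobenius-satake}.

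The only genuinely delicate point is the handling of the sign twist $\zeta$: one must be sure that replacing $t$ by $\zeta t$ commutes with forming Frobenius pullbacks and Frobenius structures, and this rests entirely on $\zeta^N=\pm1$ together with everything in sight being a function of $t^N$ --- without that, $(\zeta t)^p\neq\zeta t^p$ would spoil the structure equation. Everything else is bookkeeping with exterior powers of representations.
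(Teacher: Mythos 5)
Your proposal is correct and follows essentially the same route as the paper: the paper deduces the corollary in one line from the quantum Satake isomorphism (Lemma \ref{th:quantum-satake}), the matching of constant terms (Lemma \ref{th:gamma-grassmannian}), and the uniqueness of a Frobenius structure with prescribed constant term. You have simply made explicit the formal verifications the paper leaves implicit (that exterior powers of Frobenius structures are Frobenius structures, and that the substitution $t \mapsto \zeta t$ is compatible with the Frobenius pullback because everything is a function of $t^N$ and $\zeta^{pN} = \zeta^N$), all of which are handled correctly.
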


Hence, overconvergence of $\Phi_{\bC P^{N-1}}$ (a special case of Theorem \ref{th:toric}) implies the same for $\Phi_M$. 

As mentioned in the Introduction, Conjecture \ref{th:gamma-conjecture-2} was proved in \cite{sperber80} for $\bC P^{N-1}$, assuming that $p \geq N+2$. To be precise, \cite[Theorem 2.35]{sperber80} (see also the explanation on p.~5 of the paper) concerns the connection 
\begin{equation} \label{eq:s-connection}
s\partial_s + \begin{pmatrix} 0 & &&& \!\!\!\!\pi^N s \\ 1 & 0 && \\ & 1 & 0 && \\ && \cdots && \\ 
&&& 1 & 0 \end{pmatrix}
\end{equation}
which transforms into \eqref{eq:quantum-connection-2} for $\bC P^n$ after setting $s = t^N$. The value $t = \zeta$ for the quantum connection corresponds to $s = (-1)^{k-1}$. The computation of Frobenius eigenvalues for \eqref{eq:s-connection} applies to $s = \pm 1$, since those are both $(p-1)$-st roots of unity. The outcome, in our terminology, is that the eigenvalues of $\Phi_{\bC P^{N-1}}(\zeta)$ have the same valuations as those of $G_{\bC P^{N+1}}(x) = p^{-\mathrm{deg}(x)/2} x$. From \eqref{eq:frobenius-satake}, it follows that the eigenvalues of $\Phi_M(1)$ have the same valuations as those of $p^{k(k+1)/2} \Lambda^k_{\mathit{Group}}(G_{\bC P^{N-1}}) = G_M$. This concludes the proof of Theorem \ref{th:grassmannian}.

\appendix

\section{Numerical evidence}
Computations in this section (see \cite{seidel-url} for code) are done with respect to the variable $q$ for the quantum connection, so the Frobenius is \eqref{eq:frobenius-2}. Remember that in this context, overconvergence means that the radius of convergence of $\bar\Phi(q)$ is $> p^{-1/(p-1)}$. 

The practical aspects of the computation are fairly straightforward. We have
\begin{equation}
\Gamma_p(\mathit{TM}) = \gamma^1 b^1 + \cdots + \gamma^r b^r \in H^{\mathrm{even}}(M;\bQ_p),
\end{equation}
where each $b^k$ is a monomial in the Chern classes of $M$ (including $c_0(TM) = 1$), and each $\gamma^k$ is a rational polynomial in the derivatives $\Gamma^{(j)}_p(0)$, $j \leq \mathrm{dim}_{\bC}(M)$. We set $b = b^k$ in \eqref{eq:b-frobenius} and consider the resulting solutions $\bar\Phi^k(q)$ of \eqref{eq:frobenius-2}. The desired Frobenius structure is the same linear combination
\begin{equation}
\bar\Phi(q) = \gamma^1 \bar\Phi^1(q) + \cdots + \gamma^r \bar\Phi^r(q).
\end{equation}

We want to compute the valuation of the $q^m$-coefficients $\bar\Phi_m$ of $\bar\Phi(q)$, for all $m$ up to some fixed bound $N$. The corresponding terms $\bar\Phi^k_m$ of $\bar\Phi^k(q)$ are rational matrices, and can be computed exactly using \eqref{eq:order-by-order-2}. Suppose that, after having done this computation, we find that the matrices all have $p$-adic valuation $\geq H$. Compute rational approximations $\tilde{\gamma}^k \in \bQ$ such that $\mathrm{val}(\tilde{\gamma}^k - \gamma^k) \geq K$, which we can do for any $K$ using Lemma \ref{th:approximate-derivatives}. We then know that $\bar\Phi_m - (\tilde{\gamma}^1 \bar\Phi^1_m + \cdots + \tilde{\gamma}^r \bar\Phi^r_m)$ has valuation $\geq K+H$, and hence
\begin{equation} \label{eq:approximate-valuation}
\mathrm{val}(\tilde{\gamma}^1 \bar\Phi^1_m + \cdots + \tilde{\gamma}^r \bar\Phi^r_m) < K+H
\;\;\Rightarrow \;\;\mathrm{val}(\bar\Phi_m) = \mathrm{val}(\tilde{\gamma}^1 \bar\Phi^1_m + \cdots + \tilde{\gamma}^r \bar\Phi^r_m).
\end{equation}
It suffices to choose $K$ sufficiently large so that the condition in \eqref{eq:approximate-valuation} kicks in, and that computes $\mathrm{val}(\bar\Phi_m)$.

In the examples below, we have plotted $(x,y) = (m,-\val(\bar\Phi_m))$, for $p = 3,5$. For comparison the dashed line is $y = x/(p-1)$, so overconvergence means that our graph has a smaller linear growth rate than that. Obviously, computations up to a finite order in $q$ can never prove overconvergence, not even for a single prime $p$. However, the gap in growth rates is sufficiently large to be considered serious supporting evidence for Conjecture \ref{th:gamma-conjecture}. Moreover, our computations point towards the growth rate being $\leq (2p-1)/(p^3-p^2)$, which would correspond to the same radius of convergence as $D(q)$; however, equality does not always hold (Examples \ref{th:cp2} and \ref{th:cubic-surface} with $p = 3$).

The other aspect concerns the valuations of the eigenvalues of $\bar\Phi(\pi)$, which can be computed within $\bQ_p$ using Lemma \ref{th:newton} applied to \eqref{eq:characteristic-polynomial-2}, and \eqref{eq:fractional-valuation}. In our context, where we only have a finite order approximation to $\bar\Phi(q)$, any such computation is tentative, since it could be in principle be disrupted by higher order terms with low valuation; still, the results match Conjecture \ref{th:gamma-conjecture-2}. 

\begin{example} \label{th:cp2}
Take $M = \bC P^2$. The experimentally observed growth rates are $\sigma \approx 0.16$ for $p = 3$, $\sigma \approx 0.09$ for $p = 5$:
\begin{center}
\includegraphics[scale=0.45]{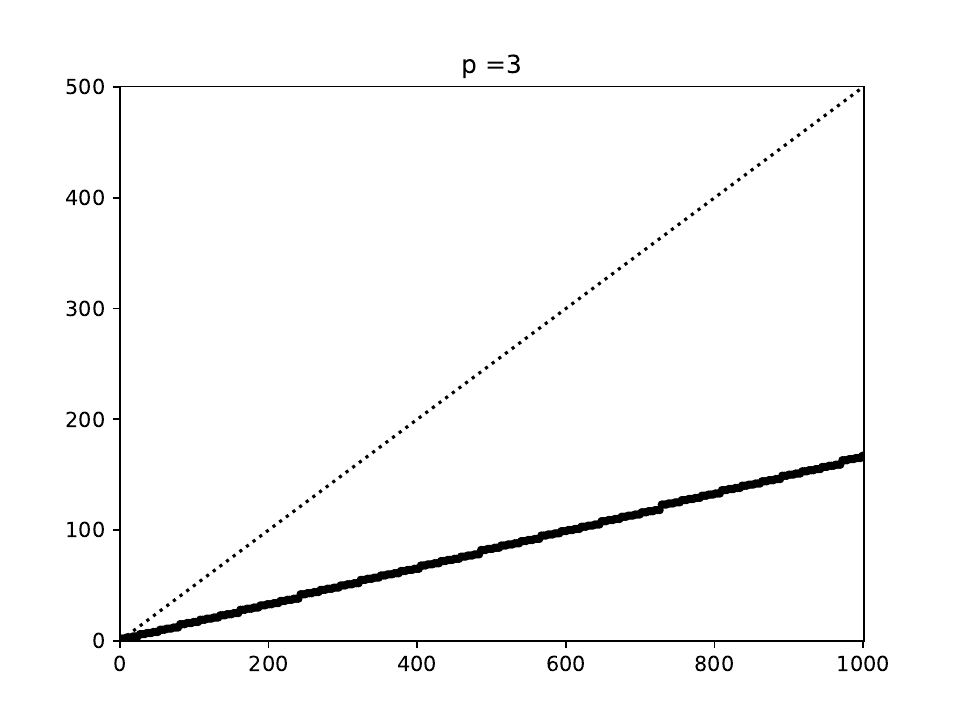}
\includegraphics[scale=0.45]{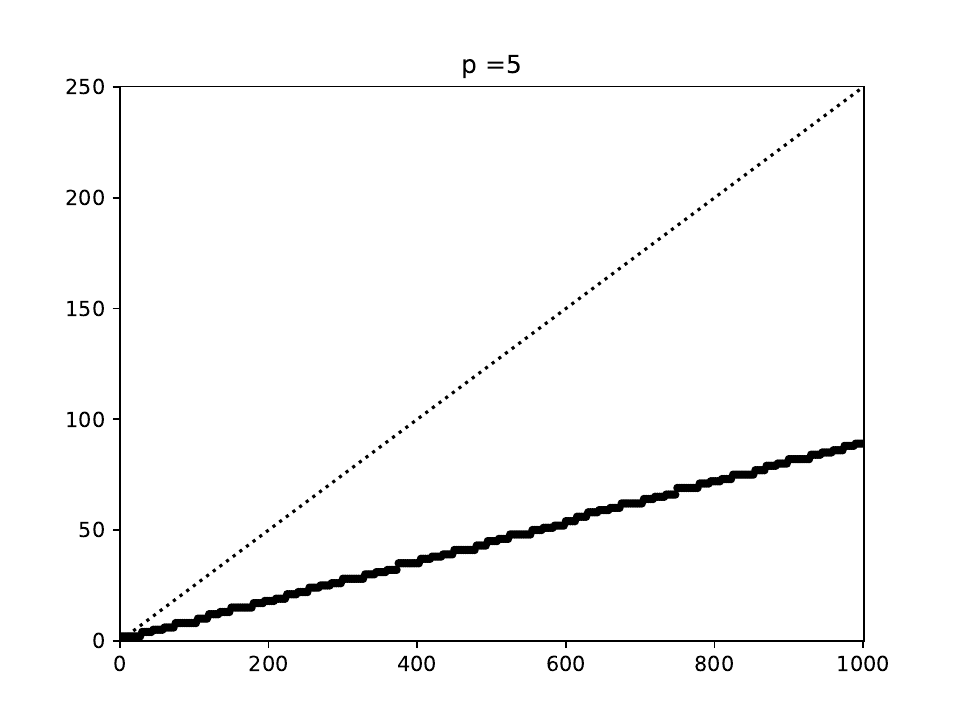}
\end{center}
At both primes, the Newton polygon of our approximation to $\bar\phi(q,\pi)$ has vertices at $(0,-3)$,$(1,-3)$, $(2,-2)$, $(3,0)$; hence slopes $(0,1,2)$.
\end{example}

\begin{example} \label{th:cubic-surface}
Take the cubic surface $M \subset \bC P^3$. We consider the quantum connection only on the subspace spanned by $\{1,\, c_1(TM),\, [point]\} \in H^*(M)$, where it is given by \cite{crauder-miranda94}
\begin{equation}
\partial_q + \begin{pmatrix} 
0 & 108 q & 252 q^2 \\
q^{-1} & 9 & 36 q \\
0 & 3q^{-1} & 0 \end{pmatrix}
\end{equation}
One has $c_1(TM)^2 = 3[\mathit{point}]$; based on that, $\Gamma_p(TM) = (1,\Gamma_p'(0),(3/2)\Gamma_p'(0)^2)$, and the constant term of our Frobenius structure is 
\begin{equation}
\Phi_0 = \begin{pmatrix} 1 & 0 & 0 \\ \Gamma_p'(0) & 1/p & 0 \\ (3/2) \Gamma_p'(0)^2 & 3\Gamma_p'(0)/p & 1/p^2  \end{pmatrix}
\end{equation}
The experimentally observed growth rates are $\sigma \approx 0.16$ for $p = 3$, $\sigma \approx 0.09$ for $p=5$:
\begin{center}
\includegraphics[scale=0.45]{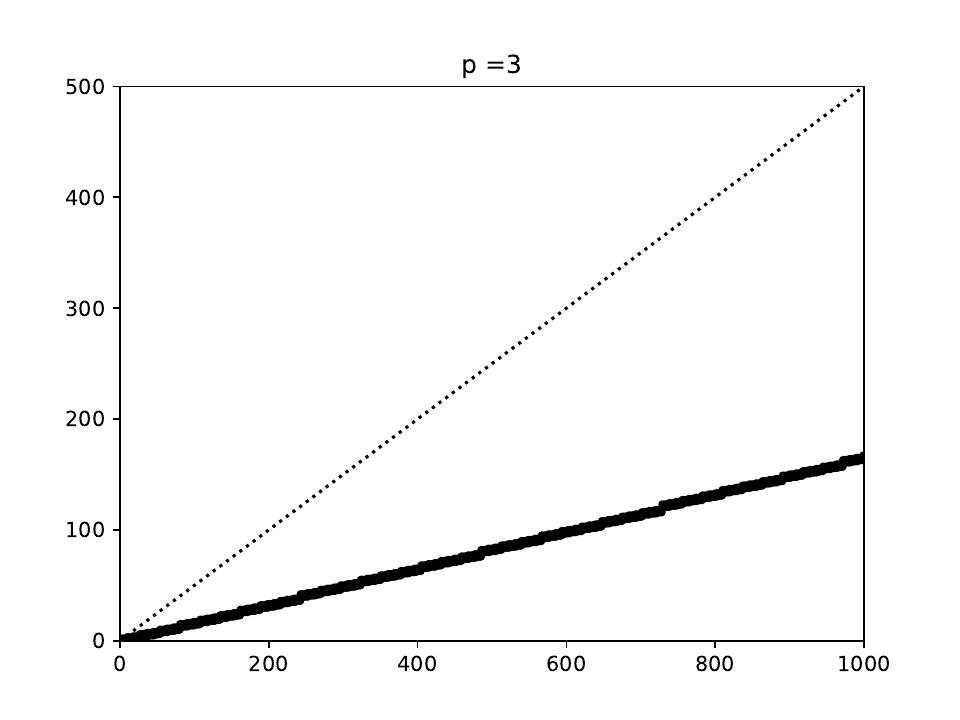}
\includegraphics[scale=0.45]{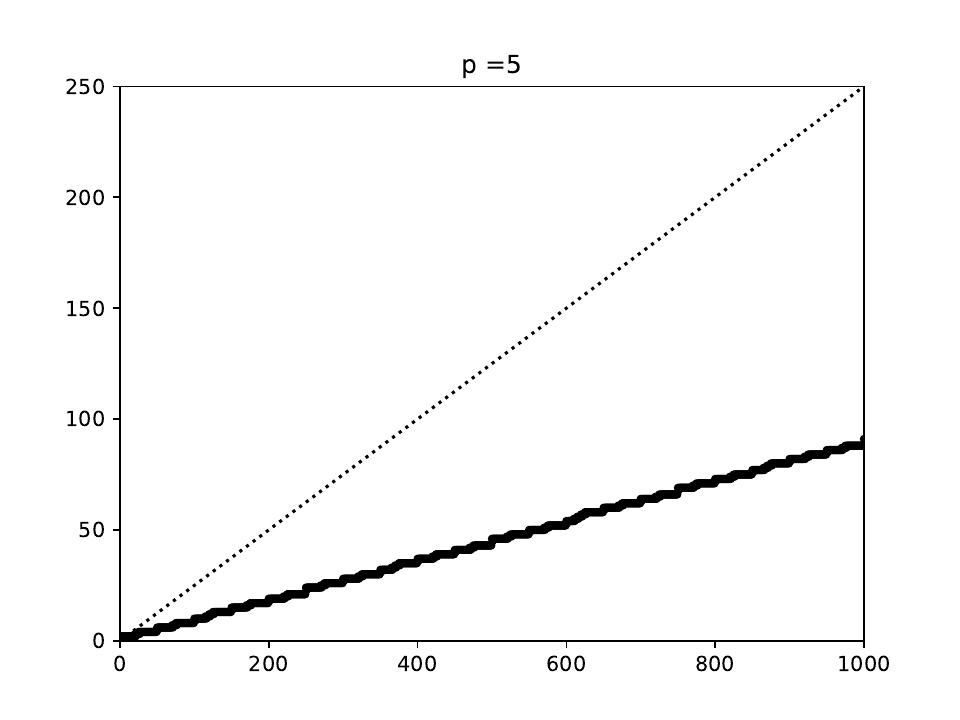}
\end{center}
At both primes, the Newton polygon of our approximation to $\bar\phi(q,\pi)$ has vertices at  $(0,-3)$, $(1,-3)$, $(2,-2)$, $(3,0)$; hence slopes $(0,1,2)$ (matching the Betti numbers for the part of the cohomology we are considering).
\end{example}

\begin{example} \label{th:f1}
Take $M$ to be the Hirzebruch surface $F_1$ (see Section \ref{sec:toric}). In the basis 
\begin{equation}
\{1,[\text{\it exceptional divisor}],[fibre],[point]\}, 
\end{equation}
where $c_1(TM) = (0,2,3,0)$, the quantum connection is \cite{crauder-miranda94}
\begin{equation}
\partial_q + 
\begin{pmatrix}
 0 & 2q &0 & 3q^2 \\
 2q^{-1} & -1 & 1 & 0 \\
 3q^{-1} & 0 & 0 & 2q \\
 0 & q^{-1} & 2q^{-1} & 0
\end{pmatrix}
\end{equation}
One has $c_1(TM)^2 = 8[\mathit{point}]$, hence
\begin{equation}
\Phi_0 = \begin{pmatrix}
1 & 0 & 0 & 0 \\
2 \Gamma_p'(0) & 1/p & 1/p & 0 \\
3 \Gamma_p'(0) & 0 & 0 & 0 \\
4 \Gamma_p'(0)^2 & 1/p & 2/p & 1/p^2 
\end{pmatrix} 
\end{equation}
The experimentally observed growth rates are $\sigma \approx 0.28$ for $p = 3$, $\sigma \approx 0.09$ for $p=5$:
\begin{center}
\includegraphics[scale=0.45]{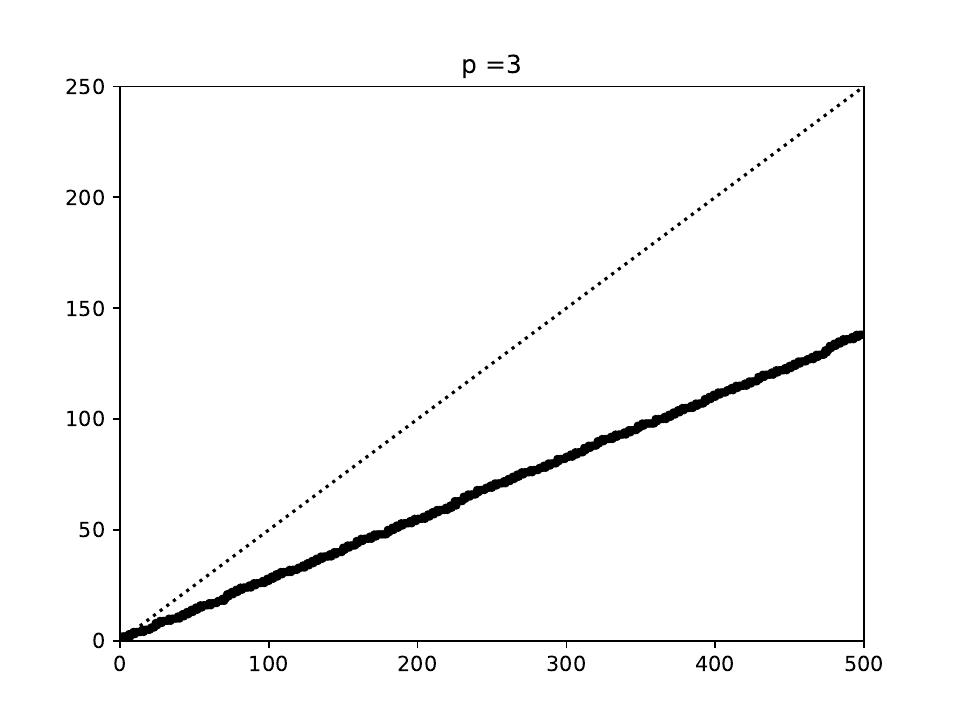}
\includegraphics[scale=0.45]{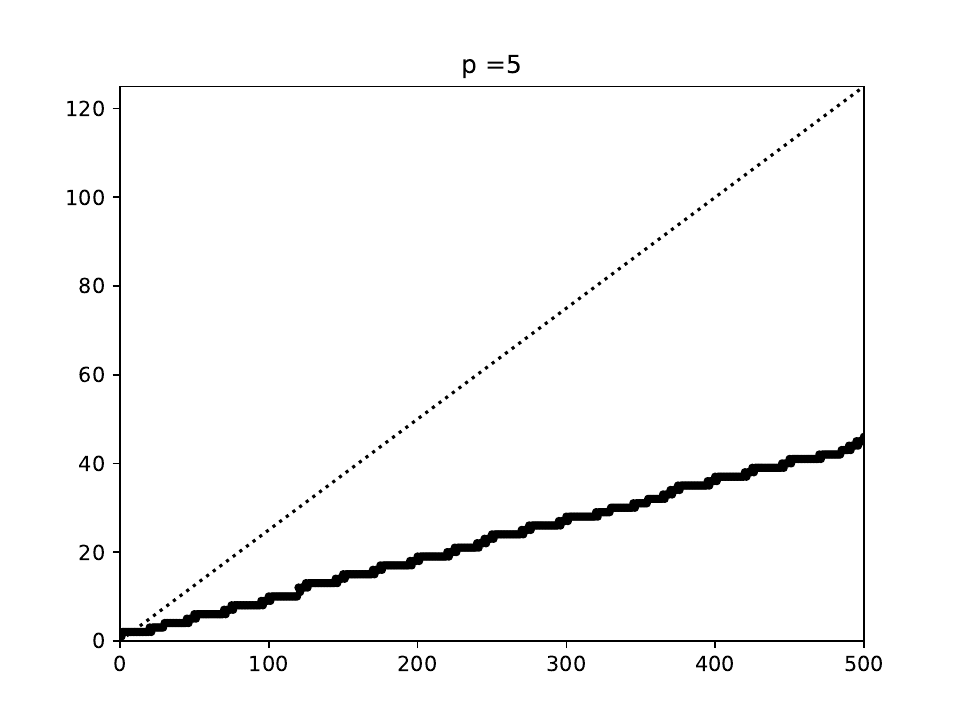}
\end{center}
At both primes, the Newton polygon of our approximation to $\bar\phi(q,\pi)$ has vertices at  $(0,-4)$, $(1,-4)$, $(3,-2)$, $(4,0)$; hence slopes $(0,1,1,2)$.
\end{example}

\begin{example}
Let $M$ be the intersection of two quadrics in $\bC P^5$ (our first example of an irrational variety). This has integral cohomology $\bZ$ in each even degree. Let's choose the integer generators to be positive multiplies of the standard generators in $\bC P^5$: in degree $2$, this is the hyperplane class $x$; in degree $4$, it is $x^2/4$; and in degree $6$, it is $x^3/4 = [\mathit{point}]$. The quantum connection is \cite{donaldson93}
\begin{equation}
\partial_q + 2 \begin{pmatrix} 
0  &   4q  & 0     & 4q^3 \\
q^{-1} &   0 &   2q &    \\
  0 & 4q^{-1} & 0     & 4q \\
 0  & 0   &  q^{-1}  & 0
\end{pmatrix}
\end{equation}
The total Chern class is $c(TM) = (1+x)^6(1+2x)^{-2}$, which means $c_1(TM) = 2x$, $c_2(TM) = 3x^2$, $c_3(TM) = 0$. Hence,
\begin{equation}
\begin{aligned}
\Gamma_p(TM) & =  1 + 2\Gamma'_p(0) x + 2\Gamma_p'(0)^2 x^2 + (3\Gamma_p'(0)^3 - (5/3)\Gamma_p'''(0)) x^3 
\\ &  = \big(
1, 2\Gamma_p'(0), 8\Gamma_p'(0)^2, 12\Gamma_p'(0)^3 - (20/3) \Gamma_p'''(0) 
\big) 
\end{aligned}
\end{equation}
and the constant term of our Frobenius structure is
\begin{equation}
\Phi_0 = \begin{pmatrix} 1 &0 &0 & 0 \\ 
2\Gamma'_p(0) & 1/p &0 & 0
\\ 8\Gamma'_p(0)^2 & 8\Gamma'_p(0)/p & 1/p^2 & 0\\ 
12\Gamma_p'(0)^3 - (20/3) \Gamma_p'''(0)
& 32\Gamma_p'(0)^2/p & 2\Gamma_p'(0)/p^2 & 1/p^3 
\end{pmatrix}
\end{equation}
The experimentally observed growth rates are $\sigma \approx 0.28$ for $p = 3$, $\sigma \approx 0.09$ for $p=5$:
\begin{center}
\includegraphics[scale=0.45]{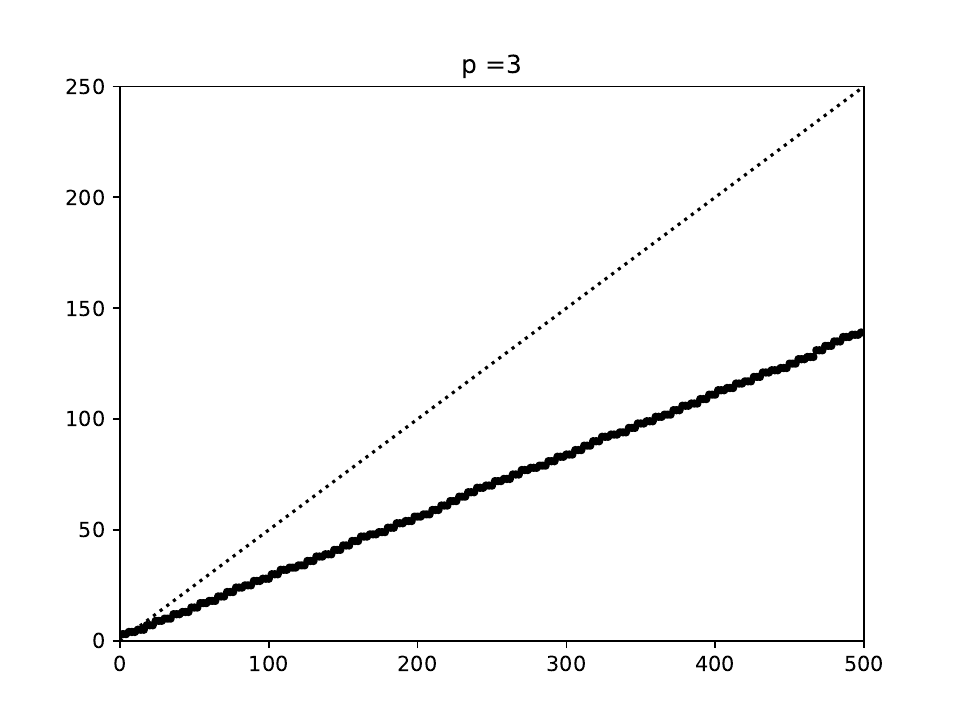}
\includegraphics[scale=0.45]{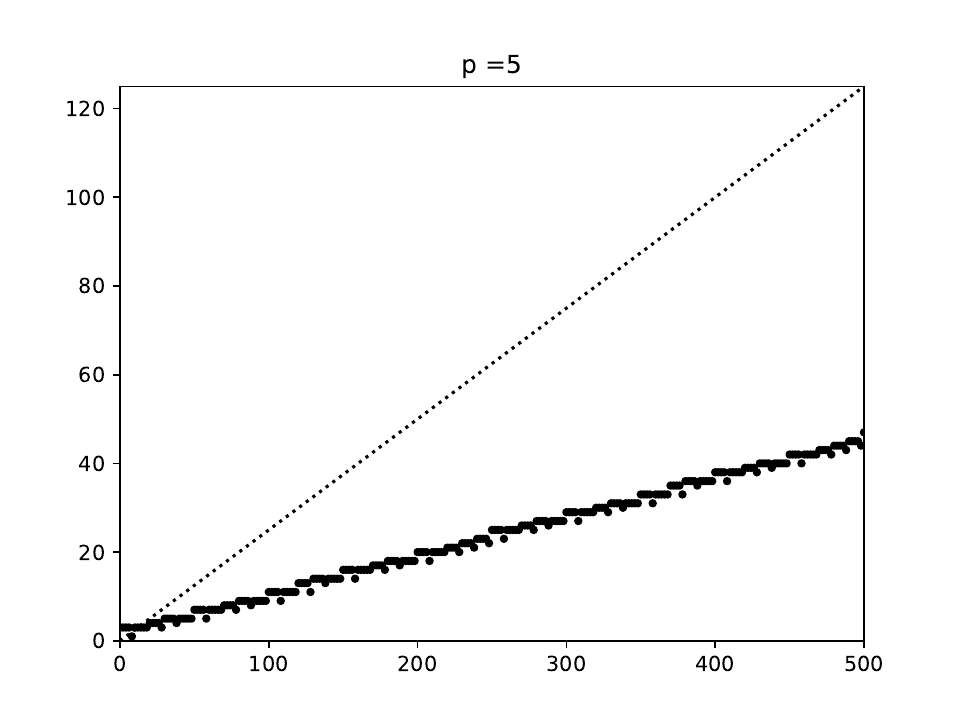}
\end{center}
At both primes, the Newton polygon of our approximation to $\bar\phi(q,\pi)$ has vertices at  $(0,-6)$, $(1,-6)$, $(2,-5)$, $(3,-3)$, $(4,0)$; hence slopes $(0,1,2)$ (matching the Betti numbers for the part of the cohomology we are considering).
\end{example}

%

\begin{example} \label{th:twistor-space}
The twistor spaces studied in \cite{reznikov93,fine-panov10, evans14, hugtenburg24b} yield examples of non-K{\"a}hler monotone symplectic manifolds. Such a manifold (in the lowest possible dimension) comes with a fibration $\bC P^3 \rightarrow M \rightarrow B$, where $B$ is a closed oriented hyperbolic $6$-manifold, assumed to be {\em Spin} for simplicity. Since $c_1(TM)/2$ restricts to the hyperplane class on $\bC P^3$, one has 
\begin{equation}
H^*(M;\bQ) \iso H^*(B;\bQ) \oplus H^*(B;\bQ)c_1(TM) \oplus H^*(B;\bQ)c_1(TM)^2 \oplus H^*(B;\bQ)c_1(TM)^3
\end{equation}
as an $H^*(B;\bQ)$-module. Let $e(TB) \in H^6(B;\bQ) \subset H^6(M;\bQ)$ be the Euler class. Then (\cite[Theorem 6.1]{hugtenburg24b}, based on the computation from \cite{evans14})
\begin{equation} \label{eq:8}
c_1(TM)^4 = -8 e(TB) c_1(TM).
\end{equation}
We need the following higher characteristic classes \cite[Lemma 7.3]{hugtenburg24b}:
\begin{equation} \label{eq:twistor-chern-character}
\begin{aligned}
& \mathit{ch}_3(TM) = c_1(TM)^3/6 + e(TB), \\
& \mathit{ch}_5(TM) = \frac{7}{120} e(TB) c_1(TM)^2. 
\end{aligned}
\end{equation}
The quantum connection splits into several blocks. The simpler blocks are those spanned by $\{y, y\,c_1(TM), y\,c_1(TM)^2, y\,c_1(TM)^3\}$ for some fixed nonzero $y \in H^d(B;\bQ)$, $0 < d < 6$ (of which only the even degree cases $d = 2,4$ stricty fall under our purviev). The cup product with $e(TB)$ acts trivially on this block. In the given basis, we have \cite[Theorem 6.1]{hugtenburg24b}
\begin{equation}
\nabla_{\partial_q} = \partial_q + \begin{pmatrix}
0 & 4q & 0 & 0 \\
q^{-1} & 0 & 0 & 0 \\
0 & q^{-1} & 0 & 4q \\
0 & 0 & q^{-1} & 0
\end{pmatrix}
\end{equation} 
and 
\begin{equation}
\Phi_0 = p^{-d/2} \begin{pmatrix} 1 & 0 & 0 & 0 \\ \Gamma'_p(0) & 1/p & 0 & 0 \\ \Gamma_p'(0)^2/2 & \Gamma_p'(0)/p & 1/p^2 & 0 \\ 
\Gamma_p'''(0)/6 & \Gamma_p'(0)^2/(2p) & \Gamma_p'(0)/p^2 & 1/p^3 \end{pmatrix}
\end{equation}
Here's the outcome of numerical computations: $\sigma \approx  0.28$ for $p = 3$, and $\sigma \approx 0.09$ for $p = 5$;
\begin{center}
\includegraphics[scale=0.45]{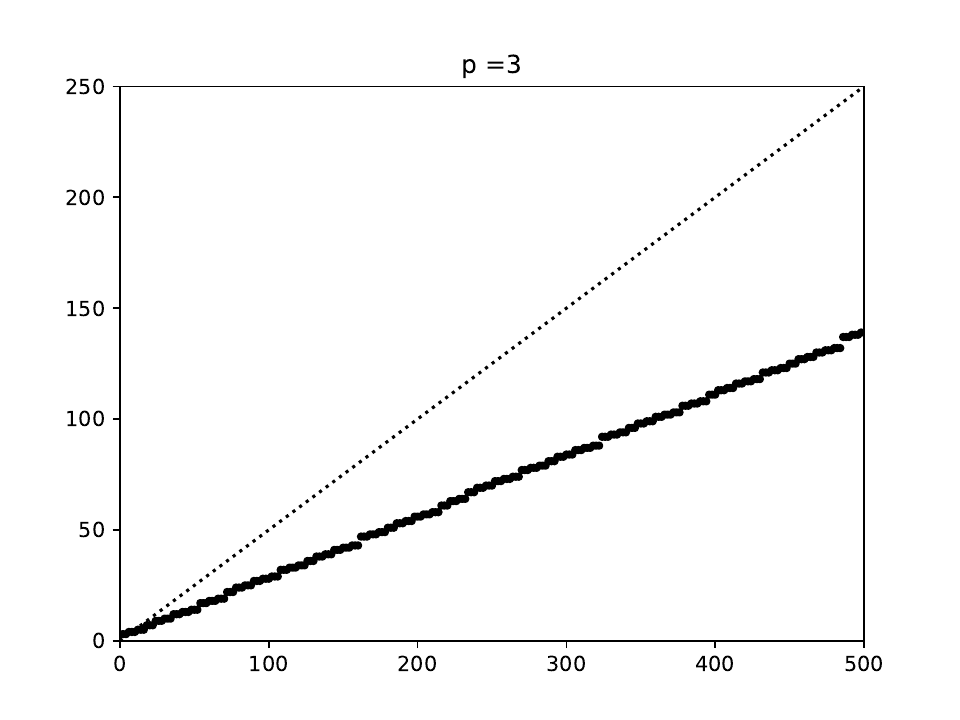}
\includegraphics[scale=0.45]{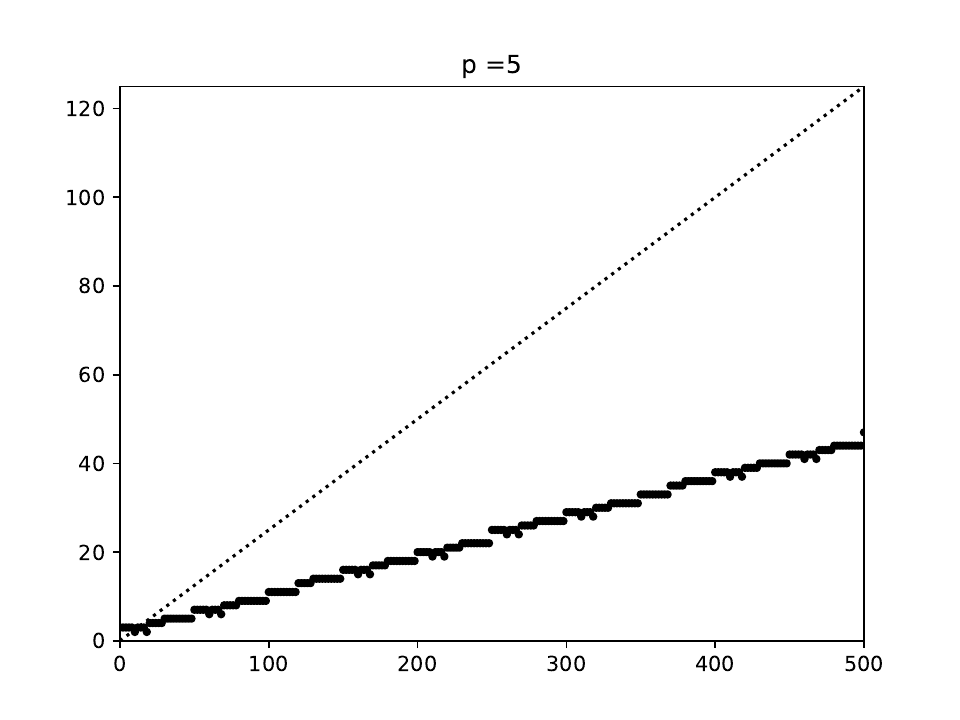}
\end{center}
(Strictly speaking, the graphics here is for the geometrically irrelevant case $d = 0$; but changing $d$ only shifts the curves vertically, since it rescales $\bar\Phi(q)$ by a constant power of $p$). At both primes, the Newton polygon of our approximation to $\bar\phi(q,\pi)$ has vertices at $(0,-6-2d)$, $(1,-6-3/2d)$, $(2,-5-d)$, $(3,-3-d/2)$, $(4,0)$; hence slopes $(d/2,d/2 + 1,d/2 + 2,d/2 + 3)$ (matching the relevant part of the cohomology).

The complicated block is spanned by 
\begin{equation}
\begin{aligned}
& \{1,c_1(TM),c_1(TM)^2,c_1(TM)^3,
\\ & \qquad
e(TB),e(TB)c_1(TM), e(TB)c_1(TM)^2, e(TB)c_1(TM)^3\}. 
\end{aligned}
\end{equation}
In that basis \cite[Theorem 6.1]{hugtenburg24b}
\begin{equation} \label{eq:big-twistor-space}
\nabla_{\partial_q} = \partial_q + \begin{pmatrix}
0 & 4q & 0 & 0 &&&& \\
q^{-1} & 0 & 0 & 0 &&&& \\
0 & q^{-1} & 0 & 4q &&&& \\
0 & 0 & q^{-1} & 0 &&&& \\
&&&& 0 & 4q & 0 & 0 \\
&&& 8q^{-1} & q^{-1} & 0 & 0 & 0 \\
&&&& 0 & q^{-1} & 0 & 4q \\
&&&& 0 & 0 & q^{-1} & 0
\end{pmatrix}
\end{equation}
with the non-block-diagonal term coming from \eqref{eq:8}. Note that this is not irreducible: it's an extension of two copies of the same rank $4$ connection.

The constant term $\Phi_0$ can be read off from the exponential of \eqref{eq:log-gamma-class} and  \eqref{eq:twistor-chern-character}. Numerical data suggest $\sigma \approx 0.28$ for $p = 3$, $\sigma \approx 0.1$ for $p = 5$ (bearing in mind that the observed convergence is slower than in other cases):
\begin{center}
\includegraphics[scale=0.45]{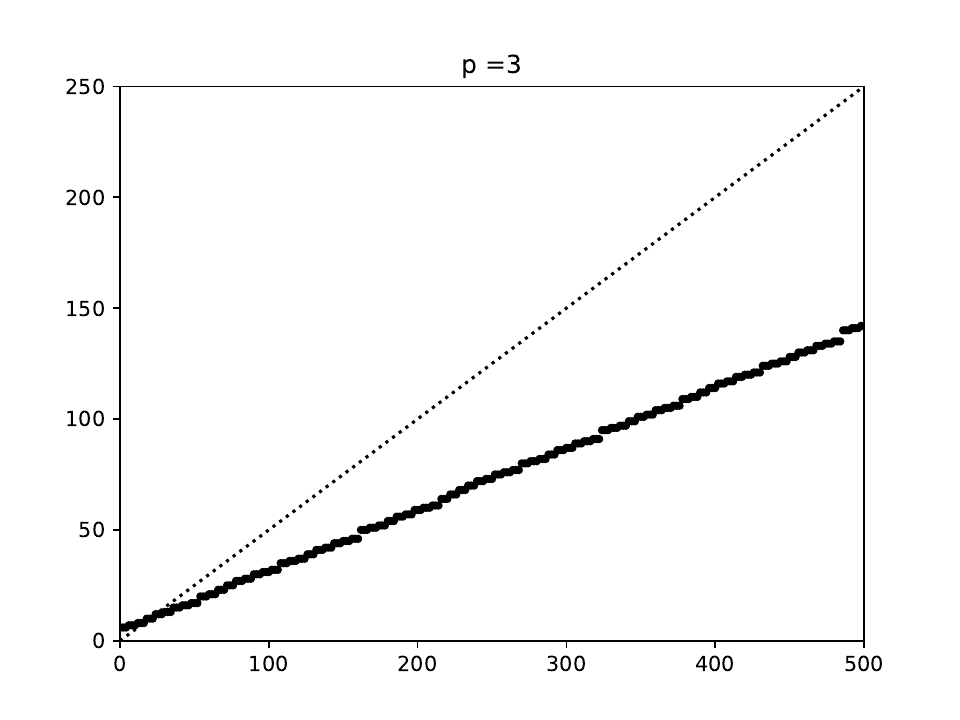}
\includegraphics[scale=0.45]{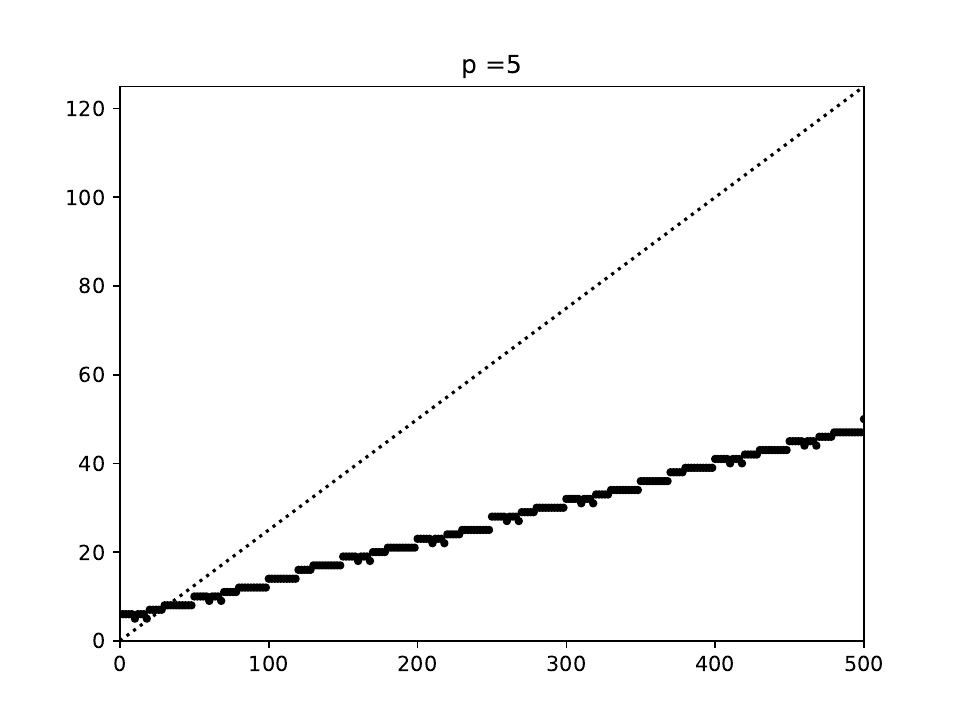}
\end{center}
Unfortunately, computations involving the Newton polygon for \eqref{eq:big-twistor-space} exceeded our modest resources.
\end{example}

\end{document}